\numberwithin{equation}{section}
\numberwithin{figure}{section}
\theoremstyle{plain}
\newtheorem{thm}{\protect\theoremname}[section]
\theoremstyle{plain}
\newtheorem{lem}[thm]{\protect\lemmaname}
\theoremstyle{plain}
\newtheorem{prop}[thm]{\protect\propositionname}
\theoremstyle{remark}
\newtheorem{rem}[thm]{\protect\remarkname}
\numberwithin{equation}{section}
\providecommand{\theoremname}{Theorem}
\providecommand{\lemmaname}{Lemma}
\providecommand{\remarkname}{Remark}
\providecommand{\propositionname}{Proposition}
\begin{document}
\global\long\def\R{\mathbf{\mathbb{R}}}%
\global\long\def\C{\mathbf{\mathbb{C}}}%
\global\long\def\Z{\mathbf{\mathbb{Z}}}%
\global\long\def\N{\mathbf{\mathbb{N}}}%
\global\long\def\T{\mathbb{T}}%
\global\long\def\Im{\mathrm{Im}}%
\global\long\def\Re{\mathrm{Re}}%
\global\long\def\Hc{\mathcal{H}}%
\global\long\def\M{\mathbb{M}}%
\global\long\def\P{\mathbb{P}}%
\global\long\def\L{\mathcal{L}}%
\global\long\def\F{\mathcal{\mathcal{F}}}%
\global\long\def\s{\sigma}%
\global\long\def\Rc{\mathcal{R}}%
\global\long\def\W{\tilde{W}}%
\global\long\def\G{\mathcal{G}}%
\global\long\def\d{\partial}%
\global\long\def\mc#1{\mathcal{\mathcal{#1}}}%
\global\long\def\Right{\Rightarrow}%
\global\long\def\Left{\Leftarrow}%
\global\long\def\les{\lesssim}%
\global\long\def\hook{\hookrightarrow}%
\global\long\def\D{\mathbf{D}}%
\global\long\def\rad{\mathrm{rad}}%
\global\long\def\d{\partial}%
\global\long\def\jp#1{\langle#1\rangle}%
\global\long\def\norm#1{\|#1\|}%
\global\long\def\ol#1{\overline{#1}}%
\global\long\def\wt#1{\widehat{#1}}%
\global\long\def\tilde#1{\widetilde{#1}}%
\global\long\def\br#1{(#1)}%
\global\long\def\Bb#1{\Big(#1\Big)}%
\global\long\def\bb#1{\big(#1\big)}%
\global\long\def\lr#1{\left(#1\right)}%
\global\long\def\la{\lambda}%
\global\long\def\al{\alpha}%
\global\long\def\be{\beta}%
\global\long\def\ga{\gamma}%
\global\long\def\La{\Lambda}%
\global\long\def\De{\Delta}%
\global\long\def\na{\nabla}%
\global\long\def\fl{\flat}%
\global\long\def\sh{\sharp}%
\global\long\def\calN{\mathcal{N}}%
\global\long\def\avg{\mathrm{avg}}%
\global\long\def\bbR{\mathbf{\mathbb{R}}}%
\global\long\def\bbC{\mathbf{\mathbb{C}}}%
\global\long\def\bbZ{\mathbf{\mathbb{Z}}}%
\global\long\def\bbN{\mathbf{\mathbb{N}}}%
\global\long\def\bbT{\mathbb{T}}%
\global\long\def\bfD{\mathbf{D}}%
\global\long\def\calC{\mathcal{C}}%
\global\long\def\calE{\mathcal{E}}%
\global\long\def\calF{\mathcal{F}}%
\global\long\def\calG{\mathcal{G}}%
\global\long\def\calH{\mathcal{H}}%
\global\long\def\calL{\mathcal{L}}%
\global\long\def\calO{\mathcal{O}}%
\global\long\def\calR{\mathcal{R}}%
\global\long\def\calZ{\mathcal{Z}}%
\global\long\def\Lmb{\Lambda}%
\global\long\def\eps{\varepsilon}%
\global\long\def\lmb{\lambda}%
\global\long\def\gmm{\gamma}%
\global\long\def\rd{\partial}%
\global\long\def\chf{\mathbf{1}}%
\global\long\def\td#1{\widetilde{#1}}%
\global\long\def\sgn{\mathrm{sgn}}%

\subjclass[2020]{35B40 (primary), 35Q55, 37K10, 37K40} 
\keywords{Calogero-Moser derivative nonlinear Schrödinger equation, continuum
Calogero-Moser model, soliton resolution, asymptotic behaviors, self-duality. }
\title[Soliton Resolution for CM--DNLS]{Soliton resolution for Calogero--Moser derivative nonlinear Schrödinger
equation}
\begin{abstract}
We consider soliton resolution for the Calogero--Moser derivative
nonlinear Schrödinger equation (CM-DNLS). A rigorous PDE analysis
of (CM-DNLS) was recently initiated by Gérard and Lenzmann, who demonstrated
its Lax pair structure. Additionally, (CM-DNLS) exhibits several symmetries,
such as mass-criticality with pseudo-conformal symmetry and a self-dual
Hamiltonian. Despite its integrability, finite-time blow-up solutions
have been constructed.

The purpose of this paper is to establish soliton resolution for both
finite-time blow-up solutions and global solutions in a fully general
setting, \emph{without imposing radial symmetry or size constraints}.
To our knowledge, this is the first proof of full soliton resolution for a Schrödinger-type equation that does not rely on complete integrability. A key aspect of our proof
is the control of the energy of the outer radiation after extracting
a soliton, referred to as the \emph{energy bubbling} estimate. This
benefits from two levels of conservation laws, mass and energy, and
self-duality. This approach allows us to directly prove continuous-in-time
soliton resolution, bypassing time-sequential soliton resolution.
Importantly, our proof does not rely on the integrability of the equation,
potentially offering insights applicable to other non-integrable models.
\end{abstract}

\author{Taegyu Kim}
\email{k1216300@kias.re.kr}
\address{(current) Korea Institute for Advanced Study, 80 Hoegi-ro, Dongdaemun-gu, Seoul 02455, Korea (previous) Department of Mathematical Sciences, Korea Advanced Institute of Science and Technology, 291 Daehak-ro, Yuseong-gu, Daejeon 34141, Korea}
\author{Soonsik Kwon}
\email{soonsikk@kaist.edu}
\address{Department of Mathematical Sciences, Korea Advanced Institute of Science and Technology, 291 Daehak-ro, Yuseong-gu, Daejeon 34141, Korea}

\maketitle
\tableofcontents{}


\section{Introduction}

In this paper, we study the global behavior of solutions, \emph{soliton resolution}, for the Calogero--Moser derivative nonlinear
Schrödinger equation (CM-DNLS): 
\begin{align}
	\begin{cases}
		i\partial_{t}u+\partial_{xx}u+2D_{+}(|u|^{2})u=0,\qquad u:[0,T)\times\bbR\to\bbC\\
		u(0)=u_{0}\in H^{s}(\R).
	\end{cases}\tag{CM-DNLS}\label{CMdnls}
\end{align}
Here, $D_{+}=D\Pi_{+}=-i\partial_{x}\Pi_{+}$ is a nonlocal derivative
with the projection to positive frequencies, i.e. $\Pi_{+}$ is the
Cauchy--Szeg\H{o} projection with the Fourier symbol, $\chf_{\xi>0}$.
\eqref{CMdnls} is a recently introduced nonlinear Schrödinger-type
equation. \eqref{CMdnls} draws attention as it enjoys rich and interesting
mathematical structures. To name a few, \eqref{CMdnls} is mass-critical
(with pseudo-conformal invariance), completely integrable, and has
a self-dual Hamiltonian. The purpose of this work is to show soliton
resolution theorem for \eqref{CMdnls}. We show soliton resolution in fully general setting, \emph{without size constraints or radial symmetry}. Although \eqref{CMdnls} is completely integrable, we do
not use the integrability features, and our proof may give insights
toward other non-integrable models.

\eqref{CMdnls} was introduced in \cite{AbanovBettelheimWiegmann2009FormalContinuum}
as a continuum model of the classical Calogero--Moser Hamiltonian
system, which is known to be completely integrable \cite{Calogero1971ClassicCalogerMoser,CalogeroMarchioro1974ClassicCalogerMoser,Moser1975ClassicCalogerMoser,OlshanetskyPerelomov1976Invent}.
It is also known as continuum Calogero--Moser model (CCM) or Calogero--Moser
NLS (CM-NLS). There is also a periodic counterpart, Calogero--Sutherland
model \cite{Sutherland1971ClassicCalogerMoser,Sutherland1972ClassicCalogerMoser}.

A rigorous mathematical analysis was initiated by Gérard and Lenzmann
\cite{GerardLenzmann2022}. They verified that \eqref{CMdnls} is
completely integrable by discovering the Lax pairs on the \emph{Hardy--Sobolev
	space} $H_{+}^{s}(\mathbb{R})$: 
\[
H_{+}^{s}(\mathbb{R})\coloneqq H^{s}(\R)\cap L_{+}^{2}(\R),\qquad L_{+}^{2}(\R)\coloneqq\{f\in L^{2}(\mathbb{R}):\text{supp}\,\widehat{f}\subset[0,\infty)\}.
\]
The flow of \eqref{CMdnls} preserves the positive frequency condition, namely that $\text{supp}\,\widehat{u}(t)\allowbreak\subset[0,\infty)$, and this distinctive feature is called \emph{chirality}. For chiral solutions $u(t)\in H_{+}^{s}(\R)$
to \eqref{CMdnls}, the equation admits the Lax pair structure: 
\begin{equation}
	\frac{d}{dt}\mathcal{L}_{\textnormal{Lax}}=[\mathcal{P}_{\textnormal{Lax}},\mathcal{L}_{\textnormal{Lax}}]\label{eq:LaxPair-evol}
\end{equation}
with the $u$-dependent operators $\mathcal{L}_{\textnormal{Lax}}$
and $\mathcal{P}_{\textnormal{Lax}}$ defined by\footnote{\eqref{eq:LaxPair} is slightly different but equivalent to what was
	presented in \cite{GerardLenzmann2022}. This formulation was presented
	in the introduction of \cite{KLV2025CAMS}.} 
\begin{align}
	\mathcal{L}_{\textnormal{Lax}}=-i\partial_{x}-u\Pi_{+}\overline{u},\quad\text{and}\quad\mathcal{P}_{\textnormal{Lax}}=i\partial_{xx}+2uD_{+}\overline{u}.\label{eq:LaxPair}
\end{align}
There is a similar Lax pair in the full intermediate NLS \cite{PelinovskyGrimshaw1995IntermediateDefocusingCMDNLSLaxPair}.
Another feature of integrability on the Hardy space is an explicit
formula \cite{KLV2025CAMS}
given by a holomorphic function on the upper half-plane. This is followed
by the works of Gérard and collaborators in relevant models \cite{GerardGrellier2015ExplictFormulaCubicSzego1,Gerard2023BOequexplicitFormula,GerardPushnitski2024CMP}.\footnote{In fact, \eqref{eq:LaxPair-evol} holds true regardless of chirality. See \cite[Proposition 3.5]{KimKimKwon2024arxiv}. However, the chirality is required for the explicit formula.} 
The solution given by the explicit formula might become singular and may not be a \emph{strong} solution (say, in $C_{t}H_{+}^{s}$) to \eqref{CMdnls}. Hence, the formula says that if a strong solution exists on a time interval, then the solution should follow the explicit formula on that interval.

We briefly recall symmetries and conservation laws. \eqref{CMdnls}
enjoys time and space translation, and phase rotation symmetries.
They are associated to the conservation laws of energy, mass, and
momentum: 
\begin{gather*}
	\widetilde{E}(u)=\frac{1}{2}\int_{\mathbb{R}}\left|\partial_{x}u-i\Pi_{+}(|u|^{2})u\right|^{2}dx,\, \text{(Energy)}\\
	M(u)=\int_{\mathbb{R}}|u|^{2}dx,\,\text{(Mass)}\quad\widetilde{P}(u)=\Re\int_{\mathbb{R}}(\overline{u}Du-\frac{1}{2}|u|^{4})dx.\,\text{(Momentum)}
\end{gather*}
Moreover, it has \textit{Galilean invariance} 
\begin{align*}
	u(t,x)\mapsto[\textnormal{Gal}_{c}u](t,x)\coloneqq e^{icx-ic^{2}t}u(t,x-2ct),\quad(c\in\mathbb{R}).
\end{align*}
Of particular importance are \textit{$L^{2}$-scaling symmetry} 
\begin{align*}
	u(t,x)\mapsto\lambda^{-\frac{1}{2}}u(\lambda^{-2}t,\lambda^{-1}x),\quad(\lambda>0),
\end{align*}
and\textit{ pseudo-conformal symmetry} 
\begin{equation}
	u(t,x)\mapsto[\calC u](t,x)\coloneqq\frac{1}{|t|^{1/2}}e^{i\frac{x^{2}}{4t}}u\left(-\frac{1}{t},\frac{x}{|t|}\right).\label{eq:pseudo-conf-transf}
\end{equation}
Associated identities to scaling and pseudo-conformal symmetries are
the virial identities \eqref{eq:Virial identity}. We note that those
symmetries are shared with the mass-critical NLS. But, we remark that
$\td E(u)$ is a complete square of first-order form. Regarding the
chirality, the Galilean invariance preserves the chirality only if
$c\geq0$. The pseudo-conformal symmetry is valid for $H^{1,1}$-solutions
but does not preserve the chirality.

It is well-known that stationary or static solutions play a pivotal
role in the dynamics. Here, a solution to \eqref{CMdnls} is static
if and only if it has zero energy. In \cite{GerardLenzmann2022},
the authors showed that $\calR(x)$, called the \emph{ground state}
or \emph{soliton}, 
\[
\mathcal{R}(x)=\frac{\sqrt{2}}{x+i}\in H_{+}^{1}(\mathbb{R})\quad\text{with}\quad M(\mathcal{R})=2\pi\quad\text{and}\quad\widetilde{E}(\mathcal{R})=0,
\]
is the unique zero energy solution (and thus static) up to scaling,
phase rotation, and translation symmetries. Note that this $\calR(x)$
is a chiral solution. More generally, any nonzero $H^{1}(\bbR)$ traveling
wave solutions (i.e., solutions of the form $u(t,x)=e^{i\omega t}\calR_{c,\omega}(x-2ct)$
for some $\omega,c\in\bbR$) are given by $\textnormal{Gal}_{c}\calR$
up to scaling, phase rotation, and translation symmetries \cite{GerardLenzmann2022}.
Applying the pseudo-conformal transform \eqref{eq:pseudo-conf-transf}
to $\calR$, one obtains an explicit finite-time blow-up solution:
\[
S(t,x)\coloneqq\frac{1}{t^{1/2}}e^{ix^{2}/4t}\mathcal{R}\left(\frac{x}{t}\right)\in L^{2}(\bbR),\qquad\forall t>0.
\]
It is important to note that $S(t)$ is neither of finite energy ($S(t)\notin H^{1}(\bbR)$),
nor \emph{chiral} ($S(t)\notin L_{+}^{2}(\bbR)$).

Let us briefly discuss previously known results on \eqref{CMdnls}.
De Moura and Pilod \cite{MouraPilod2010CMDNLSLocal} proved the local
well-posedness in $H^{s}(\mathbb{R})$ for all $s>\frac{1}{2}$. It
is also locally well-posed in $H_{+}^{s}(\R)$ for $s>\frac{1}{2}$
\cite{GerardLenzmann2022}. The ground state $\mathcal{R}$ is a threshold
for global regularity, i.e. global existence of strong solutions.
To this regard, Lax pairs provide significant information for the
subthreshold $M(u)<M(\mathcal{R})=2\pi$. If $M(u)<M(\mathcal{R})$
and $u_0\in H_{+}^{1}(\mathbb{R})$, then the solution is global and moreover,
$\sup_{t\in\R}\norm{u(t)}_{H_{+}^{k}(\R)}\lesssim_{k}\norm{u_{0}}_{H_{+}^{k}(\R)}$
for all $k\in\mathbb{N}_{\geq1}$ \cite{GerardLenzmann2022}. Subsequently,
the local well-posedness on $M(u)<M(\mathcal{R})$ was improved to
$L_{+}^{2}(\mathbb{R})$ by Killip, Laurens, and Vi\c{s}an \cite{KLV2025CAMS}.
At the threshold ($M(u_{0})=M(\mathcal{R})$), by adapting \cite{Merle1993Duke},
$H^{1}$-solutions are global, as $S(t)\notin H^{1}$ \cite{GerardLenzmann2022}.
The dynamics above the threshold ($M(u)>M(\mathcal{R}))$ were also
studied. Gérard and Lenzmann \cite{GerardLenzmann2022} employed the
Lax pair structure to construct $N$-soliton solutions of the form
\begin{align*}
	u(t,x)=\sum_{k=1}^{N}\frac{a_{k}(t)}{x-z_{k}(t)}\in H_{+}^{1}(\mathbb{R}),\quad M(u_{0})=2\pi N,\quad\forall N\geq2,
\end{align*}
where the residues $a_{k}(t)\in\mathbb{C}$ and the pairwise distinct
poles $z_{k}(t)\in\mathbb{C}_{-}=\{z\in\bbC:\Im(z)<0\}$ for $1\leq k\leq N$
solve a complexified version of the classical Calogero--Moser system.
These $N$-soliton solutions blow up in infinite-time with $\|u(t)\|_{H^{s}}\sim|t|^{2s}$
as $|t|\to\infty$ for any $s>0$. Hogan and Kowalski \cite{HoganKowalski2024PAA},
using the explicit formula, showed the existence of possibly infinite
time blow-up solutions with mass arbitrarily close to the threshold
$M(\calR)$. The first finite-time blow-up solutions were constructed
by the authors and K. Kim \cite{KimKimKwon2024arxiv}, arising from
smooth chiral data. This result says that $\mathcal{R}$ is a threshold for global existence of strong solutions. Moreover, it is remarkable that although \eqref{CMdnls}
is completely integrable, it admits finite-time blow-up solutions.
Additionally, the zero dispersion limit of \eqref{CMdnls} was investigated
by Badreddine \cite{Badreddine2024SIMA}. We also refer
to \cite{Matsuno2023,Badreddine2024PAA,Badreddine2023CMDNLSTorusDefocusingTravelingwavearxiv,BCD2024arxivNumerical}
for the periodic model.

Our main theorem concerns the global dynamics beyond the threshold,
so-called \emph{soliton resolution}. We show soliton resolution
in a fully general setting, \emph{without radial symmetry and size
	constraints. }We use notation for modulated functions $[f]_{\la,\ga,x}$
in \eqref{eq:modulation notation}. 
\begin{thm}[Soliton resolution for \eqref{CMdnls}]
	\label{thm:Soliton resolution} Let $u\in C_{t}H^{1}([0,T)\times\R)$
	be a solution to \eqref{CMdnls} with initial data $u_{0}\in H^{1}$,
	where $[0,T)$ is its maximal forward interval of existence.
	
	\noindent \textbf{(Finite-time blow-up solutions)} If $T<\infty$,
	there exist an integer $N\in\bbN$ with $1\leq N\leq\frac{M(u_{0})}{M(\calR)}$,
	a time $0<\tau<T$, modulation parameters $(\lambda_{j}(t),\gamma_{j}(t),x_{j}(t)):C^{1}([\tau,T))\to\bbR^{+}\times\bbR/2\pi\bbZ\times\bbR$
	for $j=1,2,\cdots,N$, and asymptotic profile $z^{\ast}\in L^{2}$
	so that $u(t)$ admits the decomposition 
	\begin{align}
		u(t)-\sum_{j=1}^{N}[\calR]_{\lambda_{j}(t),\gamma_{j}(t),x_{j}(t)}\to z^{\ast}\text{ in }L^{2}\text{ as }t\to T,\label{eq:soliton resol ungauge}
	\end{align}
	and satisfies the following properties: 
	\begin{itemize}
		\item (Asymptotic orthogonality, no bubble tree) For all $1\leq i\neq j\leq N$,
		\begin{align}
			\left|\frac{x_{i}(t)-x_{j}(t)}{\lambda_{i}(t)}\right|\to\infty\quad\text{as}\quad t\to T.\label{eq:asymptotic orthogonality}
		\end{align}
		\item (Convergence of translation parameters) $\lim_{t\to T}x_{j}(t)\eqqcolon x_{j}(T)$
		exist for all $1\leq j\leq N$. 
		\item (Further information about $z^{\ast}$) We have $M(z^{\ast})=M(u_{0})-N\cdot M(\mathcal{R})$.
		In addition, if $u_{0}\in H^{1,1}$, then $xz^{\ast}\in L^{2}$. 
		\item (Bound on the blow-up speed) We have $\|u(t)\|_{\dot{H}^{1}}\sim\max_{j}(\la_{j}(t)^{-1})$,
		and 
		\begin{align*}
			\lambda_{j}(t)\lesssim T-t\text{ as }t\to T & \text{ for all }1\le j\le N.
		\end{align*}
		\item (Chiral solution) If $u_{0}\in L_{+}^{2}$, then each component in
		the decomposition is chiral, i.e. $z^{\ast}\in L_{+}^{2}$. 
	\end{itemize}
	\textbf{(Global solutions)} If $T=\infty$ and $u\in H^{1,1}$, then
	either $u(t)$ scatters forward in time, 
	\begin{align*}
		\lim_{t\to\infty}\|u(t)-e^{it\partial_{xx}}u^{\ast}\|_{L^{2}}=0,
	\end{align*}
	or there exist an integer $N\in\bbN$ with $1\leq N\leq\frac{M(u_{0})}{M(\calR)}$,
	a time $\tau>0$, modulation parameters $(\lambda_{j}(t),\gamma_{j}(t),c_{j}(t)):C^{1}([\tau,\infty))\to\bbR^{+}\times\bbR/2\pi\bbZ\times\bbR$
	for $j=1,2,\cdots,N$, and $u^{\ast}\in L^{2}$ so that $u(t)$ admits
	the decomposition 
	\begin{align*}
		u(t)-\sum_{j=1}^{N}\textnormal{Gal}_{c_{j}(t)}([\calR]_{\lambda_{j}(t),\gamma_{j}(t),0})-e^{it\partial_{xx}}u^{\ast}\to0\text{ in }L^{2}\text{ as }t\to\infty,
	\end{align*}
	and satisfies the following properties: 
	\begin{itemize}
		\item (Asymptotic orthogonality, no bubble tree) For all $1\leq i\neq j\leq N$,
		denoting the translation parameters $x_{j}(t)\coloneqq2tc_{j}(t)$
		of $\textnormal{Gal}_{c_{j}(t)}[\calR]$, we have 
		\begin{align*}
			\left|\frac{x_{i}(t)-x_{j}(t)}{\lambda_{i}(t)}\right|\to\infty\quad\text{as}\quad t\to\infty.
		\end{align*}
		\item (Convergence of velocity) $\lim_{t\to\infty}c_{j}(t)\eqqcolon c_{j}(\infty)$
		exist for all $1\leq j\leq N$. 
		\item (Further information about $u^{\ast}$) We have $M(u^{\ast})=M(u_{0})-N\cdot M(\calR)$.
		We also have a further regularity, $\partial_{x}u^{\ast}\in L^{2}$. 
		\item (Bound on the scale) We have $\|u(t)\|_{\dot{H}^{1}}\sim\max_{j}(\la_{j}(t)^{-1})$,
		and 
		\begin{align*}
			\lambda_{j}(t)\lesssim 1 \quad \text{for all}\quad 1\le j\le N.
		\end{align*}
		\item (Chiral solution) If $u_{0}\in L_{+}^{2}$, then we can choose each
		component in the decomposition to be chiral. 
	\end{itemize}
\end{thm}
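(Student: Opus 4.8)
The plan is to run a continuous-in-time modulation analysis built on two coupled monotonicity mechanisms, one for the soliton scales and one for the energy of the outer radiation, exploiting the self-dual structure $\td E(u)=\frac12\int|\partial_x u-\Pi_+(|u|^2)u|^2$ together with the two conservation laws (mass and energy). First I would reduce the global case to the finite-time case by applying the pseudo-conformal transform $\calC$ in \eqref{eq:pseudo-conf-transf}: for $H^{1,1}$ global solutions, $\calC u$ is (after a time inversion) a solution on a bounded interval whose behavior as $t\to T^-$ encodes the $t\to\infty$ asymptotics of $u$, and the Galilean boosts $\textnormal{Gal}_{c_j}$ appear precisely as the images of translation parameters under $\calC$. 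This is why the two halves of the theorem mirror each other; I would set up the conjugation carefully (tracking that $\calC$ does not preserve chirality, so the chirality statements need a separate, direct argument using that $\Pi_+$ commutes with the flow). So from now on assume $T<\infty$.

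The core is a bootstrap/induction on the number of bubbles. Step 1: a profile-decomposition / concentration-compactness input along a time sequence $t_n\to T$ gives that, after extracting scales $\la_j$ and centers $x_j$, the solution at time $t_n$ looks like a finite sum of modulated ground states $[\calR]_{\la_j,\ga_j,x_j}$ (by the variational characterization of $\calR$ as the unique zero-energy state, plus mass-criticality the only possible profiles are rescaled $\calR$'s) plus an $L^2$-remainder; the bound $N\le M(u_0)/M(\calR)$ is immediate from mass additivity in the decomposition. Step 2 — and this is the heart — upgrade time-sequential to continuous-in-time convergence. Here I would introduce modulation parameters $(\la_j(t),\ga_j(t),x_j(t))$ solving orthogonality conditions, derive the modulation ODEs, and prove the \emph{energy bubbling estimate}: the energy $\td E$ of the radiation remainder $w(t)=u(t)-\sum_j[\calR]_{\la_j(t),\ga_j(t),x_j(t)}$, which is a complete square, controls $\|\partial_x w\|_{L^2}^2$ up to lower-order localized terms; combined with $\td E(u)=\td E(w)+o(1)$ (energy almost splits because $\td E(\calR)=0$ and the cross terms are controlled by separation) and conservation of $\td E(u)$, this traps $\|\partial_x w(t)\|_{L^2}$ uniformly, which in turn forces $\la_j(t)^{-1}\sim\|u(t)\|_{\dot H^1}$ and rules out oscillation of $\la_j$. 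A localized virial/Morawetz identity (the analogue of \eqref{eq:Virial identity}) then yields the almost-monotonicity needed to conclude $\la_j(t)\to0$, the asymptotic orthogonality \eqref{eq:asymptotic orthogonality} (no bubble tree, because a sub-bubble would leak energy into $w$, contradicting the energy bubbling bound), convergence of $x_j(t)$, and the blow-up rate $\la_j(t)\lesssim T-t$ (from $\dot x_j,\dot\la_j$ bounds and integrating). Step 3: strong $L^2$-convergence of $w(t)$ to some $z^*$ follows from an $L^2$-Cauchy estimate on the radiation (again via the virial identity controlling the outgoing flux), and $M(z^*)=M(u_0)-NM(\calR)$ by mass conservation; the regularity $xz^*\in L^2$ when $u_0\in H^{1,1}$ comes from propagating the pseudo-conformal weight.

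The main obstacle I expect is the energy bubbling estimate itself in this \emph{non-radial, no-size-restriction} regime: one must show that after subtracting $N$ solitons the leftover radiation carries controlled energy, even though the solitons may move relative to each other, collide in scale, or sit on top of slowly-varying radiation, and there is no a priori smallness. The usual coercivity of the linearized operator around $\calR$ degenerates along the symmetry directions (scaling, phase, translation, and — because of pseudo-conformal invariance — a Galilean/virial direction), so the orthogonality conditions must be chosen to neutralize \emph{all} of these, and the quadratic form $\td E(w)$ near the sum of bubbles must be shown coercive modulo exactly those finitely many directions; self-duality (the fact that $\td E$ is a perfect square of a first-order operator, so its linearization factors as $L^*L$) is what makes this coercivity hold with the correct sign and with constants independent of the configuration. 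A secondary difficulty is handling the interaction/error terms in the modulation equations when two scales $\la_i,\la_j$ are comparable but centers are separating — this is where the "no bubble tree" mechanism and the separation lower bound $|x_i-x_j|/\la_i\to\infty$ must be bootstrapped simultaneously with the energy bound rather than assumed.
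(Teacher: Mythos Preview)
Your outline captures the high-level architecture (pseudo-conformal reduction of the global case, self-duality as the source of coercivity, mass counting for $N$), but there is a genuine gap in your description of the energy bubbling estimate, and it propagates into a wrong picture of the whole induction. You write that the energy of the remainder $w$ ``controls $\|\partial_x w\|_{L^2}^2$ up to lower-order localized terms''. This is false in general, and if it held you would obtain single-bubble resolution as in self-dual Chern--Simons--Schr\"odinger. The actual estimate (Lemma~\ref{lem:Nonlinear Coercivity}), proved for the \emph{gauged} equation \eqref{CMdnls-gauged} where the soliton $Q$ is real and positive, reads
\[
E(Q+\tilde\eps)\ \gtrsim\ \|\tilde\eps\|_{\dot{\mathcal H}_R^1}^2 + E(\varphi_R\tilde\eps),
\]
where $\|\cdot\|_{\dot{\mathcal H}_R^1}$ is a \emph{truncated} norm seeing only $\partial_y(\chi_R\tilde\eps)$ and $\langle y\rangle^{-1}\tilde\eps$. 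The outer radiation $\varphi_R\tilde\eps$ is controlled only in \emph{energy}, not in $\dot H^1$. This forces an iterative scheme rather than a one-shot profile decomposition: either $\|\varphi_R\tilde\eps\|_{\dot H^1}\lesssim\lambda$ and the process halts, or $E(\varphi_R\tilde\eps)\ll\|\varphi_R\tilde\eps\|_{\dot H^1}^2$ and Proposition~\ref{prop:Decomposition} is reapplied to $\varphi_R\tilde\eps$ to peel off the next bubble. The paper never runs concentration-compactness to find all bubbles along a sequence; it extracts them one at a time, and the $M(Q)$ mass drop per step terminates the induction.

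The second gap is in the sequential-to-continuous upgrade. The paper does not use modulation ODEs or a localized virial/Morawetz identity here. Instead it proves a \emph{no-return property} \ref{state:no-return property}: if $\lambda_{k-1}/\|\varphi_R\tilde\eps_{k-1}\|_{\dot H^1}\to 0$ along some subsequence, then it does so along the full limit $t\to T$. The only dynamical input is the elementary Lipschitz bound $|\partial_t\int\psi|v|^2|\lesssim\sqrt{E_0}\,\|(\partial_x\psi)v\|_{L^2}$ from the nonnegativity of energy \eqref{eq:nonnegative energy}, applied to an exterior mass $\int\varphi_r|v|^2$; comparing this along the two types of subsequences gives a contradiction. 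The same exterior-mass device yields convergence of $x_j(t)$ and $\lambda_j\lesssim T-t$. The no-bubble-tree statement is likewise not an energy-leak argument but a direct consequence of the interaction bound $\|Q\tilde\eps\|_{L^2}\lesssim\lambda$ contained in $\|\tilde\eps\|_{\dot{\mathcal H}_R^1}$. Finally, all of this is carried out on \eqref{CMdnls-gauged} and transferred to \eqref{CMdnls} only at the end via $\mathcal G^{-1}$; working directly with $\mathcal R$ (complex-valued, mixed decay of real and imaginary parts) would make the coercivity and Hilbert-transform algebra in Lemma~\ref{lem:Nonlinear Coercivity} substantially harder.
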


Soliton resolution is widely believed to occur in many dispersive
equations. It suggests that any \emph{generic} global-in-time solution
asymptotically decouples into a sum of solitons (or similar solutions
such as breathers) and a radiation term that goes to zero in some
sense. For blow-up solutions, in various models, it is believed that
each blow-up profile is a sum of modulated solitons. This type of
results were conjectured for KdV equation in \cite{FPU1955numerical1,ZabuskyKruskal1965numerical2}
from the numerical simulations. The rigorous proof of soliton resolution
was first demonstrated in various integrable PDEs via the inverse
scattering method. To refer to just a few, see \cite{EckhausSchuur1983KdV1,Eckhaus1986KdV2}
for KdV, \cite{Schuur1986mKdV1} for mKdV, and \cite{ZakharovShabat19721dNLS1,SegurAblowitz19761dNLS2,Segur19761dNLS3,Novokvsenov19801dNLS4,BJM20181dNLS5}
for 1-dimensional cubic NLS. Also refer to \cite{Jenkinsetal2018CMPDNLS,Jenkinsatel2020QAMDNLS}
for Derivative NLS.

For non-integrable dispersive and wave equations, soliton resolution
has been studied in several models. In wave equations, such as the
energy-critical nonlinear wave equation (in various dimensions) and
energy-critical equivariant wave maps \cite{DuyckaertsKenigMerle2013Camb,Cote2015CPAMsolitonResol,JiaKenig2017AJMwaveSolResol,DJKM2017GaFASolresolSequence,DuyckaertsKenigMerle2023Acta,DKMM2022CMP,jendrejLawrie2025JAMS,CDKM2024vietnam,JendrejLawrie2023AnnPDESolResol}.
For damped Klein-Gordon equations, soliton resolution for global solutions
was established in \cite{BurqRaugelSchlag2017ASENS,CMY2021ARMA,Ishizuka2025NonAnal}.
For Schrödinger-type equations, soliton resolution was established
for the equivariant self-dual Chern--Simon--Schrödinger equation
(CSS) in \cite{KimKwonOh2025AJM}. In the context of parabolic
equations, soliton resolution has been studied by several authors
for the harmonic map heat flow \cite{JendrejLawrie2023CVPDEsolResolHMHF,JLS2025ForumPi}
(or references therein) and the energy-critical semilinear heat equation
\cite{Aryan2024solResolHeat}. Among others, the authors in \cite{JLS2025ForumPi}
proved a version of continuous-in-time soliton resolution without
symmetry. As seen from the list, most results in non-integrable models
were achieved under symmetry constraints, excluding moving solitons.

\vspace{5bp}
\noindent \textbf{Gauge transform.}

\eqref{CMdnls} has an extra structure, so-called \emph{gauge transform}.
This property is shared with DNLS. Define the gauge transform 
\begin{align*}
	v(t,x)=\mathcal{G}(u)(t,x):=-u(t,x)e^{-\frac{i}{2}\int_{-\infty}^{x}|u(t,y)|^{2}dy}.
\end{align*}
Then, new variable $v(t,x)$ solves 
\begin{align}
	\begin{cases}
		i\partial_{t}v+\partial_{xx}v+|D|(|v|^{2})v-\frac{1}{4}|v|^{4}v=0,\quad(t,x)\in\mathbb{R}\times\mathbb{R}\\
		v(0)=v_{0}.
	\end{cases} & \tag{\ensuremath{\mathcal{G}}-CM}\label{CMdnls-gauged}
\end{align}
\eqref{CMdnls-gauged} is a gauge transformed Calogero-Moser derivative
NLS. All symmetries are transferred accordingly. The conservation
laws of energy, mass, and momentum are given by 
\begin{align}
	\begin{gathered}E(v)=\frac{1}{2}\int_{\mathbb{R}}\left|\partial_{x}v+\frac{1}{2}\mathcal{H}(|v|^{2})v\right|^{2}dx,\\
		M(v)=\int_{\mathbb{R}}|v|^{2}dx,\quad P(v)=\int_{\mathbb{R}}\Im(\overline{v}\partial_{x}v)dx,
	\end{gathered}
	\label{eq:intro energy gauge}
\end{align}
where $\mathcal{H}$ is the Hilbert transform. The virial identities are 
\begin{align}
	\begin{split}\frac{d}{dt}\int_{\bbR}|x|^{2}|v(t,x)|^{2}dx & =4\int_{\bbR}x\cdot\Im(\ol v\rd_{x}v)\,dx,\\
		\frac{d}{dt}\int_{\bbR}x\cdot\Im(\ol v\rd_{x}v)\,dx & =4E(v).
	\end{split}
	\label{eq:Virial identity}
\end{align}
It is noteworthy that \eqref{CMdnls-gauged} admits the Hamiltonian
formulation 
\begin{align*}
	\partial_{x}v=-i\nabla E(v),
\end{align*}
where $\nabla E(v)$ is a functional derivative with respect to $(f,g)_{r}=\Re\int f\ol g$.
In other words, $-i\nabla E(v)$ is a symplectic derivative with respect
to the standard symplectic form $\omega(f,g)=\Im\int f\ol g$. Moreover,
as $E(v)$ is a complete square, \eqref{CMdnls-gauged} is a self-dual
Hamiltonian equation. See more details in \cite{KimKimKwon2024arxiv}.
The static solution $\calR$ of \eqref{CMdnls} is transformed as
a static solution to \eqref{CMdnls-gauged} 
\begin{align*}
	Q(x)\coloneqq-\mathcal{G}(\mathcal{R})(x)=\frac{\sqrt{2}}{\sqrt{1+x^{2}}}\in H^{1}(\mathbb{R}),\quad M(Q)=2\pi,\quad E(Q)=0.
\end{align*}
Note that we chose the minus sign in the transform $v=-\mathcal{G}(u)$
to ensure that $Q$ is positive. $\mathcal{R}$ is not real-valued,
and $\Re\mathcal{R}(x)$ and $\Im\mathcal{R}(x)$ exhibit different
decays. However $Q$ is positive real-valued, which is a technical
benefit of working with \eqref{CMdnls-gauged}. In the main body of
analysis, we will prove soliton resolution for \eqref{CMdnls-gauged},
and then, using the gauge transform and its inverse, we will obtain
Theorem~\ref{thm:Soliton resolution}. 
\begin{thm}[Soliton resolution for \eqref{CMdnls-gauged}]
	\label{thm:Soliton resolution gauged} Let $v\in C_{t}H^{1}([0,T)\times\R)$
	be a solution to \eqref{CMdnls-gauged} with initial data $v_{0}\in H^{1}$,
	where $[0,T)$ is its maximal forward interval of existence.
	
	\noindent \textbf{(Finite-time blow-up solutions)} If $T<\infty$,
	there exist an integer $N\in\bbN$ with $1\leq N\leq\frac{M(v_{0})}{M(Q)}$,
	a time $0<\tau<T$, modulation parameters $(\lambda_{j}(t),\gamma_{j}(t),\allowbreak x_{j}(t)):C^{1}([\tau,T))\to\bbR^{+}\times\bbR/2\pi\bbZ\times\bbR$
	for $j=1,2,\cdots,N$, and asymptotic profile $z^{\ast}\in L^{2}$
	so that $v(t)$ admits the decomposition 
	\begin{align*}
		v(t)-\sum_{j=1}^{N}[Q]_{\lambda_{j}(t),\gamma_{j}(t),x_{j}(t)}\to z^{\ast}\text{ in }L^{2}\text{ as }t\to T,
	\end{align*}
	and satisfies the following properties: 
	\begin{itemize}
		\item (Asymptotic orthogonality, no bubble tree) For all $1\leq i\neq j\leq N$,
		\begin{align}
			\left|\frac{x_{i}(t)-x_{j}(t)}{\lambda_{i}(t)}\right|\to\infty\quad\text{as}\quad t\to T.\label{eq:asymptotic orthogonality gauged}
		\end{align}
		\item (Convergence of translation parameters) $\lim_{t\to T}x_{j}(t)\eqqcolon x_{j}(T)$
		exist for all $1\leq j \leq N$. 
		\item (Further information about $z^{\ast}$) We have $M(z^{\ast})=M(v_{0})-N\cdot M(Q)$.
		We also have a further regularity, $\partial_{x}z^{\ast}\in L^{2}$.
		In addition, if $v\in H^{1,1}$, then we have $xz^{\ast}\in L^{2}$. 
		\item (Bound on the blow-up speed) We have $\|v(t)\|_{\dot{H}^{1}}\sim\max_{j}(\la_{j}(t)^{-1})$,
		and 
		\begin{align*}
			\lambda_{j}(t)\lesssim T-t\text{ as }t\to T & \text{ for all }1\le j\le N.
		\end{align*}
	\end{itemize}
	\textbf{(Global solutions)} If $T=\infty$ and $v\in H^{1,1}$, then
	either $v(t)$ scatters forward in time, 
	\begin{align*}
		\lim_{t\to\infty}\|v(t)-e^{it\partial_{xx}}v^{\ast}\|_{L^{2}}=0,
	\end{align*}
	or there exist an integer $N\in\bbN$ with $1\leq N\leq\frac{M(v_{0})}{M(Q)}$,
	a time $\tau>0$, modulation parameters $(\lambda_{j}(t),\gamma_{j}(t),c_{j}(t)):C^{1}([\tau,\infty))\to\bbR^{+}\times\bbR/2\pi\bbZ\times\bbR$
	for $j=1,2,\cdots,N$, and $v^{\ast}\in L^{2}$ so that $v(t)$ admits
	the decomposition 
	\begin{align}
		v(t)-\sum_{j=1}^{N}\textnormal{Gal}_{c_{j}(t)}([Q]_{\lambda_{j}(t),\gamma_{j}(t),0})-e^{it\partial_{xx}}v^{\ast}\to0\text{ in }L^{2}\text{ as }t\to\infty,\label{eq:soliton configu gauged}
	\end{align}
	and satisfies the following properties: 
	\begin{itemize}
		\item (Asymptotic orthogonality, no bubble tree) For all $1\leq i\neq j\leq N$,
		denoting the translation parameters $x_{j}(t)\coloneqq2tc_{j}(t)$
		of $\textnormal{Gal}_{c_{j}(t)}[Q]$, we have 
		\begin{align*}
			\left|\frac{x_{i}(t)-x_{j}(t)}{\lambda_{i}(t)}\right|\to\infty\quad\text{as}\quad t\to\infty.
		\end{align*}
		\item (Convergence of velocity) $\lim_{t\to\infty}c_{j}(t)\eqqcolon c_{j}(\infty)$
		exist for all $1\leq j\leq N$. 
		\item (Further information about $v^{\ast}$) We have $M(v^{\ast})=M(v_{0})-N\cdot M(Q)$.
		We also have a further regularity, $\partial_{x}v^{\ast},xv^{\ast}\in L^{2}$. 
		\item (Bound on the scale) We have $\|v(t)\|_{\dot{H}^{1}}\sim\max_{j}(\la_{j}(t)^{-1})$,
		and 
		\begin{align*}
			\lambda_{j}(t)\lesssim 1 \quad\text{for all}\quad 1\le j\le N.
		\end{align*}
	\end{itemize}
\end{thm}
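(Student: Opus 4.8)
The plan is to prove the resolution directly for \eqref{CMdnls-gauged} (where the profile $Q$ is real and positive) and to isolate a single \emph{bubble-extraction} step which is then iterated by induction on the mass. The structural input is self-duality: since $2E(v)=\bigl\|\partial_x v+\tfrac12\calH(|v|^2)v\bigr\|_{L^2}^2\ge0$, the finite-mass zero-energy states are exactly the orbit of $Q$ under scaling, phase and translation (the variational characterization recalled above), and, writing $F(v)=\partial_x v+\tfrac12\calH(|v|^2)v$ so that $E=\tfrac12\|F\|_{L^2}^2$ and $F(Q)=0$, the Hessian of $E$ at $Q$ factors as $E''(Q)=\calL_Q^{\ast}\calL_Q$ with $\calL_Q=F'(Q)$ a first-order (plus nonlocal lower-order) operator whose kernel is spanned by the symmetry directions $iQ$, $\partial_x Q$, $\Lambda Q$ (with $\Lambda\coloneqq\tfrac12+x\partial_x$), and, after a Galilean boost, $xQ$. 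The first move is a modulation step: whenever $v(t)$ is $L^2$-close modulo symmetries to a single bubble $[Q]_{\lambda,\gamma,x}$, write $v(t)=[Q]_{\lambda(t),\gamma(t),x(t)}+z(t)$ with $z(t)$ orthogonal to $\ker\calL_Q$ (and, in the global case, also to $xQ$), obtain $C^{1}$ modulation parameters, and derive their differential equations, bounding $|\dot\lambda/\lambda|+|\dot\gamma|+|\dot x/\lambda|$ by a suitable norm of $z$ plus lower-order errors.

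The heart of the matter is the \emph{energy bubbling} estimate: in the one-bubble regime above,
\[
E\bigl(v(t)-[Q]_{\lambda(t),\gamma(t),x(t)}\bigr)\ \lesssim\ E(v_{0})+(\text{modulation error}),
\]
equivalently, the self-dual energy density of $v$, namely $\partial_x z+\tfrac12\bigl(\calH(|v|^2)v-\calH(|[Q]|^2)[Q]\bigr)$, is, modulo controllable cross terms, essentially the function of square $L^2$-norm $2E(v)$; combined with the coercivity $\|\calL_Q w\|_{L^2}^2\gtrsim\|w\|_{\dot H^1}^2$ on the relevant orthogonal complement and with mass conservation, this produces a \emph{uniform-in-time} bound on the $\dot H^1$-energy of the radiation measured at its own scale. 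The key point is that this uses only the two conservation laws (mass and energy) and self-duality --- not integrability --- and the nonlocal term is harmless because it is $L^2$-subcritical. This is exactly what forbids the radiation from re-concentrating, and is what lets us bypass time-sequential extraction in favor of genuinely continuous-in-time convergence.

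With these ingredients the argument runs as follows. \emph{(First bubble.)} When $T<\infty$ one has $\|v(t)\|_{\dot H^1}\to\infty$; rescaling at $\lambda(t)\sim\|v(t)\|_{\dot H^1}^{-1}$ and combining an $L^2$-critical linear profile decomposition (including the Galilean group) with the energy-bubbling bound, one extracts a bubble and lowers the mass by $M(Q)$ in the limit. When $T=\infty$, under the $H^{1,1}$ hypothesis the pseudoconformal transform $\calC$ sends the behavior as $t\to\infty$ to concentration at the origin, turning translated bubbles into the Galilean-boosted bubbles $\textnormal{Gal}_{c_j}[Q]$ of \eqref{eq:soliton configu gauged}; either no bubble is produced and the solution scatters, or the same extraction applies. \emph{(Iteration and quantization.)} Apply the step to the remainder; each bubble carries asymptotic mass $M(Q)$ and masses add (Brezis--Lieb together with the asymptotic orthogonality), so the process terminates after $N\le M(v_0)/M(Q)$ steps with $M(z^{\ast})=M(v_0)-NM(Q)$ (and the analogous identity for $v^{\ast}$). \emph{(Continuous-in-time, no bubble tree, parameter convergence.)} Using the modulation equations together with a localized virial / localized momentum monotonicity, one shows the bubbles separate, $|x_i(t)-x_j(t)|/\lambda_i(t)\to\infty$, the centers $x_j(t)$ (resp.\ velocities $c_j(t)$) converge, and --- crucially via the energy-bubbling bound --- no mass flows back, so the decompositions hold as $t\to T$ (resp.\ $t\to\infty$), not merely along sequences. \emph{(Speed and regularity.)} The modulation equations, refined using the virial identity \eqref{eq:Virial identity} (which makes $t\mapsto\int|x|^2|v|^2$ a quadratic polynomial) and the self-dual structure, force $\lambda_j(t)\lesssim T-t$ (resp.\ $\lambda_j(t)\lesssim1$) and $\|v(t)\|_{\dot H^1}\sim\max_j\lambda_j(t)^{-1}$; the uniform $\dot H^1$ control of the radiation yields $\partial_x z^{\ast}\in L^2$, and \eqref{eq:Virial identity} propagates the weight to give $xz^{\ast}\in L^2$ when $v\in H^{1,1}$. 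Finally Theorem~\ref{thm:Soliton resolution} follows by applying the inverse gauge transform $\calG^{-1}$, which turns $Q$-bubbles into $\calR$-bubbles and, being $L^2$-bicontinuous, transports the convergence (the extra $\dot H^1$-regularity of the radiation is lost under $\calG^{-1}$ because the gauge involves primitives of $|u|^2$, which accounts for the weaker conclusion on $z^{\ast}$ there).

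The main obstacle is the energy-bubbling estimate and, more precisely, making it strong enough to run the entire argument continuously in time: one must control the \emph{energy} --- not merely the mass --- of the outer radiation after removing a bubble, in a regime with no size restriction and no symmetry, absorbing the modulation errors, the mutual interaction of several moving bubbles, and the nonlocal term, and obtaining a bound \emph{uniform in $t$}, so that no subsequence is needed and bubble trees are excluded.
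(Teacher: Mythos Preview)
Your outline captures the global architecture (self-duality, energy bubbling, iteration, pseudo-conformal for the global case), but the central estimate is misstated in a way that would make the argument either collapse to a single bubble or fail. You write that coercivity of $\calL_Q$ yields $\|\calL_Q w\|_{L^2}^2\gtrsim\|w\|_{\dot H^1}^2$ on the orthogonal complement, and hence a uniform-in-time $\dot H^1$ bound on the full radiation $z$. This is false here: because $Q$ has only $\langle x\rangle^{-1}$ decay, coercivity holds only in the adapted norm $\|w\|_{\dot{\mathcal H}^1}^2=\|\partial_x w\|_{L^2}^2+\|\langle x\rangle^{-1}w\|_{L^2}^2$, and the nonlinear energy bubbling inequality the paper actually proves is
\[
E(Q+\wt\eps)\ \gtrsim_M\ \|\wt\eps\|_{\dot{\mathcal H}^1_R}^2+E(\varphi_R\wt\eps),
\]
i.e.\ $\dot H^1$ control only of the \emph{inner} part $\chi_R\wt\eps$ plus \emph{energy} (not $\dot H^1$) control of the outer part $\varphi_R\wt\eps$. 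The outer $\dot H^1$ norm is \emph{not} controlled, and this is precisely what allows a second bubble to be present in $\varphi_R\wt\eps$. If you had the full $\dot H^1$ bound you claim, you would be in the CSS situation of \cite{KimKwonOh2022arXiv1} and only a single bubble would occur, contradicting the $N$-soliton examples. (Incidentally, $xQ\notin L^2$, so it is not a legitimate kernel element; the paper works with $\ker L_Q=\mathrm{span}_{\bbR}\{iQ,\Lambda Q,\partial_x Q\}$ only.)

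Because the bubbling estimate only controls $E(\varphi_R\wt\eps)$ and not $\|\varphi_R\wt\eps\|_{\dot H^1}$, the continuous-in-time upgrade is \emph{not} a consequence of the bubbling estimate alone as you suggest. The paper isolates this as a separate \emph{no-return property}: if $\liminf_{t\to T}\lambda_{k-1}(t)/\|\varphi_R\wt\eps_{k-1}(t)\|_{\dot H^1}=0$ then the limit itself is $0$. Its proof does not use modulation equations or localized virial/momentum monotonicity; it compares the \emph{exterior mass} $\int\varphi_r|v|^2$ along two time sequences, exploiting the elementary but crucial consequence of self-duality $\bigl|\partial_t\!\int\psi|v|^2\bigr|\lesssim\sqrt{E_0}\,\|\partial_x\psi\cdot v\|_{L^2}$, together with the already established convergence $x_j(t)\to x_j(T)$ for $j\le k-1$. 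The same exterior-mass device yields boundedness and convergence of $x_k(t)$ and the bound $\lambda_N\lesssim T-t$; the no-bubble-tree statement comes from the $\|Q\wt\eps\|_{L^2}$ piece of the bubbling estimate, not from virial considerations. Finally, the paper does not invoke a profile decomposition at all: bubbles are extracted one at a time via the variational tube-stability lemma applied to $\varphi_R\wt\eps_{k-1}$, which becomes legitimate precisely because $E(\varphi_R\wt\eps_{k-1})\ll\|\varphi_R\wt\eps_{k-1}\|_{\dot H^1}^2$ after the no-return upgrade. Your sketch would need to replace the overstrong coercivity by the truncated estimate and supply an independent no-return argument to go through.
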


\noindent \textbf{Comments on Theorem~\ref{thm:Soliton resolution} and~\ref{thm:Soliton resolution gauged}.}

\emph{1. Novelty and Method}. Our proof does not rely on
the complete integrability. To our knowledge, this is the first proof of soliton resolution for Schrödinger-type equations without radial symmetry or size constraints that does not use complete integrability techniques. We bypass time-sequential soliton
resolution and directly prove continuous-in-time soliton resolution.
The nonnegativity of energy is crucial. We believe our argument is
applicable to other models with nonnegative energy, such as, wave
or Schrödinger map, Chern-Simons-Schrödigner, and so on. A similar
idea was used in NLS under threshold condition by Merle \cite{Merle1993Duke}
or Dodson \cite{Dodson2024AnalPDEd1}. 

\emph{2. No bubble tree}. \eqref{eq:asymptotic orthogonality} and
\eqref{eq:asymptotic orthogonality gauged} indicate that there is
no bubble tree. i.e. any two bubbles maintain a distance larger than
the scales of both. We believe this is natural. If a bubble tree existed,
there would be a discontinuity in the soliton configuration in \eqref{CMdnls}
along with the gauge transform. See more detail in Remark~\ref{rem:phase correction}.
As a similar result, in 1D harmonic map heat flow, there is no finite
time bubble tree \cite{Hout2003JDE}. So far, finite-time bubble trees
are not yet constructed in any model, while there are several results
of infinite time bubble tree construction \cite{Topping2000,delPinoMussoWei2021AnalPDE,Jendrej2017AnalPDE,Jendrej2019AJM,JendrejLawrie2018Invent}.

\emph{3. Global solutions}. We prove finite-time blow-up cases first,
and then take the pseudo-conformal transform to obtain results for
global solutions. This is why we need to assume $u(t)\in H^{1,1}$
for global solutions, while $u(t)\in H^{1}$ suffices for finite-time
blow-up solutions. After taking the pseudo-conformal transform, the
translation parameter $x_{j}(t)$ becomes the velocity of the Galilean
boost, $\text{Gal}_{c_{j}(t)}$, and the scaling parameter becomes
$\la(t)\to\mathring{\la}(t)\coloneqq t\la(-t^{-1})\lesssim1$. This
results a multi-soliton configuration with moving solitons at constant
velocities.

For $H^{1,1}$-solutions, thanks to the pseudo-conformal transform,
the linear scattering of radiation part is easily obtained. In particular,
this adresses the subthreshold problem, i.e. when $M(u)<M(\mathcal{R})$
and $u\in H^{1,1}$, then $u(t)$ has to scatter. A remaining question
is whether $H^{1}$-global solutions may exhibit different dynamics
other than $H^{1,1}$-solutions. At current status, even for small
solution in $H^{1}$ or $H^{1}_{+}$, the (linear or modified) scattering is not known.
In view of results of other cubic equations in 1D, it is unclear whether
the linear scattering occurs for $H^{1}$-solutions.

We recall that in the $H^{k}_{+}$ setting, the global behavior of subthreshold solutions is known: if $M(u)<M(\mathcal{R})$ and $u_0 \in H^{k}_{+}$, then the solution is global and satisfies uniform-in-time bounds in $H^{k}_{+}$ for all $k\geq 1$ \cite{GerardLenzmann2022}. Moreover, under the subthreshold condition $M(u)<M(\mathcal{R})$ \cite{KLV2025CAMS} established the explicit formula and proved local well-posedness in $L^2_+$ (i.e., $k=0$). Nevertheless, scattering in this setting remains an open problem.

A multi-soliton example constructed by Gérard--Lenzmann \cite{GerardLenzmann2022}
does not belong to $H^{1,1}$ due to the slow decay of solitons. Still,
their examples meet our criteria in Theorem~\ref{thm:Soliton resolution}.
On the other hand, even if $v(t)\in H^{1,1}$, each soliton component does not belong to $H^{1,1}$ in the multi-soliton configuration \eqref{eq:soliton configu gauged}. Consequently, the radiation $\eps(t)\coloneqq v(t)-\sum_{j=1}^{N}\textnormal{Gal}_{c_{j}(t)}([Q]_{\lambda_{j}(t),\gamma_{j}(t),0})$ does not belong to $H^{1,1}$.
One might find this decomposition unsatisfactory. If one wants all
components to belong to $H^{1,1}$, then one can simply truncate tails
of solitons. More precisely, one can replace $[Q]_{\lambda_{j}(t),\gamma_{j}(t),0}$
with ($[Q]_{\lambda_{j}(t),\gamma_{j}(t),0})\chi_{t^{\delta}}$ such
that $\la_{j}(t)\ll t^{\delta}$.

\vspace{5bp}
\noindent \textbf{Outline of the proof.} 

As mentioned above, we prove Theorem~\ref{thm:Soliton resolution gauged}
for the finite-time blow-up solutions and then use the pseudo-conformal
transform to obtain result for global solutions case. And then we
use the gauge transform $\mathcal{G}$ to obtain Theorem~\ref{thm:Soliton resolution}.
Thus, in main analysis we consider a finite-time blow-up solution
to \eqref{CMdnls-gauged} $v(t)\in H^{1}$. 

The first ingredient is the variational characterization
of $Q$. In fact, $Q$ is the unique zero energy solution up to symmetries,
and thus also a static solution. Furthermore, this also tells us the
proximity of a small energy function to $Q$, as stated in Lemma~\ref{lem:Proximity}.
More specifically, if $\sqrt{E(v)}<\delta\norm v_{\dot{H}^{1}}$,
then $v=[Q+\wt{\eps}]_{\la,\ga,x}$ with $\norm{\wt{\eps}}_{\dot{H}^{1}}<\eta$.
This is due to the nonnegativity of the energy and achieved near blow-up
time. Also note that this is a stronger variational feature than that
of mass-critical NLS. This allows us to extract a soliton from $v(t)$. 

The second ingredient, one of our main novelty, is the \emph{energy
	bubbling} estimate. After the first decomposition with orthogonality
conditions on $\wt{\eps}$, we have the following improved estimate
(Lemma~\ref{lem:Nonlinear Coercivity}): 
\begin{align}
	E(Q+\wt{\eps})\gtrsim_{\|\wt{\eps}\|_{L^{2}}}\|\partial_{x}(\chi_{R}\wt{\eps})\|_{L^{2}}^{2}+\|Q\wt{\eps}\|_{L^{2}}^{2}+E((1-\chi_{R})\wt{\eps}),\label{eq:intro energy bubbling}
\end{align}
where $R>1$ is a large parameter depending on $\|\wt{\eps}\|_{L^{2}}$
and $\chi_{R}$ is a smooth cut off on $[-R,R]$. \eqref{eq:intro energy bubbling}
is motivated from a \emph{nonlinear coercivity estimate} for (CSS)
\cite{KimKwonOh2025AJM}. In (CSS), due to a special property of
its nonlinearity they have a stronger estimate on the outer radiation.
Then authors in \cite{KimKwonOh2025AJM} obtain soliton resolution
consisting of a single soliton. \eqref{eq:intro energy bubbling}
benefits from the nonnegativity of energy. More importantly, we will
take advantage of the energy conservation, which is located above
the critical scaling.

For blow-up solutions, we recall that $v=[Q+\wt{\eps}]_{\la,\ga,x}$, so that $E(Q+\wt{\eps})=\la^{2}E(v)$ with $\la(t)\to0$. Hence, the right-hand side of \eqref{eq:intro energy bubbling} is not just small but goes to zero. In particular, we have a quantitative
control of the inner part of radiation $\chi_{R}\wt{\eps}$. However,
in general, the outer part of radiation $\|(1-\chi_{R})\wt{\eps}\|_{\dot{H}^{1}}$
may not go to zero. Also, $M((1-\chi_{R})\wt{\eps})$ can be large.
Instead, we have good control of $E((1-\chi_{R})\wt{\eps})$. Indeed,
we will have a dichotomy: either 
\[
E((1-\chi_{R})\wt{\eps})\gtrsim\|(1-\chi_{R})\wt{\eps}\|_{\dot{H}^{1}}^{2}\quad\text{for all }t<T
\]
or 
\[
E((1-\chi_{R})\wt{\eps})\ll\|(1-\chi_{R})\wt{\eps}\|_{\dot{H}^{1}}^{2}\quad\text{for all }t<T.
\]
In fact, if the former is false, then the latter holds true sequentially
in time. However, we will prove the latter holds for all $t<T$. We
refer to this as the \emph{no-return property},~\ref{state:no-return property}
in Lemma~\ref{lem:Induction}. We prove~\ref{state:no-return property}
by observing the difference in exterior mass between sequences depending
on whether $E((1-\chi_{R})\wt{\eps})\ll\|(1-\chi_{R})\wt{\eps}\|_{\dot{H}^{1}}^{2}$
holds or not. Hence, extracting multi-soliton configuration benefits
from two levels of conservation laws, mass and energy. Thanks to no-return
property, we do not get through time-sequential soliton resolution,
but directly prove continuous-in-time soliton resolution. 

In the former case of the dichotomy, we have a good quantitative
estimate for $\|(1-\chi_{R})\wt{\eps}\|_{\dot{H}^{1}}$ and so we
can verify that $[\wt{\eps}]_{\la,\ga,x}$ converges to an asymptotic
profile. This ends the soliton decomposition. In the latter case,
we can reapply the decomposition to extract the second soliton from
$(1-\chi_{R})\wt{\eps}$ and arrive at the above dichotomy for the
outer radiation part. We can iterate this procedure. Since the mass
drops by $M(Q)$ at each step, it ends at finitely many steps. At
last, we arrive at the multi-soliton configuration: 
\[
v(t)=\sum_{i=1}^{N}[Q]_{\lambda_{i},\gamma_{i},x_{i}}(t)+\eps_{N}(t),\quad\|\eps_{N}(t)\|_{H^{1}}\lesssim1.
\]
The radiation $\eps_{N}(t)$ with a uniform bound $\|\eps_{N}(t)\|_{H^{1}}\lesssim1$
converges to an asymptotic profile $z^{*}$ in $L^{2}$. Along the
way, we also prove behaviors of the modulation parameters, $\la_{j}(t)\lesssim T-t$
and $\lim_{t\to T}x_{j}(t)=x_{j}(T)<\infty$. Finally, it is noteworthy
that we verify a nonradial version of \emph{no bubble-tree} condition,
Proposition~\ref{prop:no bubble tree}: 
\[
\frac{|x_{i}(t)-x_{j}(t)|}{\max(\la_{i}(t),\la_{j}(t))}\to\infty,\qquad\text{as }t\to T.
\]
This is a consequence of $\|Q\wt{\eps}(t)\|_{L^{2}}\lesssim E(Q+\wt{\eps})$
in \eqref{eq:intro energy bubbling}, which provides a quantitative
bound on the interaction between soliton and $\wt{\eps}(t)$. We are
not sure if such no-bubble tree property is a special feature of this
model. One can observe this no-bubble tree condition is consistent
with the gauge transform between \eqref{CMdnls} and \eqref{CMdnls-gauged} (Remark
\ref{rem:phase correction}).

\vspace{5bp}
\noindent \textbf{Organization of the paper.} 

In Section~\ref{sec:preliminaries}, we introduce notation
and preliminaries for our analysis. In Section~\ref{sec:bubbling},
we review a standard variational argument and prove the energy bubbling
estimate, which is a core proposition of our analysis. In Section~\ref{sec:multisoliton},
we prove the multi-soliton configuration via an induction argument.
In this step, we also show the no-return property and confirm that
the multi-soliton configuration holds true continuously in time. In
Section~\ref{sec:prooffinish}, we complete the proofs of main theorems
by applying the pseudo-conformal transform and the gauge transform.

\vspace{5bp}
\noindent \textbf{Acknowledgments.} 

The authors are partially supported by the National Research
Foundation of Korea, RS-2019-NR040050 and RS-2022-NR069873.
The authors appreciate Kihyun Kim for helpful comments on the first
manuscript.

\section{Notation and preliminaries}

\label{sec:preliminaries}

In this section, we collect notations and frequently used formulas.
For quantities $A\in\bbC$ and $B\geq0$, we write $A\lesssim B\Leftrightarrow A=O(B)$
if $|A|\leq CB$ holds for some implicit constant $C$. For $A,B\geq0$,
we write $A\sim B$ if $A\lesssim B$ and $B\lesssim A$. If $C$
depends on some parameters, then we write them as a subscript. For
$A=A(t)\in\bbC,B=B(t)>0$, we write $A=o_{t\to T}(B)$ if for any
$\epsilon>0$ there exists $\delta>0$ such that if $0<T-t<\delta$,
then $|A|\leq\epsilon B$. We also write simply $A\ll B$. When the
quantities are function of time, the estimates are uniform in time,
unless stated otherwise.

Denote a smooth cut-off function by $\chi_{R}=\chi(\frac{x}{R})$
where $\chi\in C^{\infty}$ with $\text{supp}\chi\in[-2,2]$ and $\chi\equiv1$
on $[-1,1]$. It is possible to choose $\chi$ such that $|\chi^{\prime}|^{2}\leq C_{\chi}\chi$
for some $C_{\chi}>0$. We will also frequently use the outer cut-off
\[
\varphi_{R}\coloneqq1-\chi_{R},
\]
and a truncation on the centers of solitons, $\Phi_{R}=\prod_{j=1}^{N}\varphi_{R}(\cdot-x_{j})$.
We also use the sharp cut-off on a set $A$ by ${\textbf 1}_{A}$. We
write the inhomogeneous weight by $\langle x\rangle\coloneqq(1+x^{2})^{\frac{1}{2}}$. We denote $|f|_{-1}$ by
\begin{align*}
	|f|_{-1}(x)\coloneqq \sup_{0\leq j \leq 1} |x|^{-j}|\partial_x^{1-j} f(x)|. 
\end{align*}

The Fourier transform (on $\bbR$) is denoted by 
\begin{align*}
	\mathcal{F}(f)(\xi)=\widehat{f}(\xi)\coloneqq\int_{\mathbb{R}}f(x)e^{-ix\xi}dx,
\end{align*}
with its inverse $\mathcal{F}^{-1}(f)(x)\coloneqq\tfrac{1}{2\pi}\int_{\R}\wt f(\xi)e^{ix\xi}d\xi$.
We denote $|D|$ by the Fourier multiplier operator with symbol $|\xi|$,
that is, $|D|\coloneqq\mathcal{F}^{-1}|\xi|\mathcal{F}$, and then
$|D|=\partial_{x}\mathcal{H}=\mathcal{H}\partial_{x}$ where $\mathcal{H}$
is the Hilbert transform: 
\begin{align*}
	\mathcal{H}f\coloneqq\left(\frac{1}{\pi}\text{p.v.}\frac{1}{x}\right)*f=\mathcal{F}^{-1}(-i\text{sgn}(\xi))\mathcal{F}f.
\end{align*}
We note that $[\calH,i]=0$. We denote by $\Pi_{+}$ the Cauchy--Szeg\H{o}
projection from $L^{2}(\mathbb{R})$ onto the Hardy space $L_{+}^{2}(\mathbb{R})=\{f\in L^{2}(\bbR):\mathrm{supp}\wt f\subset[0,\infty)\}$:
\begin{align*}
	\Pi_{+}f=\mathcal{F}^{-1}\textbf{1}_{\xi>0}\mathcal{F}f.
\end{align*}
Then, we have $\Pi_{+}=\frac{1}{2}(1+i\mathcal{H}).$

We use the real inner product $(\cdot,\cdot)_{r}$ given by $(f,g)_{r}=\int_{\R}\text{Re}(\overline{f}g)dx$.
For given modulation parameters, $(\la,\ga,x)\in\R_{+}\times\R/2\pi\Z\times\R$,
we denote a (inversely) modulated function by 
\begin{equation}
	[f]_{\la,\ga,x}\coloneqq\frac{e^{i\ga}}{\la^{1/2}}f\lr{\frac{\cdot-x}{\la}},\quad[g]_{\la,\ga,x}^{-1}\coloneqq e^{-i\ga}\la^{1/2}g\lr{\la\cdot+x},\label{eq:modulation notation}
\end{equation}
and we have $[[f]_{\la,\ga,x}]_{\la,\ga,x}^{-1}=[[f]_{\la,\ga,x}^{-1}]_{\la,\ga,x}=f$
and $[f]_{\la,\ga,x}^{-1}=[f]_{\la^{-1},-\ga,-\la^{-1}x}$. We also
note that 
\begin{align*}
	[\calH f]_{\la,\ga,x}=\calH[f]_{\la,\ga,x},\quad[\calG(f)]_{\la,\ga,x}=\calG([f]_{\la,\ga,x}).
\end{align*}

We have some algebraic identities with respect to $\calH$, 
\begin{align*}
	\begin{gathered}\mathcal{H}(Q^{2})=yQ^{2},\qquad|D|(Q^{2})=\partial_{y}\calH(Q^{2})=\tfrac{2(1-y^{2})}{(1+y^{2})^{2}}.\end{gathered}
\end{align*}

\begin{lem}[Formulas for Hilbert transform]
	\label{LemmaHilbertUsefulEquation} We have the following: 
\end{lem}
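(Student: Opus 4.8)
The plan is to derive every identity in the lemma from two elementary properties of $\calH$: it commutes with $\partial_{x}$, with dilations, with translations, and with complex conjugation (its convolution kernel $\tfrac{1}{\pi x}$ being real and odd); and it acts as the scalar $\mp i$ on the Hardy subspaces, i.e. $\calH f=-if$ when $\widehat f$ is supported in $[0,\infty)$ and $\calH f=+if$ when $\widehat f$ is supported in $(-\infty,0]$. Equivalently, $\Pi_{+}=\tfrac12(1+i\calH)$ and $\Pi_{-}=\tfrac12(1-i\calH)$ are complementary projections onto $L_{+}^{2}$ and $L_{-}^{2}$, and $\calH=-i(\Pi_{+}-\Pi_{-})$. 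The seed computation is
\[
\calH\!\Big(\frac{1}{x+ia}\Big)=-\,\frac{i}{x+ia},\qquad \calH\!\Big(\frac{1}{x-ia}\Big)=\frac{i}{x-ia}\qquad(a>0),
\]
valid because $(x+ia)^{-1}$ extends holomorphically and decays in the upper half-plane, hence lies in $L_{+}^{2}$, while its conjugate $(x-ia)^{-1}$ lies in $L_{-}^{2}$.

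For the explicit algebraic formulas built from $Q$ (and, via the gauge relation, from $\calR$) — such as $\calH(Q^{2})=xQ^{2}$, the value of $|D|(Q^{2})=\partial_{x}\calH(Q^{2})$, and the companion identities for $\tfrac{x}{1+x^{2}}$, $\tfrac{1}{(1+x^{2})^{2}}$, $\tfrac{x}{(1+x^{2})^{2}}$ and their modulates $[\,\cdot\,]_{\la,\ga,x}$ — I would expand each function in partial fractions, e.g. $Q^{2}=\tfrac{2}{1+x^{2}}=i\big((x+i)^{-1}-(x-i)^{-1}\big)$, apply the seed identity termwise, and reassemble. Higher-order denominators are handled either by the same partial-fraction expansion or by differentiating a lower-order identity and using $[\partial_{x},\calH]=0$, $[\partial_{x},|D|]=0$. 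The commutation $\calH[f]_{\la,\ga,x}=[\calH f]_{\la,\ga,x}$, already recorded above, reduces every rescaled statement to the model case $\la=1$, $\ga=0$, $x=0$.

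For any bilinear relations of Cotlar/Tricomi type — for instance $\calH(f\,\calH g+g\,\calH f)=\calH f\,\calH g-fg$, the algebra property $\Pi_{+}(\Pi_{+}f\cdot\Pi_{+}g)=\Pi_{+}f\cdot\Pi_{+}g$, and the mixed-projection identities that appear in the energy and virial computations — the strategy is to split each factor into its frequency halves and use that a product of two positive- (resp. negative-) frequency functions again has spectrum in $[0,\infty)$ (resp. $(-\infty,0]$), so that $\Pi_{-}(\Pi_{+}f\cdot\Pi_{+}g)=0$ and $\Pi_{+}(\Pi_{-}f\cdot\Pi_{-}g)=0$; substituting $\calH=-i(\Pi_{+}-\Pi_{-})$ and collecting terms converts these vanishing statements into the claimed identities after routine algebra.

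The one point that requires genuine care — and essentially the only obstacle — is integrability: a product of two $L^{2}$ functions need not lie in $L^{2}$, so before projecting or applying $\calH$ one must know the relevant products lie in a space on which $\Pi_{\pm}$ act and that the principal-value integrals converge well enough to justify the manipulations. For the concrete functions coming from $Q$ and $\calR$ this is immediate from their explicit polynomial decay ($Q^{2}f\in L^{2}$ whenever $f\in L^{2}$, and $xQ^{2}\in L^{2}$ since it decays like $x^{-1}$ at infinity), and for the abstract bilinear identities it follows from $L^{p}$-boundedness of $\calH$ and $\Pi_{\pm}$ together with a density argument; I expect no conceptual difficulty beyond this bookkeeping.
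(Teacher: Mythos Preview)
The paper does not actually prove this lemma; it records that the identities are standard and defers to the appendix of \cite{KimKimKwon2024arxiv}. So there is no argument to compare against, only the three stated items: (1) the product rule $fg=\calH f\cdot\calH g-\calH(f\calH g+\calH f\cdot g)$; (2) the commutator $[x,\calH]f=\frac{1}{\pi}\int f$; (3) its consequence $\partial_x[x,\calH]f=[x,\calH]\partial_x f=0$.

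Your plan correctly handles item (1): splitting $f=\Pi_+f+\Pi_-f$ and using $\Pi_-(\Pi_+f\cdot\Pi_+g)=0$ together with its conjugate is exactly the standard derivation of the Cotlar--Tricomi identity, and your remark on integrability (the hypothesis $f,g\in H^{1/2+}\hookrightarrow L^\infty$ keeps the products in $L^2$) is the right justification. The paragraph on $\calH(Q^2)=yQ^2$ and its relatives is not part of this lemma --- those identities are stated separately just above it --- but your partial-fraction argument for them is correct.

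What your proposal does not touch are items (2) and (3), and the tools you list do not immediately deliver them. The commutator $[x,\calH]$ is a rank-one operator, and the cleanest route is the one-line kernel computation
\[
x\,\calH f(x)-\calH(xf)(x)=\frac{1}{\pi}\,\mathrm{p.v.}\!\int\frac{(x-y)f(y)}{x-y}\,dy=\frac{1}{\pi}\int f(y)\,dy,
\]
which is absolutely convergent once $f\in\langle x\rangle^{-1}L^2\subset L^1$. Equivalently, on the Fourier side $[x,\calH]$ becomes $[i\partial_\xi,-i\,\sgn\xi]=2\delta_0(\xi)$, giving the same constant. Item (3) is then immediate: $[x,\calH]f$ is independent of $x$, so its $x$-derivative vanishes, and $[x,\calH]\partial_x f=\frac{1}{\pi}\int\partial_x f=0$ since $\partial_x f\in\langle x\rangle^{-1}L^2\subset L^1$. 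Neither computation fits naturally into the Hardy-projection/seed-function framework you describe, so you should add one of them explicitly.
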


\begin{enumerate}
	\item For $f,g\in H^{\frac{1}{2}+}$, we have 
	\begin{align}
		fg=\mathcal{H}f\cdot\mathcal{H}g-\mathcal{H}(f\cdot\mathcal{H}g+\mathcal{H}f\cdot g)\label{eq:HilbertProductRule}
	\end{align}
	in a pointwise sense. 
	\item For $f\in\langle x\rangle^{-1}L^{2}$, we have 
	\begin{align}
		[x,\mathcal{H}]f(x)=\frac{1}{\pi}\int_{\mathbb{R}}f(y)dy.\label{eq:CommuteHilbert}
	\end{align}
	\item If $f\in H^{1}$ and $\partial_{x}f\in\langle x\rangle^{-1}L^{2}$,
	then we have 
	\begin{align}
		\partial_{x}[x,\mathcal{H}]f=[x,\mathcal{H}]\partial_{x}f=0,\quad\text{i.e.,}\quad[\mathcal{H}\partial_{x},x]f=\mathcal{H}f.\label{eq:CommuteHilbertDerivative}
	\end{align}
\end{enumerate}
Lemma~\ref{LemmaHilbertUsefulEquation} is fairly standard. For the
proof, see \cite{KimKimKwon2024arxiv} Appendix. Next, we state a
useful lemma coming from the nonnegativity of energy. A similar idea
was first used in \cite{Merle1993Duke} for the threshold dynamics
of NLS. 
\begin{lem}[Nonnegativity of energy]
	Let $\psi\in C^{1}\cap L^{\infty}$, $\partial_{x}\psi\in L^{\infty}$
	be real-valued. Then, for a $H^{1}$-solution $v$ to \eqref{CMdnls-gauged},
	we have 
	\begin{align}
		\bigg|\partial_{t}{\int_{\bbR}}\psi|v|^{2}dx\bigg|\lesssim\sqrt{E(v_{0})}\|\partial_{x}\psi\cdot v\|_{L^{2}}.\label{eq:nonnegative energy}
	\end{align}
\end{lem}

\begin{proof}
	We use \eqref{CMdnls-gauged} to compute the left hand side, 
	\begin{align*}
		\partial_{t}{\int_{\bbR}}\psi|v|^{2}dx=2\Re{\int_{\bbR}}\psi\partial_{t}v\ol vdx=-2\Re{\int_{\bbR}}(\partial_{x}\psi)(i\ol v\partial_{x}v)dx.
	\end{align*}
	We use the nonnegativity of energy $E(e^{ia\psi}v)\geq0$ for $a\psi\in\R$
	to estimate 
	\begin{align*}
		E(e^{ia\psi}v) & =\tfrac{1}{2}{\int_{\bbR}}\bigg|ia(\partial_{x}\psi)v+\partial_{x}v+\frac{1}{2}\mathcal{H}(|v|^{2})v\bigg|^{2}dx\\
		& =E(v)+a(i(\partial_{x}\psi)v,\partial_{x}v+\tfrac{1}{2}\mathcal{H}(|v|^{2})v)_{r}+\tfrac{a^{2}}{4}\|(\partial_{x}\psi)v\|_{L^{2}}^{2}\\
		& =E(v)+a(i(\partial_{x}\psi)v,\partial_{x}v)_{r}+\tfrac{a^{2}}{4}\|(\partial_{x}\psi)v\|_{L^{2}}^{2}.
	\end{align*}
	Then, the discriminant inequality gives 
	\begin{align*}
		\bigg|\Re{\int_{\bbR}}(\partial_{x}\psi)(i\ol v\partial_{x}v)dx\bigg|\leq\sqrt{2E_{0}}\|\partial_{x}\psi\cdot v\|_{L^{2}}.
	\end{align*}
\end{proof}

\subsection{Linearization of \texorpdfstring{\eqref{CMdnls-gauged}}{G-CM}}

As mentioned earlier we will proceed all the analysis for the gauged
equation \eqref{CMdnls-gauged}. Here, we review the linearization
of \eqref{CMdnls-gauged} around $Q$. All the material here was investigated
in \cite{KimKimKwon2024arxiv}, where more details can be found.

The form of energy in \eqref{eq:intro energy gauge} represents self-duality.
Introducing the operator 
\begin{align*}
	\mathbf{D}_{v}f\coloneqq\partial_{x}f+\frac{1}{2}\mathcal{H}(|v|^{2})f,
\end{align*}
the energy \eqref{eq:intro energy gauge} can be rewritten as 
\begin{align*}
	E(v)=\frac{1}{2}\int_{\mathbb{R}}\left|\mathbf{D}_{v}v\right|^{2}dx.
\end{align*}
In analogy with \cite{Bogomolnyi1976}, we call the nonlinear operator
$v\mapsto\mathbf{D}_{v}v$ the \textit{Bogomol'nyi operator}, and
the soliton $Q$ solves the Bogomol'nyi equation $\bfD_{Q}Q=0$. Similarly
to $\mathcal{R}$, $Q$ is the unique solution to the Bogomol'nyi
equation up to symmetries. We first linearize the Bogomol'nyi operator
$v\mapsto\mathbf{D}_{v}v$. We write 
\begin{align*}
	\mathbf{D}_{v+\eps}(v+\eps)=\mathbf{D}_{v}v+L_{v}\eps+N_{v}(\eps),
\end{align*}
where the linearized operator $L_{v}$ and the nonlinear part $N_{v}(\eps)$
are given by 
\begin{align*}
	L_{v}\eps & \coloneqq\partial_{x}\eps+\tfrac{1}{2}\mathcal{H}(|v|^{2})\eps+v\mathcal{H}(\text{Re}(\overline{v}\eps)),\\
	N_{v}(\eps) & \coloneqq\eps\mathcal{H}(\Re(\overline{v}\eps))+\tfrac{1}{2}(v+\eps)\mathcal{H}(|\eps|^{2}).
\end{align*}
The $L^{2}$-adjoint operator $L_{v}^{*}$ of $L_{v}$ with respect
to $(\cdot,\cdot)_{r}$ is given by 
\begin{align*}
	L_{v}^{*}\eps & \coloneqq-\partial_{x}\eps+\tfrac{1}{2}\mathcal{H}(|v|^{2})\eps-v\mathcal{H}(\text{Re}(\overline{v}\eps)),
\end{align*}
and using $L_{v}^{\ast}$ one can write $i\nabla E(v)=iL_{v}^{*}\bfD_{v}v$.
Now we linearize \eqref{CMdnls-gauged} as 
\begin{align*}
	iL_{w+\eps}^{*}\bfD_{w+\eps}(w+\eps)=iL_{w}^{*}\bfD_{w}w+i\calL_{w}\eps+R_{w}(\eps)
\end{align*}
where $i\calL_{w}\eps$ is the linear part, and $R_{w}(\eps)$ is
the nonlinear part. If one linearize at $w=Q$, then using $\bfD_{Q}Q=0$
one derive the self-dual factorization 
\begin{align*}
	i\calL_{Q}=iL_{Q}^{*}L_{Q}.
\end{align*}
We will modulate out the kernel directions of $i\mathcal{L}_{Q}$
and obtain the coercivity of the orthogonal part. For this purpose,
we recall kernel information and the coercivity estimate from \cite{KimKimKwon2024arxiv}:
\[
\textnormal{ker}\,L_{Q}=\textnormal{ker}\,\mathcal{L}_{Q}=\textnormal{span}_{\mathbb{R}}\{iQ,\Lambda Q,\partial_{x}Q\},
\]
where $\Lambda$ is the $L^{2}$-scaling generator, $\Lambda f:=\frac{f}{2}+x\partial_{x}f.$

Note that each kernel element is a generator of symmetry, phase rotation,
scaling, and space translation. For the coercivity estimate, due to
a degeneracy of $L_{Q}$, we need to use an adapted Sobolev space
\emph{$\dot{\mathcal{H}}^{1}$ }defined by a norm;\emph{ 
	\[
	\|f\|_{\dot{\mathcal{H}}^{1}}^{2}\coloneqq\|\partial_{x}f\|_{L^{2}}^{2}+\left\Vert \langle x\rangle^{-1}f\right\Vert _{L^{2}}^{2}.
	\]
}Note that $\dot{\mathcal{H}}^{1}\subset\dot{H}^{1}$ with $\norm f_{\dot{H}^{1}}=\norm{\partial_{x}f}_{L^{2}}$.
In this section, we use notation of truncated kernel elements, ${\calZ}_{1}=\Lambda Q\chi$,
$\mathcal{Z}_{2}=iQ\chi$ , $\mathcal{Z}_{3}=\partial_{x}Q\chi$. 
\begin{lem}[Coercivity for $L_{Q}$ on $\dot{\mathcal{H}}^{1}$, \cite{KimKimKwon2024arxiv}]
	\label{lem:Coercivity} We have a coercivity estimate 
	\begin{align*}
		\|v\|_{\dot{\mathcal{H}}^{1}}\sim\|L_{Q}v\|_{L^{2}},\quad\forall v\in\dot{\mathcal{H}}^{1}\cap\{\calZ_{1},\calZ_{2},\calZ_{3}\}^{\perp}.
	\end{align*}
	\emph{For later use, we will denote a truncated version of adapted
		Sobolev space, $\dot{\mathcal{H}}_{R}^{1}$ by a norm 
		\begin{equation}
			\|f\|_{\dot{\mathcal{H}}_{R}^{1}}^{2}\coloneqq\|\partial_{x}(\chi_{R}f)\|_{L^{2}}^{2}+\left\Vert \langle x\rangle^{-1}f\right\Vert _{L^{2}}^{2}.\label{eq:truncated H^1 norm}
		\end{equation}
		Note that the second term $\norm{\jp x^{-1}f}_{L^{2}}=\norm{\tfrac{1}{\sqrt{2}}Qf}_{L^{2}}$
		is not truncated by $R$. It covers global interaction of $Q$ and
		$f$. } 
\end{lem}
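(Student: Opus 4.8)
The plan is to prove the nontrivial direction $\|v\|_{\dot{\mathcal{H}}^{1}}\lesssim\|L_{Q}v\|_{L^{2}}$ on $\{\calZ_{1},\calZ_{2},\calZ_{3}\}^{\perp}$; the reverse bound is immediate from $\|\partial_{x}v\|_{L^{2}}\leq\|v\|_{\dot{\mathcal{H}}^{1}}$, from $|\tfrac{1}{2}\calH(Q^{2})|=\tfrac{|x|}{1+x^{2}}\leq\langle x\rangle^{-1}$, and from $\|Q\calH(Q\Re v)\|_{L^{2}}\lesssim\|Q\|_{L^{\infty}}\|Qv\|_{L^{2}}\lesssim\|\langle x\rangle^{-1}v\|_{L^{2}}$. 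The key structural observation is that both $Q$ and $W:=\tfrac{1}{2}\calH(Q^{2})=\tfrac{x}{1+x^{2}}=(\log\langle x\rangle)'$ are real, so writing $v=v_{1}+iv_{2}$ with $v_{1},v_{2}$ real the operator splits:
\[
\Re(L_{Q}v)=(\partial_{x}+W)v_{1}+Q\calH(Qv_{1}),\qquad\Im(L_{Q}v)=(\partial_{x}+W)v_{2}.
\]
Hence $\|L_{Q}v\|_{L^{2}}^{2}=\|(\partial_{x}+W)v_{2}\|_{L^{2}}^{2}+\|(\partial_{x}+W)v_{1}+Q\calH(Qv_{1})\|_{L^{2}}^{2}$ and $\|v\|_{\dot{\mathcal{H}}^{1}}^{2}=\|v_{1}\|_{\dot{\mathcal{H}}^{1}}^{2}+\|v_{2}\|_{\dot{\mathcal{H}}^{1}}^{2}$, while the three conditions $v\perp\calZ_{i}$ unpack into $\int v_{2}\langle x\rangle^{-1}\chi=0$ (from $\calZ_{2}=iQ\chi$) and $\int v_{1}\Lambda Q\,\chi=\int v_{1}\partial_{x}Q\,\chi=0$ (from $\calZ_{1}=\Lambda Q\chi$, $\calZ_{3}=\partial_{x}Q\chi$). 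So it suffices to prove coercivity separately on the ``imaginary channel'' $\partial_{x}+W$ and the ``real channel'' $(\partial_{x}+W)+Q\calH(Q\,\cdot)$.

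For the imaginary channel, integration by parts (justified first for Schwartz $v_{2}$ and then by density, the endpoint terms vanishing since $v_{2}(x)/\langle x\rangle^{1/2}\to0$ for $v_{2}\in\dot{\mathcal{H}}^{1}$) gives the exact identity
\[
\|(\partial_{x}+W)v_{2}\|_{L^{2}}^{2}=\|\partial_{x}v_{2}\|_{L^{2}}^{2}+\int_{\bbR}(W^{2}-W')v_{2}^{2}\,dx=\|v_{2}\|_{\dot{\mathcal{H}}^{1}}^{2}+\int_{\bbR}\frac{x^{2}-2}{(1+x^{2})^{2}}v_{2}^{2}\,dx,
\]
using $W^{2}-W'=\frac{2x^{2}-1}{(1+x^{2})^{2}}$ and $\langle x\rangle^{-2}=\frac{1+x^{2}}{(1+x^{2})^{2}}$. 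Since $\frac{x^{2}-2}{(1+x^{2})^{2}}\geq-2\cdot\mathbf{1}_{|x|\leq\sqrt2}$, this is a subcoercivity estimate $\|(\partial_{x}+W)v_{2}\|_{L^{2}}^{2}\geq\|v_{2}\|_{\dot{\mathcal{H}}^{1}}^{2}-2\|v_{2}\|_{L^{2}(|x|\leq\sqrt2)}^{2}$. Moreover $(\partial_{x}+W)f=\langle x\rangle^{-1}\partial_{x}(\langle x\rangle f)$, so $\ker(\partial_{x}+W)\cap\dot{\mathcal{H}}^{1}=\bbR\langle x\rangle^{-1}=\bbR Q$ is one-dimensional, and it is not orthogonal to $\langle x\rangle^{-1}\chi$ since $\int\langle x\rangle^{-2}\chi>0$.

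For the real channel the extra term is $Q\calH(Qv_{1})=2\langle x\rangle^{-1}\calH(\langle x\rangle^{-1}v_{1})$, and the operator $Cv_{1}:=2\langle x\rangle^{-1}\calH(\langle x\rangle^{-1}v_{1})$ is compact from $\dot{\mathcal{H}}^{1}$ to $L^{2}$: for $v_{1}\in\dot{\mathcal{H}}^{1}$ one has $\langle x\rangle^{-1}v_{1}\in H^{1}$ with norm $\lesssim\|v_{1}\|_{\dot{\mathcal{H}}^{1}}$, hence $Cv_{1}\in H^{1}$ with $\|\langle x\rangle Cv_{1}\|_{L^{2}}=2\|\calH(\langle x\rangle^{-1}v_{1})\|_{L^{2}}\lesssim\|v_{1}\|_{\dot{\mathcal{H}}^{1}}$, and $H^{1}$-bounded families with uniformly decaying $\langle x\rangle$-weighted $L^{2}$ tails are $L^{2}$-precompact. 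A compact perturbation preserves subcoercivity: compactness of $C$ yields, for each $\eps>0$, a constant $C_{\eps}$ and radius $A_{\eps}$ with $\|Cv_{1}\|_{L^{2}}\leq\eps\|v_{1}\|_{\dot{\mathcal{H}}^{1}}+C_{\eps}\|v_{1}\|_{L^{2}(|x|\leq A_{\eps})}$ (otherwise a weakly null sequence in $\dot{\mathcal{H}}^{1}$ with $\|Cv_{1,n}\|_{L^{2}}>\eps$ contradicts compactness), so together with the imaginary-channel bound applied to $v_{1}$ and $\eps$ small one gets $\|(\partial_{x}+W)v_{1}+Q\calH(Qv_{1})\|_{L^{2}}\gtrsim\|v_{1}\|_{\dot{\mathcal{H}}^{1}}-C\|v_{1}\|_{L^{2}(|x|\leq A)}$ for a fixed $A$. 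Finally the kernel of $v_{1}\mapsto(\partial_{x}+W)v_{1}+Q\calH(Qv_{1})$ in $\dot{\mathcal{H}}^{1}$ is exactly the real part of $\ker L_{Q}$, namely $\mathrm{span}\{\Lambda Q,\partial_{x}Q\}$ by the splitting and the known identity $\ker L_{Q}=\mathrm{span}_{\bbR}\{iQ,\Lambda Q,\partial_{x}Q\}$; since $\Lambda Q$ is even and $\partial_{x}Q$ is odd, the $2\times2$ Gram-type matrix $\big(\int\phi_{i}\phi_{j}\chi\big)$ with $\phi\in\{\Lambda Q,\partial_{x}Q\}$ is diagonal with positive entries, hence invertible, and likewise the full $3\times3$ pairing matrix between $\{iQ,\Lambda Q,\partial_{x}Q\}$ and $\{\calZ_{1},\calZ_{2},\calZ_{3}\}$ is invertible by the same parity/reality considerations.

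It remains to upgrade subcoercivity to coercivity, which I would do by the standard weak-compactness contradiction argument: if coercivity failed on the imaginary channel, there would be $v_{2,n}$ with $\|v_{2,n}\|_{\dot{\mathcal{H}}^{1}}=1$, $\|(\partial_{x}+W)v_{2,n}\|_{L^{2}}\to0$, and $\int v_{2,n}\langle x\rangle^{-1}\chi=0$; subcoercivity forces $\liminf_{n}\|v_{2,n}\|_{L^{2}(|x|\leq\sqrt2)}>0$; extracting a weak limit $v_{2,\infty}$ in $\dot{\mathcal{H}}^{1}$ (which converges strongly in $L^{2}_{\mathrm{loc}}$ by Rellich) gives $v_{2,\infty}\neq0$, $(\partial_{x}+W)v_{2,\infty}=0$ so $v_{2,\infty}\in\bbR Q$, and $v_{2,\infty}\perp\langle x\rangle^{-1}\chi$, a contradiction; the real channel is identical with the $2$-dimensional kernel and its non-degenerate pairing. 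Adding the two channel estimates gives $\|v\|_{\dot{\mathcal{H}}^{1}}\lesssim\|L_{Q}v\|_{L^{2}}$ on $\{\calZ_{1},\calZ_{2},\calZ_{3}\}^{\perp}$. The main obstacle is the real channel: showing that the nonlocal term $Q\calH(Q\,\cdot)$ is a genuinely \emph{compact} (not merely lower-order) perturbation on the degenerate space $\dot{\mathcal{H}}^{1}$, and keeping careful track of $\dot{\mathcal{H}}^{1}$ versus $L^{2}_{\mathrm{loc}}$ norms so that the local errors are absorbed only at the contradiction step; the imaginary channel, by contrast, is just the elementary Poincar\'e-type identity above.
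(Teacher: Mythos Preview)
The paper does not prove this lemma; it is quoted from \cite{KimKimKwon2024arxiv} (the paper only invokes the ``subcoercivity of $L_{Q}$, \cite{KimKimKwon2024arxiv}, Lemma A.3'' in the appendix). Your argument is the standard one and is essentially what the cited reference does: establish a subcoercivity estimate, identify the kernel, and upgrade to coercivity by a weak-compactness contradiction under the transversality of $\{\calZ_{1},\calZ_{2},\calZ_{3}\}$ with $\ker L_{Q}$.

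Two small remarks. First, your density justification for the imaginary-channel identity is the right way to go; the parenthetical about $v_{2}(x)/\langle x\rangle^{1/2}\to0$ is not obviously a consequence of $v_{2}\in\dot{\mathcal{H}}^{1}$ alone, but it is unnecessary once you note that both sides of the identity are continuous quadratic forms on $\dot{\mathcal{H}}^{1}$ (since $|W^{2}-W'|\lesssim\langle x\rangle^{-2}$) and agree on Schwartz functions. Second, the decoupling into real and imaginary channels is a nice simplification that makes the imaginary channel completely explicit (a one-line Hardy-type identity) and isolates all the nonlocal difficulty in the real channel; this is cleaner than treating $L_{Q}$ as a single $\bbR$-linear operator, though the net input (kernel characterization plus compactness of the lower-order nonlocal piece) is the same.
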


\section{Decomposition and energy bubbling}

\label{sec:bubbling}

In this section, we take a preliminary decomposition (bubbling) of
a blow-up solution. If a solution $v(t)$ blows up in finite time,
we can decompose the solution $v(t)$ into a modulated soliton and
radiation of the form $[Q+\wt{\eps}]_{\lambda,\gamma,x}$. This is due
to a variational characterization stating that $[Q]_{\la,\ga,x}$
is the unique nontrivial zero energy solution. A similar argument
was used in the context of the self-dual Chern--Simons--Schrödinger
equation (CSS) in \cite{KimKwonOh2025AJM}. The proof of this part
is fairly standard, so we state them in Lemma~\ref{lem:Proximity}
and~\ref{lem:Decomposition pre} and postpone the proof to the Appendix
\ref{sec:appendix variation}. This is the first step of bubbling
of the solution. However, for soliton resolution, we should be
able to extract a sequence of bubbles from the radiation part. In
this step, we will use a highly nontrivial \emph{energy bubbling estimate}
\eqref{eq:energy bubbling}. The estimate \eqref{eq:energy bubbling}
controls the interaction of $Q$ and the radiation part at each step
and the energy of the exterior part of the radiation. Hence, \eqref{eq:energy bubbling}
is crucial in the multi-bubble decomposition in the next section.
These results are summarized in the following proposition. 
\begin{prop}[One bubbling]
	\label{prop:Decomposition} Let $M\ge M(Q)$ be fixed. There exist
	small parameters $0<\alpha^{*}\ll\eta\ll R^{-1}\ll M^{-1}$ such that
	the following hold: For $v\in H^{1}$ satisfying 
	\[
	\|v\|_{L^{2}}\leq M,\qquad\sqrt{E(v)}\leq\alpha^{*}\|v\|_{\dot{H}^{1}},
	\]
	we have the decomposition as follows; 
	\begin{enumerate}
		\item (Decomposition) There exists unique continuous map $v\in H^{1}\mapsto(\lambda,\gamma,x,\wt{\eps})\in\bbR_{+}\times\bbR/2\pi\bbZ\times\bbR\times H^{1}$
		satisfying 
		\begin{align}
			v=[Q+\wt{\eps}]_{\lambda,\gamma,x},\quad(\wt{\eps},{\calZ}_{j})_{r}=0\text{ for }j=1,2,3,\label{eq:Decomposition and orthogonal}
		\end{align}
		and smallness 
		\begin{align*}
			\|\wt{\eps}\|_{\dot{\calH}^{1}}<\eta.
		\end{align*}
		Moreover, $v\mapsto(\lambda,\gamma,x)$ part is $C^{1}$. 
		\item (Estimate for $\lambda$) We have 
		\begin{align}
			\left|\frac{\|\partial_{x}v\|_{L^{2}}}{\|\partial_{x}Q\|_{L^{2}}}\lambda-1\right|\lesssim\|\partial_{x}\wt{\eps}\|_{L^{2}}.\label{eq:Decompose lambda estimate}
		\end{align}
		\item (Energy bubbling) We have 
		\begin{align}
			\|\wt{\eps}\|_{\dot{\calH}_{R}^{1}}^{2}+E(\varphi_{R}\wt{\eps})\lesssim_{M}\lambda^{2}E(v).\label{eq:Nolinear estimate truncated}
		\end{align}
	\end{enumerate}
\end{prop}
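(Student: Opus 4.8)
The plan is to split the proof into the three stated parts, where parts (1) and (2) are the standard variational/modulation step (deferred to the appendix as the paper indicates) and part (3), the energy bubbling estimate \eqref{eq:Nolinear estimate truncated}, is the genuinely new content. First I would invoke the variational characterization of $Q$ as the unique zero-energy solution (Lemma \ref{lem:Proximity}) to get, from the hypothesis $\sqrt{E(v)}\le\alpha^*\|v\|_{\dot H^1}$, the proximity $v=[Q+\wt\eps]_{\la,\ga,x}$ with $\|\wt\eps\|_{\dot{\calH}^1}$ small; then the implicit function theorem applied to the three orthogonality conditions $(\wt\eps,\calZ_j)_r=0$ fixes the modulation parameters uniquely and gives the $C^1$-regularity, which is Lemma \ref{lem:Decomposition pre}. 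Part (2) is then immediate: by scaling invariance of $\|\partial_x\cdot\|_{L^2}$ one has $\|\partial_x v\|_{L^2}=\la^{-1}\|\partial_x(Q+\wt\eps)\|_{L^2}$, and $\|\partial_x(Q+\wt\eps)\|_{L^2}=\|\partial_x Q\|_{L^2}+O(\|\partial_x\wt\eps\|_{L^2})$ gives \eqref{eq:Decompose lambda estimate} after rearranging.

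For part (3), by scaling we have $E(Q+\wt\eps)=\la^2 E(v)$, so \eqref{eq:Nolinear estimate truncated} is equivalent to the fixed-scale estimate $\|\wt\eps\|_{\dot{\calH}_R^1}^2+E(\varphi_R\wt\eps)\lesssim_M E(Q+\wt\eps)$. I would expand $E(Q+\wt\eps)=\tfrac12\|\bfD_{Q+\wt\eps}(Q+\wt\eps)\|_{L^2}^2$ using $\bfD_{Q+\wt\eps}(Q+\wt\eps)=\bfD_QQ+L_Q\wt\eps+N_Q(\wt\eps)=L_Q\wt\eps+N_Q(\wt\eps)$ since $\bfD_QQ=0$. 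Thus $E(Q+\wt\eps)=\tfrac12\|L_Q\wt\eps\|_{L^2}^2+(L_Q\wt\eps,N_Q(\wt\eps))_r+\tfrac12\|N_Q(\wt\eps)\|_{L^2}^2$. The coercivity Lemma \ref{lem:Coercivity}, valid on $\{\calZ_1,\calZ_2,\calZ_3\}^\perp$ which is exactly where $\wt\eps$ lives, gives $\|L_Q\wt\eps\|_{L^2}^2\gtrsim\|\wt\eps\|_{\dot{\calH}^1}^2\ge\|\wt\eps\|_{\dot{\calH}_R^1}^2$. The nonlinear terms $N_Q(\wt\eps)$ are cubic in $\wt\eps$ (or quadratic times $Q$), and since $\|\wt\eps\|_{\dot{\calH}^1}<\eta$ is small they can be absorbed, so the inner-part bound $\|\wt\eps\|_{\dot{\calH}_R^1}^2\lesssim_M E(Q+\wt\eps)$ follows. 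The delicate piece is the outer term $E(\varphi_R\wt\eps)$: here I would write $\bfD_{\varphi_R\wt\eps}(\varphi_R\wt\eps)=\varphi_R\,\bfD_{Q+\wt\eps}(Q+\wt\eps)+(\text{commutator and truncation errors})$, estimating the commutator $[\partial_x,\varphi_R]=\varphi_R'$ which is supported on $|x|\sim R$ and bounded by $R^{-1}$, the difference between $\mathcal{H}(|Q+\wt\eps|^2)$ and $\mathcal{H}(|\varphi_R\wt\eps|^2)$ which includes the soliton tail contribution $\sim R^{-1}$ on the support of $\varphi_R$ plus the cross terms, and using $\|Q\wt\eps\|_{L^2}$-type quantities which are themselves controlled by the left side. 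Carefully tracking these gives $E(\varphi_R\wt\eps)\lesssim E(Q+\wt\eps)+o_{R}(1)\|\wt\eps\|_{\dot{\calH}_R^1}^2$, and the last term is absorbed into the already-established inner bound (at the cost of enlarging the implicit constant and shrinking $\eta$, $R^{-1}$ relative to $M$, consistent with the ordering $\alpha^*\ll\eta\ll R^{-1}\ll M^{-1}$).

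The main obstacle I anticipate is precisely this outer-part estimate: unlike the inner part, $\varphi_R\wt\eps$ need not be small in $\dot H^1$ and need not be small in mass, so one cannot treat $N_Q(\varphi_R\wt\eps)$ perturbatively in the naive way; one must instead exploit that the nonnegative energy $E(\varphi_R\wt\eps)\ge0$ is a complete square and that the dangerous interaction between the soliton $Q$ and the far-away radiation is weighted by $\mathcal{H}(|v|^2)$ restricted to $|x|\gtrsim R$, which is $O(R^{-1})$ off the soliton core — together with the key observation that $\|Q\wt\eps\|_{L^2}$ (the un-truncated weighted piece of $\|\wt\eps\|_{\dot{\calH}_R^1}$) controls exactly the soliton–radiation overlap, so the "bad" cross terms close self-consistently. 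Making these cutoff error estimates rigorous, and checking that all constants depend only on $M$ (through the a priori mass bound $\|\wt\eps\|_{L^2}\le\|v\|_{L^2}+O(1)\le M+O(1)$), is the technical heart; the rest is bookkeeping with the linearization identities from Section \ref{sec:preliminaries}.
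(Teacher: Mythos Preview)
Your treatment of parts (1) and (2) is correct and matches the paper. The genuine gap is in part (3), specifically the inner-part argument. You assert that after invoking coercivity $\|L_Q\wt{\eps}\|_{L^2}^2\gtrsim\|\wt{\eps}\|_{\dot{\calH}^1}^2$, the nonlinear terms $N_Q(\wt{\eps})$ can be absorbed because $\|\wt{\eps}\|_{\dot{\calH}^1}<\eta$. This fails: only $\|\wt{\eps}\|_{\dot{\calH}^1}$ is small, while $\|\wt{\eps}\|_{L^2}$ can be of order $M$, so the pieces $\tfrac12 Q\calH(|\wt{\eps}|^2)$ and $\tfrac12\wt{\eps}\,\calH(|\wt{\eps}|^2)$ in $N_Q(\wt{\eps})$ are not small relative to $\|L_Q\wt{\eps}\|_{L^2}$. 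In fact your intermediate claim $E(Q+\wt{\eps})\gtrsim\|\wt{\eps}\|_{\dot{\calH}^1}^2$ is false: take $\wt{\eps}\approx[Q]_{\mu,0,L}$ with $\eta^{-1}\lesssim\mu\ll L$ (a second soliton placed far away, perturbed slightly to satisfy the orthogonality conditions), for which $E(Q+\wt{\eps})\sim L^{-2}$ while $\|\wt{\eps}\|_{\dot{\calH}^1}^2\sim\mu^{-2}\gg L^{-2}$. This is exactly the multi-bubble scenario the estimate must accommodate, and the whole reason the \emph{truncated} norm $\|\cdot\|_{\dot{\calH}_R^1}$ (which omits $\|\partial_y(\varphi_R\wt{\eps})\|_{L^2}$) appears in \eqref{eq:Nolinear estimate truncated} rather than the full $\dot{\calH}^1$-norm. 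You recognise the large-mass difficulty for the outer part but not that it feeds back into the inner estimate through the nonlocal term $\calH(|\wt{\eps}|^2)$.

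The paper's argument (Lemma~\ref{lem:Nonlinear Coercivity}) is structural rather than perturbative. One splits $\wt{\eps}=\chi_R\wt{\eps}+\varphi_R\wt{\eps}$ \emph{inside} $L_Q\wt{\eps}+N_Q(\wt{\eps})$ before applying coercivity, uses Lemma~\ref{lem:Coercivity} only on $L_Q(\chi_R\wt{\eps})$ (there the associated nonlinear piece $\chi_R\wt{\eps}\,\calH(|\wt{\eps}|^2)$ \emph{is} perturbative, since $\|\chi_R\wt{\eps}\|_{L^2}\lesssim R\|\wt{\eps}\|_{\dot{\calH}_R^1}$), and keeps the non-perturbative term $\tfrac12 Q\calH(|\wt{\eps}|^2)$ grouped with the outer piece $(\bfD_Q+\tfrac12\calH(|\wt{\eps}|^2))\varphi_R\wt{\eps}$. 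Crucially, one does not bound $E(\varphi_R\wt{\eps})$ from above as you propose, but reconstitutes the complete square $|\partial_y(\varphi_R\wt{\eps})+\tfrac12\calH(|\varphi_R\wt{\eps}|^2)\varphi_R\wt{\eps}|^2$ as a \emph{lower} bound from these retained pieces, using the product rule \eqref{eq:HilbertProductRule} and the identity $\calH(Q^2)=yQ^2$ to engineer exact cancellations; in particular $\|\tfrac12 Q\calH(|\wt{\eps}|^2)\|_{L^2}^2$, which your scheme would discard, is precisely what cancels a negative term $-\int Q^2[\calH(|\varphi_R\wt{\eps}|^2)]^2$ arising in the outer expansion. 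A final logarithmic average over $R$ removes a boundary remainder $\|{\bf 1}_{|y|\in[R,2R]}\langle y\rangle^{-1}\wt{\eps}\|_{L^2}^2$.
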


The proof of Proposition~\ref{prop:Decomposition} consists of three lemmas. First, tube stability (Lemma~\ref{lem:Proximity}) is
a consequence of the variational structure of $E(v)$. Next (Lemma~\ref{lem:Decomposition pre}), we decompose into $v=[Q+\wt\eps]_{\lambda,\gamma,x}$ with
appropriate orthogonality conditions on $\eps$, which is an application
of the implicit function theorem. The proofs of Lemma~\ref{lem:Proximity}
and Lemma~\ref{lem:Decomposition pre} are fairly standard and thus
postponed to the Appendix~\ref{sec:appendix variation}. 
\begin{lem}[Tube stability for small energy solutions]
	\label{lem:Proximity} For any $M>0$ and $\delta>0$, there exists
	$\alpha^{*}>0$ such that the following holds. For nonzero $v\in H^{1}$
	with $\|v\|_{L^{2}}\leq M$ and $\sqrt{E(v)}\leq\alpha^{*}\|v\|_{\dot{H}^{1}}$,
	there exist $\gamma\in\bbR/2\pi\bbZ$ and $x\in\bbR$ such that 
	\begin{align*}
		\|[v]_{\lambda,\gamma,x}^{-1}-Q\|_{\dot{\calH}^{1}}<\delta,
	\end{align*}
	where $\lambda\coloneqq\|Q\|_{\dot{H}^{1}}/\|v\|_{\dot{H}^{1}}$. 
\end{lem}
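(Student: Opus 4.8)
\textbf{Proof proposal for Lemma \ref{lem:Proximity}.}

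The plan is to argue by contradiction using a compactness/concentration argument combined with the variational characterization of $Q$ as the unique (up to symmetries) nontrivial zero-energy solution. Suppose the conclusion fails: then there exist $M>0$, $\delta_{0}>0$, and a sequence $v_{n}\in H^{1}$ with $\|v_{n}\|_{L^{2}}\leq M$, $v_{n}\neq 0$, and $\sqrt{E(v_{n})}\leq \tfrac{1}{n}\|v_{n}\|_{\dot H^{1}}$, such that for every $\gamma\in\bbR/2\pi\bbZ$ and $x\in\bbR$, with $\lambda_{n}=\|Q\|_{\dot H^{1}}/\|v_{n}\|_{\dot H^{1}}$, one has $\|[v_{n}]_{\lambda_{n},\gamma,x}^{-1}-Q\|_{\dot{\calH}^{1}}\geq\delta_{0}$. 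First I would rescale: set $w_{n}\coloneqq[v_{n}]_{\lambda_{n},0,0}^{-1}$, so that by the scaling invariance of $\dot H^{1}$ we have $\|w_{n}\|_{\dot H^{1}}=\|Q\|_{\dot H^{1}}$ for all $n$, while $E(w_{n})=\lambda_{n}^{2}E(v_{n})\to 0$ since $E(v_{n})\leq\tfrac{1}{n^{2}}\|v_{n}\|_{\dot H^{1}}^{2}$ gives $\lambda_{n}^{2}E(v_{n})\leq\tfrac{1}{n^{2}}\|Q\|_{\dot H^{1}}^{2}\to 0$. The failure assumption becomes: for all $(\gamma,x)$, $\|[w_{n}]_{1,\gamma,x}^{-1}-Q\|_{\dot{\calH}^{1}}\geq\delta_{0}$, i.e. $w_{n}$ stays uniformly away from the orbit $\{[Q]_{1,\gamma,x}\}$ in $\dot{\calH}^{1}$.

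Next I would extract a profile. The sequence $w_{n}$ is bounded in $\dot H^{1}$, but not obviously in $L^{2}$; however, the key point is that $E(w_{n})\to 0$ with $E$ a complete square forces $\bfD_{w_{n}}w_{n}=\partial_{x}w_{n}+\tfrac12\calH(|w_{n}|^{2})w_{n}\to 0$ in $L^{2}$. Using $\|w_{n}\|_{\dot H^{1}}$ fixed and the $L^{4}$ (hence $L^{2}\cdot L^{\infty}$-type) control via Gagliardo--Nirenberg/Sobolev, one controls $\|\calH(|w_{n}|^{2})w_{n}\|_{L^{2}}$ by a power of $\|w_{n}\|_{\dot H^{1}}$ times a power of $\|w_{n}\|_{L^{2}}$; combined with the virial/Pohozaev-type identities attached to $E$ this should pin down that $w_{n}$ cannot vanish (the $\dot H^{1}$ norm is nontrivial and bounded below), so after translating $w_{n}(\cdot - x_{n})$ a nonzero weak limit $W\in\dot H^{1}$ exists along a subsequence. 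By weak lower semicontinuity $E(W)\leq\liminf E(w_{n})=0$, and since $E\geq 0$ we get $E(W)=0$; moreover $W\neq 0$ because the $\dot H^{1}$ mass does not escape (here I would invoke that $\bfD_{w_{n}}w_{n}\to 0$ prevents the derivative from concentrating at spatial infinity or splitting, using that the nonlinear term is subcritical relative to the scaling we fixed). By the variational characterization quoted from \cite{GerardLenzmann2022, KimKimKwon2024arxiv} — $Q$ is the unique nonzero zero-energy solution up to scaling, phase, translation — we get $W=[Q]_{\mu,\gamma_{0},y_{0}}$ for some $\mu>0$. Finally, upgrading weak to strong convergence: since $\|w_{n}\|_{\dot H^{1}}\to\|Q\|_{\dot H^{1}}$ (both sides fixed) and we would show $\|w_{n}(\cdot-x_{n})\|_{\dot H^{1}}\to\|W\|_{\dot H^{1}}$ forces $\mu=1$ and forces norm convergence, hence strong $\dot H^{1}$ convergence; the $\langle x\rangle^{-1}L^{2}$ part of the $\dot{\calH}^{1}$ norm is then handled by the same no-loss-of-mass argument (or by noting $\|Qf\|_{L^{2}}$ is controlled once we have strong $\dot H^{1}$ convergence and local $L^{2}$ convergence on compact sets plus decay from $Q$). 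This contradicts $\|[w_{n}]_{1,\gamma,x}^{-1}-Q\|_{\dot{\calH}^{1}}\geq\delta_{0}$ with the choice $(\gamma,x)=(\gamma_{0}+\text{shift},\,y_{0}+x_{n})$.

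The main obstacle I expect is the lack of an a priori $L^{2}$ bound on the rescaled sequence $w_{n}$ and the corresponding profile decomposition: we only know $\|w_{n}\|_{L^{2}}=\lambda_{n}^{1/2}\|v_{n}\|_{L^{2}}$ which could degenerate or blow up depending on $\lambda_{n}$. The remedy is to avoid needing $L^{2}$ compactness altogether and instead work entirely in the scaling-critical geometry: the condition $E(w_{n})\to 0$ together with the complete-square structure gives strong control $\bfD_{w_{n}}w_{n}\to 0$, and the map $w\mapsto \bfD_{w}w$ is, modulo the $\dot H^{1}$-bounded-set restriction, proper enough that a zero-energy profile must appear; one then only claims convergence in $\dot{\calH}^{1}$, which is exactly the norm in which $Q$ and its derivatives live and in which no $L^{2}$ tail information beyond the weight $\langle x\rangle^{-1}$ is needed. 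A secondary technical point is ruling out that the $\dot H^{1}$ mass splits into several bumps (which would be a ``bubble tree'' at the variational level); this is excluded because each bump would have to carry nonnegative energy summing to $0$, hence each is a zero-energy solution, but a superposition of two translated/scaled copies of $Q$ has strictly positive energy unless they coincide — a short computation using the explicit $Q$ and the formula $\calH(Q^{2})=yQ^{2}$. Since the paper defers this lemma to Appendix~\ref{sec:appendix variation} and labels it ``fairly standard,'' I would keep the write-up at this level of detail, citing \cite{GerardLenzmann2022} and \cite{KimKimKwon2024arxiv} for the uniqueness of $Q$ and referencing the analogous argument for (CSS) in \cite{KimKwonOh2022arXiv1}.
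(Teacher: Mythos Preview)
Your ``main obstacle'' is illusory: the rescaling $w_{n}=[v_{n}]_{\lambda_{n},0,0}^{-1}$ is the $L^{2}$-critical scaling, which \emph{preserves} the $L^{2}$ norm, so $\|w_{n}\|_{L^{2}}=\|v_{n}\|_{L^{2}}\leq M$ (your formula $\|w_{n}\|_{L^{2}}=\lambda_{n}^{1/2}\|v_{n}\|_{L^{2}}$ is simply wrong). With this in hand the sequence is bounded in $H^{1}$, and all the machinery you try to build to avoid $L^{2}$ --- profile extraction in a purely $\dot H^{1}$ framework, weak lower semicontinuity of the nonlocal quintic-type functional $E$, ruling out splitting by a superposition-energy computation --- is unnecessary and, as you present it, contains gaps (weak lsc of $E$ is not obvious for this functional, and norm matching alone does not preclude multiple bubbles).

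The paper's proof is much more direct once the $L^{2}$ bound is used. One first invokes a variational lemma (Lemma~\ref{lem:variational argument}, quoted from \cite{GerardLenzmann2022}): any $H^{1}$-bounded sequence with fixed $\dot H^{1}$ norm and $E\to 0$ has, after translation, a weak $H^{1}$ limit of the form $[Q]_{\lambda,\gamma,0}$. Writing $w_{n}(\cdot-x_{n})=[Q+\tilde f_{n}]_{\lambda,\gamma,0}$ with $\tilde f_{n}\rightharpoonup 0$ in $H^{1}$ (hence $\tilde f_{n}\to 0$ in $L^{2}_{\mathrm{loc}}$ and $L^{p}$, $2<p<\infty$), one expands $E(Q+\tilde f_{n})=\tfrac12\|L_{Q}\tilde f_{n}\|_{L^{2}}^{2}+O(\|N_{Q}(\tilde f_{n})\|_{L^{2}}^{2})$ and checks $N_{Q}(\tilde f_{n})\to 0$ in $L^{2}$ using only these weak-convergence facts together with the identity $\int f\,\calH f=0$. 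Thus $\|L_{Q}\tilde f_{n}\|_{L^{2}}\to 0$, and the subcoercivity of $L_{Q}$ on $\dot{\calH}^{1}$ (from \cite{KimKimKwon2024arxiv}) gives $\tilde f_{n}\to 0$ in $\dot{\calH}^{1}$, contradicting the assumption. In short: keep the $L^{2}$ bound, cite the variational lemma for the weak limit, and upgrade via $L_{Q}$-subcoercivity rather than attempting weak lsc of $E$.
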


Since $v$ is close to a modulated soliton, the decomposition is possible
in a standard way. We define the soliton tube $\mathcal{T}_{\delta}$
by 
\begin{align*}
	\mathcal{T}_{\delta}\coloneqq\{v\in\dot{\calH}^{1}:\inf_{\lambda^{\prime}\in\bbR_{+},\gamma^{\prime}\in\bbR/2\pi\bbZ,x^{\prime}\in\bbR}\|[v]_{\lambda^{\prime},\gamma^{\prime},x^{\prime}}^{-1}-Q\|_{\dot{\calH}^{1}}<\delta\}.
\end{align*}

\begin{lem}[Decomposition]
	\label{lem:Decomposition pre} For any sufficiently small $\eta>0$,
	there exists $\delta>0$ such that the following hold: For any $v\in\mathcal{T}_{\delta}$,
	we have a unique decomposition, 
	\begin{enumerate}
		\item (Decomposition) There exists unique continuous map $v\mapsto(\lambda,\gamma,x,\eps)\allowbreak\in\bbR_{+}\times\bbR/2\pi\bbZ\times\bbR\times H^{1}$
		satisfying 
		\begin{align*}
			v=[Q+\wt{\eps}]_{\lambda,\gamma,x},\quad(\wt{\eps},\mathcal{Z}_{k})_{r}=0\text{ for }k=1,2,3,
		\end{align*}
		and smallness 
		\begin{align*}
			\|\wt{\eps}\|_{\dot{\calH}^{1}}<\eta.
		\end{align*}
		Moreover, $v\mapsto(\lambda,\gamma,x)$ part is $C^{1}$. 
		\item (Estimate for $\lambda$) We have the estimate \eqref{eq:Decompose lambda estimate}
		for $\lambda$. 
	\end{enumerate}
\end{lem}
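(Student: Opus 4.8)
The plan is a by-now standard modulation argument: using the scaling--phase--translation covariance, reduce to a neighbourhood of $Q$ itself and there solve the orthogonality conditions by the implicit function theorem. Concretely, for $v$ and parameters $p=(\lambda,\gamma,x)\in\bbR_{+}\times\bbR/2\pi\bbZ\times\bbR$ write $\wt{\eps}_{p}\coloneqq[v]_{\lambda,\gamma,x}^{-1}-Q$ and $F(v,p)\coloneqq\left((\wt{\eps}_{p},\mathcal{Z}_{1})_{r},(\wt{\eps}_{p},\mathcal{Z}_{2})_{r},(\wt{\eps}_{p},\mathcal{Z}_{3})_{r}\right)\in\bbR^{3}$; the goal is to solve $F(v,p)=0$. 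Because modulation is an $L^{2}$-isometry, the identities in \eqref{eq:modulation notation} give $F_{j}(v,p)=(v,[\mathcal{Z}_{j}]_{\lambda,\gamma,x})_{r}-(Q,\mathcal{Z}_{j})_{r}$, and since each $\mathcal{Z}_{j}$ is smooth and rapidly decaying, $p\mapsto[\mathcal{Z}_{j}]_{\lambda,\gamma,x}$ is smooth into every $H^{k}$ and every polynomially weighted $L^{2}$. Hence $F$ is affine in $v$, $C^{\infty}$ in $p$, and jointly $C^{1}$ on $\dot{\mathcal{H}}^{1}\times(\bbR_{+}\times\bbR/2\pi\bbZ\times\bbR)$.

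Next I would check nondegeneracy at the base point $v=Q$, $p_{0}=(1,0,0)$: here $\wt{\eps}_{p_{0}}=0$, so $F(Q,p_{0})=0$, and differentiating $\wt{\eps}_{p}$ in $p$ yields $\partial_{\lambda}\wt{\eps}|_{p_{0}}=\Lambda Q$, $\partial_{\gamma}\wt{\eps}|_{p_{0}}=-iQ$, $\partial_{x}\wt{\eps}|_{p_{0}}=\partial_{x}Q$. Thus $\partial_{p}F(Q,p_{0})$ is the $3\times3$ matrix with entries $(\partial_{p_{k}}\wt{\eps}|_{p_{0}},\mathcal{Z}_{j})_{r}$; with $\mathcal{Z}_{1}=\Lambda Q\,\chi$, $\mathcal{Z}_{2}=iQ\,\chi$, $\mathcal{Z}_{3}=\partial_{x}Q\,\chi$ and $\chi$ chosen even and nonnegative, the relation $\Re(i\,r)=0$ for real $r$ annihilates every entry carrying the factor $iQ$, and the fact that a product of an even and an odd function integrates to zero annihilates the $(\Lambda Q,\partial_{x}Q\chi)$ and $(\partial_{x}Q,\Lambda Q\chi)$ entries, leaving the diagonal matrix $\mathrm{diag}\!\left(\int(\Lambda Q)^{2}\chi,\,-\int Q^{2}\chi,\,\int(\partial_{x}Q)^{2}\chi\right)$, which is invertible. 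This is exactly the computation underlying the choice of orthogonality directions in Lemma \ref{lem:Coercivity}; see \cite{KimKimKwon2024arxiv}.

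With these two inputs I would proceed in three steps. \emph{Existence}: the implicit function theorem near $(Q,p_{0})$ gives $\delta_{0}>0$ and a $C^{\infty}$ map $w\mapsto p(w)$ on $\{\|w-Q\|_{\dot{\mathcal{H}}^{1}}<\delta_{0}\}$ with $F(w,p(w))=0$, $p(w)$ near $p_{0}$, and (after shrinking $\delta_{0}$) $\|[w]_{p(w)}^{-1}-Q\|_{\dot{\mathcal{H}}^{1}}<\eta$; for $v\in\mathcal{T}_{\delta}$ with $\delta\le\delta_{0}$, pick $(\lambda',\gamma',x')$ realizing $v\in\mathcal{T}_{\delta}$, set $w\coloneqq[v]_{\lambda',\gamma',x'}^{-1}$, and let $(\lambda,\gamma,x)$ be the composition of $(\lambda',\gamma',x')$ with $p(w)$ in the group $\bbR_{+}\times\bbR/2\pi\bbZ\times\bbR$, so that $\wt{\eps}\coloneqq[v]_{\lambda,\gamma,x}^{-1}-Q=[w]_{p(w)}^{-1}-Q$ satisfies the orthogonality relations and $\|\wt{\eps}\|_{\dot{\mathcal{H}}^{1}}<\eta$ (and $\wt{\eps}\in H^{1}$ when $v\in H^{1}$). \emph{Uniqueness}: for $\eta$ small, any admissible decomposition forces $[v]_{\lambda,\gamma,x}^{-1}$ into the $\delta_{0}$-ball around $Q$ and the parameters into the IFT neighbourhood, where $F(w,\cdot)=0$ has a unique root, so the parameters coincide. \emph{Regularity}: by the symmetry covariance together with the smallness of $\wt{\eps}$, $\partial_{p}F(v_{*},p_{*})$ at any $v_{*}\in\mathcal{T}_{\delta}$ with associated parameters $p_{*}$ is an $O(\eta)$-perturbation of a linear isomorphism conjugate to $\partial_{p}F(Q,p_{0})$, hence invertible; applying the implicit function theorem at $(v_{*},p_{*})$ and invoking uniqueness shows $v\mapsto(\lambda,\gamma,x)$ is $C^{1}$ near $v_{*}$, so $C^{1}$ on $\mathcal{T}_{\delta}$. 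Finally, estimate \eqref{eq:Decompose lambda estimate} follows from the $1$D scaling identity $\|\partial_{x}[f]_{\lambda,\gamma,x}\|_{L^{2}}=\lambda^{-1}\|\partial_{x}f\|_{L^{2}}$: applied to $v=[Q+\wt{\eps}]_{\lambda,\gamma,x}$ it gives $\lambda\|\partial_{x}v\|_{L^{2}}=\|\partial_{x}(Q+\wt{\eps})\|_{L^{2}}$, hence $\big|\lambda\|\partial_{x}v\|_{L^{2}}-\|\partial_{x}Q\|_{L^{2}}\big|\le\|\partial_{x}\wt{\eps}\|_{L^{2}}$, and dividing by $\|\partial_{x}Q\|_{L^{2}}$ concludes.

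The step I expect to be the crux is the nondegeneracy: one must verify invertibility of the pairing matrix between the infinitesimal symmetry generators $\Lambda Q,iQ,\partial_{x}Q$ and the truncated directions $\mathcal{Z}_{j}$. This is where the degenerate nature of the linearization -- which lives naturally on $\dot{\mathcal{H}}^{1}$ rather than $\dot{H}^{1}$ -- could in principle obstruct the argument; here it reduces to the parity computation above, at the cost of requiring the cutoff $\chi$ to be even (otherwise one argues instead that the $3\times3$ determinant is a nonvanishing functional of $\chi$). A secondary, bookkeeping issue is keeping the spaces straight -- $\dot{\mathcal{H}}^{1}$ governs the tube and the smallness $\|\wt{\eps}\|_{\dot{\mathcal{H}}^{1}}<\eta$, while $H^{1}$ is the stated target for $\wt{\eps}$ -- and handling the group-law composition carefully when propagating the local construction near $Q$ to all of $\mathcal{T}_{\delta}$.
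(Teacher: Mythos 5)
Your proposal is correct and follows essentially the same route as the paper's Appendix~\ref{sec:appendix variation} proof: the same functional $\mathbf{F}(\textrm{g};v)=((\wt{\eps},\mathcal{Z}_{j})_{r})_{j}$, the implicit function theorem at $(Q,\textrm{g}_{0})$ after checking the pairing matrix is diagonal and invertible, uniqueness by forcing both parameter tuples into the IFT neighbourhood via the group action, and the exact scaling identity $\lambda\|\partial_{x}v\|_{L^{2}}=\|\partial_{x}(Q+\wt{\eps})\|_{L^{2}}$ for \eqref{eq:Decompose lambda estimate}. The only difference is presentational: you spell out the parity computation behind the nondegeneracy (which the paper merely asserts) and state the uniqueness step slightly more tersely than the paper's explicit reduction to $\textrm{g}'=\textrm{g}_{0}$.
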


The remaining part, \emph{energy bubbling}, is the heart of our analysis.
This provides an improved estimate on the radiation $\wt{\eps}$.
Specifically, it upgrades $\|\wt{\eps}\|_{\dot{\calH}^{1}}<\eta$
to $\|\wt{\eps}\|_{\dot{\calH}_{R}^{1}}\lesssim\la$. This leads to
the vanishing of the inner radiation $\chi_{R}\wt{\eps}$ and $Q\wt{\eps}$,
and subsequently deduces the asymptotic decoupling of the soliton
and $\wt{\eps}$. Observing that $\|\partial_{y}(\varphi_{R}\wt{\eps})\|_{L^{2}}$
is absent in $\|\wt{\eps}\|_{\dot{\calH}_{R}^{1}}$, one can expect
that there could be a nontrivial concentration in $\varphi_{R}\wt{\eps}$. 
\begin{lem}[Energy bubbling]
	\label{lem:Nonlinear Coercivity} Suppose $\|\wt{\eps}\|_{L^{2}}\leq M$
	and orthogonality conditions in \eqref{eq:Decomposition and orthogonal}.
	There exists $R=R(M)$ and $\eta=\eta(M)$ such that if $\|\wt{\eps}\|_{\dot{\calH}^{1}}\leq\eta$,
	then 
	\begin{align}
		E(Q+\wt{\eps})\gtrsim_{M}\|\wt{\eps}\|_{\dot{\calH}_{R}^{1}}^{2}+E(\varphi_{R}\wt{\eps}),\label{eq:energy bubbling}
	\end{align}
	where $\|\cdot\|_{\dot{\calH}_{R}^{1}}$ is defined in \eqref{eq:truncated H^1 norm}. 
\end{lem}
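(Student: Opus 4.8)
The plan is to exploit the self-dual (Bogomol'nyi) structure. Writing $f:=Q+\wt{\eps}$, we have $2E(f)=\|\bfD_{f}f\|_{L^{2}}^{2}$ and $\bfD_{Q}Q=0$, so the linearization reads $\bfD_{f}f=L_{Q}\wt{\eps}+N_{Q}(\wt{\eps})$ with $N_{Q}(\wt{\eps})=\wt{\eps}\,\calH(Q\Re\wt{\eps})+\tfrac12(Q+\wt{\eps})\calH(|\wt{\eps}|^{2})$. Two structural remarks drive everything. First, the two hypotheses are asymmetric: $\|\wt{\eps}\|_{L^{2}}\le M$ is merely bounded, but $\|\jp x^{-1}\wt{\eps}\|_{L^{2}}=\tfrac{1}{\sqrt2}\|Q\wt{\eps}\|_{L^{2}}\le\eta$ is small, hence the mass of $\wt{\eps}$ on any fixed window is small; in particular $\|\chi_{R}\wt{\eps}\|_{L^{2}}\lesssim R\,\|\jp x^{-1}\wt{\eps}\|_{L^{2}}\lesssim R\eta\ll1$ because $\eta\ll R^{-1}$. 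Second, $\chi_{R}\wt{\eps}$ inherits the orthogonality conditions, since $\chi_{R}\calZ_{j}=\calZ_{j}$ for $R\ge2$, so Lemma~\ref{lem:Coercivity} is available for $\chi_{R}\wt{\eps}$.

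The first step is to localize. Recalling $0\le\chi\le1$ one has $\chi_{R}^{2}+\varphi_{R}^{2}\le1$ pointwise, hence
\begin{align*}
2E(Q+\wt{\eps})=\|\bfD_{f}f\|_{L^{2}}^{2}\ge\|\chi_{R}\bfD_{f}f\|_{L^{2}}^{2}+\|\varphi_{R}\bfD_{f}f\|_{L^{2}}^{2}.
\end{align*}
For the inner piece I would commute the cutoff inside, writing $\chi_{R}\bfD_{f}f=L_{Q}(\chi_{R}\wt{\eps})+\mathcal{E}_{\mathrm{in}}$, where $\mathcal{E}_{\mathrm{in}}$ gathers the commutators $\chi_{R}'\wt{\eps}$ and $Q[\calH,\chi_{R}](Q\Re\wt{\eps})$, the nonlinear contribution $\chi_{R}N_{Q}(\wt{\eps})$, and the error from replacing $Q\Re\wt{\eps}$ by $Q\Re(\chi_{R}\wt{\eps})$. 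Each term should be controlled by the quantities on the right-hand side with either a small constant or one of the good weighted pieces: the commutator/cutoff terms via $\|\chi_{R}'\wt{\eps}\|_{L^{2}}^{2}\lesssim\|\jp x^{-1}\wt{\eps}\|_{L^{2}(|x|\ge R)}^{2}$ (using $|\chi'|^{2}\lesssim\chi$) and the $R^{-1}$-decay of $Q$ on the annulus, and the nonlinear terms via Gagliardo--Nirenberg using the smallness $\|\chi_{R}\wt{\eps}\|_{L^{2}}\lesssim R\eta$ and $\|Q\wt{\eps}\|_{L^{2}}\lesssim\eta$ (e.g. $\|\chi_{R}\wt{\eps}\,\calH(|\wt{\eps}|^{2})\|_{L^{2}}\lesssim\|\chi_{R}\wt{\eps}\|_{L^{2}}^{2}\|\partial_{x}\wt{\eps}\|_{L^{2}}$ and $\|\chi_{R}\wt{\eps}\,\calH(Q\Re\wt{\eps})\|_{L^{2}}\lesssim\|\wt{\eps}\|_{L^{\infty}}\|Q\wt{\eps}\|_{L^{2}}$). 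Plugging this into Lemma~\ref{lem:Coercivity} then yields $\|\chi_{R}\bfD_{f}f\|_{L^{2}}^{2}\gtrsim_{M}\|\partial_{x}(\chi_{R}\wt{\eps})\|_{L^{2}}^{2}+\|\jp x^{-1}\chi_{R}\wt{\eps}\|_{L^{2}}^{2}$.

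For the outer piece I would instead compare $\varphi_{R}\bfD_{f}f$ with the Bogomol'nyi operator of the outer radiation itself, $\varphi_{R}\bfD_{f}f=\bfD_{\varphi_{R}\wt{\eps}}(\varphi_{R}\wt{\eps})+\mathcal{E}_{\mathrm{out}}$, where now $\mathcal{E}_{\mathrm{out}}$ contains $\varphi_{R}'\wt{\eps}$, the interaction terms $\varphi_{R}\{\tfrac12\calH(Q^{2})\wt{\eps}+Q\calH(Q\Re\wt{\eps})+\wt{\eps}\,\calH(Q\Re\wt{\eps})+\tfrac12Q\calH(|\wt{\eps}|^{2})\}$, and the defect $\tfrac12\wt{\eps}\,[\calH,\varphi_{R}^{2}](|\wt{\eps}|^{2})$ arising from $\calH(\varphi_{R}^{2}|\wt{\eps}|^{2})$ versus $\varphi_{R}^{2}\calH(|\wt{\eps}|^{2})$. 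The key point is that $\varphi_{R}$ restricts to $|x|\ge R$, where $Q$, $\partial_{x}Q$ and $\calH(Q^{2})=xQ^{2}$ all decay like $R^{-1}$, so the interaction terms are $O(R^{-1})\|\jp x^{-1}\wt{\eps}\|_{L^{2}}$ or better, and the commutator defect is governed by the inner mass $\|\chi_{2R}\wt{\eps}\|_{L^{2}}\lesssim R\eta$; thus $\|\mathcal{E}_{\mathrm{out}}\|_{L^{2}}^{2}$ is controlled by a small multiple of $\|\wt{\eps}\|_{\dot{\calH}_{R}^{1}}^{2}$, giving $\|\varphi_{R}\bfD_{f}f\|_{L^{2}}^{2}\gtrsim_{M}E(\varphi_{R}\wt{\eps})$ after absorbing errors. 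It then remains to assemble the two estimates and upgrade the inner weighted term to the full one via $\|\jp x^{-1}\wt{\eps}\|_{L^{2}}^{2}\le2\|\jp x^{-1}\chi_{R}\wt{\eps}\|_{L^{2}}^{2}+2\|\jp x^{-1}\varphi_{R}\wt{\eps}\|_{L^{2}}^{2}$, bounding the outer weighted mass $\|\jp x^{-1}\varphi_{R}\wt{\eps}\|_{L^{2}}^{2}$ against the two main contributions already secured — this last step requires some care in the order of constants.

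The main obstacle, which forces exactly this architecture, is that the cubic nonlinearity is \emph{not} uniformly small: for a spread-out profile of mass $\sim M$ one only has $\|\wt{\eps}\,\calH(|\wt{\eps}|^{2})\|_{L^{2}}\lesssim\|\wt{\eps}\|_{L^{2}}^{2}\|\partial_{x}\wt{\eps}\|_{L^{2}}$, a factor $M^{2}$ larger than $\|\partial_{x}\wt{\eps}\|_{L^{2}}\sim\|L_{Q}\wt{\eps}\|_{L^{2}}$, so the naive estimate $2E\ge\tfrac12\|L_{Q}\wt{\eps}\|_{L^{2}}^{2}-\|N_{Q}(\wt{\eps})\|_{L^{2}}^{2}$ is worthless. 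The resolution is twofold: (i) cutting at scale $R$ makes the \emph{inner} radiation carry only mass $\lesssim R\eta\ll1$, so its nonlinearity becomes genuinely perturbative and the coercivity of $L_{Q}$ can be used; (ii) the \emph{outer} radiation is only asked to dominate its own self-dual energy $E(\varphi_{R}\wt{\eps})$, never $\|\partial_{x}(\varphi_{R}\wt{\eps})\|_{L^{2}}^{2}$ — the latter can be enormous for a wide, nearly solitonic tail while the former stays small. The orthogonality conditions are also essential here: together with $\|\wt{\eps}\|_{\dot{\calH}^{1}}\le\eta$, which forces any solitonic component of $\wt{\eps}$ to be very wide and hence to overlap the $O(1)$ window of the $\calZ_{j}$, they rule out $\wt{\eps}$ being a pure modulated soliton, and the unavoidable correction is precisely what makes the inner coercive lower bound control the weighted mass. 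The bookkeeping is then: fix $M$, take $R=R(M)$ large to beat the $O(R^{-1})$ interaction errors against the fixed coercivity constant, and finally $\eta=\eta(M)$ small to absorb all remaining powers of $\eta$.
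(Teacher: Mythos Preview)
Your high-level architecture (inner coercivity of $L_{Q}$ on $\chi_{R}\wt{\eps}$; outer comparison to $E(\varphi_{R}\wt{\eps})$) matches the paper's, but the execution of the outer step has a genuine gap. The problem is the term $\tfrac12\calH(Q^{2})\varphi_{R}\wt{\eps}=\tfrac{y}{1+y^{2}}\varphi_{R}\wt{\eps}$ that you place in $\mathcal{E}_{\mathrm{out}}$. Its $L^{2}$ norm is \emph{comparable} to $\|\jp{y}^{-1}\varphi_{R}\wt{\eps}\|_{L^{2}}$, not small in any parameter; so after $\|\varphi_{R}\bfD_{f}f\|_{L^{2}}^{2}\ge E(\varphi_{R}\wt{\eps})-\|\mathcal{E}_{\mathrm{out}}\|_{L^{2}}^{2}$ you lose a full $-C\|\jp{y}^{-1}\varphi_{R}\wt{\eps}\|_{L^{2}}^{2}$. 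But this outer weighted mass is precisely what is needed on the right-hand side, since $\|\wt{\eps}\|_{\dot{\calH}_{R}^{1}}^{2}$ contains the \emph{untruncated} $\|\jp{y}^{-1}\wt{\eps}\|_{L^{2}}^{2}$ by definition. Neither the inner coercivity (which yields only $\|\jp{y}^{-1}\chi_{R}\wt{\eps}\|_{L^{2}}^{2}$) nor $E(\varphi_{R}\wt{\eps})$ bounds $\|\jp{y}^{-1}\varphi_{R}\wt{\eps}\|_{L^{2}}^{2}$: take $\varphi_{R}\wt{\eps}$ to be a distant modulated soliton, for which the energy vanishes but the weighted mass does not. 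So your ``care in the order of constants'' cannot close.

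The paper's fix is not to throw $\tfrac{y}{1+y^{2}}\varphi_{R}\wt{\eps}$ into the error but to keep it inside the square. The cross term with $\partial_{y}(\varphi_{R}\wt{\eps})$, after integration by parts, yields $+\tfrac12\int\tfrac{y^{2}-1}{(1+y^{2})^{2}}|\varphi_{R}\wt{\eps}|^{2}\gtrsim\|\jp{y}^{-1}\varphi_{R}\wt{\eps}\|_{L^{2}}^{2}$, which is exactly the missing positive piece. This however generates a second cross term $\int\tfrac{y}{1+y^{2}}\calH(|\varphi_{R}\wt{\eps}|^{2})|\varphi_{R}\wt{\eps}|^{2}$ which is sign-indefinite and \emph{not} perturbative (it is only linear in $\|\jp{y}^{-1}\wt{\eps}\|_{L^{2}}$). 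The paper handles it by a model-specific algebraic cancellation: using $2\tfrac{y}{1+y^{2}}=\calH(Q^{2})$ and the product rule \eqref{eq:HilbertProductRule} one rewrites this integral as $\tfrac14\int Q^{2}|\varphi_{R}\wt{\eps}|^{4}-\tfrac14\int Q^{2}[\calH(|\varphi_{R}\wt{\eps}|^{2})]^{2}$, and the negative part is then absorbed by the leftover positive term $\|\tfrac12 Q\calH(|\wt{\eps}|^{2})\|_{L^{2}}^{2}$, which the paper has deliberately retained (rather than discarded as an inner nonlinear error). Your outline contains neither mechanism; without them the estimate does not close. The paper also introduces an intermediate scale $\sim R^{2}$ and a final logarithmic averaging in $R$ to dispose of an annular error, but these are secondary once the two issues above are addressed.
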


In fact, \eqref{eq:energy bubbling} is a combination of a coercivity
estimate of the inner radiation part $\|\wt{\eps}\|_{\dot{\calH}_{R}^{1}}^{2}$
and the energy control of the outer radiation part $E(\varphi_{R}\wt{\eps})$.
The former is a result of linear coercivity (Lemma~\ref{lem:Coercivity}).
The latter is obtained from the nonnegativity of energy. Although
we do not have a control over $\norm{\partial_{y}(\varphi_{R}\wt{\varepsilon})}_{L^{2}}$,
we can instead maintain the smallness of the energy of $\varphi_{R}\wt{\eps}$.
We believe that this approach is robust enough to be applied to other
models with nonnegative energy. There is a similar (even stronger)
nonlinear coercivity inequality for (CSS) \cite[Lemma 4.4]{KimKwonOh2025AJM},
\[
E(Q+\wt{\eps})\gtrsim_{M}\|\wt{\eps}\|_{\dot{\calH}^{1}}^{2}.
\]
It is worth noting that it also controls the outer radiation $\norm{\partial_{y}(\varphi_{R}\wt{\varepsilon})}_{L^{2}}$.
This is due to a special property of (CSS), so-called \emph{the defocusing
	nature at the exterior of a soliton.} From this specific property,
the authors in \cite{KimKwonOh2025AJM} were able to achieve soliton
resolution with a \emph{single bubble}. However, in \eqref{CMdnls-gauged},
we have no coercivity of $\varphi_{R}\wt{\eps}$. But the energy control
\eqref{eq:energy bubbling} indicates that after extracting a soliton
$Q$, there could be another bubble from $\varphi_{R}\wt{\eps}$ since
$E(\varphi_{R}\wt{\eps})$ is still small.

Having Lemmas~\ref{lem:Proximity},~\ref{lem:Decomposition pre},
and~\ref{lem:Nonlinear Coercivity}, we deduce Proposition~\ref{prop:Decomposition}.
Indeed, for a given $M>0$, there are $R=R(M)$ and $\eta=\eta(M,R)$.
Then, there exist $\delta=\delta(\eta)$ and $\alpha^{*}=\alpha^{*}(M,\delta)$
to satisfy the statement in Proposition~\ref{prop:Decomposition}.
We close this section with the proof of Lemma~\ref{lem:Nonlinear Coercivity}. 
\begin{proof}[Proof of Lemma~\ref{lem:Nonlinear Coercivity}]
	We decompose the domain into inner and outer regions. We will choose
	parameters $R,\eta$ such that $M^{-1}\gg R^{-1}\gg\eta$ . First,
	observe from the definition and assumption that 
	\begin{align*}
		\|\wt{\eps}\|_{\dot{\calH}^{1}}<\eta,\quad\|Q\wt{\eps}\|_{L^{2}}\leq2\|\wt{\eps}\|_{\dot{\calH}_{R^{\prime}}^{1}}\text{ for any }R^{\prime}>0.
	\end{align*}
	
	We claim a preliminary bubbling for suitable $R$ and $\eta.$ 
	\begin{align}
		E(Q+\wt{\eps})\gtrsim_{M}\|\wt{\eps}\|_{\dot{\calH}_{2R^{2}}^{1}}^{2}+E(\varphi_{4R^{2}}\wt{\eps})-O(\|{\textbf 1}_{|y|\in[R,2R]}|\wt{\eps}|_{-1}\|_{L^{2}}^{2}).\label{eq:Nonlinear energy before averaging}
	\end{align}
	We will prove \eqref{eq:Nonlinear energy before averaging} in Step
	1, 2, and 3. Then take an average over $R$ to finish the proof.\textbf{ }
	
	\textbf{Step 1.} In this step, we decompose the domain into $|x|\lesssim R$
	and $|x|\geq R$ and claim that 
	\begin{align}
		\begin{split}E(Q+\wt{\eps})\gtrsim_{\calZ_{j}} & \|\chi_{R}\wt{\eps}\|_{\dot{\calH}^{1}}^{2}-\|{\textbf 1}_{|y|\in[R,2R]}|\wt{\eps}|_{-1}\|_{L^{2}}^{2}\\
			& +O_{M}(1)\cdot o_{R^{-1},\eta\to0}(\|\wt{\eps}\|_{\dot{\calH}_{R}^{1}}^{2})+\eqref{eq:Nolinear energy middle and outer}.
		\end{split}
		\label{eq:Nolinear energy Step 1 goal}
	\end{align}
	In \eqref{eq:Nolinear energy Step 1 goal}, the first term is the
	inner term, the second term is the interaction term, and \eqref{eq:Nolinear energy middle and outer}
	is the intermediate and outer term. We will handle \eqref{eq:Nolinear energy middle and outer}
	in Step 2 and Step 3. We start by linearizing the energy; 
	\begin{align*}
		E(Q+\wt{\eps})=\tfrac{1}{2}\|\bfD_{Q+\wt{\eps}}(Q+\wt{\eps})\|_{L^{2}}^{2}=\tfrac{1}{2}\|L_{Q}\eps+N_{Q}(\wt{\eps})\|_{L^{2}}^{2},
	\end{align*}
	where $N_{Q}(\wt{\eps})$ is the nonlinear term, 
	\begin{align*}
		N_{Q}(\wt{\eps})=\wt{\eps}\mathcal{H}(\Re(Q\wt{\eps}))+\tfrac{1}{2}(Q+\wt{\eps})\mathcal{H}(|\wt{\eps}|^{2}).
	\end{align*}
	The first term of the $N_{Q}(\wt{\eps})$ can be absorbed
	in $O_{M}(1)\cdot o_{R^{-1},\eta\to0}(\|\wt{\eps}\|_{\dot{\calH}_{R}^{1}}^{2})$
	by 
	\begin{align*}
		\|\wt{\eps}\mathcal{H}(\Re(Q\wt{\eps}))\|_{L^{2}}\lesssim\|\wt{\eps}\|_{L^{\infty}}\|\wt{\eps}\|_{\dot{\calH}_{R}^{1}}\lesssim\|\wt{\eps}\|_{L^{2}}^{\frac{1}{2}}\|\wt{\eps}\|_{\dot{\calH}^{1}}^{\frac{1}{2}}\|\wt{\eps}\|_{\dot{\calH}_{R}^{1}}\leq M^{\frac{1}{2}}\eta^{\frac{1}{2}}\|\wt{\eps}\|_{\dot{\calH}_{R}^{1}}.
	\end{align*}
	Thus, it suffices to consider 
	\begin{align*}
		\|L_{Q}\wt{\eps}+\tfrac{1}{2}(Q+\wt{\eps})\mathcal{H}(|\wt{\eps}|^{2})\|_{L^{2}}^{2}.
	\end{align*}
	We decompose into inner and outer parts, 
	\begin{align}
		L_{Q}\wt{\eps}+\tfrac{1}{2}(Q+\wt{\eps})\mathcal{H}(|\wt{\eps}|^{2})= & L_{Q}(\chi_{R}\wt{\eps})+L_{Q}(\varphi_{R}\wt{\eps})\nonumber \\
		& +\tfrac{1}{2}\chi_{R}(Q+\wt{\eps})\mathcal{H}(|\wt{\eps}|^{2})+\tfrac{1}{2}\varphi_{R}(Q+\wt{\eps})\mathcal{H}(|\wt{\eps}|^{2})\nonumber \\
		= & L_{Q}(\chi_{R}\wt{\eps})+(\bfD_{Q}+\tfrac{1}{2}\mathcal{H}(|\wt{\eps}|^{2}))\varphi_{R}\wt{\eps}+\tfrac{1}{2}Q\calH(|\wt{\eps}|^{2})\label{eq:Nolinear energy divide 1}\\
		& +Q\mathcal{H}(\text{Re}(Q\varphi_{R}\wt{\eps}))+\tfrac{1}{2}\chi_{R}\wt{\eps}\mathcal{H}(|\wt{\eps}|^{2}).\label{eq:Nolinear energy divide 2}
	\end{align}
	We first show \eqref{eq:Nolinear energy divide 2} is perturbative.
	Using $\langle y\rangle^{-2}(1+y^{2})=1$ and \eqref{eq:CommuteHilbert},
	we compute the first term of \eqref{eq:Nolinear energy divide 2}
	by 
	\begin{align*}
		Q\mathcal{H}(\text{Re}(Q\varphi_{R}\wt{\eps}))= & Q\mathcal{H}(\text{Re}(\langle y\rangle^{-2}Q\varphi_{R}\wt{\eps}))+Q\mathcal{H}(\text{Re}(y^{2}\langle y\rangle^{-2}Q\varphi_{R}\wt{\eps}))\\
		= & Q\mathcal{H}(\text{Re}(\langle y\rangle^{-2}Q\varphi_{R}\wt{\eps}))+yQ\mathcal{H}(\text{Re}(y\langle y\rangle^{-2}Q\varphi_{R}\wt{\eps}))\\
		& -Q\frac{1}{\pi}{\int_{\bbR}}(\text{Re}(y\langle y\rangle^{-2}Q\varphi_{R}\wt{\eps}))dy.
	\end{align*}
	So, we deduce 
	\begin{align*}
		\|Q\mathcal{H}(\text{Re}(Q\varphi_{R}\wt{\eps}))\|_{L^{2}} & \lesssim\|Q^{2}\varphi_{R}\wt{\eps}\|_{L^{2}}+{\int_{\bbR}}|Q^{2}\varphi_{R}\wt{\eps}|dy\\
		& \lesssim\|Q^{\frac{1}{2}+}\|_{L^{2}}\|Q^{\frac{3}{2}-}\varphi_{R}\wt{\eps}\|_{L^{2}}\lesssim R^{-\frac{1}{2}+}\|\wt{\eps}\|_{\dot{\calH}_{R}^{1}}.
	\end{align*}
	We also have 
	\begin{align*}
		\|\chi_{R}\wt{\eps}\mathcal{H}(|\wt{\eps}|^{2})\|_{L^{2}}\lesssim R\|\wt{\eps}\|_{L^{2}}\|\wt{\eps}\|_{\dot{\calH}^{1}}\|\wt{\eps}\|_{\dot{\calH}_{R}^{1}}\leq RM\eta\|\wt{\eps}\|_{\dot{\calH}_{R}^{1}}.
	\end{align*}
	Hence, we have $\|\eqref{eq:Nolinear energy divide 2}\|_{L^{2}}=O_{M}(1)\cdot o_{R^{-1},\eta\to0}(\|\wt{\eps}\|_{\dot{\calH}_{R}^{1}})$.
	Now we estimate the main part \eqref{eq:Nolinear energy divide 1},
	\begin{align}
		\|\eqref{eq:Nolinear energy divide 1}\|_{L^{2}}^{2}= & \|L_{Q}(\chi_{R}\wt{\eps})+(\bfD_{Q}+\tfrac{1}{2}\mathcal{H}(|\wt{\eps}|^{2}))\varphi_{R}\wt{\eps}+\tfrac{1}{2}Q\calH(|\wt{\eps}|^{2})\|_{L^{2}}^{2}\nonumber \\
		= & \|L_{Q}(\chi_{R}\wt{\eps})\|_{L^{2}}^{2}\label{eq:Nolinear energy inner 0}\\
		& +2\Re{\int_{\bbR}}L_{Q}(\chi_{R}\ol{\wt{\eps}})(\bfD_{Q}+\tfrac{1}{2}\mathcal{H}(|\wt{\eps}|^{2}))\varphi_{R}\wt{\eps}dy\label{eq:Nolinear energy interaction 1}\\
		& +\Re{\int_{\bbR}}L_{Q}(\chi_{R}\ol{\wt{\eps}})Q\calH(|\wt{\eps}|^{2})dy\label{eq:Nolinear energy inter addition}\\
		& +\|(\bfD_{Q}+\tfrac{1}{2}\mathcal{H}(|\wt{\eps}|^{2}))\varphi_{R}\wt{\eps}+\tfrac{1}{2}Q\calH(|\wt{\eps}|^{2})\|_{L^{2}}^{2}.\label{eq:Nolinear energy middle and outer}
	\end{align}
	For the linear term in the inner region, we use the coercivity, Lemma
	\ref{lem:Coercivity}, to estimate \eqref{eq:Nolinear energy inner 0}
	\begin{align}
		\|L_{Q}(\chi_{R}\wt{\eps})\|_{L^{2}}^{2}\gtrsim_{\calZ_{j}}\|\chi_{R}\wt{\eps}\|_{\dot{\calH}^{1}}^{2}.\label{eq:Nolinear energy goal 1}
	\end{align}
	We now estimate the interaction term \eqref{eq:Nolinear energy interaction 1}.
	\begin{align}
		\eqref{eq:Nolinear energy interaction 1}\lesssim & \bigg|{\int_{\bbR}}\bfD_{Q}(\chi_{R}\ol{\wt{\eps}})\bfD_{Q}(\varphi_{R}\wt{\eps})dy\bigg|+\bigg|{\int_{\bbR}}\bfD_{Q}(\chi_{R}\ol{\wt{\eps}})\tfrac{1}{2}\mathcal{H}(|\wt{\eps}|^{2})\varphi_{R}\wt{\eps}dy\bigg|\nonumber \\
		& +\bigg|{\int_{\bbR}}Q\calH\Re(Q\chi_{R}\ol{\wt{\eps}})(\tfrac{y}{1+y^{2}}+\tfrac{1}{2}\mathcal{H}(|\wt{\eps}|^{2}))\varphi_{R}\wt{\eps}dy\bigg|\nonumber \\
		& +\bigg|{\int_{\bbR}}Q\calH\Re(Q\chi_{R}\ol{\wt{\eps}})\partial_{y}\varphi_{R}\wt{\eps}dy\bigg|\nonumber \\
		\lesssim & \|{\textbf 1}_{|y|\in[R,2R]}|\wt{\eps}|_{-1}\|_{L^{2}}^{2}+RM\eta\|\wt{\eps}\|_{\dot{\calH}_{R}^{1}}^{2}+R^{-1}(R^{-1}+M\eta)\|\wt{\eps}\|_{\dot{\calH}_{R}^{1}}^{2}\nonumber \\
		& +\bigg|{\int_{\bbR}}Q\calH\Re(Q\chi_{R}\ol{\wt{\eps}})\partial_{y}(\varphi_{R}\wt{\eps})dy\bigg|,\label{eq:Nolinear energy inter 2}
	\end{align}
	For \eqref{eq:Nolinear energy inter 2}, by integration by parts,
	we have 
	\begin{align*}
		\eqref{eq:Nolinear energy inter 2}=\bigg|{\int_{\bbR}}\partial_{y}(Q\calH\Re(Q\chi_{R}\ol{\wt{\eps}}))\varphi_{R}\wt{\eps}dy\bigg|\lesssim R^{-1}\|\wt{\eps}\|_{\dot{\calH}_{R}^{1}}^{2}.
	\end{align*}
	Hence, we have 
	\begin{align}
		|\eqref{eq:Nolinear energy interaction 1}| & \lesssim\|{\textbf 1}_{|y|\in[R,2R]}|\wt{\eps}|_{-1}\|_{L^{2}}^{2}+RM\eta\|\wt{\eps}\|_{\dot{\calH}_{R}^{1}}^{2}+R^{-1}\|\wt{\eps}\|_{\dot{\calH}_{R}^{1}}^{2}\nonumber \\
		& \lesssim\|{\textbf 1}_{|y|\in[R,2R]}|\wt{\eps}|_{-1}\|_{L^{2}}^{2}+O_{M}(1)\cdot o_{R^{-1},\eta\to0}(\|\wt{\eps}\|_{\dot{\calH}_{R}^{1}}^{2}).\label{eq:Nolinear energy goal 2}
	\end{align}
	Now, we control \eqref{eq:Nolinear energy inter addition}. We compute
	\begin{align}
		\eqref{eq:Nolinear energy inter addition}= & \Re{\int_{\bbR}}\partial_{y}(\chi_{R}\ol{\wt{\eps}})Q\calH(|\wt{\eps}|^{2})dy\label{eq:Nolinear energy inter addition 1}\\
		& +\Re{\int_{\bbR}}\tfrac{y}{1+y^{2}}\chi_{R}\ol{\wt{\eps}}Q\calH(|\wt{\eps}|^{2})dy\label{eq:Nolinear energy inter addition 2}\\
		& +{\int_{\bbR}}\calH\Re(Q\chi_{R}\ol{\wt{\eps}})Q^{2}\calH(|\wt{\eps}|^{2})dy.\label{eq:Nolinear energy inter addition 3}
	\end{align}
	From $Q_y=-\frac{y}{1+y^2}Q$, we have 
	\begin{align*}
		\eqref{eq:Nolinear energy inter addition 1}
		= \eqref{eq:Nolinear energy inter addition 2}-\Re{\int_{\bbR}}\chi_{R}\ol{\wt{\eps}}Q|D|(|\wt{\eps}|^{2})dy,
	\end{align*}
	and this implies 
	\begin{align}
		\eqref{eq:Nolinear energy inter addition}=2\eqref{eq:Nolinear energy inter addition 2}+\eqref{eq:Nolinear energy inter addition 3}-\Re{\int_{\bbR}}\chi_{R}\ol{\wt{\eps}}Q|D|(|\wt{\eps}|^{2})dy.\label{eq:Nolinear energy inter addition 1-1}
	\end{align}
	For the last term of \eqref{eq:Nolinear energy inter addition 1-1},
	using $\langle y\rangle^{-2}(1+y^{2})=1$ and \eqref{eq:CommuteHilbertDerivative},
	we have 
	\begin{align*}
		|D|(|\wt{\eps}|^{2})=\calH(\tfrac{y}{1+y^{2}}|\wt{\eps}|^{2})+yQ|D|(\tfrac{y}{1+y^{2}}|\wt{\eps}|^{2})+|D|(\tfrac{1}{1+y^{2}}|\wt{\eps}|^{2}),
	\end{align*}
	and hence, we estimate 
	\begin{align}
		{\int_{\bbR}}\chi_{R}\ol{\wt{\eps}}Q|D|(|\wt{\eps}|^{2})dy= & {\int_{\bbR}}\chi_{R}\ol{\wt{\eps}}Q\calH(\tfrac{y}{1+y^{2}}|\wt{\eps}|^{2})dy-{\int_{\bbR}}\partial_{y}(\chi_{R}\ol{\wt{\eps}}yQ)\calH(\tfrac{y}{1+y^{2}}|\wt{\eps}|^{2})dy\nonumber \\
		&
		-{\int_{\bbR}}\partial_{y}(\chi_{R}\ol{\wt{\eps}}Q)\calH(\tfrac{1}{1+y^{2}}|\wt{\eps}|^{2})dy \nonumber \\
		\lesssim & M^{\frac{1}{2}}\eta^{\frac{1}{2}}\|\wt{\eps}\|_{\dot{\calH}_{R}^{1}}^{2}.\label{eq:Nolinear energy inter addition 1-2}
	\end{align}
	Now, we estimate $2\eqref{eq:Nolinear energy inter addition 2}+\eqref{eq:Nolinear energy inter addition 3}$.
	Using $2\frac{y}{1+y^{2}}=yQ^{2}=\calH(Q^{2})$ and \eqref{eq:HilbertProductRule}
	with $f=Q^{2}$ and $g=|\wt{\eps}|^{2}$, we deduce 
	\begin{align*}
		2\eqref{eq:Nolinear energy inter addition 2}+\eqref{eq:Nolinear energy inter addition 3}= & \Re\bigg({\int_{\bbR}}Q\chi_{R}\ol{\wt{\eps}}\calH(Q^{2})\calH(|\wt{\eps}|^{2})-Q\chi_{R}\ol{\wt{\eps}}\calH(Q^{2}\calH(|\wt{\eps}|^{2}))dy\bigg)\\
		= & \Re\bigg({\int_{\bbR}}Q\chi_{R}\ol{\wt{\eps}}Q^{2}|\wt{\eps}|^{2}+Q\chi_{R}\ol{\wt{\eps}}\calH(\calH(Q^{2})|\wt{\eps}|^{2})dy\bigg).
	\end{align*}
	Thus, we have 
	\begin{align}
		|2\eqref{eq:Nolinear energy inter addition 2}+\eqref{eq:Nolinear energy inter addition 3}|\lesssim M^{\frac{1}{2}}\eta^{\frac{1}{2}}\|\wt{\eps}\|_{\dot{\calH}_{R}^{1}}^{2}.\label{eq:Nolinear energy inter addition 1-3}
	\end{align}
	Collecting \eqref{eq:Nolinear energy inter addition 1-1}, \eqref{eq:Nolinear energy inter addition 1-2},
	and \eqref{eq:Nolinear energy inter addition 1-3}, we have 
	\begin{align}
		\eqref{eq:Nolinear energy inter addition}=O_{M}(1)\cdot o_{\eta\to0}(\|\wt{\eps}\|_{\dot{\calH}_{R}^{1}}^{2}).\label{eq:Nolinear energy goal 3}
	\end{align}
	Now we finish to prove \eqref{eq:Nolinear energy Step 1 goal} by
	summarizing from \eqref{eq:Nolinear energy goal 1}, \eqref{eq:Nolinear energy goal 2},
	and \eqref{eq:Nolinear energy goal 3}.
	
	\textbf{Step 2.} We estimate \eqref{eq:Nolinear energy middle and outer}
	by decomposing into intermediate and outer regions, and in this step
	we estimate the intermediate part and summarize in the main claim,
	\eqref{eq:Nolinear energy Step 2 goal-2}. We first compute 
	\begin{align}
		\eqref{eq:Nolinear energy middle and outer}= & \|(\bfD_{Q}+\tfrac{1}{2}\mathcal{H}(|\wt{\eps}|^{2}))\varphi_{R}\wt{\eps}\|_{L^{2}}^{2}\label{eq:Nolinear energy inter 0-1}\\
		& +\Re{\int_{\bbR}}(\bfD_{Q}+\tfrac{1}{2}\mathcal{H}(|\wt{\eps}|^{2}))\varphi_{R}\ol{\wt{\eps}}\cdot Q\calH(|\wt{\eps}|^{2})dy\label{eq:Nolinear energy inter 0-2}\\
		& +\|\tfrac{1}{2}Q\calH(|\wt{\eps}|^{2})\|_{L^{2}}^{2}.\label{eq:Nolinear energy inter remain}
	\end{align}
	For \eqref{eq:Nolinear energy inter 0-2}, we have 
	\begin{align}
		\eqref{eq:Nolinear energy inter 0-2} & =\Re{\int_{\bbR}}(\bfD_{Q}+\tfrac{1}{2}\mathcal{H}(|\wt{\eps}|^{2}))\varphi_{R}\ol{\wt{\eps}}\cdot\varphi_{\frac{R}{2}}Q\calH(|\wt{\eps}|^{2})dy\nonumber \\
		& \leq\tfrac{1}{2}\|(\bfD_{Q}+\tfrac{1}{2}\mathcal{H}(|\wt{\eps}|^{2}))\varphi_{R}\wt{\eps}\|_{L^{2}}^{2}+\tfrac{1}{2}\|\varphi_{\frac{R}{2}}Q\calH(|\wt{\eps}|^{2})\|_{L^{2}}^{2}.\label{eq:Nolinear energy inter 0-3}
	\end{align}
	Using $\langle y\rangle^{-2}(1+y^{2})=1$ and \eqref{eq:CommuteHilbert},
	we compute 
	\begin{align*}
		\calH(|\wt{\eps}|^{2})=\calH(\tfrac{1}{1+y^{2}}|\wt{\eps}|^{2})+y\calH(\tfrac{y}{1+y^{2}}|\wt{\eps}|^{2})-\frac{1}{\pi}{\int_{\bbR}}\tfrac{y}{1+y^{2}}|\wt{\eps}|^{2}dy^{\prime}.
	\end{align*}
	So, we estimate the second term of \eqref{eq:Nolinear energy inter 0-3}
	such as 
	\begin{align*}
		\|\varphi_{\frac{R}{2}}Q\calH(|\wt{\eps}|^{2})\|_{L^{2}}\lesssim(M^{\frac{1}{2}}\eta^{\frac{1}{2}}+R^{-\frac{1}{2}}M)\|\tfrac{1}{\langle y\rangle}\wt{\eps}\|_{L^{2}}=O_{M}(1)\cdot o_{R^{-1},\eta\to0}(\|\wt{\eps}\|_{\dot{\calH}_{R}^{1}}).
	\end{align*}
	Therefore, we have 
	\begin{align*}
		|\eqref{eq:Nolinear energy inter 0-2}| & \leq\tfrac{1}{2}\|(\bfD_{Q}+\tfrac{1}{2}\mathcal{H}(|\wt{\eps}|^{2}))\varphi_{R}\wt{\eps}\|_{L^{2}}^{2}+O_{M}(1)\cdot o_{R^{-1},\eta\to0}(\|\wt{\eps}\|_{\dot{\calH}_{R}^{1}}^{2})\\
		& =\tfrac{1}{2}\eqref{eq:Nolinear energy inter 0-1}+O_{M}(1)\cdot o_{R^{-1},\eta\to0}(\|\wt{\eps}\|_{\dot{\calH}_{R}^{1}}^{2}),
	\end{align*}
	and 
	\begin{align*}
		\eqref{eq:Nolinear energy middle and outer}=\eqref{eq:Nolinear energy inter 0-1}+\eqref{eq:Nolinear energy inter 0-2}+\eqref{eq:Nolinear energy inter remain}\geq\tfrac{1}{2}\eqref{eq:Nolinear energy inter 0-1}+\eqref{eq:Nolinear energy inter remain}+o_{R^{-1},\eta\to0}(\|\wt{\eps}\|_{\dot{\calH}_{R}^{1}}^{2}).
	\end{align*}
	\eqref{eq:Nolinear energy inter remain} is the outer term and will
	be estimated in Step 3. There is an outer part in \eqref{eq:Nolinear energy inter 0-1},
	too. We extract it in the following: 
	\begin{align}
		\eqref{eq:Nolinear energy inter 0-1}=\int_{|y|\geq R}\calE+\Re{\int_{|y|\geq R}}\partial_{y}(\varphi_{R}\ol{\wt{\eps}})\tfrac{2y}{1+y^{2}}\varphi_{R}\wt{\eps}, \label{eq:Nolinear energy inter and outer 2}
	\end{align}
	where
	\begin{align*}
		\calE\coloneqq |\partial_{y}(\varphi_{R}\wt{\eps})|^{2}+|\tfrac{y}{1+y^{2}}+\tfrac{1}{2}\calH(|\wt{\eps}|^{2})|^{2}|\varphi_{R}\wt{\eps}|^{2}+\Re[\partial_{y}(\varphi_{R}\ol{\wt{\eps}})\calH(|\wt{\eps}|^{2})\varphi_{R}\wt{\eps}].
	\end{align*}
	For the last term of \eqref{eq:Nolinear energy inter and outer 2}, we
	have 
	\begin{align*}
		\eqref{eq:Nolinear energy inter and outer 2}=-\frac{1}{2}{\int_{|y|\geq R}}\partial_{y}(\tfrac{2y}{1+y^{2}})|\varphi_{R}\wt{\eps}|^{2}={\int_{|y|\geq R}}\tfrac{y^{2}-1}{(1+y^{2})^{2}}|\varphi_{R}\wt{\eps}|^{2}\geq\frac{1}{2}\|\varphi_{R}\tfrac{1}{\langle y\rangle}\wt{\eps}\|_{L^{2}}^{2}.
	\end{align*}
	We use abbreviated notation for the integrals in the intermediate
	region and outer region. For a function $f$ with $\text{supp }f\subset[R,\infty)$,
	e.g. $\varphi_{R}\wt{\eps}$, we denote 
	\begin{align*}
		{\int_{\text{inter}}}f\coloneqq{\int_{|y|\geq R}}(1-\varphi_{4R^{2}}^{2})fdy,\quad{\int_{\text{outer}}}f\coloneqq{\int_{|y|\geq R}}\varphi_{4R^{2}}^{2}fdy.
	\end{align*}
	We also decompose $\int_{|y|\geq R}\calE$ into the intermediate part and the outer part,
	\begin{align*}
		\int_{|y|\geq R}\calE={\int_{\text{inter}}}\calE+{\int_{\text{outer}}}\calE.
	\end{align*}
	We control the
	intermediate term ${\int_{\text{inter}}}\calE$. We observe that
	\begin{align*}
		|\calH(|\wt{\eps}|^{2})|\leq C\|\wt{\eps}\|_{L^{2}}\|\wt{\eps}\|_{\dot{H}^{1}}\leq CM\eta.
	\end{align*}
	On $|y|\in[R,4R^{2}]$, we have 
	\begin{align*}
		|{\textbf 1}_{|y|\in[R,4R^{2}]}\tfrac{1}{2}\calH(|\wt{\eps}|^{2})|\leq(1-c(R,\eta))|{\textbf 1}_{|y|\in[R,4R^{2}]}(\tfrac{y}{1+y^{2}}+\tfrac{1}{2}\calH(|\wt{\eps}|^{2}))|
	\end{align*}
	where 
	\begin{align*}
		\frac{1}{2}<c(R,\eta)=\frac{\frac{4R^{2}}{1+(4R^{2})^{2}}-CM\eta}{\frac{4R^{2}}{1+(4R^{2})^{2}}-\frac{C}{2}M\eta}<1.
	\end{align*}
	Thus, we have 
	\begin{align*}
		{\int_{\text{inter}}}\calE\geq c(R,\eta){\int_{\text{inter}}}|\partial_{y}(\varphi_{R}\wt{\eps})|^{2}>\frac{1}{2}{\int_{\text{inter}}}|\partial_{y}(\varphi_{R}\wt{\eps})|^{2}.
	\end{align*}
	As a result, we have arrived at 
	\begin{align}
		\eqref{eq:Nolinear energy middle and outer}\geq & \eqref{eq:Nolinear energy inter remain}+\frac{1}{2}{\int_{\text{outer}}}\calE+\frac{1}{4}{\int_{\text{inter}}}|\partial_{y}(\varphi_{R}\wt{\eps})|^{2}\label{eq:Nolinear energy Step 2 goal-1}\\
		& +\frac{1}{4}\|\varphi_{R}\tfrac{1}{\langle y\rangle}\wt{\eps}\|_{L^{2}}^{2}+O_{M}(1)\cdot o_{R^{-1},\eta\to0}(\|\wt{\eps}\|_{\dot{\calH}_{R}^{1}}^{2}).\label{eq:Nolinear energy Step 2 goal-2}
	\end{align}
	\textbf{ }\eqref{eq:Nolinear energy Step 2 goal-1} will be handled
	in Step 3.\textbf{ }The first term of \eqref{eq:Nolinear energy Step 2 goal-2}
	is a part of $\norm{\wt{\eps}}_{\dot{\calH}_{R}^{1}}^{2}$.
	
	\textbf{Step 3.} We control the outer part \eqref{eq:Nolinear energy Step 2 goal-1}.
	A crucial feature of this step is to extract the energy $E(\varphi_{4R^{2}}\wt{\eps})$
	of the outer radiation as a lower bound. We observe that ${\int_{\text{outer}}}\calE$ becomes
	\begin{align}
		{\int_{\text{outer}}}\calE= & {\int_{\text{outer}}}|\partial_{y}(\varphi_{R}\wt{\eps})+\tfrac{1}{2}\calH(|\wt{\eps}|^{2})|^{2} +\tfrac{y^{2}}{(1+y^{2})^{2}}|\varphi_{R}\wt{\eps}|^{2}+\tfrac{y}{1+y^{2}}\calH(|\wt{\eps}|^{2})|\varphi_{R}\wt{\eps}|^{2}.\label{eq:Nolinear energy outer cancel pre}
	\end{align}
	Note that the first two terms of \eqref{eq:Nolinear energy outer cancel pre}
	are positive. By discarding the second term and part of the first term in \eqref{eq:Nolinear energy outer cancel pre}, we reduce the outer part \eqref{eq:Nolinear energy Step 2 goal-1}
	to 
	\begin{align}
		\eqref{eq:Nolinear energy Step 2 goal-1}=&\eqref{eq:Nolinear energy inter remain} +\frac{1}{2}{\int_{\text{outer}}}\calE+\frac{1}{4}{\int_{\text{inter}}}|\partial_{y}(\varphi_{R}\wt{\eps})|^{2}\nonumber \\
		\geq &
		\frac{1}{4}{\int_{\text{outer}}}|\partial_{y}(\varphi_{R}\wt{\eps})+\tfrac{1}{2}\calH(|\wt{\eps}|^{2})|^{2}+\frac{1}{4}{\int_{\text{inter}}}|\partial_{y}(\varphi_{R}\wt{\eps})|^{2}\label{eq:Nolinear energy outer energy}\\
		& +\eqref{eq:Nolinear energy inter remain}+\frac{1}{2}{\int_{\text{outer}}}\tfrac{y}{1+y^{2}}\calH(|\wt{\eps}|^{2})|\varphi_{R}\wt{\eps}|^{2}.\label{eq:Nolinear energy outer cancel}
	\end{align}
	We have to find out the outer energy from \eqref{eq:Nolinear energy outer energy}.
	To do this, we deal with the cut-off error. We note that \eqref{eq:Nolinear energy outer energy} equals to
	\begin{align*}
		\frac{1}{4}{\int_{|y|\geq R}}|\partial_{y}(\varphi_{R}\wt{\eps})|^{2}dy
		+\frac{1}{4}\Re{\int_{\text{outer}}}\partial_{y}(\varphi_{R}\wt{\eps})\calH(|\wt{\eps}|^{2})+\tfrac{1}{4}|\calH(|\wt{\eps}|^{2})|^2.
	\end{align*}
	We claim that 
	\begin{align}
		{\int_{|y|\geq R}}|\partial_{y}(\varphi_{R}\wt{\eps})|^{2}dy\geq & {\int_{|y|\geq R}}|\partial_{y}(\varphi_{4R^{2}}\wt{\eps})|^{2}dy+\|\chi_{2R^{2}}\varphi_{R}\wt{\eps}\|_{\dot{H}^{1}}^{2}-O_{\chi}(\|\wt{\eps}\|_{\dot{\calH}_{R}^{1}}^{2}).\label{eq:sec3 step3 cutoff claim}
	\end{align}
	We start from
	\begin{align}
		{\int_{|y|\geq R}}|\partial_{y}(\varphi_{R}\wt{\eps})|^{2}dy={\int_{\text{inter}}}|\partial_{y}(\varphi_{R}\wt{\eps})|^{2}+{\int_{\text{outer}}}|\partial_{y}(\varphi_{R}\wt{\eps})|^{2}.\label{eq:sec3 step3 cutoff 0}
	\end{align}
	For the outer term, we compute 
	\begin{align}
		{\int_{\text{outer}}}|\partial_{y}(\varphi_{R}\wt{\eps})|^{2}= & {\int_{|y|\geq R}}|\partial_{y}(\varphi_{4R^{2}}\wt{\eps})-\partial_{y}(\varphi_{4R^{2}})\wt{\eps}|^{2}dy\nonumber \\
		= & {\int_{|y|\geq R}}|\partial_{y}(\varphi_{4R^{2}}\wt{\eps})|dy+{\int_{|y|\geq R}}|\partial_{y}(\varphi_{4R^{2}})\wt{\eps}|^{2}dy\label{eq:sec3 step3 cutoff 1}\\
		& -2\Re{\int_{|y|\geq R}}\partial_{y}(\varphi_{4R^{2}}\wt{\eps})\partial_{y}(\varphi_{4R^{2}})\ol{\wt{\eps}}dy.\label{eq:sec3 step3 cutoff 2}
	\end{align}
	For the second term of \eqref{eq:sec3 step3 cutoff 1}, we deduce
	\begin{align}
		{\int_{|y|\geq R}}|\partial_{y}(\varphi_{4R^{2}})\wt{\eps}|^{2}dy=\frac{1}{(4R^{2})^{2}}{\int_{|y|\geq R}}|(\chi_{y})_{4R^{2}}\wt{\eps}|^{2}dy\lesssim_{\chi}\|Q\wt{\eps}\|_{L^{2}}^{2}\leq\|\wt{\eps}\|_{\dot{\calH}_{R}^{1}}^{2}.\label{eq:sec3 step3 cutoff 1-1}
	\end{align}
	We control \eqref{eq:sec3 step3 cutoff 2} as 
	\begin{align}
		\eqref{eq:sec3 step3 cutoff 2} & =-2{\int_{|y|\geq R}}|\partial_{y}(\varphi_{4R^{2}})\wt{\eps}|^{2}dy-2\Re{\int_{|y|\geq R}}\varphi_{4R^{2}}\partial_{y}(\varphi_{R}\wt{\eps})\partial_{y}(\varphi_{4R^{2}})\ol{\wt{\eps}}dy\nonumber \\
		& \leq O_{\chi}(1)\cdot\|\wt{\eps}\|_{\dot{\calH}_{R}^{1}}^{2}+2\bigg|{\int_{|y|\geq R}}\varphi_{4R^{2}}\partial_{y}(\varphi_{R}\wt{\eps})\partial_{y}(\varphi_{4R^{2}})\ol{\wt{\eps}}dy\bigg|.\label{eq:sec3 step3 cutoff 3}
	\end{align}
	The second term of \eqref{eq:sec3 step3 cutoff 3} is equal to
	\begin{align}
		&2\bigg|{\int_{|y|\geq R}}(\chi_{y}\varphi_1)(\tfrac{y}{4R^2})\cdot\partial_{y}(\varphi_{R}\wt{\eps})\cdot\tfrac{1}{4R^{2}}{\textbf 1}_{y\sim4R^{2}}\ol{\wt{\eps}}dy\bigg|\nonumber \\
		& \leq\frac{1}{C_{\chi}}{\int_{|y|\geq R}}|(\chi_{y}\varphi_1)(\tfrac{y}{4R^2})\cdot\partial_{y}(\varphi_{R}\wt{\eps})|^{2}dy+C_{\chi}^{\prime}\|\wt{\eps}\|_{\dot{\calH}_{R}^{1}}^{2}.\label{eq:sec3 step3 cutoff 3-1}
	\end{align}
	Here $C_{\chi}$ comes from $|\chi_{y}|^{2}\leq C_{\chi}\chi$.
	Therefore, gathering \eqref{eq:sec3 step3 cutoff 0}, \eqref{eq:sec3 step3 cutoff 1-1},
	\eqref{eq:sec3 step3 cutoff 3}, and \eqref{eq:sec3 step3 cutoff 3-1},
	we derive that 
	\begin{align}
		{\int_{|y|\geq R}}|\partial_{y}(\varphi_{R}\wt{\eps})|^{2}dy\geq & {\int_{|y|\geq R}}|\partial_{y}(\varphi_{4R^{2}}\wt{\eps})|^{2}dy-O_{\chi}(\|\wt{\eps}\|_{\dot{\calH}_{R}^{1}}^{2})\nonumber \\
		& +{\int_{|y|\geq R}}[(1-\varphi_{4R^{2}}^{2})-\tfrac{1}{C_{\chi}}(\chi_{y}\varphi_1)^2(\tfrac{y}{4R^2})]|\partial_{y}(\varphi_{R}\wt{\eps})|^{2}dy.\label{eq:sec3 step3 cutoff 4}
	\end{align}
	By the pointwise estimate $|\chi_{y}|^{2}\leq C_{\chi}\chi$, we
	have 
	\begin{align*}
		\tfrac{1}{C_{\chi}}|\chi_{y}\varphi|^{2}\leq\chi\leq2\chi-\chi^{2}=1-\varphi^{2}.
	\end{align*}
	This implies that, for large $2R^2\gg \|\chi_y\|_{L^\infty}$,
	\begin{align*}
		\eqref{eq:sec3 step3 cutoff 4}\geq\|{\textbf 1}_{|y|\leq 4R^2}\partial_y(\varphi_{R}\wt{\eps})\|_{L^2}^{2}\geq \|\chi_{2R^{2}}\varphi_{R}\wt{\eps}\|_{\dot{H}^{1}}^{2}
	\end{align*}
	and we conclude the claim \eqref{eq:sec3 step3 cutoff claim}.
	
	Now, using \eqref{eq:sec3 step3 cutoff claim}, we compute \eqref{eq:Nolinear energy outer energy}
	as 
	\begin{align*}
		4\eqref{eq:Nolinear energy outer energy}\geq & \|\partial_{y}(\varphi_{4R^{2}}\wt{\eps})+\tfrac{1}{2}\calH(|\wt{\eps}|^{2})\varphi_{4R^{2}}\wt{\eps}\|_{L^{2}}^{2}+\|\chi_{2R^{2}}\varphi_{R}\wt{\eps}\|_{\dot{H}^{1}}^{2}\\
		& +\Re{\int_{\bbR}}[\varphi_{4R^{2}},\partial_{y}](\varphi_{R}\ol{\wt{\eps}})\calH(|\wt{\eps}|^{2})\varphi_{4R^{2}}\wt{\eps}dy-O_{\chi}(\|\wt{\eps}\|_{\dot{\calH}_{R}^{1}}^{2})\\
		= & \|\partial_{y}(\varphi_{4R^{2}}\wt{\eps})+\tfrac{1}{2}\calH(|\varphi_{4R^{2}}\wt{\eps}|^{2})\varphi_{4R^{2}}\wt{\eps}\|_{L^{2}}^{2}+\|\chi_{2R^{2}}\varphi_{R}\wt{\eps}\|_{\dot{H}^{1}}^{2}\\
		& +O(R^{2}M\eta\|\tfrac{1}{\langle y\rangle}\wt{\eps}\|_{L^{2}}^{2})-O_{\chi}(\|\wt{\eps}\|_{\dot{\calH}_{R}^{1}}^{2})\\
		= & E(\varphi_{4R^{2}}\wt{\eps})+\|\chi_{2R^{2}}\varphi_{R}\wt{\eps}\|_{\dot{H}^{1}}^{2}
		\\
		&-O_{\chi}(\|\wt{\eps}\|_{\dot{\calH}_{R}^{1}}^{2})+O_{M}(1)\cdot o_{R^{-1},\eta\to0}(\|\wt{\eps}\|_{\dot{\calH}_{R}^{1}}^{2}).
	\end{align*}
	In addition, we check that $\eqref{eq:Nolinear energy outer energy}\geq0$. So, discarding $(1-\delta^{\prime})\eqref{eq:Nolinear energy outer energy}$
	for a small $\delta^{\prime}>0$, we obtain 
	\begin{align}
		\begin{split}\eqref{eq:Nolinear energy outer energy}\geq & \tfrac{\delta^{\prime}}{4}E(\varphi_{4R^{2}}\wt{\eps})+\tfrac{\delta^{\prime}}{4}\|\chi_{2R^{2}}\varphi_{R}\wt{\eps}\|_{\dot{H}^{1}}^{2}\\
			& -\delta^{\prime}O_{\chi}(\|\wt{\eps}\|_{\dot{\calH}_{R}^{1}}^{2})+O_{M}(1)\cdot o_{R^{-1},\eta\to0}(\|\wt{\eps}\|_{\dot{\calH}_{R}^{1}}^{2})
		\end{split}
		\label{eq:Nolinear energy outer extra energy goal}
	\end{align}
	We control \eqref{eq:Nolinear energy outer cancel}. We have 
	\begin{align}
		\eqref{eq:Nolinear energy outer cancel}= & \eqref{eq:Nolinear energy inter remain}+\frac{1}{2}{\int_{\bbR}}\tfrac{y}{1+y^{2}}\calH(|\wt{\eps}|^{2})\varphi_{4R^{2}}^{2}|\wt{\eps}|^{2}dy\nonumber \\
		= & \frac{1}{2}{\int_{\bbR}}\tfrac{y}{1+y^{2}}\calH((1-\varphi_{4R^{2}}^{2})|\wt{\eps}|^{2})|\varphi_{4R^{2}}\wt{\eps}|^{2}dy\label{eq:Nolinear energy outer good}\\
		& +\eqref{eq:Nolinear energy inter remain}+\frac{1}{2}{\int_{\bbR}}\tfrac{y}{1+y^{2}}\calH(|\varphi_{4R^{2}}\wt{\eps}|^{2})|\varphi_{4R^{2}}\wt{\eps}|^{2}dy.\label{eq:Nolinear energy outer bad}
	\end{align}
	For \eqref{eq:Nolinear energy outer good}, we have 
	\begin{align}
		|\eqref{eq:Nolinear energy outer good}|\lesssim R^{C}M\eta\|\tfrac{1}{\langle y\rangle}\wt{\eps}\|_{L^{2}}^{2}=O_{M}(1)\cdot o_{R^{-1},\eta\to0}(\|\wt{\eps}\|_{\dot{\calH}_{R}^{1}}^{2}).\label{eq:Nolinear energy outer cancel goal 1}
	\end{align}
	Now we estimate \eqref{eq:Nolinear energy outer bad}. For the second
	term of \eqref{eq:Nolinear energy outer bad}, by using $2\frac{y}{1+y^{2}}=\calH(Q^{2})$
	and \eqref{eq:HilbertProductRule} with $f=g=|\varphi_{4R^{2}}\wt{\eps}|^{2}$,
	we compute 
	\begin{align}
		4{\int_{\bbR}}\tfrac{y}{1+y^{2}}\calH(|\varphi_{4R^{2}}\wt{\eps}|^{2})|\varphi_{4R^{2}}\wt{\eps}|^{2}dy & =-2{\int_{\bbR}}Q^{2}\calH[\calH(|\varphi_{4R^{2}}\wt{\eps}|^{2})|\varphi_{4R^{2}}\wt{\eps}|^{2}]dy\nonumber \\
		& ={\int_{\bbR}}Q^{2}|\varphi_{4R^{2}}\wt{\eps}|^{4}-Q^{2}[\calH(|\varphi_{4R^{2}}\wt{\eps}|^{2})]^{2}dy.\label{eq:Nolinear energy outer bad 2}
	\end{align}
	To cancel out the second term of \eqref{eq:Nolinear energy outer bad 2},
	we use $\eqref{eq:Nolinear energy inter remain}=\|\tfrac{1}{2}Q\calH(|\wt{\eps}|^{2})\|_{L^{2}}^{2}$.
	We have 
	\begin{align}
		4\eqref{eq:Nolinear energy inter remain}= & {\int_{\bbR}}Q^{2}|\calH((1-\varphi_{4R^{2}}^{2})|\wt{\eps}|^{2})|^{2}+2Q^{2}\calH((1-\varphi_{4R^{2}}^{2})|\wt{\eps}|^{2})\calH(|\varphi_{4R^{2}}\wt{\eps}|^{2})dy\label{eq:Nolinear energy last inner}\\
		& +{\int_{\bbR}}Q^{2}|\calH(|\varphi_{4R^{2}}\wt{\eps}|^{2})|^{2}dy,\label{eq:Nolinear energy last outer}
	\end{align}
	and 
	\begin{align*}
		|\eqref{eq:Nolinear energy last inner}|\lesssim R^{C}M\eta\|\langle y\rangle^{-1}\wt{\eps}\|_{\dot{\calH}_{R}^{1}}^{2}=O_{M}(1)\cdot o_{R^{-1},\eta\to0}(\|\wt{\eps}\|_{\dot{\calH}_{R}^{1}}^{2}).
	\end{align*}
	In addition, the second term of \eqref{eq:Nolinear energy outer bad 2}
	is canceled by \eqref{eq:Nolinear energy last outer}. Therefore,
	we derive 
	\begin{align}
		\eqref{eq:Nolinear energy outer bad}\geq\frac{1}{8}{\int_{\bbR}}Q^{2}|\varphi_{4R^{2}}\wt{\eps}|^{4}+Q^{2}|\calH(|\varphi_{4R^{2}}\wt{\eps}|^{2})|^{2}dy+O_{M}(1)\cdot o_{R^{-1},\eta\to0}(\|\wt{\eps}\|_{\dot{\calH}_{R}^{1}}^{2}).\label{eq:Nolinear energy outer cancel goal 2}
	\end{align}
	Collecting \eqref{eq:Nolinear energy outer extra energy goal}, \eqref{eq:Nolinear energy outer cancel goal 1},
	and \eqref{eq:Nolinear energy outer cancel goal 2}, and discarding
	first two positive terms in \eqref{eq:Nolinear energy outer cancel goal 2},
	we deduce 
	\begin{align}
		\begin{split}  \eqref{eq:Nolinear energy Step 2 goal-1}\geq &\tfrac{\delta^{\prime}}{4}E(\varphi_{4R^{2}}\wt{\eps})+\tfrac{\delta^{\prime}}{4}\|\chi_{2R^{2}}\varphi_{R}\wt{\eps}\|_{\dot{H}^{1}}^{2}\\
			& -\delta^{\prime}O_{\chi}(\|\wt{\eps}\|_{\dot{\calH}_{R}^{1}}^{2})+O_{M}(1)\cdot o_{R^{-1},\eta\to0}(\|\wt{\eps}\|_{\dot{\calH}_{R}^{1}}^{2}).
		\end{split}
		\label{eq:Nolinear energy Step 3 goal}
	\end{align}
	Finally, combining \eqref{eq:Nolinear energy Step 1 goal}, \eqref{eq:Nolinear energy Step 2 goal-2},
	and \eqref{eq:Nolinear energy Step 3 goal}, we arrive at 
	\begin{align}
		\begin{split}E(Q+\wt{\eps})\geq & (C-\delta^{\prime}C_{\chi}^{\prime})\|\wt{\eps}\|_{\dot{\calH}_{R}^{1}}^{2}+\tfrac{\delta^{\prime}}{4}\|\chi_{2R^{2}}\varphi_{R}\wt{\eps}\|_{\dot{H}^{1}}^{2}+\tfrac{\delta^{\prime}}{4}E(\varphi_{4R^{2}}\wt{\eps})\\
			& -C\|{\textbf 1}_{|y|\in[R,2R]}|\wt{\eps}|_{-1}\|_{L^{2}}^{2}+O_{M}(1)\cdot o_{R^{-1},\eta\to0}(\|\wt{\eps}\|_{\dot{\calH}_{R}^{1}}^{2}),
		\end{split}
		\label{eq:finalizing (3.5)}
	\end{align}
	where $C$ is the implicit constant in \eqref{eq:finalizing (3.5)}
	depending only on $\mathcal{Z}_{j}$ and $\chi$. Taking small $\delta^{\prime}=\delta^{\prime}(\calZ_{j},\chi)$,
	large $R=R(\calZ_{j},\chi,M,\delta^{\prime})\gg1$ and small $\eta=\eta(\calZ_{j},\chi,M,\delta^{\prime},R)\ll1$,
	we can conclude \eqref{eq:Nonlinear energy before averaging}.
	
	\textbf{Step 4.} Now, we finalize the proof by taking a logarithmic
	average over $R$. One goal is to eliminate the term $\|{\textbf 1}_{[R,2R]}|\wt{\eps}|_{-1}\|_{L^{2}}^{2}$.
	Indeed, we replace $R$ by $R^{\prime}$, and take a logarithmic integral $\frac{1}{\log R}\int_{R}^{R^{2}}\frac{dR^{\prime}}{R^{\prime}}$ of \eqref{eq:Nonlinear energy before averaging} on $[R,R^{2}]$. Then, by Fubini theorem, we have
	\begin{align*}
		\frac{1}{\log R}\int_{R}^{R^2}\left\|{\textbf 1}_{|y|\in[R^{\prime},2R^{\prime}]} |\wt\eps|_{-1}\right\|_{L_y^2}^2 \frac{dR^\prime}{R^\prime}
		=&\frac{1}{\log R}
		\int_{R}^{R^2}
		\left[\int {\textbf 1}_{|y|\in[R^{\prime},2R^{\prime}]} |\wt\eps|_{-1}^2(y) dy \right]\frac{dR^\prime}{R^\prime}
		\\
		=&
		\int \left[\frac{1}{\log R}\int_{R}^{R^2} {\textbf 1}_{|y|\in[R^{\prime},2R^{\prime}]} \frac{dR^\prime}{R^\prime} \right] |\wt\eps|_{-1}^2(y) dy.
	\end{align*}
	By direct computation, we derive
	\begin{align*}
		\frac{1}{\log R}\int_{R}^{R^2} {\textbf 1}_{|y|\in[R^{\prime},2R^{\prime}]} \frac{dR^\prime}{R^\prime}
		=
		\begin{cases}
			0, & |y| \notin [R,2R^2], \\
			\frac{1}{\log R} \log (\frac{|y|}{R}), 
			& |y| \in [R,2R], \\
			\frac{\log 2}{\log R}, 
			& |y| \in [2R,R^2], \\
			\frac{1}{\log R} \log (\frac{2R^2}{|y|}),
			& |y| \in [R^2,2R^2].
		\end{cases}
	\end{align*}
	Therefore, we have
	\begin{align*}
		\frac{1}{\log R}\int_{R}^{R^2} {\textbf 1}_{|y|\in[R^{\prime},2R^{\prime}]} \frac{dR^\prime}{R^\prime}
		\leq {\textbf 1}_{|y|\in[R,2R^{2}]}  \frac{\log 2}{\log R},
	\end{align*}
	which implies
	\begin{align*}
		\frac{1}{\log R}\int_{R}^{R^2}\left\|{\textbf 1}_{|y|\in[R^{\prime},2R^{\prime}]} |\wt\eps|_{-1}\right\|_{L_y^2}^2 \frac{dR^\prime}{R^\prime}
		\lesssim 
		\frac{1}{\log R}\left\|{\textbf 1}_{|y|\in[R,2R^{2}]} |\wt\eps|_{-1}\right\|_{L^2}^2.
	\end{align*}
	This and \eqref{eq:Nonlinear energy before averaging} yield (after taking $R\gg1$ larger if necessary)
	\begin{align}
		E(Q+\wt{\eps}) & \gtrsim_{M}\|\wt{\eps}\|_{\dot{\calH}_{2R^{2}}^{1}}^{2}+E(\varphi_{4R^{4}}\wt{\eps})-O(\tfrac{1}{\log R}\left\|{\textbf 1}_{|y|\in[R,2R^{2}]}|\wt{\eps}|_{-1}\right\|_{L^{2}}^{2})\nonumber \\
		& \gtrsim_{M}\|\wt{\eps}\|_{\dot{\calH}_{2R^{2}}^{1}}^{2}+E(\varphi_{4R^{4}}\wt{\eps})\geq E(\varphi_{4R^{4}}\wt{\eps}).\label{eq:Nolinear energy goal cutoff 1}
	\end{align}
	Moreover, we redo all the argument with $R_{1}=\sqrt{2}R^{2}$, by
	taking smaller $\eta_{1}$ depending on $R_{1}$. Then we have 
	\begin{align}
		E(Q+\wt{\eps}) & \gtrsim_{M}\|\wt{\eps}\|_{\dot{\calH}_{4R^{4}}^{1}}^{2}+E(\varphi_{16R^{8}}\wt{\eps})\geq\|\wt{\eps}\|_{\dot{\calH}_{4R^{4}}^{1}}^{2}.\label{eq:Nolinear energy goal cutoff 2}
	\end{align}
	Hence, combining \eqref{eq:Nolinear energy goal cutoff 1} and \eqref{eq:Nolinear energy goal cutoff 2},
	and renaming $4R^{4}$ by $R$, and $\eta_{1}$ by $\eta$, we conclude
	\eqref{eq:energy bubbling}. This finishes the proof. 
\end{proof}

\section{Multi-soliton configuration}
\label{sec:multisoliton}

\label{sec:Multi sol decomp} In this section, we establish a multi-soliton
decomposition by iterating one bubbling procedure in Proposition~\ref{prop:Decomposition}.
In this process, we bypass a time-sequential soliton resolution and
directly prove the soliton resolution continuously in time. From this
section, for given initial data $v_{0}$ to \eqref{CMdnls-gauged},
we denote the mass and energy for $v_{0}$ by $(M_{0},E_{0})\coloneqq(M(v_{0}),E(v_{0}))$.
We assume that the solution $v(t)$ to \eqref{CMdnls-gauged} blows
up at the time $T$. Then $\|v(t)\|_{\dot{H}^{1}}\to\infty$ as $t\to T$,
and there exists a time $0<T_{1}<T$ such that $\sqrt{E(v(t))}\leq\alpha^{*}\|v(t)\|_{\dot{H}}$
on $[T_{1},T)$. We apply the decomposition, Proposition \eqref{prop:Decomposition},
and then there exist $\lambda_{1}(t),\gamma_{1}(t),x_{1}(t),\wt{\eps}_{1}$
such that 
\begin{align}
	v=[Q+\wt{\eps}_{1}]_{\lambda_{1},\gamma_{1},x_{1}},\quad(\wt{\eps}_{1},\mathcal{Z}_{k})_{r}=0\text{ for }k=1,2,3,\label{eq:4intro decompose}
\end{align}
\begin{align}
	\lambda_{1}^{-1}\sim_{M_{0}}\|v(t)\|_{\dot{H}^{1}}\to\infty,\label{eq:v(t) blowup rate lambda 1}
\end{align}
\begin{align}
	\|\wt{\eps}_{1}\|_{\dot{\calH}_{R}^{1}}^{2}+E(\varphi_{R}\wt{\eps}_{1})\lesssim_{M_{0}}\lambda_{1}^{2}E_{0}.\label{eq:4intro nonlinear coercivity}
\end{align}
\eqref{eq:4intro nonlinear coercivity} indicates that we can control
the inner part of radiation $\|\wt{\eps}_{1}\|_{\dot{\calH}_{R}^{1}}\lesssim\lambda_{1}$
and the outer radiation energy $E(\varphi_{R}\wt{\eps}_{1})\lesssim\la_{1}^{2}$.
However, we cannot control the outer part of radiation $\varphi_{R}\wt{\eps}_{1}$.
More precisely, it is possible that $\|\varphi_{R}\wt{\eps}_{1}\|_{\dot{H}^{1}}\gg\lambda_{1}$,
and one does not expect that the radiation term $[\wt{\eps}_{1}]_{\lambda_{1},\gamma_{1},x_{1}}$
to converge to some asymptotic profile.\footnote{It is instructive to compare to the self-dual \emph{equivariant} Chern-Simons-Schrödinger
	equation (CSS) in \cite{KimKwonOh2025AJM}. There, the authors
	obtain $\|\wt{\eps}_{1}\|_{\dot{\calH}^{1}}\lesssim\lambda_{1}$ and
	proceed to show soliton resolution with a single bubble.} In this case, for the outer radiation, we sustain near-zero energy
$E(\varphi_{R}\wt{\eps}_{1})\lesssim\la_{1}^{2}\ll\|\varphi_{R}\wt{\eps}_{1}\|_{\dot{H}^{1}}^{2}$.
This bound enables us to reapply Proposition~\ref{prop:Decomposition}
to extract another bubble from $\varphi_{R}\wt{\eps}_{1}$. We will
iterate this procedure until the radiation satisfies $\|\varphi_{R}\wt{\eps}_{N}\|_{\dot{H}^{1}}\lesssim\lambda_{N}$
and thus converges to some asymptotic profile. At each bubbling out,
the mass of radiation drops by $M(Q)$. So, the iteration should halt
in finitely many steps. Indeed, for the second bubbling, the radiation satisfies
either $\|\varphi_{R}\wt{\eps}_{1}\|_{\dot{H}^{1}}\gg\lambda_{1}$
or $\|\varphi_{R}\wt{\eps}_{1}\|_{\dot{H}^{1}}\lesssim\lambda_{1}$.
In other words, either 
\begin{align*}
	\liminf_{t\to T}\frac{\lambda_{1}(t)}{\|\varphi_{R}\wt{\eps}_{1}(t)\|_{\dot{H}^{1}}}=0,\quad\text{or}\quad\liminf_{t\to T}\frac{\lambda_{1}(t)}{\|\varphi_{R}\wt{\eps}_{1}(t)\|_{\dot{H}^{1}}}>0.
\end{align*}
The latter case corresponds to $\|\wt{\eps}_{1}\|_{\dot{H}^{1}}\lesssim\lambda_{1}$,
which results in convergence to an asymptotic profile. In this case, $v(t)$
is a single-bubble blow-up solution. For the former case, we may reapply Proposition~\ref{prop:Decomposition} to $v=\varphi_{R}\wt{\eps}_{1}$ in order to extract a second bubble along a sequence of times: there exists
a sequence $\{t_{n}\}_{n\in\bbN}$ with $t_{n}\to T$ such that $\lim_{n\to\infty}\frac{\lambda_{1}(t_{n})}{\|\varphi_{R}\wt{\eps}_{1}(t_{n})\|_{\dot{H}^{1}}}=0$.
It then follows that $\varphi_{R}\wt{\eps}_{1}$ has sequentially small energy, namely,
\begin{equation*}
	\sqrt{E(\varphi_{R}\wt{\eps}_{1}(t_{n}))}<\alpha^{*}\|\varphi_{R}\wt{\eps}_{1}(t_{n})\|_{\dot{H}^{1}}.
\end{equation*}
Hence, we have the decomposition: 
\begin{align*}
	\begin{gathered}
		\varphi_{R}\wt{\eps}_{1}(t_{n})=[Q+\wt{\eps}_{2,n}]_{\wt{\lambda}_{2,n},\wt{\gamma}_{2,n},\wt x_{2,n}},
		\\
		v(t_{n})=([Q]_{\lambda_{1},\gamma_{1},x_{1}}+[Q]_{\lambda_{2},\gamma_{2},x_{2}}+\eps_{2})(t_{n}),
	\end{gathered}
\end{align*}
with 
\begin{align*}
	\begin{gathered}
		\wt{\lambda}_{2,n}\sim\frac{\|Q\|_{\dot{H}^{1}}}{\|\varphi_{R}\wt{\eps}_{1}(t_{n})\|_{\dot{H}^{1}}},
		\quad 
		(\lambda_{2},\gamma_{2},x_{2})(t_{n})\coloneqq(\lambda_{1}\wt{\lambda}_{2,n},\gamma_{1}+\wt{\gamma}_{2,n},x_{1}+\lambda_{1}\wt x_{2,n})(t_{n}),
		\\
		\eps_{2}\coloneqq[\chi_{R}\wt{\eps}_{1}]_{\lambda_{1},\gamma_{1},x_{1}}+[\wt{\eps}_{1}]_{\lambda_{2},\gamma_{2},x_{2}}.
	\end{gathered}
\end{align*}
Moreover, we have the \eqref{eq:4intro nonlinear coercivity} bound
for the next step: 
\begin{align*}
	\|\wt{\eps}_{2,n}\|_{\dot{\calH}_{R}^{1}}^{2}+E(\varphi_{R}\wt{\eps}_{2,n})\lesssim\wt{\lambda}_{2,n}^{2}E(\varphi_{R}\wt{\eps}_{1}(t_{n}))\lesssim_{M_{0}}(\wt{\lambda}_{2,n}\lambda_{1})^{2}(t_{n})E_{0}=\lambda_{2}^{2}(t_{n})E_{0}.
\end{align*}
And then, we can check whether $\liminf_{n\to\infty}\frac{\lambda_{2}(t)}{\|\varphi_{R}\wt{\eps}_{2}(t_{n})\|_{\dot{H}^{1}}}=0$
or not to proceed to the next step. This allows us to obtain sequential
in time soliton resolution. However, we will bypass the sequential
soliton resolution and directly prove continuous in time soliton resolution.
For this goal, we improve 
\begin{equation}
	\liminf_{t\to T}\frac{\lambda_{1}(t)}{\|\varphi_{R}\wt{\eps}_{1}(t)\|_{\dot{H}^{1}}}=0\quad\Rightarrow\quad\lim_{t\to T}\frac{\lambda_{1}(t)}{\|\varphi_{R}\wt{\eps}_{1}(t)\|_{\dot{H}^{1}}}=0.\label{eq:no return property intro}
\end{equation}
Equipped with \eqref{eq:no return property intro}, we can derive
continuous-in-time soliton resolution as explained above. In the rest
of this section, we establish the multi-soliton decomposition by an
induction argument. Among other things, as we work in a nonradial
setting, a crucial part of the induction step is to control the translation
parameters: 
\[
\lim_{t\to T}x_{j}(t)\to x_{j}(T),\qquad|x_{j}(T)|<\infty.
\]

\subsection{Induction}

In order to implement the previously described strategy, we will prove
via an induction argument. Fix an initial data $v_{0}\in H^{1}$ and
the blow-up solution $v(t)$ as above. Fix $R,\eta$, and $\alpha^{*}$
depending on $M_{0}$ as in Proposition~\ref{prop:Decomposition}.
At the zeroth step, we use conventions $T_{0}\coloneqq0$, $\varphi_{R}\wt{\eps}_{0}(t)\coloneqq v(t)$,
and $(\lambda_{0},\gamma_{0},x_{0})=(1,0,0)$.

For each $k\geq1$, we define the induction hypothesis statement for
$k$-soliton configuration, $P(k;v_{0})=P(k)$, as follows; 
\begin{quote}
	For $1\le j\le k$, there exist a time $T_{j}>0$, $C^{1}$ modulation
	parameters $(\wt{\lambda}_{j},\wt{\gamma}_{j},\wt x_{j})$ and $(\lambda_{j},\gamma_{j},x_{j})\in\bbR_{+}\times\bbR/2\pi\bbZ\times\bbR$,
	and radiation terms $\wt{\eps}_{j},\eps_{j}\in\dot{\calH}^{1}$ defined on
	$t\in[T_{j},T)$ such that the following properties hold:
\end{quote}

\begin{enumerate}[label=(H\arabic{*})]
	\item \label{state:no-return property}(No-return property) We have 
	\begin{equation}
		\lim_{t\to T}\frac{\lambda_{j-1}(t)}{\|\varphi_{R}\wt{\eps}_{j-1}(t)\|_{\dot{H}^{1}}}=0.\label{eq:no return}
	\end{equation}
	\item \label{state:further decomposition}(Further decomposition) There
	exists a $T_{j-1}<T_{j}<T$ such that there exist $C^{1}$ modulation
	parameters $(\wt{\lambda}_{j},\wt{\gamma}_{j},\wt x_{j})\in\bbR_{+}\times\bbR/2\pi\bbZ\times\bbR$
	and radiation term $\wt{\eps}_{j}\in\dot{\calH}^{1}$ on $t\in[T_{j},T)$
	which satisfy the following: 
	\begin{align*}
		\varphi_{R}\wt{\eps}_{j-1}=[Q+\wt{\eps}_{j}]_{\wt{\lambda}_{j},\wt{\gamma}_{j},\wt x_{j}},\quad\quad(\wt{\eps}_{j},\mathcal{Z}_{i})_{r}=0\text{ for }i=1,2,3,
	\end{align*}
	with the smallness $\|\wt{\eps}_{j}\|_{\dot{\calH}^{1}}<\eta$ and
	the energy bubbling 
	\begin{align}
		\|\wt{\eps}_{j}\|_{\dot{\calH}_{R}^{1}}^{2}+E(\varphi_{R}\wt{\eps}_{j})\lesssim_{M_{0}}\wt{\lambda}_{j}^{2}E(\varphi_{R}\wt{\eps}_{j-1})\lesssim_{M_{0},E_{0}}\lambda_{j}^{2}.\label{eq:induction k nonlinear energy}
	\end{align}
	Moreover, for $t\in[T_{j},T)$, we have 
	\begin{align}
		v(t)=\sum_{i=1}^{j}[Q]_{\lambda_{i},\gamma_{i},x_{i}}+\eps_{j}\label{eq:induction k v decom}
	\end{align}
	where 
	\begin{align}
		\eps_{j}=\sum_{i=1}^{j-1}[\chi_{R}\wt{\eps}_{i}]_{\lambda_{i},\gamma_{i},x_{i}}+[\wt{\eps}_{j}]_{\lambda_{j},\gamma_{j},x_{j}},\label{eq:induction k radiation decom}
	\end{align}
	and 
	\begin{align}
		(\lambda_{j},\gamma_{j},x_{j})\coloneqq(\lambda_{j-1}\wt{\lambda}_{j},\gamma_{j-1}+\wt{\gamma}_{j},x_{j-1}+\lambda_{j-1}\wt x_{j}).\label{eq:induction k modul para}
	\end{align}
	\item \label{state:soliton decoupling}(Soliton decoupling) We have $\lim_{t\to T^{-}}\lambda_{j}(t)=0$.
	If $j\geq2$, we have 
	\begin{align}
		\lim_{t\to T}\left(\frac{\lambda_{j}(t)}{\lambda_{j-1}(t)}+\left|\frac{x_{j}(t)-x_{j-1}(t)}{\lambda_{j-1}(t)}\right|\right)=\lim_{t\to T}(\wt{\lambda}_{j}(t)+|\wt x_{j}(t)|)=\infty.\label{eq:induction k no same rate}
	\end{align}
	Moreover, we have 
	\begin{align}
		\lambda_{1}\lesssim\lambda_{2}\lesssim\cdots\lesssim\lambda_{j},\label{eq:induction lambda}
	\end{align}
	and if $\lambda_{i}\sim\lambda_{j}$ for some $i<j$, we have 
	\begin{align}
		\lim_{t\to T}\left|\frac{x_{j}(t)-x_{i}(t)}{\lambda_{i}(t)}\right|=\infty.\label{eq:induction translation}
	\end{align}
	\item \label{state:translation}(Convergence of the translation) $\lim_{t\to T}x_{j}(t)\eqqcolon x_{j}(T)$
	exists and $|x_{j}(T)|<\infty$. 
\end{enumerate}
Once we have $P(k)$ with $k$-soliton decomposition, we encounter
a dichotomy for the next step. Define the statements $Q(k;v_{0})=Q(k)$
and $Q(k;v_{0})^{c}=Q(k)^{c}$ by the followings: 
\begin{align*}
	Q(k):\liminf_{t\to T}\frac{\lambda_{k}(t)}{\|\varphi_{R}\wt{\eps}_{k}(t)\|_{\dot{H}^{1}}}=0,\qquad Q(k)^{c}:\liminf_{t\to T^{-}}\frac{\lambda_{k}(t)}{\|\varphi_{R}\wt{\eps}_{k}(t)\|_{\dot{H}^{1}}}>0.
\end{align*}
Now, we are ready to state the initial case and induction step. 
\begin{lem}[Initial case]
	\label{lem:Induction initial} Let $v$ be a blow-up solution to
	\eqref{CMdnls-gauged} with initial data $v_{0}\in H^{1}$. Then $P(1)$
	is true. 
\end{lem}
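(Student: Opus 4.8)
The plan is to verify the four clauses \ref{state:no-return property}--\ref{state:translation} defining $P(1)$ (i.e. for $j=1$), of which only \ref{state:translation} requires genuine work; the rest follow from the blow-up alternative, the conservation laws, and Proposition \ref{prop:Decomposition}.

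\emph{Clauses \ref{state:no-return property}--\ref{state:soliton decoupling}.} Since $[0,T)$ is maximal and $T<\infty$, the blow-up alternative for \eqref{CMdnls-gauged} in $H^{1}$ forces $\|v(t)\|_{H^{1}}\to\infty$, hence $\|v(t)\|_{\dot H^{1}}\to\infty$ as $t\to T$ by mass conservation. With the zeroth-step conventions $\lambda_{0}\equiv1$ and $\varphi_{R}\wt{\eps}_{0}\coloneqq v$, this is exactly \eqref{eq:no return} for $j=1$, giving \ref{state:no-return property}. For \ref{state:further decomposition}: energy is conserved, $E(v(t))\equiv E_{0}$, so there is $0<T_{1}<T$ with $\sqrt{E_{0}}\le\alpha^{*}\|v(t)\|_{\dot H^{1}}$ on $[T_{1},T)$; since also $\|v(t)\|_{L^{2}}=\sqrt{M_{0}}$, we apply Proposition \ref{prop:Decomposition} with $M\coloneqq\max(M_{0},M(Q))$ on $[T_{1},T)$. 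This produces $C^{1}$ modulation parameters, which here coincide with $(\lambda_{1},\gamma_{1},x_{1})=(\wt\lambda_{1},\wt\gamma_{1},\wt x_{1})$, and $\wt{\eps}_{1}\in\dot{\calH}^{1}$, satisfying the orthogonality $(\wt{\eps}_{1},\calZ_{i})_{r}=0$, the smallness $\|\wt{\eps}_{1}\|_{\dot{\calH}^{1}}<\eta$, and — since $E(\varphi_{R}\wt{\eps}_{0})=E(v)=E_{0}$ — the energy bubbling \eqref{eq:induction k nonlinear energy}; setting $\eps_{1}\coloneqq[\wt{\eps}_{1}]_{\lambda_{1},\gamma_{1},x_{1}}$ gives \eqref{eq:induction k v decom}--\eqref{eq:induction k modul para}. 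For \ref{state:soliton decoupling}, the clauses concerning $j\ge2$ and $\lambda_{i}\sim\lambda_{j}$ are vacuous when $j=1$, and $\lambda_{1}\to0$ follows from \eqref{eq:Decompose lambda estimate}, which gives $\lambda_{1}\sim_{M_{0}}\|Q\|_{\dot H^{1}}/\|v(t)\|_{\dot H^{1}}\to0$.

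\emph{Clause \ref{state:translation}: convergence of $x_{1}(t)$.} Substituting $v=[Q+\wt{\eps}_{1}]_{\lambda_{1},\gamma_{1},x_{1}}$ into \eqref{CMdnls-gauged}, passing to self-similar variables, and using $\nabla E(Q)=L_{Q}^{*}\bfD_{Q}Q=0$ together with the self-dual factorization $\calL_{Q}=L_{Q}^{*}L_{Q}$, one derives a modulation equation of the form
\[
i\lambda_{1}^{2}\partial_{t}\wt{\eps}_{1}=\calL_{Q}\wt{\eps}_{1}+R_{Q}(\wt{\eps}_{1})+\lambda_{1}^{2}\dot\gamma_{1}(Q+\wt{\eps}_{1})+i\lambda_{1}\dot\lambda_{1}\,\Lambda(Q+\wt{\eps}_{1})+i\lambda_{1}\dot x_{1}\,\partial_{x}(Q+\wt{\eps}_{1}).
\]
Pairing with the truncated kernel elements $\calZ_{j}$ ($j=1,2,3$) and using $\tfrac{d}{dt}(\wt{\eps}_{1},\calZ_{j})_{r}=0$ produces a linear system for $(\lambda_{1}\dot\lambda_{1},\lambda_{1}^{2}\dot\gamma_{1},\lambda_{1}\dot x_{1})$ whose coefficient matrix is an $O(\eta)$-perturbation of an invertible (indeed, by the parity of $Q$ and $\chi$, essentially diagonal) matrix. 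Solving it yields
\[
|\lambda_{1}\dot x_{1}|\;\lesssim\;\sum_{j}\big(|(\calL_{Q}\wt{\eps}_{1},\calZ_{j})_{r}|+|(R_{Q}(\wt{\eps}_{1}),\calZ_{j})_{r}|\big)\;\lesssim\;\|\wt{\eps}_{1}\|_{\dot{\calH}_{R}^{1}},
\]
the essential point being that, since each $\calZ_{j}$ is supported in $\{|x|\le2\}$ and the nonlocal pieces of $L_{Q}$ and the (at least quadratic) $R_{Q}$ generate only tails decaying like $\langle x\rangle^{-2}$, these pairings are controlled by the \emph{localized} norm $\|\wt{\eps}_{1}\|_{\dot{\calH}_{R}^{1}}$ (whose untruncated part is $\|Q\wt{\eps}_{1}\|_{L^{2}}$) rather than by the full $\|\wt{\eps}_{1}\|_{\dot{\calH}^{1}}$. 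Combining with the energy bubbling \eqref{eq:induction k nonlinear energy}, $\|\wt{\eps}_{1}\|_{\dot{\calH}_{R}^{1}}\lesssim_{M_{0},E_{0}}\lambda_{1}$, we conclude $|\dot x_{1}(t)|\lesssim_{M_{0},E_{0}}1$ on $[T_{1},T)$. Since $T<\infty$, $\int_{T_{1}}^{T}|\dot x_{1}(t)|\,dt\lesssim T-T_{1}<\infty$, so $x_{1}(t)$ converges to a finite limit $x_{1}(T)$ as $t\to T$, which is \ref{state:translation}.

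\emph{Main obstacle.} The only delicate step is the modulation bound for $\dot x_{1}$: one must show that the interaction terms $(\calL_{Q}\wt{\eps}_{1},\calZ_{j})_{r}$ and $(R_{Q}(\wt{\eps}_{1}),\calZ_{j})_{r}$ are dominated by the localized quantity $\|\wt{\eps}_{1}\|_{\dot{\calH}_{R}^{1}}$ — which the energy bubbling makes $O(\lambda_{1})$ — rather than by $\|\wt{\eps}_{1}\|_{\dot{\calH}^{1}}$, which would only give the useless estimate $|\dot x_{1}|\lesssim\eta/\lambda_{1}$. This requires tracking precisely how $\calH$ and $|D|$ act on the compactly supported $\calZ_{j}$ (the relevant tails are $\langle x\rangle^{-2}$, exactly matching the untruncated weight $\|Q\wt{\eps}_{1}\|_{L^{2}}$ inside $\|\cdot\|_{\dot{\calH}_{R}^{1}}$). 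Everything else is routine; note that it is the finiteness of $T$ that makes the crude bound $|\dot x_{1}|\lesssim1$ sufficient — precisely the point where finite-time blow-up is easier to handle than the global case (which must instead be routed through the pseudo-conformal transform).
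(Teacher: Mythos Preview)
Your treatment of \ref{state:no-return property}--\ref{state:soliton decoupling} coincides with the paper's: these follow immediately from the blow-up alternative, conservation of mass and energy, and Proposition~\ref{prop:Decomposition}.

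For \ref{state:translation} you take a genuinely different route. You derive the modulation ODEs, use the compact support of the $\calZ_j$ together with the $\langle x\rangle^{-2}$ tails produced by the nonlocal pieces of $\calL_Q$ and $R_Q$ to bound $|\lambda_1\dot x_1|\lesssim\|\wt\eps_1\|_{\dot\calH_R^1}$, and then invoke the energy bubbling \eqref{eq:induction k nonlinear energy} to get $|\dot x_1|\lesssim_{M_0,E_0}1$, which is integrable on $[T_1,T)$. This is correct, and the paper explicitly acknowledges this route works (``this can be shown by a similar argument in \cite{Raphael2005MathAnnalen}''). The paper instead argues by localized-mass monotonicity: using only \eqref{eq:nonnegative energy} it first shows $\sup_t|x_1(t)|<\infty$ by contradiction (a runaway $x_1$ would make $\int\chi_{|x-x_1|\le|x_1|/3}\,x|v|^2$ blow up, contradicting its Lipschitz bound), and then rules out two distinct subsequential limits by a uniform-gap estimate on exterior masses $\int\varphi_r|v|^2$.

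The reason the paper prefers its argument is not $P(1)$ itself but the induction: at step $k\ge2$ the $k$-th soliton is extracted from $\varphi_R\wt\eps_{k-1}$, which does not solve \eqref{CMdnls-gauged}, so there is no clean modulation equation for $(\wt\lambda_k,\wt\gamma_k,\wt x_k)$ to differentiate. The localized-mass argument uses only the equation for $v$ and the soliton decoupling \eqref{eq:soliton decoupling}, so it transfers verbatim to every step. Your approach buys a cleaner and more quantitative proof of \ref{state:translation} for $k=1$ (you get the Lipschitz bound $|\dot x_1|\lesssim1$ directly), at the cost of not being reusable later; the paper's buys uniformity across the induction.
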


\begin{lem}[Induction]
	\label{lem:Induction} Assume that $P(k-1)$ is true. If $Q(k-1)$
	is true, then $P(k)$ is true. 
\end{lem}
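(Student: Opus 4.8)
The plan is to verify the four clauses \ref{state:no-return property}--\ref{state:translation} of $P(k)$ in order, keeping $R,\eta,\alpha^{*}$ fixed as in Proposition~\ref{prop:Decomposition} and using $P(k-1)$ freely, in particular the level-$(k-1)$ energy bubbling $\|\wt\eps_{k-1}\|_{\dot{\calH}_{R}^{1}}^{2}+E(\varphi_{R}\wt\eps_{k-1})\lesssim_{M_{0},E_{0}}\lambda_{k-1}^{2}$ (hence $E(\varphi_{R}\wt\eps_{k-1})\to0$ and $\|\chi_{R}\wt\eps_{j}\|_{L^{2}}+\|\langle y\rangle^{-1}\wt\eps_{j}\|_{L^{2}}\lesssim\lambda_{j}\to0$ for $j\le k-1$) and the ordering $\lambda_{1}\lesssim\cdots\lesssim\lambda_{k-1}\to0$.

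\emph{The no-return property \ref{state:no-return property}} is the crux. Writing $r(t):=\lambda_{k-1}(t)/\|\varphi_{R}\wt\eps_{k-1}(t)\|_{\dot{H}^{1}}$, which is continuous in $t$, hypothesis $Q(k-1)$ says $\liminf_{t\to T}r=0$ and I must upgrade this to $\lim_{t\to T}r=0$. I would argue by contradiction: if $\limsup_{t\to T}r\ge2\delta_{0}>0$, pick interleaved sequences $t_{n}\uparrow T$ with $r(t_{n})\to0$ and $s_{n}\in(t_{n},t_{n+1})$ with $r(s_{n})\ge2\delta_{0}$, shrinking $\delta_{0}$ so that $r(t)\le\delta_{0}$ already forces $\sqrt{E(\varphi_{R}\wt\eps_{k-1}(t))}\le\alpha^{*}\|\varphi_{R}\wt\eps_{k-1}(t)\|_{\dot{H}^{1}}$ (using $\sqrt{E(\varphi_{R}\wt\eps_{k-1})}\lesssim\lambda_{k-1}=r\|\varphi_{R}\wt\eps_{k-1}\|_{\dot{H}^{1}}$). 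On the interval between $t_{n}$ and the last crossing time $\sigma_{n}\in(t_{n},s_{n})$ with $r(\sigma_{n})=\delta_{0}$ I compare the mass $v$ carries in the exterior of the first $k-1$ soliton cores. At $t_{n}$, indeed on $\{r\le\delta_{0}\}$, Proposition~\ref{prop:Decomposition} applies to $\varphi_{R}\wt\eps_{k-1}$, and together with Lemma~\ref{lem:Proximity} and $\|\langle z\rangle^{-1}\wt\eps_{k}\|_{L^{2}}\lesssim\lambda_{k}\to0$ it shows $\varphi_{R}\wt\eps_{k-1}$ carries a full mass quantum $M(Q)-o(1)$ concentrated in a shrinking ball about a point staying in a bounded region. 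The conservation input is \eqref{eq:nonnegative energy}: with $\psi=\chi_{A}(\cdot-p)$ for fixed large $A$ and fixed $p$ it gives $\big|\partial_{t}\int\chi_{A}(x-p)|v|^{2}\,dx\big|\lesssim A^{-1}\sqrt{E_{0}M_{0}}$, so over the \emph{finite} interval $[T_{1},T)$ the mass of $v$ in any fixed large ball moves by at most $\lesssim A^{-1}\sqrt{E_{0}M_{0}}(T-T_{1})\ll M(Q)$; in particular no quantum of mass can leak to spatial infinity and the localized mass is essentially frozen. Combining this with the mass quantization of the exterior bubble, an emerged bubble cannot dissolve between $t_{n}$ and $\sigma_{n}$, so the exterior mass $\int\Phi|v|^{2}dx$ (with $\Phi$ a fixed cutoff slightly larger than the union of the soliton cores) would have $\limsup-\liminf\ge M(Q)-o(1)$, contradicting its near-constancy. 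Hence $\lim_{t\to T}r=0$. This step is \textbf{the main obstacle}: it is where the two conservation laws genuinely cooperate — energy bubbling pins $E(\varphi_{R}\wt\eps_{k-1})\lesssim\lambda_{k-1}^{2}$, and near-conservation of mass forbids the bubble from dissolving.

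\emph{The further decomposition \ref{state:further decomposition}} follows quickly. By \ref{state:no-return property}, $\sqrt{E(\varphi_{R}\wt\eps_{k-1}(t))}\lesssim\lambda_{k-1}(t)\ll\|\varphi_{R}\wt\eps_{k-1}(t)\|_{\dot{H}^{1}}$ for $t$ near $T$ and $\|\varphi_{R}\wt\eps_{k-1}\|_{L^{2}}\le\|\wt\eps_{k-1}\|_{L^{2}}\lesssim M_{0}$, so I apply Proposition~\ref{prop:Decomposition} to $\varphi_{R}\wt\eps_{k-1}(t)$ on some $[T_{k},T)$. This yields $(\wt\lambda_{k},\wt\gamma_{k},\wt x_{k},\wt\eps_{k})$ with $\varphi_{R}\wt\eps_{k-1}=[Q+\wt\eps_{k}]_{\wt\lambda_{k},\wt\gamma_{k},\wt x_{k}}$, the orthogonalities, $\|\wt\eps_{k}\|_{\dot{\calH}^{1}}<\eta$, and $\|\wt\eps_{k}\|_{\dot{\calH}_{R}^{1}}^{2}+E(\varphi_{R}\wt\eps_{k})\lesssim_{M_{0}}\wt\lambda_{k}^{2}E(\varphi_{R}\wt\eps_{k-1})\lesssim_{M_{0},E_{0}}\lambda_{k}^{2}$ once $\lambda_{k}:=\lambda_{k-1}\wt\lambda_{k}$ and the level-$(k-1)$ bound are invoked; the $C^{1}$-in-time regularity of $(\wt\lambda_{k},\wt\gamma_{k},\wt x_{k})$ is inherited from Proposition~\ref{prop:Decomposition}, from the $C^{1}$ regularity of the level-$(k-1)$ parameters, and from that of the flow. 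Defining $(\lambda_{k},\gamma_{k},x_{k})$ by \eqref{eq:induction k modul para}, substituting $\wt\eps_{k-1}=\chi_{R}\wt\eps_{k-1}+\varphi_{R}\wt\eps_{k-1}$ into the level-$(k-1)$ decomposition of $v$, and using the composition law $[[f]_{\wt\lambda,\wt\gamma,\wt x}]_{\lambda,\gamma,x}=[f]_{\lambda\wt\lambda,\gamma+\wt\gamma,x+\lambda\wt x}$ to rewrite $[\varphi_{R}\wt\eps_{k-1}]_{\lambda_{k-1},\gamma_{k-1},x_{k-1}}=[Q]_{\lambda_{k},\gamma_{k},x_{k}}+[\wt\eps_{k}]_{\lambda_{k},\gamma_{k},x_{k}}$, gives exactly \eqref{eq:induction k v decom}--\eqref{eq:induction k radiation decom}.

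\emph{The decoupling \ref{state:soliton decoupling} and convergence \ref{state:translation}} are obtained as follows. From the $\lambda$-estimate in Proposition~\ref{prop:Decomposition}, $\lambda_{k}^{-1}\sim\lambda_{k-1}^{-1}\|\varphi_{R}\wt\eps_{k-1}\|_{\dot{H}^{1}}/\|Q\|_{\dot{H}^{1}}=r^{-1}/\|Q\|_{\dot{H}^{1}}\to\infty$ by \ref{state:no-return property}, so $\lambda_{k}\to0$; and $\|\varphi_{R}\wt\eps_{k-1}\|_{\dot{H}^{1}}\le\|\partial\wt\eps_{k-1}\|_{L^{2}}+R^{-1}\|\wt\eps_{k-1}\|_{L^{2}}\lesssim\eta+R^{-1}M_{0}$ is bounded, so $\wt\lambda_{k}\gtrsim1$, giving $\lambda_{k-1}\lesssim\lambda_{k}$ and \eqref{eq:induction lambda}. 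For $\wt\lambda_{k}+|\wt x_{k}|\to\infty$: were it false, a subsequence would have $(\wt\lambda_{k},\wt x_{k})\to(\wt\lambda_{\infty},\wt x_{\infty})$ finite with $\wt\lambda_{\infty}>0$; since $\varphi_{R}\wt\eps_{k-1}$ vanishes on $\{|y|\le R\}$, $[Q]_{\wt\lambda_{k},\wt\gamma_{k},\wt x_{k}}=-[\wt\eps_{k}]_{\wt\lambda_{k},\wt\gamma_{k},\wt x_{k}}$ there, and rescaling to the $k$-th frame gives $\|Q\|_{L^{2}(I)}=\|\wt\eps_{k}\|_{L^{2}(I)}+o(1)\lesssim_{I}\|\langle z\rangle^{-1}\wt\eps_{k}\|_{L^{2}}\lesssim\lambda_{k}\to0$ on the fixed bounded interval $I=\wt\lambda_{\infty}^{-1}([-R,R]-\wt x_{\infty})$, contradicting $\|Q\|_{L^{2}(I)}>0$. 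When $\lambda_{i}\sim\lambda_{k}$ (hence $\lambda_{i}\sim\lambda_{k-1}$ and $\wt\lambda_{k}\sim1$), combining $\left|\tfrac{x_{k-1}-x_{i}}{\lambda_{i}}\right|\to\infty$ from $P(k-1)$ with $|\wt x_{k}|\to\infty$ and the fact that the $k$-th bubble arises from $\varphi_{R}\wt\eps_{k-1}$, which contains none of the first $k-1$ solitons so its center cannot converge to $x_{i}$, yields \eqref{eq:induction translation}. Finally, for \ref{state:translation}: \eqref{eq:nonnegative energy} with $\psi=\chi_{A}(\cdot-p)$ as above shows $v$ stays essentially supported in a fixed bounded region on $[T_{1},T)$, so $x_{k}$ is bounded; the modulation equations for the $k$-th bubble together with $\|\wt\eps_{k}\|_{\dot{\calH}_{R}^{1}}\lesssim\lambda_{k}$ give $|\dot x_{k}(t)|\lesssim1$ on $[T_{1},T)$; as this is a finite interval, $x_{k}$ has bounded variation and converges to a finite $x_{k}(T)$, completing $P(k)$.
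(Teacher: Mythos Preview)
Two steps have genuine gaps. For the no-return property \ref{state:no-return property}, the cutoff you propose (exterior of the first $k-1$ soliton cores) does not distinguish the two regimes: by mass decoupling, $\int\Phi|v|^2\approx M_0-(k-1)M(Q)$ at \emph{both} $t_n$ and $s_n$, so no contradiction arises from its near-constancy. The paper instead localizes near the position of the \emph{emerging $k$-th bubble}. This in turn requires first proving $\sup_n|x_k(t_n)|<\infty$ --- a nontrivial computation you compress into the clause ``a point staying in a bounded region'' but which the paper carries out separately via a localized first-moment argument (testing $\psi=x\chi_{|x-x_k|\le|x_k|/3}$ against \eqref{eq:nonnegative energy}). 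After translating so that $x_k(a_n)\to0$, one compares $I_r=\int\varphi_r|v|^2$ at times $a_n$ (where $r(a_n)\to0$) and $b_n$ (where $r(b_n)\to c>0$): the crucial input is that at $b_n$ one has $\|\eps_{k-1}(b_n)\|_{H^1}\lesssim_c1$, hence $\int\chi_r|\eps_{k-1}(b_n)|^2\lesssim r$, while at $a_n$ the $k$-th bubble lies inside $|x|<r$ and the full-mass identity gives $\|\eps_{k-1}(b_n)\|_{L^2}^2-\|\eps_k(a_n)\|_{L^2}^2=2\pi+o(1)$; combining with $|I_r(a_n)-I_r(b_n)|\lesssim r^{-1}|a_n-b_n|$ and the convergence of $x_j(t)$ for $j<k$ forces $2\pi\lesssim r+o_n(1)r^{-1}$, the contradiction.

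For the convergence of $x_k$ \ref{state:translation}, your appeal to ``modulation equations give $|\dot x_k|\lesssim1$'' is not justified, and the paper does not go this route. The parameters $(\wt{\lambda}_k,\wt{\gamma}_k,\wt{x}_k)$ decompose $\varphi_R\wt{\eps}_{k-1}$, which is not a solution of \eqref{CMdnls-gauged}, so no modulation ODEs are readily available; and even working with $v$ directly, the nonlocal nonlinear errors (through $\calH(|\wt{\eps}_k|^2)$) are only controlled by $\|\wt{\eps}_k\|_{\dot{\calH}^1}<\eta$, not by $\lambda_k$. The paper instead reuses the localized-mass machinery: assuming two subsequential limits $x_{k,a}\ne x_{k,b}$, it shows a \emph{uniform} gap $|x_{k,a}-x_{k,b}|\ge c(M_0,E_0)>0$, which then contradicts the continuity of $t\mapsto x_k(t)$ on $[T_k,T)$. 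There is also a smaller gap in your argument for \eqref{eq:induction translation} when $i<k-1$: the two diverging summands $\tfrac{x_{k-1}-x_i}{\lambda_i}$ and $\tfrac{\lambda_{k-1}}{\lambda_i}\wt{x}_k$ could cancel; the paper handles this by a descending induction based on weak convergence and Lemma~\ref{lem:variational argument}.
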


We also claim that the induction stops within finite steps with respect
to the initial value of mass. 
\begin{lem}[Halting of induction]
	\label{lem:Halting induction} There exists an integer $N\in\bbN$
	with $1\leq N\leq\frac{M(v_{0})}{M(Q)}$ such that $P(N)$ and $Q(N)^{c}$
	hold true. Moreover, we have $\|\eps_{N}\|_{H^{1}}\lesssim1$ uniformly
	in time. 
\end{lem}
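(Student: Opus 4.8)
The plan is a mass‑counting argument. Each time a bubble is extracted the outer radiation loses exactly $M(Q)$ of mass in the limit $t\to T$; since $M(v_0)$ is finite this forces the induction to terminate after at most $M(v_0)/M(Q)$ steps, and at the terminal step the condition $Q(N)^{c}$ upgrades the $\dot H^{1}$‑control on the outer radiation enough to give $\|\eps_{N}\|_{H^{1}}\lesssim1$.

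\textbf{Step 1 (asymptotic mass‑drop).} I would first prove that whenever $P(j)$ holds,
\[
M(\varphi_{R}\wt{\eps}_{j-1}(t))=M(Q)+M(\varphi_{R}\wt{\eps}_{j}(t))+o_{t\to T}(1).
\]
From $\varphi_{R}\wt{\eps}_{j-1}=[Q+\wt{\eps}_{j}]_{\wt{\lambda}_{j},\wt{\gamma}_{j},\wt x_{j}}$ in \ref{state:further decomposition} and the $L^{2}$‑invariance of modulations, $M(\varphi_{R}\wt{\eps}_{j-1})=M(Q)+2(Q,\wt{\eps}_{j})_{r}+M(\wt{\eps}_{j})$; writing $\wt{\eps}_{j}=\chi_{R}\wt{\eps}_{j}+\varphi_{R}\wt{\eps}_{j}$ gives $M(\wt{\eps}_{j})=M(\varphi_{R}\wt{\eps}_{j})+M(\chi_{R}\wt{\eps}_{j})+2(\chi_{R}\wt{\eps}_{j},\varphi_{R}\wt{\eps}_{j})_{r}$. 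The energy bubbling \eqref{eq:induction k nonlinear energy} yields $\|\wt{\eps}_{j}\|_{\dot{\calH}_{R}^{1}}\lesssim_{M_{0},E_{0}}\lambda_{j}$, and a crude recursion $\|\wt{\eps}_{j}\|_{L^{2}}\le M(Q)^{1/2}+\|\varphi_{R}\wt{\eps}_{j-1}\|_{L^{2}}\le M(Q)^{1/2}+\|\wt{\eps}_{j-1}\|_{L^{2}}$ (using $0\le\varphi_{R}\le1$ and $\|\varphi_{R}\wt{\eps}_{0}\|_{L^{2}}=M_{0}^{1/2}$) gives inductively $\|\wt{\eps}_{j}\|_{L^{2}}\le jM(Q)^{1/2}+M_{0}^{1/2}\lesssim_{M_{0}}1$. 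Since $\chi_{R}$ is supported on $\{|x|\le2R\}$ we get $\|\chi_{R}\wt{\eps}_{j}\|_{L^{2}}\le\langle2R\rangle\|\langle x\rangle^{-1}\wt{\eps}_{j}\|_{L^{2}}\lesssim_{R,M_{0},E_{0}}\lambda_{j}$, hence also $|(\chi_{R}\wt{\eps}_{j},\varphi_{R}\wt{\eps}_{j})_{r}|\lesssim\lambda_{j}\|\wt{\eps}_{j}\|_{L^{2}}$; and splitting $(Q,\wt{\eps}_{j})_{r}=\sqrt2\int\langle x\rangle^{-1}\Re\wt{\eps}_{j}$ at $|x|=\|\wt{\eps}_{j}\|_{L^{2}}/\|\langle x\rangle^{-1}\wt{\eps}_{j}\|_{L^{2}}$ gives $|(Q,\wt{\eps}_{j})_{r}|\lesssim\|\wt{\eps}_{j}\|_{L^{2}}^{1/2}\|\langle x\rangle^{-1}\wt{\eps}_{j}\|_{L^{2}}^{1/2}\lesssim_{M_{0},E_{0}}\lambda_{j}^{1/2}$. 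As $\lambda_{j}(t)\to0$ by \ref{state:soliton decoupling}, all three error terms are $o_{t\to T}(1)$.

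\textbf{Step 2 (telescoping and termination).} With the convention $\varphi_{R}\wt{\eps}_{0}(t)=v(t)$ and $M(v(t))\equiv M_{0}$, summing the identity of Step~1 over $j=1,\dots,k$ on $[T_{k},T)$ gives $M(\varphi_{R}\wt{\eps}_{k}(t))=M_{0}-kM(Q)+o_{t\to T}(1)$; in particular this limit exists, and $M(\varphi_{R}\wt{\eps}_{k})\ge0$ forces $k\le M_{0}/M(Q)$ for every $k$ with $P(k)$ true. If $Q(j)$ were true for all $j\in\bbN$, then Lemma~\ref{lem:Induction initial} together with repeated application of Lemma~\ref{lem:Induction} would produce $P(k)$ for all $k$, contradicting $k\le M_{0}/M(Q)$; hence there is a smallest $N\in\bbN$ with $Q(N)^{c}$ true. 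By minimality $Q(1),\dots,Q(N-1)$ all hold, so $P(1),\dots,P(N)$ all hold (again by Lemmas~\ref{lem:Induction initial} and~\ref{lem:Induction}), and $1\le N\le M_{0}/M(Q)$.

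\textbf{Step 3 ($H^{1}$‑bound, and the main obstacle).} For $\|\eps_{N}\|_{H^{1}}\lesssim1$ on $[T_{N},T)$ I would estimate each summand of $\eps_{N}=\sum_{i=1}^{N-1}[\chi_{R}\wt{\eps}_{i}]_{\lambda_{i},\gamma_{i},x_{i}}+[\wt{\eps}_{N}]_{\lambda_{N},\gamma_{N},x_{N}}$ from \eqref{eq:induction k radiation decom}. In $L^{2}$: $\|[\chi_{R}\wt{\eps}_{i}]_{\lambda_{i},\gamma_{i},x_{i}}\|_{L^{2}}=\|\chi_{R}\wt{\eps}_{i}\|_{L^{2}}\lesssim_{R,M_{0},E_{0}}\lambda_{i}\lesssim1$ and $\|[\wt{\eps}_{N}]_{\lambda_{N},\gamma_{N},x_{N}}\|_{L^{2}}=\|\wt{\eps}_{N}\|_{L^{2}}\lesssim_{M_{0}}1$ by the recursion of Step~1. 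In $\dot H^{1}$: by scaling $\|[\chi_{R}\wt{\eps}_{i}]_{\lambda_{i},\gamma_{i},x_{i}}\|_{\dot H^{1}}=\lambda_{i}^{-1}\|\partial_{x}(\chi_{R}\wt{\eps}_{i})\|_{L^{2}}\le\lambda_{i}^{-1}\|\wt{\eps}_{i}\|_{\dot{\calH}_{R}^{1}}\lesssim_{M_{0},E_{0}}1$, and $\|[\wt{\eps}_{N}]_{\lambda_{N},\gamma_{N},x_{N}}\|_{\dot H^{1}}=\lambda_{N}^{-1}\|\partial_{x}\wt{\eps}_{N}\|_{L^{2}}\le\lambda_{N}^{-1}(\|\wt{\eps}_{N}\|_{\dot{\calH}_{R}^{1}}+\|\varphi_{R}\wt{\eps}_{N}\|_{\dot H^{1}})$; the first part is $\lesssim_{M_{0},E_{0}}1$ by \eqref{eq:induction k nonlinear energy}, and for the second, $Q(N)^{c}$ says $\liminf_{t\to T}\lambda_{N}(t)/\|\varphi_{R}\wt{\eps}_{N}(t)\|_{\dot H^{1}}>0$, so $\|\varphi_{R}\wt{\eps}_{N}(t)\|_{\dot H^{1}}\lesssim\lambda_{N}(t)$ on a left‑neighborhood of $T$ and hence $\lambda_{N}^{-1}\|\varphi_{R}\wt{\eps}_{N}\|_{\dot H^{1}}\lesssim1$ there; continuity handles the remaining compact piece of $[T_{N},T)$ (possibly after enlarging $T_{N}$). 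The main obstacle is Step~1: converting the energy‑bubbling bound $\|\wt{\eps}_{j}\|_{\dot{\calH}_{R}^{1}}\lesssim_{M_{0},E_{0}}\lambda_{j}$ into genuine $L^{2}$‑smallness of the inner radiation $\chi_{R}\wt{\eps}_{j}$ and into $\langle x\rangle^{-1}$‑weighted smallness of $(Q,\wt{\eps}_{j})_{r}$, and recognizing that $M(\varphi_{R}\wt{\eps}_{k})$ need not converge a priori but does so precisely through the telescoping; once the mass accounting is in place, the remaining estimates are routine bookkeeping whose only substantive input is the $\dot H^{1}$‑upgrade supplied by $Q(N)^{c}$.
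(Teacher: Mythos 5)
Your proof is correct, and it takes a genuinely different route from the paper's. The paper's own argument (having fixed $N_{0}$ with $N_{0}\le M_{0}/M(Q)<N_{0}+1$ and assumed $Q(N)$ holds for all $N\le N_{0}$) invokes the soliton decoupling identity \eqref{eq:soliton decoupling} with $\psi\equiv1$ to obtain $\|\varphi_{R}\wt{\eps}_{N_{0}}\|_{L^{2}}^{2}<M(Q)$ near $T$, and then normalizes $\varphi_{R}\wt{\eps}_{N_{0}}$ along the sequence furnished by $Q(N_{0})$ and applies the variational Lemma~\ref{lem:variational argument} to force $\liminf\|\varphi_{R}\wt{\eps}_{N_{0}}(t_{n})\|_{L^{2}}^{2}\ge M(Q)$, a contradiction. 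You instead establish the exact mass‑drop identity $M(\varphi_{R}\wt{\eps}_{j-1})=M(Q)+M(\varphi_{R}\wt{\eps}_{j})+o_{t\to T}(1)$ directly from \ref{state:further decomposition} and the energy bubbling bound $\|\wt{\eps}_{j}\|_{\dot{\calH}_{R}^{1}}\lesssim\lambda_{j}$ (your interpolation estimate $|(Q,\wt{\eps}_{j})_{r}|\lesssim\|\wt{\eps}_{j}\|_{L^{2}}^{1/2}\|\langle x\rangle^{-1}\wt{\eps}_{j}\|_{L^{2}}^{1/2}\lesssim\lambda_{j}^{1/2}$ is fine), telescope to get $k\le M_{0}/M(Q)$ whenever $P(k)$ holds, and then invoke Lemma~\ref{lem:Induction} once more to rule out $Q(N)$ holding indefinitely. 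In effect you outsource the reapplication of the variational structure to the already‑proved induction step rather than calling Lemma~\ref{lem:variational argument} directly, and you replace the global decoupling identity \eqref{eq:soliton decoupling} by the local telescoping bookkeeping; this buys a self‑contained, slightly more elementary termination argument at the cost of one extra appeal to Lemma~\ref{lem:Induction}, while the paper's version is more transparent about the mechanism (the outer remainder has mass strictly below $M(Q)$ and hence cannot host another bubble). Your Step~3 for the $H^{1}$ bound is identical in substance to the paper's, with the additional (correct) remark that continuity covers the compact initial piece of $[T_{N},T)$.
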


At each step, we have a dichotomy, $Q(k)$ or $Q(k)^{c}$. If $Q(k)$
holds true (and $P(k+1)$), we can extract a soliton. Otherwise, we
have $\|\varphi_{R}\wt{\eps}_{k}(t)\|_{\dot{H}^{1}}\lesssim\la_{k}(t)$
and then $\|\eps_{k}(t)\|_{H^{1}}\lesssim1$. By a similar argument
to \cite{MerleRaphael2005CMP}, $\varepsilon_{k}$ converges to some
asymptotic profile.

In the above configuration, it is possible that $\lim_{t\to T}\big|\frac{x_{j}(t)-x_{i}(t)}{\lambda_{i}(t)}\big|<\infty$.
This a nonradial version of bubble tree, i.e., $|x_{i}(t)-x_{j}(t)|\lesssim\min(\la_{i},\la_{j})\to0$.
Such a bubble tree naturally appears in statements of other soliton
resolutions for blow-up solutions. However, to our knowledge, there
is no finite-time bubble tree construction in relevant models. Moreover,
there is a specific clue indicating the absence of a bubble tree in
\eqref{CMdnls-gauged}. If there were a bubble tree in \eqref{CMdnls-gauged},
then after taking the inverse gauge transform $\mathcal{G}^{-1}$,
there would be discontinuities in phase at some points for each soliton,
which is somewhat unusual. See Remark~\ref{rem:phase correction}
for more details. Fortunately, we are able to verify that there is
no bubble tree in \eqref{CMdnls} and \eqref{CMdnls-gauged}. 
\begin{prop}[No bubble tree solutions]
	\label{prop:no bubble tree} Suppose $P(k)$ is true for $k\geq2$.
	For any $1\leq i\neq j\leq k$, we have 
	\begin{align}
		\lim_{t\to T}\left|\frac{x_{i}(t)-x_{j}(t)}{\lambda_{i}(t)}\right|=\infty.\label{eq:no bubble eq in prop}
	\end{align}
\end{prop}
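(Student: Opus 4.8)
The plan is to argue by contradiction, pulling the required separation out of the quantitative interaction bound hidden inside the energy bubbling estimate. First a reduction: by \eqref{eq:induction lambda} we have $\lambda_i(t)\lesssim\lambda_j(t)$ for $i<j$, so \eqref{eq:no bubble eq in prop} for the ordered pair $(i,j)$ follows once it is known for $(j,i)$; hence it suffices to prove $|x_i(t)-x_j(t)|/\lambda_j(t)\to\infty$ for all $1\le i<j\le k$. Two ingredients are used throughout. (i) The energy bubbling \eqref{eq:induction k nonlinear energy}, together with $\|Qf\|_{L^2}=\sqrt2\,\|\jp y^{-1}f\|_{L^2}\le\sqrt2\,\|f\|_{\dot{\calH}_{R}^{1}}$, gives at every extraction step $l$ the interaction bound $\|Q\wt{\eps}_l(t)\|_{L^2}\lesssim_{M_0,E_0}\lambda_l(t)$, while $\lambda_l(t)\to0$ by \ref{state:soliton decoupling}. (ii) Since $Q=\sqrt2\,\jp{\cdot}^{-1}$, an elementary computation gives
\[
\|Q[Q]_{\mu,\gamma,\xi}\|_{L^2}^2=4\mu\int_{\bbR}\frac{dy}{\jp y^{2}\bigl(\mu^2+(y-\xi)^2\bigr)}\ \gtrsim_{C}\ \frac1\mu\qquad\text{whenever }\mu\ge1,\ |\xi|\le C\mu,
\]
the lower bound already coming from the range $|y|\le1$.

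I would first dispose of the consecutive case $j=i+1$, which is the template. Writing $\wt{\eps}_i=\chi_R\wt{\eps}_i+\varphi_R\wt{\eps}_i$ and using $\varphi_R\wt{\eps}_i=[Q+\wt{\eps}_{i+1}]_{\wt{\lambda}_{i+1},\wt{\gamma}_{i+1},\wt x_{i+1}}$ from \ref{state:further decomposition}, the $(i{+}1)$-st soliton is an explicit summand of $\wt{\eps}_i$:
\[
[Q]_{\wt{\lambda}_{i+1},\wt{\gamma}_{i+1},\wt x_{i+1}}=\wt{\eps}_i-\chi_R\wt{\eps}_i-[\wt{\eps}_{i+1}]_{\wt{\lambda}_{i+1},\wt{\gamma}_{i+1},\wt x_{i+1}}.
\]
Here $\|Q\chi_R\wt{\eps}_i\|_{L^2}\le\|Q\wt{\eps}_i\|_{L^2}\lesssim\lambda_i$, and a change of variables with $\|\wt{\eps}_{i+1}\|_{L^\infty}\lesssim\|\wt{\eps}_{i+1}\|_{L^2}^{1/2}\|\wt{\eps}_{i+1}\|_{\dot{H}^{1}}^{1/2}\lesssim(M_0\eta)^{1/2}$ yields $\|Q[\wt{\eps}_{i+1}]_{\wt{\lambda}_{i+1},\ldots}\|_{L^2}^2\lesssim M_0\eta/\wt{\lambda}_{i+1}$, so $\|Q[Q]_{\wt{\lambda}_{i+1},\ldots}\|_{L^2}^2\lesssim\lambda_i^2+M_0\eta/\wt{\lambda}_{i+1}$. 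If $|x_{i+1}(t)-x_i(t)|/\lambda_{i+1}(t)\not\to\infty$ then, along some $t_n\to T$, $|\wt x_{i+1}(t_n)|\le C\wt{\lambda}_{i+1}(t_n)$ (recall $\lambda_{i+1}=\lambda_i\wt{\lambda}_{i+1}$, $x_{i+1}-x_i=\lambda_i\wt x_{i+1}$, and $\wt{\lambda}_{i+1}\gg1$ by \eqref{eq:Decompose lambda estimate} since $\|\varphi_R\wt{\eps}_i\|_{\dot{H}^{1}}\ll1$); applying (ii) with $\mu=\wt{\lambda}_{i+1}$, $\xi=\wt x_{i+1}$ forces $\wt{\lambda}_{i+1}^{-1}\lesssim\lambda_i^2+M_0\eta/\wt{\lambda}_{i+1}$, whence, absorbing the last term for $\eta=\eta(M_0)$ small, $\lambda_{i+1}(t_n)=\lambda_i(t_n)\wt{\lambda}_{i+1}(t_n)\gtrsim\lambda_i(t_n)^{-1}\to\infty$, contradicting $\lambda_{i+1}(t_n)\to0$.

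For $j-i=m\ge2$ I would induct on $m$. Unwinding \eqref{eq:induction k v decom}--\eqref{eq:induction k modul para} shows that soliton $j$ is again, in the $i$-th frame, an explicit summand $[Q]_{\mu_{i,j},\gamma_j-\gamma_i,\xi_{i,j}}$ of $\wt{\eps}_i$, with $\mu_{i,j}=\lambda_j/\lambda_i$ and $\xi_{i,j}=(x_j-x_i)/\lambda_i$; the complementary term $r_{i,j}:=\wt{\eps}_i-[Q]_{\mu_{i,j},\ldots}$ collects the remaining soliton bumps $[Q]_{\mu_{i,l},\ldots}$ ($i<l\le k$, $l\ne j$), the rescaled inner truncations $[\chi_R\wt{\eps}_{m'}]$, and the final radiation. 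Suppose the claim fails for $(i,j)$ along $t_n\to T$, i.e. $|\xi_{i,j}(t_n)|\le C\mu_{i,j}(t_n)$. If some $i<l<j$ had $|x_l(t_n)-x_i(t_n)|\lesssim\lambda_j(t_n)$ then also $|x_j(t_n)-x_l(t_n)|\lesssim\lambda_j(t_n)$, contradicting the inductive hypothesis for the shorter pair $(l,j)$; so, after passing to a subsequence, every soliton $i<l<j$ lies at distance $\gg\lambda_j$, hence at distance $\gg\mu_{i,j}$ in the $i$-frame, from soliton $i$. On that subsequence one runs the consecutive-case argument with $[Q]_{\mu_{i,j},\ldots}$ replacing $[Q]_{\wt{\lambda}_{i+1},\ldots}$: the contribution of $r_{i,j}$ near the origin of the $i$-frame (where $\|Q[Q]_{\mu_{i,j},\ldots}\|_{L^2}$ is supported when $|\xi_{i,j}|\le C\mu_{i,j}$) is negligible compared with $\mu_{i,j}^{-1}$—the bumps with $l>j$ are geometrically smaller ($\mu_{i,l+1}/\mu_{i,l}=\wt{\lambda}_{l+1}\gg1$), the bumps with $i<l<j$ are centered $\gg\mu_{i,j}$ away, and the truncations and radiation are controlled by $\|Q\wt{\eps}_i\|_{L^2}\lesssim\lambda_i$ and the smallness of the $\wt{\eps}_{m'}$—so $\|Q\wt{\eps}_i\|_{L^2}\lesssim\lambda_i$ again forces $\mu_{i,j}^{-1}\lesssim\lambda_i^2$, i.e. $\lambda_j(t_n)\gtrsim\lambda_i(t_n)^{-1}\to\infty$, contradicting $\lambda_j(t_n)\to0$. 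This closes the induction.

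The hard part is that last negligibility statement for $\|Q\,r_{i,j}\|_{L^2}$ near the origin: it is where the many relative scales $\wt{\lambda}_l$ must be tracked simultaneously, and where the qualitative separation ``$|x_l-x_i|\gg\lambda_j$'' obtained above needs to be upgraded to a sufficiently quantitative bound so that the already-extracted solitons other than $j$ genuinely do not pollute the window carrying the soliton-$j$ bump; this may well force one to state the induction hypothesis with explicit rates. Everything else—the reduction, the elementary integral (ii), and the consecutive case—is routine. Consistency of \eqref{eq:no bubble eq in prop} with the gauge transform $\calG$ between \eqref{CMdnls} and \eqref{CMdnls-gauged} is taken up in Remark \ref{rem:phase correction}.
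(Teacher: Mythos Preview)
Your overall strategy is the paper's: derive a contradiction from the interaction bound $\|Q\wt{\eps}_l\|_{L^2}\lesssim_{M_0,E_0}\lambda_l\to0$ contained in \eqref{eq:induction k nonlinear energy}. But the consecutive case, which you call the routine template, has a genuine gap in the absorption step. Your lower bound (ii) carries an implicit constant $c(C)\sim(1+C^2)^{-1}$, where $C$ bounds $|\xi|/\mu=|x_{i+1}-x_i|/\lambda_{i+1}$ along the contradicting sequence; your upper bound on the remainder $\|Q[\wt{\eps}_{i+1}]_{\wt{\mathrm g}_{i+1}}\|_{L^2}^2\lesssim M_0\eta/\wt{\lambda}_{i+1}$ is of the \emph{same order} in $\wt{\lambda}_{i+1}$ as that lower bound. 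Absorbing therefore needs $M_0\eta\ll c(C)$, i.e.\ $\eta$ small depending on $C$. Since $\eta$ was fixed once and for all in Proposition~\ref{prop:Decomposition} (depending only on $M_0$) while $C$ comes from the contradiction hypothesis and can be arbitrarily large, the inequality $\wt{\lambda}_{i+1}^{-1}\lesssim\lambda_i^2+M_0\eta/\wt{\lambda}_{i+1}$ is vacuous for large $C$. Replacing the crude $L^\infty$ bound on $\wt{\eps}_{i+1}$ by the sharper $\|Q[\wt{\eps}_{i+1}]_{\wt{\mathrm g}_{i+1}}\|_{L^2}^2\lesssim_C\lambda_{i+1}^2$ does not save the argument either: combined with your lower bound $\gtrsim_C\wt{\lambda}_{i+1}^{-1}$ it only yields $\lambda_{i+1}^3\gtrsim_C\lambda_i$, which is no contradiction.

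The paper's remedy is to localize in the $(i{+}1)$-frame rather than the $i$-frame, and to keep $Q+\wt{\eps}_{i+1}$ together rather than isolating $[Q]_{\wt{\mathrm g}_{i+1}}$. Writing $\varphi_R\wt{\eps}_i=[Q+\wt{\eps}_{i+1}]_{\wt{\mathrm g}_{i+1}}$ and substituting $y=(x-\wt{x}_{i+1})/\wt{\lambda}_{i+1}$, one restricts to $|y|\le1$: there the weight satisfies $Q^2(\wt{\lambda}_{i+1}y+\wt{x}_{i+1})\gtrsim_{y_\infty}\wt{\lambda}_{i+1}^{-2}$, and the key point is that $\int_{|y|\le1}|Q+\wt{\eps}_{i+1}|^2\to\int_{|y|\le1}Q^2>0$ because $\int_{|y|\le1}(|Q\wt{\eps}_{i+1}|+|\wt{\eps}_{i+1}|^2)\lesssim\|Q\wt{\eps}_{i+1}\|_{L^2}+\|Q\wt{\eps}_{i+1}\|_{L^2}^2\lesssim\lambda_{i+1}\to0$ by the $(i{+}1)$-level energy bubbling. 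This is an $o_{t\to T}(1)$ error, not an $O(\eta)$ one, so no absorption against a fixed $\eta$ is needed. Combining with the clean upper bound $\|Q\varphi_R\wt{\eps}_i\|_{L^2}^2\lesssim\lambda_i^2$ gives $\lambda_i^2\gtrsim_{y_\infty}\wt{\lambda}_{i+1}^{-2}$, hence $\lambda_{i+1}=\lambda_i\wt{\lambda}_{i+1}\gtrsim_{y_\infty}1$, contradicting $\lambda_{i+1}\to0$. The general case is run the same way: the paper changes to the $k$-frame, restricts to $|y|\le1$ there, and uses a descending induction on the smaller index $\ell$ (at fixed $k$) so that the intermediate bumps $[Q]_{\mathrm g_{k,j}}^{-1}$ for $\ell<j<k$ vanish on $|y|\le1$ by the already-established separation $|(x_k-x_j)/\lambda_j|\to\infty$. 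Your induction on the gap $m=j-i$ could be made to work, but only after the frame-switch above; the difficulty you flag at the end (quantitative separation of the other bumps) is exactly what the paper's choice of window $|y|\le1$ in the $k$-frame resolves.
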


From the continuity of $x(t)$ and $\la(t)$, we have either $\lim_{t\to T}\frac{x_{i}-x_{j}}{\lambda_{i}}$
is $\infty$ or $-\infty$.

\subsection{Proof of the induction}

We provide the proof of induction steps, Lemmas~\ref{lem:Induction initial},\ref{lem:Induction},
\ref{lem:Halting induction}, and Proposition~\ref{prop:no bubble tree}.

We begin with defining abridged notations for modulation parameters
and transformations: 
\begin{align}
	\textrm{g}(t)\coloneqq(\lambda,\gamma,x)(t),\quad\textrm{g}_{j}(t)\coloneqq(\lambda_{j},\gamma_{j},x_{j})(t),\quad\wt{\textrm{g}}_{j}(t)\coloneqq(\wt{\lambda}_{j},\wt{\gamma}_{j},\wt x_{j})(t),\label{eq: g def}
\end{align}
\begin{align}
	[f]_{\textrm{g}_{j}}\coloneqq[f]_{\lambda_{j},\gamma_{j},x_{j}},\,[f]_{\wt{\textrm{g}}_{j}}\coloneqq[f]_{\wt{\lambda}_{j},\wt{\gamma}_{j},\wt x_{j}},\,[f]_{\textrm{g}_{j}}^{-1}\coloneqq[f]_{\lambda_{j},\gamma_{j},x_{j}}^{-1},\, [f]_{\wt{\textrm{g}}_{j}}^{-1}\coloneqq[f]_{\wt{\lambda}_{j},\wt{\gamma}_{j},\wt x_{j}}^{-1}.\label{eq: g def renormal}
\end{align}
Moreover, we denote by $\textrm{g}_{i,j}$, 
\begin{align}
	\textrm{g}_{i,j}\coloneqq(\lambda_{i}\lambda_{j}^{-1},\gamma_{i}-\gamma_{j},(x_{i}-x_{j})\lambda_{j}^{-1}).\label{eq: g_i,j def}
\end{align}

\begin{lem}
	Let \emph{$\textrm{g}_{j}$} and \emph{$\wt{\textrm{g}}_{j}$} satisfy
	relations in \eqref{eq:induction k modul para}. Then, for any $1\le\ell<k$,
	we have 
	\begin{align}
		[[[f]_{\wt{\emph{\textrm{g}}}_{k}}]_{\wt{\emph{\textrm{g}}}_{k-1}}\cdots]_{\wt{\emph{\textrm{g}}}_{\ell+1}}=[f]_{\emph{\textrm{g}}_{k,\ell}},\quad[[[f]_{\wt{\emph{\textrm{g}}}_{\ell+1}}^{-1}]_{\wt{\emph{\textrm{g}}}_{\ell+2}}^{-1}\cdots]_{\wt{\emph{\textrm{g}}}_{k}}^{-1}=[f]_{\emph{\textrm{g}}_{k,\ell}}^{-1}.\label{eq:modulation induction}
	\end{align}
\end{lem}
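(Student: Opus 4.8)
The plan is to deduce everything from the single composition rule for the modulation operations. Writing out both sides of \eqref{eq:modulation notation} pointwise and performing a change of variables, one gets, for any $f$ and any two parameter triples,
\begin{equation}
[[f]_{\mu,\delta,y}]_{\lambda,\gamma,x}=[f]_{\lambda\mu,\,\gamma+\delta,\,x+\lambda y},\qquad [[g]_{\lambda,\gamma,x}^{-1}]_{\mu,\delta,y}^{-1}=[g]_{\lambda\mu,\,\gamma+\delta,\,x+\lambda y}^{-1},\label{eq:modul-group-law}
\end{equation}
since the scalar prefactors multiply to $e^{i(\gamma+\delta)}(\lambda\mu)^{-1/2}$ while the spatial argument unfolds as $\big(\tfrac{z-x}{\lambda}-y\big)/\mu=(z-x-\lambda y)/(\lambda\mu)$ (for the second identity one additionally uses $[g]_{\lambda,\gamma,x}^{-1}=[g]_{\lambda^{-1},-\gamma,-\lambda^{-1}x}$, recorded below \eqref{eq:modulation notation}). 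In other words, $f\mapsto[f]_{\lambda,\gamma,x}$ is a left action of the group $\bbR_{+}\times\bbR/2\pi\bbZ\times\bbR$ with the semidirect-product law read off from \eqref{eq:modul-group-law}, and each such map is a bijection with inverse $f\mapsto[f]_{\lambda,\gamma,x}^{-1}$.

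Given \eqref{eq:modul-group-law}, I would prove the first identity in \eqref{eq:modulation induction} by induction on $k-\ell\ge 1$. The base case $k=\ell+1$ is immediate: the relations \eqref{eq:induction k modul para} with $j=\ell+1$ say $\lambda_{\ell+1}=\lambda_{\ell}\wt\lambda_{\ell+1}$, $\gamma_{\ell+1}=\gamma_{\ell}+\wt\gamma_{\ell+1}$, $x_{\ell+1}=x_{\ell}+\lambda_{\ell}\wt x_{\ell+1}$, which is precisely $\wt{\textrm{g}}_{\ell+1}=\textrm{g}_{\ell+1,\ell}$ in the notation \eqref{eq: g_i,j def}. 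For the inductive step, assume $[[\cdots[f]_{\wt{\textrm{g}}_{k}}\cdots]_{\wt{\textrm{g}}_{\ell+2}}=[f]_{\textrm{g}_{k,\ell+1}}$. Applying $[\,\cdot\,]_{\wt{\textrm{g}}_{\ell+1}}$ to both sides and invoking \eqref{eq:modul-group-law} with $(\mu,\delta,y)=\textrm{g}_{k,\ell+1}$ and $(\lambda,\gamma,x)=\wt{\textrm{g}}_{\ell+1}$, the new parameters are computed using $\lambda_{\ell+1}=\lambda_{\ell}\wt\lambda_{\ell+1}$, $\gamma_{\ell+1}=\gamma_{\ell}+\wt\gamma_{\ell+1}$, $x_{\ell+1}=x_{\ell}+\lambda_{\ell}\wt x_{\ell+1}$: the scale becomes $\wt\lambda_{\ell+1}\cdot\lambda_{k}\lambda_{\ell+1}^{-1}=\lambda_{k}\lambda_{\ell}^{-1}$, the phase becomes $\wt\gamma_{\ell+1}+(\gamma_{k}-\gamma_{\ell+1})=\gamma_{k}-\gamma_{\ell}$, and the translation becomes $\wt x_{\ell+1}+\wt\lambda_{\ell+1}(x_{k}-x_{\ell+1})\lambda_{\ell+1}^{-1}=(x_{\ell+1}-x_{\ell})\lambda_{\ell}^{-1}+(x_{k}-x_{\ell+1})\lambda_{\ell}^{-1}=(x_{k}-x_{\ell})\lambda_{\ell}^{-1}$. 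This triple is exactly $\textrm{g}_{k,\ell}$, which closes the induction.

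For the second identity in \eqref{eq:modulation induction} I would not redo the computation: by the first part, the operator $[\,\cdot\,]_{\wt{\textrm{g}}_{\ell+1}}\circ\cdots\circ[\,\cdot\,]_{\wt{\textrm{g}}_{k}}$ equals $[\,\cdot\,]_{\textrm{g}_{k,\ell}}$, and inverting both operators reverses the order of composition and replaces each factor by its inverse, giving $[\,\cdot\,]_{\wt{\textrm{g}}_{k}}^{-1}\circ\cdots\circ[\,\cdot\,]_{\wt{\textrm{g}}_{\ell+1}}^{-1}=[\,\cdot\,]_{\textrm{g}_{k,\ell}}^{-1}$, which is the claim; alternatively, the same induction on $k-\ell$ using the second half of \eqref{eq:modul-group-law} works verbatim. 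I do not expect any analytic obstacle here — the content is purely the group law \eqref{eq:modul-group-law} together with the defining recursion \eqref{eq:induction k modul para}. The only point that needs care is the bookkeeping of the order of composition: in \eqref{eq:modulation induction} the transformations are applied from the innermost index $k$ outward to $\ell+1$, so the recursion \eqref{eq:induction k modul para} must be fed in the matching order.
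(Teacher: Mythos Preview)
Your proof is correct and follows essentially the same approach as the paper: both argue by induction on the number of compositions, reducing the inductive step to the single composition identity $[[f]_{\textrm{g}_{k,\ell+1}}]_{\wt{\textrm{g}}_{\ell+1}}=[f]_{\textrm{g}_{k,\ell}}$ and verifying the parameter arithmetic from \eqref{eq:induction k modul para}. Your framing via the group law \eqref{eq:modul-group-law} and your choice of base case $k-\ell=1$ are slightly cleaner than the paper's (which starts from the two-step composition $[[f]_{\wt{\textrm{g}}_{k}}]_{\wt{\textrm{g}}_{k-1}}=[f]_{\textrm{g}_{k,k-2}}$), but the content is identical.
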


\begin{proof}
	Since $[[f]_{\textrm{g}}]_{\textrm{g}}^{-1}=f$, it suffices to show,
	for any $\ell<k$, 
	\begin{align}
		[[[f]_{\wt{\textrm{g}}_{k}}]_{\wt{\textrm{g}}_{k-1}}\cdots]_{\wt{\textrm{g}}_{\ell+1}}=[f]_{\textrm{g}_{k,\ell}}.\label{eq:modulation induction pf1}
	\end{align}
	We show \eqref{eq:modulation induction pf1} by induction in descending
	order. For the initial case, using the relation \eqref{eq:induction k modul para},
	we have 
	\begin{align}
		[[f]_{\wt{\textrm{g}}_{k}}]_{\wt{\textrm{g}}_{k-1}}(x) & =\frac{e^{i(\wt{\gamma}_{k-1}+\wt{\gamma}_{k})}}{\wt{\lambda}_{k-1}\wt{\lambda}_{k}}f\bigg(\frac{x-\wt x_{k-1}-\wt{\lambda}_{k-1}\wt x_{k}}{\wt{\lambda}_{k-1}\wt{\lambda}_{k}}\bigg)\nonumber \\
		& =\frac{e^{i(\gamma_{k}-\gamma_{k-2})}}{\lambda_{k}\lambda_{k-2}^{-1}}f\bigg(\frac{x-(x_{k}-x_{k-2})\lambda_{k-2}^{-1}}{\lambda_{k}\lambda_{k-2}^{-1}}\bigg)=[f]_{\textrm{g}_{k,k-2}}.\label{eq:modulation induction pf2}
	\end{align}
	For a fixed $k$, we assume that \eqref{eq:modulation induction pf1}
	is true for all $j+1\leq\ell\leq k$. Then, we want to show \eqref{eq:modulation induction pf1}
	for $\ell=j$, and it suffices to prove 
	\begin{align}
		[[f]_{\textrm{g}_{k,\ell}}]_{\wt{\textrm{g}}_{\ell}}=[f]_{\textrm{g}_{k,\ell-1}}.\label{eq:modulation induction pf3}
	\end{align}
	Similarly to \eqref{eq:modulation induction pf2}, we can show \eqref{eq:modulation induction pf3}
	for scaling and phase rotation parts. For the translation part, we
	compute the translation part of RHS of \eqref{eq:modulation induction pf3}
	by 
	\begin{align*}
		\frac{\frac{x-\wt x_{\ell}}{\wt{\lambda}_{\ell}}-(x_{k}-x_{\ell})\lambda_{\ell}^{-1}}{\lambda_{k}\lambda_{\ell}^{-1}}=\frac{x-\wt x_{\ell}-(x_{k}-x_{\ell})\lambda_{\ell-1}^{-1}}{\lambda_{k}\lambda_{\ell-1}^{-1}}.
	\end{align*}
	Using $x_{\ell}=x_{\ell-1}+\wt{\lambda}_{\ell-1}\wt x_{\ell}$, we
	have 
	\begin{align*}
		\wt x_{\ell}+(x_{k}-x_{\ell})\lambda_{\ell-1}^{-1}=(x_{k}-x_{\ell-1})\lambda_{\ell-1}^{-1},
	\end{align*}
	and this proves \eqref{eq:modulation induction pf3}. Therefore, by
	the induction, we have \eqref{eq:modulation induction pf1}, and finish
	the proof. 
\end{proof}
When we have $k$-soliton configuration, we naturally have an asymptotic
decoupling. 
\begin{lem}[Decoupling of localized mass]
	We suppose that $P(k)$ holds true. Let $\psi\in L^{\infty}$. Then,
	we have 
	\begin{align}
		\int_{\bbR}\psi|v|^{2}dx=\sum_{j=1}^{k}\int_{\bbR}\psi|[Q]_{\emph{\textrm{g}}_{k}}|^{2}+\psi|\eps_{k}|^{2}dx+o_{t\to T}(1)\cdot\|\psi\|_{L^{\infty}}.\label{eq:soliton decoupling}
	\end{align}
\end{lem}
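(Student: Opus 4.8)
The plan is to prove \eqref{eq:soliton decoupling} by peeling off the bubbles one layer at a time, rather than expanding $|v|^{2}=|\sum_{j}[Q]_{\textrm{g}_{j}}+\eps_{k}|^{2}$ directly; this way the soliton--soliton overlap integrals never appear. The input is the nested structure from \ref{state:further decomposition}: $v=[Q+\wt{\eps}_{1}]_{\textrm{g}_{1}}$ and $\varphi_{R}\wt{\eps}_{j-1}=[Q+\wt{\eps}_{j}]_{\wt{\textrm{g}}_{j}}$ for $2\le j\le k$, together with the composition law \eqref{eq:induction k modul para} giving $\lambda_{j}=\lambda_{j-1}\wt{\lambda}_{j}$ and $x_{j}=x_{j-1}+\lambda_{j-1}\wt{x}_{j}$. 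After the change of variables $x=x_{1}+\lambda_{1}y$, one has $\int\psi|v|^{2}dx=\int\psi(x_{1}+\lambda_{1}y)\big(Q^{2}+2\Re(Q\ol{\wt{\eps}_{1}})+|\wt{\eps}_{1}|^{2}\big)(y)\,dy$; the $Q^{2}$ term is precisely $\int\psi|[Q]_{\textrm{g}_{1}}|^{2}dx$, the cross term is $\le 2\|\psi\|_{L^{\infty}}\int_{\bbR}Q|\wt{\eps}_{1}|\,dy$, and for the quadratic term I split $\wt{\eps}_{1}=\chi_{R}\wt{\eps}_{1}+\varphi_{R}\wt{\eps}_{1}$, so that the $|\chi_{R}\wt{\eps}_{1}|^{2}$ and cross contributions are $\lesssim_{R}\|\psi\|_{L^{\infty}}\|\jp{y}^{-1}\wt{\eps}_{1}\|_{L^{2}}^{2}\lesssim_{M_{0},E_{0},R}\|\psi\|_{L^{\infty}}\lambda_{1}^{2}$ by the energy bubbling \eqref{eq:induction k nonlinear energy} (using $\|\jp{y}^{-1}f\|_{L^{2}}=\tfrac{1}{\sqrt{2}}\|Qf\|_{L^{2}}\le\|f\|_{\dot{\calH}_{R}^{1}}$ and $\|\chi_{R}f\|_{L^{2}}\le\jp{2R}\|\jp{y}^{-1}f\|_{L^{2}}$), hence $o_{t\to T}(1)\|\psi\|_{L^{\infty}}$ since $\lambda_{1}\to0$ by \ref{state:soliton decoupling}. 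The surviving term, upon inserting $\varphi_{R}\wt{\eps}_{1}=[Q+\wt{\eps}_{2}]_{\wt{\textrm{g}}_{2}}$ and changing variables again, becomes $\int\psi(x_{2}+\lambda_{2}z)|Q+\wt{\eps}_{2}|^{2}(z)\,dz$ --- the same shape as the original integral. Iterating this $k$ times produces
\begin{equation*}
\int_{\bbR}\psi|v|^{2}dx=\sum_{j=1}^{k}\int_{\bbR}\psi|[Q]_{\textrm{g}_{j}}|^{2}dx+\int_{\bbR}\psi(x_{k}+\lambda_{k}z)|\varphi_{R}\wt{\eps}_{k}(z)|^{2}dz+o_{t\to T}(1)\cdot\|\psi\|_{L^{\infty}}.
\end{equation*}

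To finish, I would reconcile the middle term with $\int\psi|\eps_{k}|^{2}dx$. Using the formula \eqref{eq:induction k radiation decom} for $\eps_{k}$ and Cauchy--Schwarz, every summand of $|\eps_{k}|^{2}$ containing a factor $[\chi_{R}\wt{\eps}_{i}]_{\textrm{g}_{i}}$ with $i\le k-1$ carries $\|[\chi_{R}\wt{\eps}_{i}]_{\textrm{g}_{i}}\|_{L^{2}}=\|\chi_{R}\wt{\eps}_{i}\|_{L^{2}}\lesssim_{R}\lambda_{i}\to0$ (again by \eqref{eq:induction k nonlinear energy}), hence is $o_{t\to T}(1)\|\psi\|_{L^{\infty}}$; and $\int\psi|[\wt{\eps}_{k}]_{\textrm{g}_{k}}|^{2}dx=\int\psi(x_{k}+\lambda_{k}z)|\wt{\eps}_{k}(z)|^{2}dz$ is treated by the same inner/outer split as above, leaving only $\int\psi(x_{k}+\lambda_{k}z)|\varphi_{R}\wt{\eps}_{k}(z)|^{2}dz$. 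Subtracting the two identities yields \eqref{eq:soliton decoupling}.

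The main obstacle is the bound $\int_{\bbR}Q|\wt{\eps}_{j}|\,dy\to0$ as $t\to T$: the energy bubbling only gives $\|Q\wt{\eps}_{j}\|_{L^{2}}\lesssim\lambda_{j}\to0$, and this $L^{1}$-type quantity genuinely needs more, since $\wt{\eps}_{j}$ may carry $O(1)$ mass on large dyadic scales. I would split $\int Q|\wt{\eps}_{j}|$ over $\{|y|\le1\}$, $\{1\le|y|\le\Lambda\}$, $\{|y|\ge\Lambda\}$ with $\Lambda=\lambda_{j}^{-1/2}\to\infty$: on $|y|\le1$ one has $Q\lesssim1$ and $\wt{\eps}_{j}=\chi_{R}\wt{\eps}_{j}$ (as $R>1$), so the contribution is $\lesssim_{R}\lambda_{j}$; on $\{1\le|y|\le\Lambda\}$, Cauchy--Schwarz gives $\int_{1\le|y|\le\Lambda}\tfrac{|\wt{\eps}_{j}|}{|y|}\le(2\log\Lambda)^{1/2}\big(\int_{|y|\ge1}\tfrac{|\wt{\eps}_{j}|^{2}}{|y|}\big)^{1/2}$ with $\int_{|y|\ge1}\tfrac{|\wt{\eps}_{j}|^{2}}{|y|}\lesssim\|\jp{y}^{-1}\wt{\eps}_{j}\|_{L^{2}}\|\wt{\eps}_{j}\|_{L^{2}}\lesssim_{M_{0}}\lambda_{j}$, so this is $\lesssim_{M_{0}}(\lambda_{j}\log\Lambda)^{1/2}$; and on $\{|y|\ge\Lambda\}$, $\int_{|y|\ge\Lambda}Q|\wt{\eps}_{j}|\le\|Q\mathbf{1}_{|y|\ge\Lambda}\|_{L^{2}}\|\wt{\eps}_{j}\|_{L^{2}}\lesssim_{M_{0}}\Lambda^{-1/2}$. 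All three tend to $0$. This uses the uniform mass bound $\|\wt{\eps}_{j}\|_{L^{2}}\lesssim_{M_{0}}1$, which follows inductively from $\|\wt{\eps}_{j}\|_{L^{2}}\le\|\varphi_{R}\wt{\eps}_{j-1}\|_{L^{2}}+\|Q\|_{L^{2}}\le\|\wt{\eps}_{j-1}\|_{L^{2}}+\|Q\|_{L^{2}}$ and $k\le M_{0}/M(Q)$.
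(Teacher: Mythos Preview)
Your proof is correct and takes a genuinely different route from the paper's. The paper expands $|v|^{2}=|\sum_{j}[Q]_{\textrm{g}_{j}}+\eps_{k}|^{2}$ directly and then kills the soliton--soliton cross terms $(\psi[Q]_{\textrm{g}_{j}},[Q]_{\textrm{g}_{i}})_{r}$ one by one, using the scale ordering $\lambda_{j}\lesssim\lambda_{i}$ together with the translation separation \eqref{eq:induction translation} from \ref{state:soliton decoupling}; the soliton--radiation terms are handled by rewriting $\eps_{k}=\eps_{j}-\sum_{i>j}[Q]_{\textrm{g}_{i}}$ and reducing to $(Q,|\wt{\eps}_{j}|)_{r}$. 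Your iterative peeling through the nested structure $\varphi_{R}\wt{\eps}_{j-1}=[Q+\wt{\eps}_{j}]_{\wt{\textrm{g}}_{j}}$ sidesteps the soliton--soliton overlaps entirely, so you never invoke \eqref{eq:induction lambda} or \eqref{eq:induction translation}; you only need $\lambda_{j}\to0$ and the energy bubbling bound $\|\wt{\eps}_{j}\|_{\dot{\calH}_{R}^{1}}\lesssim\lambda_{j}$. This is a strictly lighter use of the induction hypothesis $P(k)$. Both arguments ultimately hinge on the same scalar estimate $\int Q|\wt{\eps}_{j}|\,dy=o_{t\to T}(1)$; the paper obtains it in one line via $\int Q|\wt{\eps}_{j}|\le\|Q^{1/2+}\|_{L^{2}}\|Q^{1/2-}\wt{\eps}_{j}\|_{L^{2}}\lesssim\lambda_{j}^{1/2-}$ (interpolating $\|Q\wt{\eps}_{j}\|_{L^{2}}\lesssim\lambda_{j}$ against $\|\wt{\eps}_{j}\|_{L^{2}}\lesssim_{M_{0}}1$), whereas your three-region split gives the slightly weaker rate $\lambda_{j}^{1/4}+(\lambda_{j}|\log\lambda_{j}|)^{1/2}$---either suffices here.
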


\begin{proof}
	From the assumption, we have the decomposition \eqref{eq:induction k v decom}
	with \eqref{eq:induction k no same rate} and \eqref{eq:induction translation}.
	We compute $\int_{\bbR}\psi|v|^{2}$ by 
	\begin{align}
		{\int_{\bbR}}\psi|{\sum_{j=1}^{k}}[Q]_{\textrm{g}_{j}}+\eps_{k}|^{2}= & {\sum_{j=1}^{k}}{\int_{\bbR}}\psi|[Q]_{\textrm{g}_{j}}|^{2}+\psi|\eps_{k}|^{2}\nonumber \\
		& +2{\sum_{j<i}^{k}}(\psi[Q]_{\textrm{g}_{j}},[Q]_{\textrm{g}_{i}})_{r}+2{\sum_{j=1}^{k}}(\psi[Q]_{\textrm{g}_{j}},\eps_{k})_{r}.\label{eq:no return gr interaction}
	\end{align}
	Our goal is to show $\eqref{eq:no return gr interaction}=o_{t\to T}(1)$.
	For the first term, if $\lambda_{j}\ll\lambda_{i}$, we deduce 
	\begin{align*}
		(\psi[Q]_{\textrm{g}_{j}},[Q]_{\textrm{g}_{i}})_{r}=(Q,[\psi[Q]_{\textrm{g}_{i}}]_{\textrm{g}_{j}}^{-1})_{r} & \lesssim\|Q\|_{L^{1+}}\|[\psi[Q]_{\textrm{g}_{i}}]_{\textrm{g}_{j}}^{-1}\|_{L^{\infty-}}\\
		& \lesssim(\lambda_{j}\lambda_{i}^{-1})^{\frac{1}{2}-}\|\psi\|_{L^{\infty}}=o_{t\to T}(1)\cdot\|\psi\|_{L^{\infty}}.
	\end{align*}
	If $\lambda_{j}\sim\lambda_{i}$, thanks to \eqref{eq:induction translation},
	\begin{align*}
		(Q,[\psi[Q]_{\textrm{g}_{i}}]_{\textrm{g}_{j}}^{-1})_{r} & \lesssim\|\psi\|_{L^{\infty}}{\int_{\bbR}}Q(y)Q(y+\tfrac{x_{j}-x_{i}}{\lambda_{i}})dy\\
		& \lesssim\tfrac{\lambda_{i}}{x_{j}-x_{i}}\cdot\|\psi\|_{L^{\infty}}=o_{t\to T}(1)\cdot\|\psi\|_{L^{\infty}}.
	\end{align*}
	We compute the second term, by using $\eps_{j}=\sum_{i=j+1}^{k}[Q]_{\textrm{g}_{i}}+\eps_{k}$,
	\begin{align*}
		(\psi[Q]_{\textrm{g}_{j}},\eps_{k})_{r} & =(\psi[Q]_{\textrm{g}_{j}},\eps_{j})_{r}-{\sum_{i=j+1}^{k}}(\psi[Q]_{\textrm{g}_{j}},[Q]_{\textrm{g}_{i}})_{r}\\
		& =(\psi[Q]_{\textrm{g}_{j}},\eps_{j})_{r}-o_{t\to T}(1)\cdot\|\psi\|_{L^{\infty}}.
	\end{align*}
	We decompose 
	\begin{align}
		(\psi[Q]_{\textrm{g}_{j}},\eps_{j})_{r}= & {\sum_{i=1}^{j}}(\psi[Q]_{\textrm{g}_{j}},[\chi_{R}\wt{\eps}_{i}]_{\textrm{g}_{i}})_{r}\label{eq:no return gr interaction second 1}\\
		& +(\psi[Q]_{\textrm{g}_{j}},[\wt{\eps}_{j}]_{\textrm{g}_{j}})_{r}.\label{eq:no return gr interaction second 2}
	\end{align}
	We have 
	\begin{align*}
		\eqref{eq:no return gr interaction second 1}\lesssim{\sum_{i=1}^{j}}\|\chi_{R}\wt{\eps}_{i}\|_{L^{2}}\cdot\|\psi\|_{L^{\infty}}\lesssim{\sum_{i=1}^{j}}\lambda_{i}\cdot\|\psi\|_{L^{\infty}}=o_{t\to T}(1)\cdot\|\psi\|_{L^{\infty}}.
	\end{align*}
	For, \eqref{eq:no return gr interaction second 2}, we have 
	\begin{align*}
		(\psi[Q]_{\textrm{g}_{j}},[\wt{\eps}_{j}]_{\textrm{g}_{j}})_{r}\leq|(Q,|\wt{\eps}_{j}|)_{r}|\cdot\|\psi\|_{L^{\infty}}\lesssim\lambda_{j}^{\frac{1}{2}-}\cdot\|\psi\|_{L^{\infty}}=o_{t\to T}(1)\cdot\|\psi\|_{L^{\infty}}.
	\end{align*}
	Therefore, we arrive at $\eqref{eq:no return gr interaction}=o_{t\to T}(1)\cdot\|\psi\|_{L^{\infty}}$,
	and conclude 
	\begin{align*}
		{\int_{\bbR}}\psi|v|^{2}={\sum_{j=1}^{k}}{\int_{\bbR}}\psi|[Q]_{\textrm{g}_{j}}|^{2}+{\int_{\bbR}}\psi|\eps_{k}|^{2}+o_{t\to T}(1)\cdot\|\psi\|_{L^{\infty}}.
	\end{align*}
\end{proof}
Now, we prove the initial induction step. 
\begin{proof}[Proof of Lemma~\ref{lem:Induction initial}]
	\ref{state:further decomposition} was explained in \eqref{eq:4intro decompose},
	\eqref{eq:v(t) blowup rate lambda 1}, and \eqref{eq:4intro nonlinear coercivity}.
	Since there is only one soliton for now, it only remains to prove
	the convergence of translation parameter $x_{1}(t)$,~\ref{state:translation}.
	In fact, this can be shown by a similar argument in \cite{Raphael2005MathAnnalen}.
	Instead, we present a different argument that will be employed in
	later induction step. First, we claim 
	\begin{align}
		\sup_{t}|x_{1}(t)|<\infty.\label{eq:No rapid x1 claim}
	\end{align}
	Suppose not. Then, there exists a sequence $t_{n}\to T$ such that
	$x_{1}(t_{n})\to\pm\infty$. Without loss of generality, we may assume
	$\lim_{n\to\infty}x_{k}(t_{n})=+\infty$.  We suppress the dependence on the time sequence $(t_n)$ when no confusion arises. Denote $\chi_{|x-c|\leq\td R}\coloneqq\chi(\frac{x-c}{\td R})$. Then, we have $|\partial_{x}(x\chi_{|x-c|\leq\td R})|\lesssim1+\tfrac{|c|}{\td R}$.
	According to \eqref{eq:nonnegative energy}, we have 
	\begin{align}
		\partial_{t}{\int_{\bbR}}\chi_{|x-c|\leq\td R}x|v|^{2}dx=O(\sqrt{M_{0}E_{0}}(1+\tfrac{|c|}{\td R})).\label{eq:x2 dt bound}
	\end{align}
	Integrating \eqref{eq:x2 dt bound} on $[0,t_{n}]$, and taking $c=x_{1}(t_{n}),\td R=|x_{1}(t_{n})|/3$,
	we have 
	\begin{align}
		\sup_{n}\bigg|{\int_{\bbR}}\chi_{|x-x_{1}|\leq\frac{|x_{1}|}{3}}x|v|^{2}(t_{n})dx\bigg|<C\sqrt{M_{0}E_{0}}.\label{eq:x1 centre mass bdd}
	\end{align}
	Using \eqref{eq:4intro decompose} with the notation $\eps_{1}=[\wt{\eps}_{1}]_{\textrm{g}_{1}}$
	in \eqref{eq:induction k radiation decom}, we compute 
	\begin{align}
		{\int_{\bbR}}\chi_{|x-x_{1}|\leq\frac{|x_{1}|}{3}}x|v|^{2}= & {\int_{\bbR}}\chi_{|x-x_{1}|\leq\frac{|x_{1}|}{3}}x|[Q]_{\textrm{g}_{1}}|^{2}\label{eq:initial step sol}\\
		& +2{\int_{\bbR}}\chi_{|x-x_{1}|\leq\frac{|x_{1}|}{3}}x\Re([Q]_{\textrm{g}_{1}}\ol{\eps}_{1})\label{eq:initial step interact}\\
		& +{\int_{\bbR}}\chi_{|x-x_{1}|\leq\frac{|x_{1}|}{3}}x|\eps_{1}|^{2}.\nonumber 
	\end{align}
	By a direct computation, we have 
	\begin{align}
		\eqref{eq:initial step sol}=2{\int_{\bbR}}\chi_{\frac{x_{1}}{3\lambda_{1}}}\frac{\lambda_{1}y+x_{1}}{1+y^{2}}dy=2\pi x_{1}-O(\tfrac{\lambda_{1}}{x_{1}}).\label{eq:initial step sol compute}
	\end{align}
	Thanks to \eqref{eq:4intro nonlinear coercivity}, we have $\|Q\wt{\eps}_{1}\|_{L^{2}}\lesssim_{M_{0},E_{0}}\lambda_{1}$,
	and this implies 
	\begin{align}
		\eqref{eq:initial step interact}\lesssim|x_{1}|\cdot{\int_{\bbR}}Q\wt{\eps}_{1}\lesssim|x_{1}|\cdot\|Q^{\frac{1}{2}+}\|_{L^{2}}\|Q^{\frac{1}{2}-}\wt{\eps}_{1}\|_{L^{2}}\lesssim|x_{1}|\cdot\lambda_{1}^{\frac{1}{2}-}.\label{eq:initial step interact compute}
	\end{align}
	Thus, by \eqref{eq:initial step sol compute} and \eqref{eq:initial step interact compute}
	with $\lambda_{1}\to0$ and $x_{1}\to+\infty$, we have 
	\begin{align*}
		{\int_{\bbR}}\chi_{|x-x_{1}|\leq\frac{|x_{1}|}{3}}x|v|^{2}=(2\pi-o_{n\to\infty}(1))x_{1}-o_{n\to\infty}(1)+{\int_{\bbR}}\chi_{|x-x_{1}|\leq\frac{|x_{1}|}{3}}x|\eps_{1}|^{2}.
	\end{align*}
	Since ${\int_{\bbR}}\chi_{|x-x_{1}|\leq\frac{|x_{1}|}{3}}x|\eps_{1}|^{2}\sim x_{1}\cdot{\int_{\bbR}}\chi_{|x-x_{1}|\leq\frac{|x_{1}|}{3}}|\eps_{1}|^{2}\geq0$,
	we deduce 
	\begin{align*}
		{\int_{\bbR}}\chi_{|x-x_{1}|\leq\frac{|x_{1}|}{3}}x|v|^{2}\to+\infty,
	\end{align*}
	which contradicts to \eqref{eq:x1 centre mass bdd}. Therefore, we
	have \eqref{eq:No rapid x1 claim}.
	
	Next, we finish showing~\ref{state:translation}, that is, $\lim_{t\to T}x_{1}(t)\eqqcolon x_{1}(T)$
	and $|x_{1}(T)|<\infty$. We assume that $\lim_{t\to T}x_{1}(t)$
	does not exist. Then, there exist $\{a_{n}\}_{n\in\bbN},\allowbreak\{b_{n}\}_{n\in\bbN}$
	such that $a_{n},b_{n}\to T$ and $x_{1}(a_{n})\to x_{1,a}$, $x_{1}(b_{n})\to x_{1,b}$
	with $x_{1,a}\neq x_{1,b}$, and $|x_{1,a}|$, $|x_{1,b}|<C$ due
	to \eqref{eq:No rapid x1 claim}. By symmetry and taking translation,
	we may assume $0=x_{1,a}<x_{1,b}$. We claim that there exist a uniform
	constant $c=c(M_{0},E_{0})$ such that 
	\begin{equation}
		0<c\le x_{1,b}.\label{eq:uniform gap bound of x_k}
	\end{equation}
	To show a contradiction, we observe the exterior mass, $I_{r}(t)\coloneqq\int\varphi_{r}|v|^{2}(t)$
	with $0<r\ll x_{1,b}$. Integrating from $a_{n}$ to $b_{n}$ by \eqref{eq:nonnegative energy},
	we have 
	\begin{align*}
		|I_{r}(b_{n})-I_{r}(a_{n})|\lesssim|b_{n}-a_{n}|\cdot r^{-1}=o_{n\to\infty}(1)\cdot r^{-1}.
	\end{align*}
	By a similar argument to \eqref{eq:initial step interact compute},
	we have $([Q]_{\textrm{g}_{1}},\eps_{1})_{r}=(Q,\wt{\eps}_{1})_{r}=o_{t\to T}(1)$.
	Thus, we have 
	\begin{align*}
		I_{r}(b_{n})-I_{r}(a_{n})= & {\int_{\bbR}}\varphi_{r}|[Q]_{\textrm{g}_{1}}|^{2}(b_{n})-{\int_{\bbR}}\varphi_{r}|[Q]_{\textrm{g}_{1}}|^{2}(a_{n})\\
		& +{\int_{\bbR}}\varphi_{r}|\eps_{1}|^{2}(b_{n})-{\int_{\bbR}}\varphi_{r}|\eps_{1}|^{2}(a_{n})+o_{n\to\infty}(1),
	\end{align*}
	and $\|v_{0}\|_{L^{2}}^{2}-2\pi=\|\eps_{1}(t)\|_{L^{2}}^{2}+o_{t\to T}(1)$.
	Moreover, since $x_{1,a}=0$, we have 
	\begin{equation*}
		{\int_{\bbR}}\varphi_{r}|[Q]_{\textrm{g}_{1}}|^{2}(a_{n})=o_{n\to\infty}(1),\quad \text{and thus}, \quad  {\int_{\bbR}}\varphi_{r}|[Q]_{\textrm{g}_{1}}|^{2}(b_{n})=2\pi+o_{n\to\infty}(1).
	\end{equation*}
	Thus, we derive 
	\begin{align}
		\left|2\pi-{\int_{\bbR}}\chi_{r}|\eps_{1}|^{2}(b_{n})+{\int_{\bbR}}\chi_{r}|\eps_{1}|^{2}(a_{n})\right|\leq o_{n\to\infty}(1)\cdot r^{-1}+o_{n\to\infty}(1).\label{eq:x1 differ gap}
	\end{align}
	For $\int_{\bbR}\chi_{r}|\eps_{1}|^{2}$, we have 
	\begin{align}
		{\int_{\bbR}}\chi_{r}|[\wt{\eps}_{1}]_{\lambda_{1},\gamma_{1},x_{1}}|^{2}={\int_{\bbR}}\chi_{|y+\frac{x_{1}}{\lambda_{1}}|\lesssim\frac{r}{\lambda_{1}}}|\wt{\eps}_{1}|^{2} & \leq{\int_{\bbR}}{\textbf 1}_{[\frac{-x_{1}-Cr}{\lambda_{1}},\frac{-x_{1}+Cr}{\lambda_{k}}]}|\wt{\eps}_{1}|^{2}\nonumber \\
		& \lesssim\frac{x_{1}^{2}+r^{2}}{\lambda_{1}^{2}}{\int_{\bbR}}|Q\wt{\eps}_{1}|^{2}\lesssim_{M_{0},E_{0}}x_{1}^{2}+r^{2}.\label{eq:eps 1 inner estimate 2}
	\end{align}
	Therefore, by \eqref{eq:eps 1 inner estimate 2}, we have 
	\begin{align}
		\begin{split}{\int_{\bbR}}\chi_{r}|\eps_{1}|^{2}(b_{n}) & \leq C_{M_{0},E_{0}}(x_{1}(b_{n})^{2}+r^{2})+o_{n\to\infty}(1),\\
			{\int_{\bbR}}\chi_{r}|\eps_{1}|^{2}(a_{n}) & \leq C_{M_{0},E_{0}}r^{2}+o_{n\to\infty}(1).
		\end{split}
		\label{eq:eps 1 inner estimate 3}
	\end{align}
	From \eqref{eq:x1 differ gap} and \eqref{eq:eps 1 inner estimate 3},
	we deduce 
	\begin{align*}
		2\pi\leq2C_{M_{0},E_{0}}(x_{1}(b_{n})^{2}+r^{2})+o_{n\to\infty}(1)\cdot(1+r^{-1}).
	\end{align*}
	Thus, choose $r$ arbitrarily small and taking $n\to\infty$, we have
	$\frac{\pi}{C_{M_{0},E_{0}}}=:c^{2}\le x_{1,b}^{2}$. This prove \eqref{eq:uniform gap bound of x_k}.
	Once we have \eqref{eq:uniform gap bound of x_k}, then we can find
	another sequence $(t_{n})$ such that $x_{1}(t_{n})\to\td x_{1}$
	for any $\td x_{1}\in(x_{1,a},x_{1,b})=(0,x_{1,b})$, since $x_{1}(t)$
	is $C^{1}$. This obviously makes a contradiction by reapplying the
	uniform gap \eqref{eq:uniform gap bound of x_k} of $\lim_{n\to\infty}x_{1}(a_{n})$
	and $\lim_{n\to\infty}x_{1}(t_{n})$. Hence, we conclude that $\lim_{t\to T}x_{1}(t)=x_{1}(T)$
	exists, and we finish the proof. 
\end{proof}
\begin{proof}[Proof of Lemma~\ref{lem:Induction}]
	
	\textbf{Step 1.} ($k$-th sequential decomposition) We assume that
	$P(k-1)$ and $Q(k-1)$ hold true. We first claim $k$-th time-sequential
	decomposition. From $Q(k-1)$, we have 
	\begin{align}
		\liminf_{t\to T}\frac{\lambda_{k-1}(t)}{\|\varphi_{R}\wt{\eps}_{k-1}(t)\|_{\dot{H}^{1}}}=0.\label{eq:induction lambda k-1 assume}
	\end{align}
	and so there is a time sequence $t_{n}$ so that $\lim_{n\to\infty}\frac{\lambda_{k-1}(t_{n})}{\|\varphi_{R}\wt{\eps}_{k-1}(t_{n})\|_{\dot{H}^{1}}}=0$.
	Then, \eqref{eq:induction k nonlinear energy} in $P(k-1)$ implies,
	for large $n$, 
	\begin{align*}
		\sqrt{E(\varphi_{R}\wt{\eps}_{k-1}(t_{n}))}\lesssim_{M_{0},E_{0}}\lambda_{k-1}(t_{n})\ll\|\varphi_{R}\wt{\eps}_{k-1}(t_{n})\|_{\dot{H}^{1}}.
	\end{align*}
	So, we apply the decomposition (Proposition~\ref{prop:Decomposition})
	for $\{\varphi_{R}\wt{\eps}_{k-1}(t_{n})\}$, and then there exist
	$(\lambda_{k,n},\wt{\gamma}_{k,n},\wt x_{k,n},\wt{\eps}_{k,n})$ such
	that 
	\begin{align}
		\varphi_{R}\wt{\eps}_{k-1}(t_{n})=[Q+\wt{\eps}_{k,n}]_{\wt{\lambda}_{k,n},\wt{\gamma}_{k,n},\wt x_{k,n}}\label{eq:induction pf kth decom}
	\end{align}
	with the smallness $\|\wt{\eps}_{k,n}\|_{\dot{\calH}^{1}}<\eta$,
	\begin{align}
		\wt{\lambda}_{k,n}\sim\frac{\|Q\|_{\dot{H}^{1}}}{\|\varphi_{R}\wt{\eps}_{k-1}(t_{n})\|_{\dot{H}^{1}}},\label{eq:induction pf kth lambda}
	\end{align}
	and $(\wt{\eps}_{k,n},\mathcal{Z}_{1})_{r}=(\wt{\eps}_{k,n},\mathcal{Z}_{2})_{r}=(\wt{\eps}_{k,n},\mathcal{Z}_{3})_{r}=0$.
	Moreover, by \eqref{eq:Nolinear estimate truncated}, we have 
	\begin{align}
		\|\wt{\eps}_{k,n}\|_{\dot{\calH}_{R}^{1}}^{2}+E(\varphi_{R}\wt{\eps}_{k,n})\lesssim_{M_{0}}\wt{\lambda}_{k,n}^{2}E(\varphi_{R}\wt{\eps}_{k-1}(t_{n})).\label{eq:induction pf kth nonlinear energy}
	\end{align}
	We denote $(\wt{\lambda}_{k,n},\wt{\gamma}_{k,n},\wt x_{k,n},\wt{\eps}_{k,n})$
	by $(\wt{\lambda}_{k}(t_{n}),\wt{\gamma}_{k}(t_{n}),\wt x_{k}(t_{n}),\wt{\eps}_{k}(t_{n}))$,
	and we just denote by $(\wt{\lambda}_{k},\wt{\gamma}_{k},\wt x_{k},\wt{\eps}_{k})=(\wt{\textrm{g}}_{k},\wt{\eps}_{k})$
	if there is no confusion. From $P(k-1)$, we have continuous in time
	$k-1$-th configuration 
	\begin{align*}
		v(t)={\sum_{j=1}^{k-1}}[Q]_{\textrm{g}_{j}}+\eps_{k-1}\text{ on }t\in[T_{k-1},T).
	\end{align*}
	Define $(\lambda_{k},\gamma_{k},x_{k},\eps_{k})(t_{n})=(\textrm{g}_{k},\eps_{k})(t_{n})$
	via \eqref{eq:induction k radiation decom} and \eqref{eq:induction k modul para}.
	Using \eqref{eq:induction k radiation decom} and \eqref{eq:induction pf kth decom},
	we deduce 
	\begin{align*}
		\eps_{k-1}(t_{n})=[Q]_{\lambda_{k},\gamma_{k},x_{k}}(t_{n})+\eps_{k}(t_{n})=[Q]_{\textrm{g}_{k}}(t_{n})+\eps_{k}(t_{n}).
	\end{align*}
	Therefore, we have the sequential $k$-th decomposition 
	\begin{align}
		v(t_{n})=\bigg({\sum_{j=1}^{k}}[Q]_{\textrm{g}_{j}}+\eps_{k}\bigg)(t_{n}),\quad\|\wt{\eps}_{k}(t_{n})\|_{\dot{\calH}_{R}^{1}}\lesssim\lambda_{k}(t_{n}).\label{eq:Soliton nonradial second}
	\end{align}
	Note that the second inequality of \eqref{eq:Soliton nonradial second}
	comes from \eqref{eq:induction pf kth nonlinear energy} and \eqref{eq:induction k nonlinear energy}
	in $P(k-1)$. In addition, we obtain $\lambda_{k}(t_{n})\to0$ as
	$n\to\infty$ by \eqref{eq:induction lambda k-1 assume}, \eqref{eq:induction pf kth lambda},
	and the definition of $\lambda_{k}(t_{n})$.
	
	\textbf{Step 2.} (Boundedness of translation) Here we show a partial
	information of translation parameter $x_{k}(t)$,~\ref{state:translation}
	(of $P(k)$). Indeed, we claim that 
	\begin{align}
		\sup_{n}|x_{k}(t_{n})|<\infty.\label{eq:No rapid x2 claim}
	\end{align}
	The proof of \eqref{eq:No rapid x2 claim} follows a similar argument
	to \eqref{eq:No rapid x1 claim}. So here we only sketch it. From
	$P(k-1)$, $x_{j}(t_{n})$ converges to $x_{j}(T)$, and $|x_{j}(T)|<\infty$
	for all $j=1,2\cdots,k-1$. If \eqref{eq:No rapid x2 claim} fails,
	passing to a subsequence (again denoting it by $t_{n}$), we have
	$x_{k}(t_{n})\to\pm\infty$. Without loss of generality, we may assume
	$\lim_{n\to\infty}x_{k}(t_{n})=+\infty$. By a similar argument to
	\eqref{eq:x1 centre mass bdd}, taking $c=x_{k}(t_{n}),\td R=|x_{k}(t_{n})|/3$,
	we have 
	\begin{align}
		\sup_{n}\bigg|{\int_{\bbR}}\chi_{|x-x_{k}|\leq\frac{|x_{k}|}{3}}x|v|^{2}(t_{n})dx\bigg|<C\sqrt{M_{0}E_{0}}.\label{eq:x2 centre mass bdd}
	\end{align}
	On the other hand, by \eqref{eq:soliton decoupling}, we have 
	\begin{align*}
		{\int_{\bbR}}\chi_{|x-c|\leq\td R}x|v|^{2}= & {\sum_{j=1}^{k-1}}{\int_{\bbR}}\chi_{|x-c|\leq\td R}x|[Q]_{\textrm{g}_{j}}|^{2}+{\int_{\bbR}}\chi_{|x-c|\leq\td R}x|\eps_{k-1}|^{2}\\
		& +o_{t\to T}(1)\cdot(1+\tfrac{|c|}{R}).
	\end{align*}
	Therefore, taking $c=x_{1}(t_{n}),\td R=|x_{1}(t_{n})|/3$, and $t=t_{n}$,
	we derive 
	\begin{align}
		{\int_{\bbR}}\chi_{|x-x_{k}|\leq\frac{|x_{k}|}{3}}x|v|^{2}= & {\int_{\bbR}}\chi_{|x-x_{k}|\leq\frac{|x_{k}|}{3}}x{\sum_{j=1}^{k}}|[Q]_{\textrm{g}_{j}}|^{2}\label{eq:induction pf soliton L2}\\
		& +2{\int_{\bbR}}\chi_{|x-x_{k}|\leq\frac{|x_{k}|}{3}}x\Re([Q]_{\textrm{g}_{k}}\ol{\eps}_{k})\label{eq:induction pf soliton radiation interaction}\\
		& +{\int_{\bbR}}\chi_{|x-x_{k}|\leq\frac{|x_{k}|}{3}}x|\eps_{k}|^{2}+o_{t\to T}(1).\label{eq:induction pf soliton radiation}
	\end{align}
	We compute \eqref{eq:induction pf soliton L2}. For $j<k$, we have
	\begin{align}
		{\int_{\bbR}}\chi_{|x-x_{k}|\leq\frac{x_{k}}{3}}x|[Q]_{\textrm{g}_{j}}|^{2}(t_{n})dx & \sim{\int_{\bbR}}{\textbf 1}_{|\lambda_{j}y+x_{j}-x_{k}|\leq\frac{2x_{k}}{3}}\frac{\lambda_{j}y+x_{j}}{1+y^{2}}dy\nonumber \\
		& \lesssim \frac{\lambda_{j}^{2}}{|x_{k}|}\cdot{\int_{\bbR}}{\textbf 1}_{|\lambda_{j}y+x_{j}-x_{k}|\leq\frac{2x_{k}}{3}}dy\nonumber \\
		& \lesssim\lambda_{j}\to0.\label{eq:induction step2 soliton j<k}
	\end{align}
	Here, we used $y\sim x_{k}\lambda_{j}^{-1}$ which comes from $|x_{j}(T)|<\infty$
	and $x_{k}(t_{n})\to\infty$. For $j=k$, we have 
	\begin{align}
		{\int_{\bbR}}\chi_{|x-x_{k}|\leq\frac{x_{k}}{3}}x|[Q]_{\textrm{g}_{k}}|^{2}dx=2{\int_{\bbR}}\chi_{\frac{x_{k}}{3\lambda_{k}}}\frac{\lambda_{j}y+x_{j}}{1+y^{2}}dy=2\pi x_{k}-O(\tfrac{\lambda_{k}}{x_{k}}).\label{eq:induction step2 soliton j equ k}
	\end{align}
	For \eqref{eq:induction pf soliton radiation interaction}, we remind
	$\eps_{k}=\sum_{j=1}^{k-1}[\chi_{R}\wt{\eps}_{j}]_{\textrm{g}_{j}}+[\wt{\eps}_{k}]_{\textrm{g}_{k}}$
	and $\|Q\wt{\eps}_{j}\|_{L^{2}}\lesssim\lambda_{j}$. We have 
	\begin{align}
		\eqref{eq:induction pf soliton radiation interaction} & \lesssim|x_{k}|\cdot\bigg({\sum_{j=1}^{k}}|([Q]_{\textrm{g}_{k}},[\chi_{R}\wt{\eps}_{j}]_{\textrm{g}_{j}})_{r}|+\left|([Q]_{\textrm{g}_{k}},[\wt{\eps}_{k}]_{\textrm{g}_{k}})_{r}\right|\bigg)\nonumber \\
		& \lesssim|x_{k}|\cdot\bigg({\sum_{j=1}^{k-1}}\|Q\|_{L^{2}}\|\chi_{R}\wt{\eps}_{j}\|_{L^{2}}+{\int_{\bbR}}|Q\wt{\eps}_{k}|\bigg)\nonumber \\
		& \lesssim_{R}|x_{k}|\bigg({\sum_{j=1}^{k-1}}\lambda_{j}+\lambda_{k}^{\frac{1}{2}-}\bigg)=o_{n\to\infty}(1)\cdot|x_{k}|.\label{eq:induction step2 soliton radiate inter}
	\end{align}
	Here, we note that 
	\begin{align*}
		\|\chi_{R}\wt{\eps}_{j}\|_{L^{2}}\lesssim_{R}\|Q\wt{\eps}_{j}\|_{L^{2}}\lesssim\lambda_{j},\quad{\int_{\bbR}}|Q\wt{\eps}_{k}|\lesssim\|Q^{\frac{1}{2}+}\|_{L^{2}}\|Q^{\frac{1}{2}-}\wt{\eps}_{k}\|_{L^{2}}\lesssim\lambda_{k}^{\frac{1}{2}-}.
	\end{align*}
	By \eqref{eq:induction step2 soliton j<k}, \eqref{eq:induction step2 soliton j equ k},
	and \eqref{eq:induction step2 soliton radiate inter}, we get to 
	\begin{align*}
		{\int_{\bbR}}\chi_{|x-x_{k}|\leq\frac{|x_{k}|}{3}}x|v|^{2}=(2\pi-o_{n\to\infty}(1))x_{k}-o_{n\to\infty}(1)+\eqref{eq:induction pf soliton radiation}.
	\end{align*}
	For the last term \eqref{eq:induction pf soliton radiation}, since
	$x\geq0$ on $x\in[\frac{x_{k}}{3},\frac{5x_{k}}{3}]$, we have $\eqref{eq:induction pf soliton radiation}\geq0+o_{n\to\infty}(1)$.
	Therefore, we deduce 
	\begin{align*}
		{\int_{\bbR}}\chi_{|x-x_{k}|\leq\frac{x_{k}}{3}}x|v|^{2}(t_{n})dx\geq x_{k}(t_{n})\to\infty,
	\end{align*}
	and this contradicts to \eqref{eq:x2 centre mass bdd}. This finishes
	to prove \eqref{eq:No rapid x2 claim}.
	
	\textbf{Step 3.} (No return property) In this step, we show~\ref{state:no-return property}.
	This part is crucial to upgrade the sequential soliton resolution
	to continuous in time resolution. In this step, we use the convergence
	of $x_{j}$ for $j\le k-1$ (\ref{state:translation} in $P(j)$).
	Suppose 
	\begin{align*}
		\limsup_{t\to T}\frac{\lambda_{k-1}(t)}{\|\varphi_{R}\wt{\eps}_{k-1}(t)\|_{\dot{H}^{1}}}>0.
	\end{align*}
	Then, we can find a subsequence of $\{t_{n}\}_{n\in\bbN}$ which is
	denoted by $\{a_{n}\}_{n\in\bbN}$, and another sequence $\{b_{n}\}_{n\in\bbN}$
	such that $a_{n},b_{n}\to T$ and 
	\begin{align}
		\frac{\lambda_{k-1}(a_{n})}{\|\varphi_{R}\wt{\eps}_{k-1}(a_{n})\|_{\dot{H}^{1}}}\to0,\quad\frac{\lambda_{k-1}(b_{n})}{\|\varphi_{R}\wt{\eps}_{k-1}(b_{n})\|_{\dot{H}^{1}}}\to c>0.\label{eq:no return assume}
	\end{align}
	As explained above, along the sequence of $(a_{n})$, we can extract
	another soliton $[Q]_{\mathrm{g_{k}}}$, while there is no other soliton
	along $(b_{n})$. However, the nonnegativity of energy implies that
	the exterior mass of the last soliton is Lipschitz in time. This will
	make a contradiction.
	
	Indeed, passing a subsequence, we have $x_{k}(a_{n})\to x_{k,\infty}$
	with $|x_{k,\infty}|<\infty$ by the Step 1. Taking translation, we
	may assume $x_{k,\infty}=0$. In order to bring out a contradiction,
	we investigate the exterior mass, $I_{r}(t)=\int\varphi_{r}|v|^{2}$
	for a sufficiently small $r=r(M_{0},E_{0},c)$ to be chosen later.
	Thanks to \eqref{eq:nonnegative energy}, we have 
	\begin{align*}
		|\partial_{t}I_{r}(t)|\lesssim\sqrt{E_{0}}\|(\partial_{x}\varphi_{r})\cdot v\|_{L^{2}}\lesssim\sqrt{M_{0}E_{0}}r^{-1}.
	\end{align*}
	Integrating this, we deduce 
	\begin{align}
		|I_{r}(b)-I_{r}(a)|\lesssim_{M_{0},E_{0}}|b-a|r^{-1},\quad\forall a,b<T.\label{eq:no return gr lipschitz}
	\end{align}
	Now, we estimate $I_{r}(t)$ on each sequence $(a_{n})$ and $(b_{n})$.
	From \eqref{eq:soliton decoupling}, we have 
	\begin{align}
		I_{r}(t)={\sum_{j=1}^{k-1}}{\int_{\bbR}}\varphi_{r}|[Q]_{\textrm{g}_{j}}|^{2}+{\int_{\bbR}}\varphi_{r}|\eps_{k-1}|^{2}+o_{t\to T}(1)\cdot r^{-1},\label{eq:no return gr bn}
	\end{align}
	for any $t\ge0$. Firstly, we use \eqref{eq:no return gr bn} for
	$t=a_{n}$ to obtain \eqref{eq:no return gr an}. Indeed, we take
	a further decompose by $\eps_{k-1}=[Q]_{\textrm{g}_{k}}+\eps_{k}$using
	Proposition~\ref{prop:Decomposition}, and then have 
	\begin{align*}
		{\int_{\bbR}}\varphi_{r}|\eps_{k-1}|^{2}(a_{n})={\int_{\bbR}}\varphi_{r}|[Q]_{\textrm{g}_{k}}|^{2}+2(\varphi_{r}[Q]_{\textrm{g}_{k}},\eps_{k})_{r}+{\int_{\bbR}}\varphi_{r}|\eps_{k}|^{2}.
	\end{align*}
	We also estimate $(\varphi_{r}[Q]_{\textrm{g}_{k}},\eps_{k})_{r}\lesssim\lambda_{k}^{\frac{1}{2}-}=o_{n\to\infty}(1)$
	by a similar argument to \eqref{eq:induction pf soliton radiation interaction}.
	Then we derive 
	\begin{align*}
		I_{r}(a_{n})={\sum_{j=1}^{k}}{\int_{\bbR}}\varphi_{r}|[Q]_{\textrm{g}_{j}}|^{2}(a_{n})+{\int_{\bbR}}\varphi_{r}|\eps_{k}|^{2}(a_{n})+o_{n\to\infty}(1)\cdot r^{-1}.
	\end{align*}
	Moreover, since $x_{k}(a_{n})\to0$, after taking large $n$ so that
	$|x_{k}(a_{n})|<\frac{r}{2}$, we have 
	\begin{align*}
		{\int_{\bbR}}\varphi_{r}|[Q]_{\textrm{g}_{k}}|^{2}(a_{n})\leq{\int_{\bbR}}\varphi_{\frac{r}{2\lambda_{k}}}Q^{2}\lesssim\lambda_{k}(a_{n})r^{-1}.
	\end{align*}
	Therefore, we arrive at 
	\begin{align}
		I_{r}(a_{n})={\sum_{j=1}^{k-1}}{\int_{\bbR}}\varphi_{r}|[Q]_{\textrm{g}_{j}}|^{2}(a_{n})+{\int_{\bbR}}\varphi_{r}|\eps_{k}|^{2}(a_{n})+o_{n\to\infty}(1)\cdot r^{-1}.\label{eq:no return gr an}
	\end{align}
	On the sequence $t=b_{n}$, from \eqref{eq:no return assume}, we
	know $\lambda_{k-1}(b_{n})\sim_{c}\|\varphi_{R}\wt{\eps}_{k-1}(b_{n})\|_{\dot{H}^{1}}$.
	Thanks to energy bubbling \eqref{eq:induction k nonlinear energy},
	we also know $\|\wt{\eps}_{k-1}\|_{\dot{\calH}_{R}^{1}}\lesssim_{M_{0},E_{0}}\lambda_{k-1}$,
	and hence we have 
	\begin{align*}
		\|\wt{\eps}_{k-1}(b_{n})\|_{\dot{\calH}^{1}}\leq\|\wt{\eps}_{k-1}(b_{n})\|_{\dot{\calH}_{R}^{1}}+\|\varphi_{R}\wt{\eps}_{k-1}(b_{n})\|_{\dot{H}^{1}}\lesssim_{M_{0},E_{0},c}\lambda_{k-1}(b_{n}).
	\end{align*}
	From this and energy bubbling $\|\wt{\eps}_{j}\|_{\dot{\calH}_{R}^{1}}\lesssim_{M_{0},E_{0}}\lambda_{j}$,
	we estimate 
	\begin{align}
		\|\eps_{k-1}(b_{n})\|_{H^{1}} & \lesssim_{c}\|v_{0}\|_{L^{2}}+{\sum_{j=1}^{k-2}}\frac{1}{\lambda_{j}(b_{n})}\|\chi_{R}\wt{\eps}_{j}(b_{n})\|_{\dot{H}^{1}}+\frac{1}{\lambda_{k-1}(b_{n})}\|\wt{\eps}_{k-1}(b_{n})\|_{\dot{H}^{1}}\nonumber \\
		& \lesssim_{M_{0},E_{0},c}1.\label{eq:no return e k-1 bound}
	\end{align}
	Now, we are ready to bring out the contradiction. From \eqref{eq:no return gr lipschitz}
	and $a_{n},b_{n}\to T$ as $n\to\infty$, we have 
	\begin{align*}
		|I_{r}(b_{n})-I_{r}(a_{n})|\lesssim|b_{n}-a_{n}|r^{-1}=o_{n\to\infty}(1)\cdot r^{-1}.
	\end{align*}
	By this, \eqref{eq:no return gr bn}, and \eqref{eq:no return gr an},
	\begin{align}
		{\int_{\bbR}} & \varphi_{r}|\eps_{k-1}|^{2}(b_{n})-{\int_{\bbR}}\varphi_{r}|\eps_{k}|^{2}(a_{n})\nonumber \\
		\lesssim & {\sum_{j=1}^{k-1}}\left|{\int_{\bbR}}\varphi_{r}|[Q]_{\textrm{g}_{j}}|^{2}(b_{n})-|[Q]_{\textrm{g}_{j}}|^{2}(a_{n})\right|+o_{n\to\infty}(1)\cdot r^{-1}.\label{eq:no return soliton differ}
	\end{align}
	Using DCT, we have 
	\begin{align*}
		{\int_{\bbR}}\varphi_{r}|[Q]_{\textrm{g}_{j}}|^{2}={\int_{\bbR}}\varphi_{r}(\lambda_{j}y+x_{j})\cdot Q^{2}\to{\int_{\bbR}}\varphi_{r}(x_{j}(T))\cdot Q^{2}\text{ as }t\to T.
	\end{align*}
	Since $a_{n},b_{n}\to T$ as $n\to\infty$, we deduce $\eqref{eq:no return soliton differ}=o_{n\to\infty}(1)\cdot r^{-1}$.
	This is the reason what we need the convergence of $x_{j}(t)$ as
	$t\to T$. So, we have 
	\begin{align}
		{\int_{\bbR}}\varphi_{r}|\eps_{k-1}|^{2}(b_{n})-{\int_{\bbR}}\varphi_{r}|\eps_{k}|^{2}(a_{n})\lesssim o_{n\to\infty}(1)\cdot r^{-1}.\label{eq:no return gr differ}
	\end{align}
	On the other hand, again from \eqref{eq:soliton decoupling} with
	$\psi\equiv1$, we have 
	\begin{align*}
		\|v_{0}\|_{L^{2}}^{2}=\|v(t)\|_{L^{2}}^{2}=(k-1)\cdot2\pi+\|\eps_{k-1}(t)\|_{L^{2}}^{2}+o_{t\to T}(1).
	\end{align*}
	On $a_{n}$, by arguing as like \eqref{eq:no return gr an}, we have
	\begin{align*}
		\|v_{0}\|_{L^{2}}^{2}=k\cdot2\pi+\|\eps_{k}(a_{n})\|_{L^{2}}^{2}+o_{n\to\infty}(1).
	\end{align*}
	Thus, we have 
	\begin{align}
		{\int_{\bbR}}|\eps_{k-1}(b_{n})|^{2}-{\int_{\bbR}}|\eps_{k}(a_{n})|^{2}=2\pi+o_{n\to\infty}(1).\label{eq:no return mass differ}
	\end{align}
	By \eqref{eq:no return gr differ} and \eqref{eq:no return mass differ},
	we have 
	\begin{align*}
		\bigg|-{\int_{\bbR}}\chi_{r}|\eps_{k-1}|^{2}(b_{n})+{\int_{\bbR}}\chi_{r}|\eps_{k}|^{2}(a_{n})+2\pi\bigg|\lesssim o_{n\to\infty}(1)\cdot r^{-1}.
	\end{align*}
	Thanks to \eqref{eq:no return e k-1 bound}, we have 
	\begin{align*}
		{\int_{\bbR}}\chi_{r}|\eps_{k-1}|^{2}(b_{n})\leq\|\chi_{r}\|_{L^{1}}\|\eps_{k-1}(b_{n})\|_{L^{\infty}}^{2}\lesssim r\|\eps_{k-1}(b_{n})\|_{H^{1}}^{2}\lesssim_{M_{0},E_{0},c}r.
	\end{align*}
	Therefore, we can take $r$ sufficiently small so that 
	\begin{align*}
		\sup_{n}{\int_{\bbR}}\chi_{r}|\eps_{k-1}|^{2}(b_{n})\leq\epsilon,
	\end{align*}
	for given small $\epsilon>0$. We remark that the choice of $r$ does
	not depend on $n$. Thus, we obtain 
	\begin{align*}
		2\pi\leq{\int_{\bbR}}\chi_{r}|\eps_{k}|^{2}(a_{n})+2\pi\lesssim_{M_{0},E_{0},c}o_{n\to\infty}(1)\cdot r^{-1}+\epsilon.
	\end{align*}
	Now, taking $n\to\infty$, we have $2\pi\lesssim_{M_{0},E_{0},c}\epsilon$
	for arbitrary small $\epsilon$, which leads a contradiction. Hence,
	we conclude $\limsup_{t\to T}\frac{\lambda_{k-1}(t)}{\|\varphi_{R}\wt{\eps}_{k-1}(t)\|_{\dot{H}^{1}}}=0$,
	and this proves~\ref{state:no-return property}.
	
	\textbf{Step 4.} We finish to prove~\ref{state:further decomposition}
	and~\ref{state:soliton decoupling}. By Step 3, we have \eqref{eq:no return}.
	This means that by \eqref{eq:induction k nonlinear energy} there
	exists a $T_{k}$ with $T_{k-1}<T_{k}<T$ satisfying 
	\begin{align*}
		\sqrt{E(\varphi_{R}\wt{\eps}_{k-1}(t))}<\alpha^{*}\|\varphi_{R}\wt{\eps}_{k-1}(t)\|_{\dot{H}^{1}}\quad\text{on}\quad t\in[T_{k},T).
	\end{align*}
	Therefore, we can apply the decomposition Proposition~\ref{prop:Decomposition}
	for $\varphi_{R}\wt{\eps}_{k-1}(t)$ on $[T_{k},T)$, and we have
	\ref{state:further decomposition}. Now, we prove~\ref{state:soliton decoupling}.
	From Proposition~\ref{prop:Decomposition}, we have 
	\begin{align*}
		\wt{\lambda}_{k}(t)^{-1}\sim\|\varphi_{R}\wt{\eps}_{k-1}(t)\|_{\dot{H}^{1}},
	\end{align*}
	and deduce $\lim_{t\to T}\lambda_{k}(t)=0$ by~\ref{state:no-return property}.
	From $\|\wt{\eps}_{k-1}\|_{\dot{\calH}^{1}}<\eta$, we have $\wt{\lambda}_{k}^{-1}\sim\|\varphi_{R}\wt{\eps}_{k-1}\|_{\dot{H}^{1}}<\eta$,
	and deduce that $\lambda_{k-1}=\wt{\lambda}_{k}^{-1}\la_{k}\lesssim\lambda_{k}$,
	\eqref{eq:induction lambda}, and $\wt{\lambda}_{k}\not\to0$.
	
	Now, we prove \eqref{eq:induction k no same rate}. Suppose otherwise. Then,
	we have $\liminf_{t\to T}\wt{\lambda}_{k}<\infty$ and $\liminf_{t\to T}|\wt x_{k}|<\infty$, so there exists a sequence $\{t_{n}\}_{n\in\bbN}$ such that $\lim_{n\to\infty}\wt{\lambda}_{k}(t_{n})\eqqcolon\wt{\lambda}_{k,\infty}<\infty$ and $\lim_{n\to\infty}\wt x_{k}(t_{n})\eqqcolon\wt x_{k,\infty}$ with $|\wt x_{k,\infty}|<\infty$ as $t_{n}\to T$. 
	Further taking subsequence if necessary, $\lim_{n\to\infty}x_{k}(t_{n})=x_{k,\infty}$ exists, and by the step 2, we also have $|x_{k,\infty}|<\infty$.
	Let $f_{n}=[\varphi_{R}\wt{\eps}_{k-1}]_{\wt{\textrm{g}}_{k}}^{-1}(t_{n})$,
	and applying the variational argument to $f_{n}$ (Lemma~\ref{lem:variational argument}),
	we have 
	\begin{align}
		f_{n}\rightharpoonup[Q]_{\lambda_{k,0},\gamma_{k,0},x_{k,0}}\text{ weakly in }H^{1}\label{eq:pf induc para decouple fn}
	\end{align}
	for some fixed $(\lambda_{k,0},\gamma_{k,0},x_{k,0})$. We have 
	\begin{align}
		0 & =\lim_{n\to\infty}\Re{\int_{\bbR}}\varphi_{R}\wt{\eps}_{k-1}(t_{n})e^{-i\gamma_{k,0}}\cdot\chi_{\frac{R}{2}}e^{-i\wt{\gamma}_{k}(t_{n})}dx\nonumber \\
		& =\lim_{n\to\infty}\Re{\int_{\bbR}}f_{n}e^{-i\gamma_{k,0}}\cdot\chi_{|\wt{\lambda}_{k,\infty}x+\wt x_{k,\infty}|\leq\frac{R}{2}}dy\nonumber \\
		& =\Re{\int_{\bbR}}[Q]_{\lambda_{k,0},0,x_{k,0}}\cdot\chi_{|\wt{\lambda}_{k,\infty}x+\wt x_{k,\infty}|\leq\frac{R}{2}}dy\neq0,\label{eq:pf induc para decouple}
	\end{align}
	and this is a contradiction. So, we derive \eqref{eq:induction k no same rate}.
	
	Now, we show \eqref{eq:induction translation} by using induction
	in descending order for $j$. The initial case $j=k-1$ already was
	shown in \eqref{eq:induction k no same rate}. We assume that \eqref{eq:induction translation}
	holds true for $j=\ell+1,\ell+2,\cdots,k-1$. To show \eqref{eq:induction translation}
	for $j=l$, we use the contradiction argument. We suppose \eqref{eq:induction translation}
	is not true for $j=l$. Then, we can find a sequence $t_{n}\to T$
	such that $(x_{k}-x_{\ell})\lambda_{\ell}^{-1}(t_{n})\to x_{k,\ell,\infty}$
	with $|x_{k,\ell,\infty}|<\infty$ and $\lambda_{k}\lambda_{\ell}^{-1}(t_{n})\to\lambda_{k,\ell,\infty}$.
	We have $0<\lambda_{k,\ell,\infty}<\infty$ by the assumption $\lambda_{\ell}\sim\lambda_{k}$.
	We consider 
	\begin{align*}
		f(t_{n};k,\ell)\coloneqq[[[\varphi_{R}\wt{\eps}_{\ell}]_{\wt{\textrm{g}}_{\ell+1}}^{-1}]_{\wt{\textrm{g}}_{l+2}}^{-1}\cdots]_{\wt{\textrm{g}}_{k}}^{-1}(t_{n}).
	\end{align*}
	Using \eqref{eq:modulation induction}, we rewrite 
	\begin{align*}
		f(t_{n};k,\ell)=[\varphi_{R}\wt{\eps}_{\ell}]_{\textrm{g}_{k,\ell}}^{-1}(t_{n})=\bigg(\frac{\lambda_{k}}{\lambda_{\ell}}\bigg)^{\frac{1}{2}}e^{-i(\gamma_{k}-\gamma_{\ell})}(\varphi_{R}\wt{\eps}_{\ell})\bigg(\frac{\lambda_{k}}{\lambda_{\ell}}\cdot+\frac{x_{k}-x_{\ell}}{\lambda_{\ell}}\bigg)(t_{n}).
	\end{align*}
	Note that $\textrm{g}_{k,\ell}$ is given by \eqref{eq: g_i,j def}.
	On the other hand, from the decomposition $[\varphi_{R}\wt{\eps}_{i}]_{\wt{\textrm{g}}_{i+1}}^{-1}=Q+\wt{\eps}_{i+1}$,
	we have 
	\begin{align*}
		f(t_{n};k,\ell)={\sum_{i=\ell+1}^{k-1}}[Q+\chi_{R}\wt{\eps}_{i}]_{\textrm{g}_{k,i}}^{-1}(t_{n})+f_{n},
	\end{align*}
	where $f_{n}=[\varphi_{R}\wt{\eps}_{k-1}]_{\wt{\textrm{g}}_{k}}^{-1}(t_{n})$
	as given above. By the induction hypothesis with $t_{n}\to T$, we
	have $|(x_{k}-x_{i})\lambda_{i}^{-1}|\to\infty$ for $\ell+1\leq i\leq k-1$,
	and 
	\begin{align}
		{\sum_{i=\ell+1}^{k-1}}[Q]_{\textrm{g}_{k,i}}^{-1}(t_{n})\rightharpoonup0\text{ weakly in }H^{1}.\label{eq:pf induc soliton trans weak zero}
	\end{align}
	Moreover, from \eqref{eq:induction k nonlinear energy} and the assumption
	$\lambda_{\ell}\sim\lambda_{k}$ with \eqref{eq:induction lambda},
	we have 
	\begin{align}
		\|[\chi_{R}\wt{\eps}_{i}]_{\textrm{g}_{k,i}}^{-1}\|_{H^{1}}\lesssim\lambda_{i}\to0.\label{eq:pf induc radiation zero}
	\end{align}
	Gathering \eqref{eq:pf induc para decouple fn} \eqref{eq:pf induc soliton trans weak zero},
	and \eqref{eq:pf induc radiation zero}, we arrive at 
	\begin{align*}
		f(t_{n};k,\ell)\rightharpoonup[Q]_{\lambda_{k,0},\gamma_{k,0},x_{k,0}}\text{ weakly in }H^{1}.
	\end{align*}
	Therefore, as in \eqref{eq:pf induc para decouple}, we have 
	\begin{align*}
		0 & =\lim_{n\to\infty}\Re{\int_{\bbR}}\varphi_{R}\wt{\eps}_{\ell}(t_{n})e^{-i\gamma_{k,0}}\cdot\chi_{\frac{R}{2}}e^{-i(\gamma_{k}-\gamma_{\ell})(t_{n})}dx\\
		& =\lim_{n\to\infty}\Re{\int_{\bbR}}f(t_{n};k,\ell)e^{-i\gamma_{k,0}}\cdot\chi_{|\lambda_{k,\ell,\infty}x+x_{k,\ell,\infty}|\leq\frac{R}{2}}dy\\
		& =\Re{\int_{\bbR}}[Q]_{\lambda_{k,0},0,x_{k,0}}\cdot\chi_{|\lambda_{k,\ell,\infty}x+x_{k,\ell,\infty}|\leq\frac{R}{2}}dy\neq0,
	\end{align*}
	which lead a contradiction. Hence, \eqref{eq:induction translation}
	holds true for $j=l$. This finishes the proof of \eqref{eq:induction translation}.
	
	\textbf{Step 5.} It remains to show~\ref{state:translation}. The proof
	of~\ref{state:translation} is also similar to that of~\ref{state:translation}
	of $P(1)$ in Lemma~\ref{lem:Induction initial}.
	
	Recall that we have shown that $\limsup_{t\to T}|x_{k}(t)|<\infty$
	in Step 2. So, it suffices to show $x_{k}(t)$ converges as $t\to T$.
	Suppose not. Then, there exist $\{a_{n}\}_{n\in\bbN},\{b_{n}\}_{n\in\bbN}$
	so that $a_{n},b_{n}\to T$ and $x_{k}(a_{n})\to x_{k,a}$, $x_{k}(b_{n})\to x_{k,b}$
	with $x_{k,a}\neq x_{k,b}$. As in the proof of Lemma~\ref{lem:Induction initial},
	we can show the uniform gap \eqref{eq:uniform gap bound of x_k} of
	$0=x_{k,a}<c\le x_{k,b}$. Indeed, arguing as above, we have 
	\begin{align*}
		|I_{r}(b_{n})-I_{r}(a_{n})| & =o_{n\to\infty}(1)\cdot r^{-1}\\
		& =\bigg|2\pi+{\int_{\bbR}}\varphi_{r}|\eps_{k}|^{2}(b_{n})-{\int_{\bbR}}\varphi_{r}|\eps_{k}|^{2}(a_{n})\bigg|+o_{n\to\infty}(1).
	\end{align*}
	From $\|v_{0}\|_{L^{2}}^{2}-k\cdot2\pi=\|\eps_{k}(t)\|_{L^{2}}^{2}+o_{t\to T}(1)$, it follows that $\left|\int_{\bbR}|\eps_{k}|^{2}(b_{n})-|\eps_{k}|^{2}(a_{n})\right|\allowbreak=o_{n\to\infty}(1)$.
	Thus, we have 
	\begin{align}
		\left|2\pi-{\int_{\bbR}}\chi_{r}|\eps_{k}|^{2}(b_{n})+{\int_{\bbR}}\chi_{r}|\eps_{k}|^{2}(a_{n})\right|\leq o_{n\to\infty}(1)\cdot(1+r^{-1}).\label{eq:x differ gap}
	\end{align}
	Now, we estimate $\int_{\bbR}\chi_{r}|\eps_{k}|^{2}$. We have 
	\begin{align}
		{\int_{\bbR}}\chi_{r}|\eps_{k}|^{2}\lesssim{\sum_{j=1}^{k-1}}{\int_{\bbR}}\chi_{r}|[\chi_{R}\wt{\eps}_{j}]_{\textrm{g}_{j}}|^{2}+{\int_{\bbR}}\chi_{r}|[\wt{\eps}_{k}]_{\textrm{g}_{k}}|^{2}\lesssim{\sum_{j=1}^{k-1}}\lambda_{j}^{2}+{\int_{\bbR}}\chi_{r}|[\wt{\eps}_{k}]_{\textrm{g}_{k}}|^{2}.\label{eq:eps k inner estimate 1}
	\end{align}
	We also have 
	\begin{align}
		{\int_{\bbR}}\chi_{r}|[\wt{\eps}_{k}]_{\textrm{g}_{k}}|^{2}={\int_{\bbR}}\chi_{|y+\frac{x_{k}}{\lambda_{k}}|\lesssim\frac{r}{\lambda_{k}}}|\wt{\eps}_{k}|^{2}\lesssim\frac{x_{k}^{2}+r^{2}}{\lambda_{k}^{2}}{\int_{\bbR}}|Q\wt{\eps}_{k}|^{2}\lesssim_{M_{0},E_{0}}x_{k}^{2}+r^{2}.\label{eq:eps k inner estimate 2}
	\end{align}
	Therefore, by \eqref{eq:x differ gap}, \eqref{eq:eps k inner estimate 1},
	and \eqref{eq:eps k inner estimate 2}, we have 
	\begin{align*}
		2\pi\leq2C_{M_{0},E_{0}}(x_{k}(b_{n})^{2}+r^{2})+o_{n\to\infty}(1)\cdot(1+r^{-1}),
	\end{align*}
	and taking $r$ small and $n\to\infty,$there is a uniform bound on
	$x_{k,b}$ such as $0<c<x_{k,b}$. As in the proof of Lemma~\ref{lem:Induction initial},
	we can conclude~\ref{state:translation}. This finishes the proof. 
\end{proof}
Next, we show that the induction has to stop in finite steps, since
at each step, the mass drops by soliton mass $2\pi$. 
\begin{proof}[Proof of Lemma~\ref{lem:Halting induction}]
	Let $N_{0}\in\bbN$ be such that $N_{0}\leq\frac{M_{0}}{M(Q)}<N_{0}+1$.
	Assume that for all $N\leq N_{0}$, $Q(N)$ is true. Then, by Lemma
	\ref{lem:Induction initial} and~\ref{lem:Induction}, we have $v(t)={\sum_{j=1}^{N}}[Q]_{\textrm{g}_{j}}+\eps_{N}$
	for all $N\leq N_{0}$, and 
	\begin{align}
		\liminf_{t\to T}\frac{\lambda_{N_{0}}(t)}{\|\varphi_{R}\wt{\eps}_{N_{0}}(t)\|_{\dot{H}^{1}}}=0.\label{eq:Halting last 1}
	\end{align}
	Thanks to \eqref{eq:soliton decoupling} with $\psi\equiv1$, we have
	\begin{align*}
		M_{0}=\|v(t)\|_{L^{2}}^{2}=N_{0}M(Q)+\|\wt{\eps}_{N_{0}}\|_{L^{2}}^{2}+o_{t\to T}(1).
	\end{align*}
	There exist small $\epsilon>0$ and $T_{N_{0}}\leq T^{\prime}<T$
	such that for $T^{\prime}\leq t<T$, we have 
	\begin{align*}
		0\leq\|\varphi_{R}\wt{\eps}_{N_{0}}(t)\|_{L^{2}}^{2}<(\tfrac{M_{0}}{M(Q)}-N_{0})M(Q)+\epsilon<M(Q)=2\pi.
	\end{align*}
	By \eqref{eq:Halting last 1}, there exists a sequence of times $\{t_{n}\}_{n\in\bbN}\subset[T^{\prime},T)$ with $t_{n}\to T$ such that $\frac{\wt{\lambda}_{N_{0}}(t_{n})}{\|\varphi_{R}\wt{\eps}_{N_{0}}(t_{n})\|_{\dot{H}^{1}}}\to0$.
	Let $f_{n}$ be 
	\begin{align*}
		f_{n}\coloneqq[\varphi_{R}\wt{\eps}_{N_{0}}(t_{n})]_{\td{\lambda}_{N_{0},n},0,0},\quad\text{where}\quad\td{\lambda}_{N_{0},n}=\|\varphi_{R}\wt{\eps}_{N_{0}}(t_{n})\|_{\dot{H}^{1}}.
	\end{align*}
	Then, we have 
	\begin{align*}
		\sup_{n}\|f_{n}\|_{L^{2}}^{2}=\sup_{n}\|\varphi_{R}\wt{\eps}_{N_{0}}(t)\|_{L^{2}}^{2}<2\pi,\quad\|f_{n}\|_{\dot{H}^{1}}=1\quad\text{and}\quad E(f_{n})\to0.
	\end{align*}
	Now, applying Lemma~\ref{lem:variational argument}, we have $\liminf_{n\to\infty}\|\varphi_{R}\wt{\eps}_{N_{0}}(t)\|_{L^{2}}^{2}\geq2\pi$,
	and this is a contradiction. Thus, there exists a $1\leq N\leq N_{0}$
	such that $Q(N)^{c}$ is true.
	
	Now, we prove $\|\eps_{N}\|_{H^{1}}\lesssim1$. It suffices to show
	$\|\eps_{N}\|_{\dot{H}^{1}}\lesssim1$. Since $Q(N)^{c}$ is true,
	we have $\|\varphi_{R}\wt{\eps}_{N}(t)\|_{\dot{H}^{1}}\lesssim\la_{N}(t)$,
	and combining with \eqref{eq:induction k nonlinear energy}, we conclude
	$\|\eps_{N}(t)\|_{\dot{H}^{1}}\lesssim1$. 
\end{proof}
We now end this section with proving that there is no bubble tree,
\eqref{eq:no bubble eq in prop}. 
\begin{proof}[Proof of Proposition~\ref{prop:no bubble tree}]
	Since $\lambda_{i}\lesssim_{M_{0}}\lambda_{j}$ for $i<j$, it suffices
	to show $|(x_{i}-x_{j})\lambda_{j}^{-1}(t)|\to\infty$ as $t\to T$
	for $i<j$. We first claim that, for $1\leq k\leq N-1$, 
	\begin{align}
		\big|(x_{k}-x_{k+1})\lambda_{k+1}^{-1}(t)\big|\to\infty.\label{eq:no bubble tree index gap 1}
	\end{align}
	Suppose not. Then, there exists a time sequence $t_{n}\to T$ and
	a constant $|y_{\infty}|<\infty$ such that 
	\begin{align*}
		(x_{k}-x_{k+1})\lambda_{k+1}^{-1}(t_{n})\to y_{\infty}\text{ as }n\to\infty.
	\end{align*}
	We remark that $(x_{k}-x_{k+1})\lambda_{k+1}^{-1}=\wt x_{k+1}\wt{\lambda}_{k+1}^{-1}$
	and $1\lesssim_{M_{0}}\wt{\lambda}_{k+1}$. To reach a contradiction,
	we consider $\|Q\varphi_{R}\wt{\eps}_{k}\|_{L^{2}}$. Thanks to \eqref{eq:induction k nonlinear energy},
	we have 
	\begin{align}
		\|Q\varphi_{R}\wt{\eps}_{k}\|_{L^{2}}^{2}\lesssim\|Q\chi_{R}\wt{\eps}_{k}\|_{L^{2}}^{2}+\|Q\wt{\eps}_{k}\|_{L^{2}}^{2}\lesssim\lambda_{k}^{2}.\label{eq:wtepsk lambda bound}
	\end{align}
	From the decomposition $\varphi_{R}\wt{\eps}_{k}=[Q+\wt{\eps}_{k+1}]_{\wt{\textrm{g}}_{k+1}}$,
	we obtain 
	\begin{align*}
		\|Q\varphi_{R}\wt{\eps}_{k}\|_{L^{2}}^{2}=\int_{\bbR}Q^{2}|[Q+\wt{\eps}_{k+1}]_{\wt{\textrm{g}}_{k+1}}|^{2}dx.
	\end{align*}
	By renormalizing with $y=\frac{x-\wt x_{k+1}}{\wt{\lambda}_{k+1}}$,
	we have for large $n$, 
	\begin{align*}
		\|Q\varphi_{R}\wt{\eps}_{k}\|_{L^{2}}^{2}= & \int_{\bbR}\frac{2}{1+(\wt{\lambda}_{k+1}y+\wt x_{k+1})^{2}}|Q+\wt{\eps}_{k+1}|^{2}dy\\
		\gtrsim & _{M_{0}}\frac{1}{\wt{\lambda}_{k+1}^{2}(1+|y_{\infty}|)}\int_{|y|\leq1}|Q+\wt{\eps}_{k+1}|^{2}dy.
	\end{align*}
	Here, again according to \eqref{eq:induction k nonlinear energy},
	we have 
	\begin{align*}
		\int_{|y|\leq1}|Q\wt{\eps}_{k+1}|+|\wt{\eps}_{k+1}|^{2}dy\lesssim_{M_{0},E_{0}}\lambda_{k+1}+\lambda_{k+1}^{2}\to0.
	\end{align*}
	This implies that $\int_{|y|\leq1}|Q+\wt{\eps}_{k+1}|^{2}dy\sim\int_{|y|\leq1}Q^{2}dy>0$
	as $n\to\infty$. Thus, we derive 
	\begin{align*}
		\wt{\lambda}_{k+1}^{-1}(t_{n})\lesssim_{M_{0},E_{0},y_{\infty}}\lambda_{k}(t_{n}),
	\end{align*}
	for large $n$. However, from the definition of $\lambda_{k+1}$ and
	$\lambda_{k+1}\to0$ as $t_{n}\to T$, we have 
	\begin{align*}
		1\lesssim_{M_{0},E_{0},C}\lambda_{k}\wt{\lambda}_{k+1}=\lambda_{k+1}=o_{n\to\infty}(1),
	\end{align*}
	and this is a contradiction. Thus, we have \eqref{eq:no bubble tree index gap 1}.
	
	Now, we show the general case using an induction. We assume that $|(x_{i}-x_{j})\lambda_{j}^{-1}(t)|\to\infty$
	for all $1\leq i<j\leq k-1$. We will show that for $j=k$, 
	\begin{align}
		\big|(x_{i}-x_{k})\lambda_{k}^{-1}(t)\big|\to\infty\text{ for all }1\leq i<k.\label{eq:no bubble pf goal}
	\end{align}
	Again, we take further induction on $i=1,\cdots,k-1$ in descending
	order, as we already proved it for $i=k-1$ from \eqref{eq:no bubble tree index gap 1}.
	We assume that \eqref{eq:no bubble pf goal} is true for $\ell+1\leq i\leq k-1$.
	Then, we want to show that \eqref{eq:no bubble pf goal} holds for
	$i=\ell$. Suppose not, then we again find a sequence $t_{n}\to T$
	and a constant $|y_{\infty}|<\infty$ such that 
	\begin{align*}
		(x_{\ell}-x_{k})\lambda_{k}^{-1}(t_{n})\to y_{\infty}.
	\end{align*}
	In a similar manner to \eqref{eq:no bubble tree index gap 1}, we
	consider $\|Q\varphi_{R}\wt{\eps}_{\ell}\|_{L^{2}}$ to derive a contradiction.
	The estimate such as \eqref{eq:wtepsk lambda bound} also holds true.
	From the decomposition $\varphi_{R}\wt{\eps}_{j}=[Q+\wt{\eps}_{j+1}]_{\wt{\textrm{g}}_{j+1}}$
	for $\ell\leq j\leq k-1$, and \eqref{eq:modulation induction}, we
	have 
	\begin{align}
		\|Q\varphi_{R}\wt{\eps}_{\ell}\|_{L^{2}}^{2}={\int_{\bbR}}Q^{2}\bigg|{\sum_{j=\ell+1}^{k-1}}[Q+\chi_{R}\wt{\eps}_{j}]_{\textrm{g}_{j,\ell}}+[Q+\wt{\eps}_{k}]_{\textrm{g}_{k,\ell}}\bigg|^{2}dx.\label{eq:no bubble pf 1}
	\end{align}
	Here, we recall $\textrm{g}_{j,\ell}$ given in \eqref{eq: g_i,j def},
	and we note that 
	\begin{align*}
		[f]_{\textrm{g}_{j,\ell}}=\frac{e^{i(\gamma_{j}-\gamma_{\ell})}}{(\lambda_{j}\lambda_{\ell}^{-1})^{\frac{1}{2}}}f\bigg(\frac{\cdot-(x_{j}-x_{\ell})\lambda_{\ell}^{-1}}{\lambda_{j}\lambda_{\ell}^{-1}}\bigg),
	\end{align*}
	Substituting with $y=(x-(x_{k}-x_{\ell})\lambda_{\ell}^{-1})(\lambda_{k}\lambda_{\ell}^{-1})^{-1}$,
	we rewrite \eqref{eq:no bubble pf 1} as
	\begin{align}
		{\int_{\bbR}}\frac{2}{1+(\lambda_{k}\lambda_{\ell}^{-1}y+(x_{k}-x_{\ell})\lambda_{\ell}^{-1})^{2}}\bigg|Q+\wt{\eps}_{k}+{\sum_{j=\ell+1}^{k-1}}[Q+\chi_{R}\wt{\eps}_{j}]_{\textrm{g}_{k,j}}^{-1}\bigg|^{2}dy.\label{eq:k+l contradiction 1}
	\end{align}
	Here, we used that, for $\ell<j<k$, 
	\begin{align*}
		[[Q+\chi_{R}\wt{\eps}_{j}]_{\textrm{g}_{j,\ell}}]_{\textrm{g}_{k,\ell}}^{-1}=[Q+\chi_{R}\wt{\eps}_{j}]_{\textrm{g}_{k,j}}^{-1}
	\end{align*}
	which also can be shown by \eqref{eq:modulation induction}. We write
	\begin{align*}
		\lambda_{k}\lambda_{\ell}^{-1}y+(x_{k}-x_{\ell})\lambda_{\ell}^{-1}&=\lambda_{k}\lambda_{\ell}^{-1}(y+(x_{k}-x_{\ell})\lambda_{k}^{-1})
		\\
		&=\lambda_{k}\lambda_{\ell}^{-1}(y-y_{\infty}+o_{n\to\infty}(1)).
	\end{align*}
	So, on $|y|\leq1$, we again have, from $1\lesssim_{M_{0}}\lambda_{k}\lambda_{\ell}^{-1}$,
	\begin{align}
		\eqref{eq:k+l contradiction 1}\gtrsim_{M_{0},y_{\infty}} & (\lambda_{k}\lambda_{\ell}^{-1})^{-2}\bigg[\int_{|y|\leq1}Q^{2}dy-O\bigg(\int_{|y|\leq1}|\wt{\eps}_{k}|^{2}dy\bigg)\nonumber \\
		& -O\bigg(\int_{|y|\leq1}\sum_{j=\ell+1}^{k-1}|[Q+\chi_{R}\wt{\eps}_{j}]_{\textrm{g}_{k,j}}^{-1}|^{2}dy\bigg)\bigg].\label{eq:k+l contradiction 2}
	\end{align}
	We have $\|{\textbf 1}_{|y|\leq1}\wt{\eps}_{k}\|_{L^{2}}\lesssim_{M_{0},E_{0}}\lambda_{k}\to0$.
	In addition, we have $\|\chi_{R}\wt{\eps}_{j}\|_{L^{2}}\lesssim\lambda_{j}\to0$.
	Therefore, we reduce the right side of \eqref{eq:k+l contradiction 2}
	to 
	\begin{align*}
		\lambda_{k}^{-2}\lambda_{\ell}^{2}\bigg[\int_{|y|\leq1}Q^{2}dy-O\bigg(\sum_{j=\ell+1}^{k-1}\int_{|x-(x_{k}-x_{j})\lambda_{j}^{-1}|\leq\lambda_{k}\lambda_{j}^{-1}}Q^{2}dx\bigg)-o_{n\to\infty}(1)\bigg].
	\end{align*}
	By the induction hypothesis, $|(x_{k}-x_{j})\lambda_{j}^{-1}|\to\infty$
	for all $\ell+1\leq j\leq k-1$. Thus we have 
	\begin{align*}
		\int_{|x-(x_{k}-x_{j})\lambda_{j}^{-1}|\leq\lambda_{k}\lambda_{j}^{-1}}Q^{2}dx & =\int_{\lambda_{k}\lambda_{j}^{-1}(-1+(x_{k}-x_{j})\lambda_{k}^{-1})}^{\lambda_{k}\lambda_{j}^{-1}(1+(x_{k}-x_{j})\lambda_{k}^{-1})}Q^{2}dx\sim\bigg|\frac{\lambda_{k}}{x_{k}-x_{j}}\bigg|\to0.
	\end{align*}
	Thus, the right side of \eqref{eq:k+l contradiction 2} becomes 
	\begin{align*}
		\lambda_{k}^{-2}\lambda_{\ell}^{2}\bigg[\int_{|y|\leq1}Q^{2}dy-o_{n\to\infty}(1)\bigg],
	\end{align*}
	and this means that 
	\begin{align*}
		\lambda_{\ell}\gtrsim\|Q\varphi_{R}\wt{\eps}_{\ell}\|_{L^{2}}\gtrsim_{M_{0},E_{0},y_{\infty}}\lambda_{k}^{-1}\lambda_{\ell},
	\end{align*}
	and we obtain $1\lesssim\lambda_{k}=o_{n\to\infty}(1)$. This makes
	a contradiction. Therefore, we conclude that $|(x_{\ell}-x_{k})\lambda_{k}^{-1}|\to\infty$,
	and by the induction, we deduce \eqref{eq:no bubble pf goal}. This
	finishes the proof. 
\end{proof}

\section{Proof of Theorem~\ref{thm:Soliton resolution} and~\ref{thm:Soliton resolution gauged}}

\label{sec:prooffinish} In this section we complete the proof of
Theorem~\ref{thm:Soliton resolution} and~\ref{thm:Soliton resolution gauged}
based on the multi-soliton configuration derived in Section~\ref{sec:multisoliton}.
We first prove the blow-up case of Theorem~\ref{thm:Soliton resolution gauged}
and then use the pseudo-conformal transform to show the global solution
case. For this part, we require that $u(t)\in H^{1,1}$. Theorem~\ref{thm:Soliton resolution}
is proved using the gauge transform $\mathcal{G}$ and $\mathcal{G}^{-1}$
from Theorem~\ref{thm:Soliton resolution gauged}. 
\begin{proof}[Proof of Theorem~\ref{thm:Soliton resolution gauged}]
	
	For a finite-time blow-up solution $v(t)$ to \eqref{CMdnls-gauged},
	in Section~\ref{sec:multisoliton}, we have a multi-soliton configuration.
	That is, there exists an $N\geq1$ such that $P(N)$ and $Q(N)^{c}$
	are true. More precisely, there exists a $0<T_{N}<T$ such that for
	$t\in[T_{N},T)$, there exist $(\lambda_{j},\gamma_{j},x_{j},\eps_{j},\wt{\eps}_{j})=(\textrm{g}_{j},\eps_{j},\wt{\eps}_{j})$
	that satisfy the followings: 
	\begin{align}
		v=\sum_{j=1}^{k}[Q]_{\textrm{g}_{j}}+\eps_{k},\quad\eps_{k}=\sum_{j=1}^{k-1}[\chi_{R}\wt{\eps}_{j}]_{\textrm{g}_{j}}+[\wt{\eps}_{k}]_{\textrm{g}_{k}},\quad\text{for all }k\leq N,\label{eq:proof Outer decomposition}
	\end{align}
	and 
	\begin{align}
		\|\wt{\eps}_{k}\|_{\dot{\calH}_{R}^{1}}\lesssim\lambda_{k}\text{ for }k=1,2\cdots,N-1,\quad\|\wt{\eps}_{N}\|_{\dot{\calH}^{1}}\lesssim\lambda_{N}.\label{eq:proof outer radiation scale bound}
	\end{align}
	Moreover, we have 
	\begin{align}
		\sup_{1\leq k\leq N}\sup_{t}|x_{k}(t)|\leq C<\infty,\quad\lim_{t\to T}x_{j}(t)\eqqcolon x_{j}(T),\text{ exists}.\label{eq:proof outer translation bound}
	\end{align}
	
	To finish the proof for finite-time blow-up solutions, we remain to
	show that $\lambda_{N}\lesssim T-t$, $\eps_{N}(t)\to z^{\ast}$ in
	$L^{2}$, and $z^{\ast}$ satisfies $M(z^{\ast})=M(v_{0})-N\cdot M(Q)$
	and $\partial_{x}z^{\ast}\in L^{2}.$
	
	\textbf{Step 1.} \emph{Convergence of $\eps_{N}(t)$ and regularity
		of $z^{*}$.}
	
	Our first goal is to show that $\eps_{N}(t)$ converges to an asymptotic
	profile $z^{\ast}$ in $L^{2}$. This proof is motivated by \cite{MerleRaphael2005CMP}.
	We will truncate the outer region of each contracting soliton and
	show the convergence of $\eps_{N}(t)$ in the truncated outer region.
	Then, using $\norm{\eps_{N}(t)}_{H^{1}}\lesssim1$, we can conclude
	the convergence in $L^{2}(\R)$. Define a smooth cutoff away from
	the centers of solitons $x_{j}(T)$ for $1\leq j\leq N$, by 
	\begin{align*}
		\Phi_{r}(x)\coloneqq\prod_{j=1}^{N}\varphi_{r}(x-x_{j}(T)).
	\end{align*}
	Then, we have the outer convergence of the radiation. 
	\begin{lem}[Outer convergence]
		\label{lem:Outer convergence} There exists $z^{\ast}\in L^{2}$
		such that for any $r>0$, we have $\Phi_{r}\eps_{N}(t)\to\Phi_{r}z^{\ast}$
		in $L^{2}$ as $t\to T$. 
	\end{lem}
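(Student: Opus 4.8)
The plan is to prove Lemma~\ref{lem:Outer convergence} by a Cauchy-sequence argument in $L^2$, exploiting the two facts we have already established about $\eps_N$: the uniform bound $\|\eps_N(t)\|_{H^1}\lesssim 1$ (from Lemma~\ref{lem:Halting induction}), and the fact that in the region away from the soliton centers $x_j(T)$ the function $\eps_N$ is essentially the full solution $v$ minus slowly-varying, spatially-localized pieces. First I would fix $r>0$ and work with the cutoff $\Phi_r$; the key point is that on $\mathrm{supp}\,\Phi_r$ every soliton $[Q]_{\mathrm{g}_j}$ is evaluated far from its center because $x_j(t)\to x_j(T)$ and $\lambda_j(t)\to 0$, so $\Phi_r[Q]_{\mathrm{g}_j}(t)\to 0$ in $L^2$ (indeed $\|\Phi_r[Q]_{\mathrm{g}_j}(t)\|_{L^2}^2\lesssim \int_{|y|\gtrsim (r-|x_j(t)-x_j(T)|)/\lambda_j(t)}Q^2\,dy\to 0$). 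Likewise the inner pieces $[\chi_R\wt\eps_i]_{\mathrm{g}_i}$ in the formula \eqref{eq:proof Outer decomposition} for $\eps_k$ satisfy $\|[\chi_R\wt\eps_i]_{\mathrm{g}_i}\|_{L^2}\lesssim_R\|Q\wt\eps_i\|_{L^2}\lesssim \lambda_i(t)\to 0$, and $\|[\wt\eps_N]_{\mathrm{g}_N}\|_{\dot H^1}\lesssim$ small but more importantly the whole $\eps_N$ has bounded $H^1$-norm. Hence, modulo errors that tend to $0$ as $t\to T$, we have $\Phi_r\eps_N(t)=\Phi_r v(t)+o_{t\to T}(1)$ in $L^2$.

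The next step is to show $\Phi_r v(t)$ (equivalently $\Phi_r\eps_N(t)$) is Cauchy in $L^2$ as $t\to T$. The natural device is the local mass/momentum almost-monotonicity coming from the equation: compute $\frac{d}{dt}\int \Phi_r^2 v\,\ol{\eps_N}$ or, cleaner, estimate $\|\Phi_r\eps_N(t)-\Phi_r\eps_N(s)\|_{L^2}^2$ by expanding and using the virial-type identity \eqref{eq:nonnegative energy} together with the Duhamel formula for $v$. Concretely, since $v$ solves \eqref{CMdnls-gauged}, for $\psi=\Phi_r^2$ (real, $C^1\cap L^\infty$, $\partial_x\psi\in L^\infty$) one gets $|\partial_t\int\psi|v|^2|\lesssim\sqrt{E_0}\,\|\partial_x\psi\cdot v\|_{L^2}\lesssim_r\sqrt{M_0E_0}$; but this only gives Lipschitz control of the \emph{mass} $\int\Phi_r^2|v|^2$, not of $v$ itself in $L^2$. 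To upgrade to $L^2$-convergence of the function, I would instead test the equation against $\Phi_r^2 \overline{w}$ where $w$ is a fixed profile, showing $\langle \Phi_r^2 v(t),w\rangle$ converges for a dense family of $w\in C_c^\infty$; combined with the uniform $L^2$-bound $\|\Phi_r\eps_N(t)\|_{L^2}\le\|\eps_N\|_{L^2}\lesssim 1$, weak convergence plus convergence of the norm $\|\Phi_r v(t)\|_{L^2}^2$ (which is the Lipschitz-in-$t$ localized mass, hence has a limit as $t\to T$) yields strong $L^2$-convergence. So the plan is: (i) localized mass $\int\Phi_r^2|v|^2$ converges by \eqref{eq:nonnegative energy}; (ii) $\langle\Phi_r^2 v(t),w\rangle$ converges for $w\in C_c^\infty$ by integrating $\partial_t\langle\Phi_r^2 v,w\rangle=\langle\Phi_r^2(i\partial_{xx}v+i\,\text{(nonlinearity)}),w\rangle$ and checking the right side is integrable near $T$ (here the dangerous term is $\partial_{xx}v$, but $\|v\|_{H^1}$ may blow up — however $\Phi_r^2\partial_{xx}v$ paired with a smooth $w$ can be integrated by parts twice onto $\Phi_r^2 w$, leaving only $\|v\|_{L^2}$, which is bounded; the nonlinear terms $|D|(|v|^2)v$ etc.\ are controlled by $\|v\|_{L^2}$ and $\|v\|_{H^1}$ but localized, and one checks they pair against $w$ with an $L^1_t$ bound or, failing that, one shows $\langle\Phi_r^2 v,w\rangle$ is Cauchy directly); (iii) weak limit + norm convergence $\Rightarrow$ strong limit, call it $\Phi_r z^*$ on $\mathrm{supp}\,\Phi_r$; (iv) a diagonal/consistency argument over $r\downarrow 0$ and the uniform $H^1$-bound on $\eps_N$ patch these together into a single $z^*\in L^2(\mathbb{R})$, with $\Phi_r\eps_N(t)\to\Phi_r z^*$ for every $r>0$.

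I expect the main obstacle to be step (ii): obtaining an $L^1_{t}([T_N,T))$-integrable bound for $\partial_t\langle\Phi_r^2 v(t),w\rangle$, because although the smoothness of $w$ lets us move derivatives off $v$, the nonlinear terms — especially the nonlocal $\tfrac12\mathcal{H}(|v|^2)\partial_x v$ and $\tfrac14|v|^4 v$ pieces hidden in $i\partial_t v=-\partial_{xx}v-|D|(|v|^2)v+\tfrac14|v|^4v$ — involve $\partial_x v$ or $L^4,L^6$ norms of $v$ which are \emph{not} uniformly bounded near $T$. The resolution I anticipate is to avoid the Duhamel expansion entirely on the full solution and instead run the Cauchy argument on $\eps_N$ using the linearized/renormalized equations already set up in Section~\ref{sec:multisoliton}, where $\wt\eps_N$ obeys good bounds ($\|\wt\eps_N\|_{\dot{\mathcal H}^1}\lesssim\lambda_N$), transporting the modulation parameters' convergence ($\lambda_j\to0$, $x_j\to x_j(T)$ established in Step~0) to conclude that on $\mathrm{supp}\,\Phi_r$ the error between $\eps_N(t)$ and $\eps_N(s)$ is governed by a localized mass difference plus terms vanishing as $t,s\to T$; the localized-mass Lipschitz estimate \eqref{eq:nonnegative energy} then closes the Cauchy criterion. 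This is essentially the Merle--Rapha\"el scheme \cite{MerleRaphael2005CMP}, and the nonnegativity of energy (through \eqref{eq:nonnegative energy}) is exactly the structural input that makes the localized mass almost-monotone, which is the crux.
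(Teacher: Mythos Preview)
Your overall architecture (weak convergence against test functions plus convergence of the localized mass, then upgrade to strong $L^2$ via tightness) is a legitimate alternative to what the paper does, but you leave the one genuine difficulty unresolved, and your fallback is not a plan.

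The paper proceeds differently: it writes $\td v^{\tau}(t)=\Phi_{r}(v(t+\tau)-v(t))$, applies Duhamel, and reduces everything to the uniform bound
\[
\sup_{s\in[T_{0},T)}\bigl\|[\partial_{xx},\Phi_{r}]v(s)+\Phi_{r}\calN(v(s))\bigr\|_{L^{2}}\lesssim_{r,M_{0},E_{0}}1,
\]
which gives Cauchyness of $\Phi_{r}v$ in $L^{2}$ directly, with no weak/strong step. The crux is exactly your step~(ii), and the resolution is the observation you almost make but then talk yourself out of: on $\mathrm{supp}\,\Phi_{r}$ the solution $v$ is $\eps_{N}$ (uniformly $H^{1}$-bounded) plus decaying soliton tails, so $\|\Phi_{r}\partial_{x}v\|_{L^{2}}\lesssim1$ and $\|\Phi_{r}v\|_{L^{\infty}}\lesssim1$ uniformly in $t$ (these are \eqref{eq: outer v L2 estimate}--\eqref{eq: outer v Linf estimate} in the paper). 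Your sentence ``involve $\partial_{x}v$ or $L^{4},L^{6}$ norms of $v$ which are not uniformly bounded near $T$'' is therefore misleading: those norms \emph{are} bounded once localized by $\Phi_{r}$, and that is the whole point. With this in hand the quintic term is trivial; the genuinely delicate piece is the nonlocal $\Phi_{r}|D||v|^{2}$, where the Hilbert transform sees the concentrating solitons even from the exterior. The paper handles this by decomposing $|v|^{2}$ along the multi-soliton configuration and using the commutator identities $[x,\calH]$, $[\calH\partial_{x},x]$ together with $\calH(Q^{2})=yQ^{2}$ to show each piece contributes $O(1)$ in $L^{2}$.

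Your fallback (``run the Cauchy argument on $\eps_{N}$ using the linearized/renormalized equations already set up in Section~\ref{sec:multisoliton}'') refers to equations that are not derived anywhere; the paper never writes an evolution equation for $\wt\eps_{N}$ or $\eps_{N}$. And the last sentence, that ``the localized-mass Lipschitz estimate then closes the Cauchy criterion,'' confuses $\|\Phi_{r}v(t)\|_{L^{2}}^{2}-\|\Phi_{r}v(s)\|_{L^{2}}^{2}$ (which \eqref{eq:nonnegative energy} controls) with $\|\Phi_{r}(v(t)-v(s))\|_{L^{2}}^{2}$ (which is what you need); the former says nothing about the phase and cannot by itself give $L^{2}$-Cauchyness.
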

	
	\begin{proof}
		We first claim that for any $\delta_{1}>0$, there exist $T_{0}<T$
		and $\delta_{2}\in(0,T-T_{0})$ such that 
		\begin{align}
			\sup_{\tau\in(0,\delta_{2})}\sup_{t\in[T_{0},T-\tau)}\|\Phi_{r}(v(t+\tau)-v(t))\|_{L^{2}}<\delta_{1}.\label{eq:Outer nonlinear goal}
		\end{align}
		Denote $\td v^{\tau}(t)\coloneqq\Phi_{r}(v(t+\tau)-v(t))$. Then,
		we have 
		\begin{align*}
			(i\partial_{t}+\partial_{xx})\td v^{\tau}(t)=[\partial_{xx},\Phi_{r}]((v(t+\tau)-v(t))+\Phi_{r}(\calN(v(t+\tau))-\calN(v(t))).
		\end{align*}
		Here, $\calN(v)=\frac{1}{4}|v|^{4}v-v|D||v|^{2}$. From the Duhamel
		formula, we have for $t\in[T_{0},T)$, 
		\begin{align*}
			\|\td v^{\tau}(t)\|_{L^{2}}\leq\|\td v^{\tau}(T_{0})\|_{L^{2}}+2|T-T_{0}|\sup_{s\in[T_{0},T)}\|[\partial_{xx},\Phi_{r}]v(s)+\Phi_{r}\calN(v(s))\|_{L^{2}}.
		\end{align*}
		If we have 
		\begin{align}
			\sup_{s\in[T_{0},T)}\|[\partial_{xx},\Phi_{r}]v(s)+\Phi_{r}\calN(v(s))\|_{L^{2}}\lesssim_{r,M_{0},E_{0}}1,\label{eq:Outer nonlinear goal pre}
		\end{align}
		then we have 
		\begin{align*}
			\sup_{\tau\in(0,\delta_{2})}\sup_{t\in[T_{0},T-\tau)}\|\td v^{\tau}(t)\|_{L^{2}}\leq\sup_{\tau\in(0,\delta_{2})}\|\td v^{\tau}(T_{0})\|_{L^{2}}+C(r,M_{0},E_{0})|T-T_{0}|.
		\end{align*}
		So, taking $T_{0}$ sufficiently close to $T$ and then choosing $\delta_{2}>0$
		to be small, we deduce \eqref{eq:Outer nonlinear goal}. Here, we
		use the continuity of the flow $\tau\mapsto u(T_{0}+\tau)\in L^{2}$
		at $\tau=0$. Thus, we reduce \eqref{eq:Outer nonlinear goal} to
		\eqref{eq:Outer nonlinear goal pre}.
		
		Now, we prove \eqref{eq:Outer nonlinear goal pre}. If necessary,
		we take $T_{0}(r)=T_{0}<T$ larger so that 
		\begin{align}
			T_{N}<T_{0}<T,\quad|x_{j}(t)-x_{j}(T)|<\tfrac{r}{2}\text{ on }t\in[T_{0},T)\text{ for all }1\leq j\leq N.\label{eq:Outer nonlinear T0 def}
		\end{align}
		Using \eqref{eq:proof outer radiation scale bound}, we have 
		\begin{align}
			\|\Phi_{r}\partial_{x}v\|_{L^{2}} & \lesssim{\sum_{j=1}^{N}}\lambda_{j}^{-1}|\|{\textbf 1}_{|y|\gtrsim r\lambda_{j}^{-1}}Q_{y}\|_{L^{2}}+\|\partial_{x}\eps_{N}\|_{L^{2}}\nonumber \\
			& \lesssim{\sum_{j=1}^{N}}(\lambda_{j}^{\frac{1}{2}}+\lambda_{j}^{-1}\|\chi_{R}\wt{\eps}_{j}\|_{\dot{H}^{1}})+\lambda_{N}^{-1}\|\wt{\eps}_{N}\|_{\dot{H}^{1}}\lesssim_{r,M_{0},E_{0}}1.\label{eq: outer v L2 estimate}
		\end{align}
		Moreover, interpolating \eqref{eq: outer v L2 estimate} and mass
		conservation law, we have 
		\begin{align}
			\|\Phi_{r}v\|_{L^{\infty}}\lesssim_{r,M_{0},E_{0}}1.\label{eq: outer v Linf estimate}
		\end{align}
		For the commutator term, similar to \eqref{eq: outer v L2 estimate},
		we have 
		\begin{align}
			\|[\partial_{xx},\Phi_{r}]v\|_{L^{2}} & \lesssim{\sum_{j=1}^{N}}\left(\|(\partial_{x}\chi_{|x-x_{j}(T)|\leq r})\partial_{x}v\|_{L^{2}}+\|(\partial_{xx}\chi_{|x-x_{j}(T)|\leq r})v\|_{L^{2}}\right)\nonumber \\
			& \lesssim_{r,M_{0},E_{0}}1.\label{eq:Outer nonlinear goal commutator}
		\end{align}
		For the quadratic term in the nonlinear terms, using \eqref{eq: outer v Linf estimate},
		we have 
		\begin{align*}
			\|\Phi_{r}|v|^{4}v\|_{L^{2}}\lesssim_{r,M_{0},E_{0}}1.
		\end{align*}
		The estimate of the nonlocal part $v|D||v|^{2}$ is not as simple
		as above since the Hilbert transform $\mathcal{H}$ may interfere
		with the truncation $\Phi_{r}$. In fact, we need to look inside $v(t)$
		and use the multi-soliton configuration. We will use some special
		relation between $Q$ and $\mathcal{H}$. Moreover, we will use the
		commutation relation with $\mathcal{H}$. We first simplify by \eqref{eq: outer v Linf estimate},
		\begin{align*}
			\|\Phi_{r}v|D||v|^{2}\|_{L^{2}}\lesssim_{r,M_{0},E_{0}}\|\Phi_{r}|D||v|^{2}\|_{L^{2}},
		\end{align*}
		and use the decomposition 
		\begin{align*}
			|v|^{2}={\sum_{j=1}^{N}}|[Q]_{\textrm{g}_{j}}|^{2}+2{\sum_{i<j}^{N}}\Re([Q]_{\textrm{g}_{j}}\ol{[Q]_{\textrm{g}_{i}}})+2{\sum_{j=1}^{N}}\Re([Q]_{\textrm{g}_{j}}\ol{\eps}_{N})+|\eps_{N}|^{2}.
		\end{align*}
		Using the pointwise bound $||D|Q^{2}|\lesssim Q^{2}$ from $\mathcal{H}(Q^{2})=yQ^{2}$,
		we have 
		\begin{align*}
			\big\|\Phi_{r}|D|{\sum_{j=1}^{N}}|[Q]_{\textrm{g}_{j}}|^{2}\big\|_{L^{2}}\lesssim{\sum_{j=1}^{N}}\lambda_{j}^{-\frac{3}{2}}\big\|{\textbf 1}_{|y|\gtrsim r\lambda_{j}^{-1}}Q^{2}\big\|_{L^{2}}\lesssim_{r}1.
		\end{align*}
		Now, we estimate interaction terms, 
		\begin{align}
			\big\|\Phi_{r}|D|{\sum_{i<j}^{N}}\Re([Q]_{\textrm{g}_{j}}\ol{[Q]_{\textrm{g}_{i}}})\big\|_{L^{2}}.\label{eq:Outer two soliton goal}
		\end{align}
		It suffices to estimate 
		\begin{align}
			\big\|\Phi_{r}|D|\Re([Q]_{\textrm{g}_{j}}\ol{[Q]_{\textrm{g}_{i}}})\big\|_{L^{2}}\label{eq:Outer two soliton 1}
		\end{align}
		for $i<j$. Changing the variable with $\frac{x-x_{j}}{\lambda_{j}}=y$,
		we have 
		\begin{align*}
			\eqref{eq:Outer two soliton 1}\leq\lambda_{j}^{-\frac{3}{2}}\big\|{\textbf 1}_{|y|\gtrsim r\lambda_{j}^{-1}}{\textbf 1}_{|y-(x_{i}-x_{j})\lambda_{j}^{-1}|\gtrsim r\lambda_{j}^{-1}}|D|\Re(Q[Q]_{\textrm{g}_{i,j}})\big\|_{L^{2}}.
		\end{align*}
		We recall the definition of $\textrm{g}_{i,j}$, \eqref{eq: g_i,j def}.
		By \eqref{eq:CommuteHilbertDerivative}, we have 
		\begin{align}
			|D|(Q[Q]_{\textrm{g}_{i,j}})=y^{-1}\calH[y\partial_{y}(Q[Q]_{\textrm{g}_{i,j}})].\label{eq:Outer two soliton 2}
		\end{align}
		We decompose \eqref{eq:Outer two soliton 2} into 
		\begin{align}
			\eqref{eq:Outer two soliton 2}= & y^{-1}\calH[yQ_{y}[Q]_{\textrm{g}_{i,j}}]\label{eq:Outer two soliton 2-1}\\
			& +y^{-1}\calH[yQ\partial_{y}([Q]_{\textrm{g}_{i,j}})].\label{eq:Outer two soliton 2-2}
		\end{align}
		For \eqref{eq:Outer two soliton 2-1}, applying \eqref{eq:CommuteHilbert},
		we have 
		\begin{align*}
			\eqref{eq:Outer two soliton 2-1}=y^{-2}\calH[y^{2}Q_{y}[Q]_{\textrm{g}_{i,j}}]+\frac{1}{\pi y^{2}}{\int_{\bbR}}yQ_{y}[Q]_{\textrm{g}_{i,j}}dy.
		\end{align*}
		Using $|y|\gtrsim r\lambda_{j}^{-1}>0$ and Hölder inequality, we
		have 
		\begin{align*}
			\big\|{\textbf 1}_{|y|\gtrsim r\lambda_{j}^{-1}}\eqref{eq:Outer two soliton 2-1}\big\|_{L^{2}}\lesssim_{r}(\lambda_{j}^{2}\|y^{2}Q_{y}\|_{L^{\infty}}+\lambda_{j}^{\frac{3}{2}}\|yQ_{y}\|_{L^{2}})\|[Q]_{\textrm{g}_{i,j}}\|_{L^{2}}\lesssim\lambda_{j}^{\frac{3}{2}}.
		\end{align*}
		For \eqref{eq:Outer two soliton 2-2}, further using \eqref{eq:CommuteHilbert},
		we have 
		\begin{align}
			\eqref{eq:Outer two soliton 2-2}= & \frac{1}{y(y-(x_{i}-x_{j})\lambda_{j}^{-1})}\calH[yQ(y-(x_{i}-x_{j})\lambda_{j}^{-1})\partial_{y}([Q]_{\textrm{g}_{i,j}})]\label{eq:Outer two soliton 3}\\
			& +\frac{1}{y(y-(x_{i}-x_{j})\lambda_{j}^{-1})}{\int_{\bbR}}yQ\partial_{y}([Q]_{\textrm{g}_{i,j}})dy.\label{eq:Outer two soliton 4}
		\end{align}
		We first control \eqref{eq:Outer two soliton 4}. By integrating by
		parts and Hölder inequality, we have 
		\begin{align*}
			|\eqref{eq:Outer two soliton 4}|\leq\frac{1}{|y(y-(x_{i}-x_{j})\lambda_{j}^{-1})|}\|\partial_{y}(yQ)\|_{L^{2}}\|Q\|_{L^{2}}\lesssim\frac{1}{|y(y-(x_{i}-x_{j})\lambda_{j}^{-1})|}.
		\end{align*}
		Therefore, we have 
		\begin{align*}
			\big\|{\textbf 1}_{|y|\gtrsim r\lambda_{j}^{-1}}{\textbf 1}_{|y-(x_{i}-x_{j})\lambda_{j}^{-1}|\gtrsim r\lambda_{j}^{-1}}\eqref{eq:Outer two soliton 4}\big\|_{L^{2}}\lesssim_{r}\lambda_{j}^{\frac{3}{2}}.
		\end{align*}
		For \eqref{eq:Outer two soliton 3}, from $y\partial_{y}Q\in L^{2}$,
		we have 
		\begin{align*}
			&\big\|{\textbf 1}_{|y|\gtrsim r\lambda_{j}^{-1}}{\textbf 1}_{|y-(x_{i}-x_{j})\lambda_{j}^{-1}|\gtrsim r\lambda_{j}^{-1}}\eqref{eq:Outer two soliton 3}\big\|_{L^{2}} 
			\\& \lesssim_{r}\lambda_{j}^{2}\|yQ(y-(x_{i}-x_{j})\lambda_{j}^{-1})\partial_{y}([Q]_{\textrm{g}_{i,j}})\|_{L^{2}}\\
			& \lesssim\lambda_{j}^{2}\|[y\partial_{y}Q]_{\textrm{g}_{i,j}}\|_{L^{2}}\lesssim\lambda_{j}^{2}.
		\end{align*}
		Therefore, we deduce 
		\begin{align*}
			\eqref{eq:Outer two soliton goal}\lesssim_{r}{\sum_{i<j}^{N}}\lambda_{j}^{-\frac{3}{2}}(\lambda_{j}^{\frac{3}{2}}+\lambda_{j}^{2})\lesssim1.
		\end{align*}
		Next, the estimate of $2\sum_{j=1}^{N}\Re([Q]_{\textrm{g}_{j}}\ol{\eps}_{N})$
		is performed in a similar manner. Indeed, we have 
		\begin{align}
			\big\|\Phi_{r}|D|\Re([Q]_{\textrm{g}_{j}}\ol{\eps}_{N})\big\|_{L^{2}}\leq\lambda_{j}^{-\frac{3}{2}}\big\|{\textbf 1}_{|y|\gtrsim r\lambda_{j}^{-1}}|D|\Re(Q[\ol{\eps}_{N}]_{\textrm{g}_{j}}^{-1})\big\|_{L^{2}}.\label{eq:Outer two soliton 5}
		\end{align}
		Again thanks to \eqref{eq:CommuteHilbertDerivative}, we have 
		\begin{align*}
			\eqref{eq:Outer two soliton 5} & =\lambda_{j}^{-\frac{3}{2}}\big\|{\textbf 1}_{|y|\gtrsim r\lambda_{j}^{-1}}y^{-1}\calH[y\partial_{y}(\Re(Q[\ol{\eps}_{N}]_{\textrm{g}_{j}}^{-1})]\big\|_{L^{2}}\\
			& \lesssim_{r}\lambda_{j}^{-\frac{1}{2}}(\|yQ_{y}\|_{L^{2}}\|[\ol{\eps}_{N}]_{\textrm{g}_{j}}^{-1}\|_{L^{\infty}}+\|yQ\|_{L^{\infty}}\|\partial_{y}[\ol{\eps}_{N}]_{\textrm{g}_{j}}^{-1}\|_{L^{2}})\lesssim_{M_{0},E_{0}}1.
		\end{align*}
		Finally, $|\eps_{N}|^{2}$ is estimated as 
		\begin{align*}
			\|\Phi_{r}|D||\eps_{N}|^{2}\|_{L^{2}}\lesssim\|\partial_{x}\eps_{N}\|_{L^{2}}\|\eps_{N}\|_{L^{\infty}}\lesssim_{M_{0},E_{0}}1.
		\end{align*}
		Hence, we have 
		\begin{align}
			\sup_{s\in[t_{0},T)}\|\Phi_{r}\calN(v(s))\|_{L^{2}}\lesssim_{r,M_{0},E_{0}}1,\label{eq:Outer nonlinear goal nonlinear}
		\end{align}
		and then from \eqref{eq:Outer nonlinear goal commutator} and \eqref{eq:Outer nonlinear goal nonlinear},
		we are led to \eqref{eq:Outer nonlinear goal pre}.
		
		Now, let us finish the proof. We first claim that $\Phi_{r}\eps_{N}(t)$
		is Cauchy in $L^{2}$ for any $r>0$. We want to show that for any
		fixed $\delta_{1}>0$ and $r>0$, there exists $\delta_{2}\in(0,T-T_{0})$
		such that 
		\begin{align}
			\sup_{\tau\in(0,\delta_{2})}\sup_{t\in[T_{0},T-\tau)}\|\Phi_{r}(\eps_{N}(t+\tau)-\eps_{N}(t))\|_{L^{2}}<\delta_{1}.\label{eq:Outer L2 Cauchy}
		\end{align}
		By \eqref{eq:proof Outer decomposition} and \eqref{eq:Outer nonlinear goal},
		to prove \eqref{eq:Outer L2 Cauchy}, it suffices to show that $\|\Phi_{r}[Q]_{\textrm{g}_{j}}\|_{L^{2}}\to0$
		for all $j=1,2,\cdots,N$. Given the definition of $\Phi_{r}$, \eqref{eq:proof outer translation bound},
		\eqref{eq:Outer nonlinear T0 def}, and $\lambda_{j}(t)\to0$ as $t\to T$
		for all $j=1,2,\cdots,N$, we have $\|\Phi_{r}[Q]_{\textrm{g}_{j}}\|_{L^{2}}\to0$
		for all $j=1,2,\cdots,N$. Therefore, we conclude \eqref{eq:Outer L2 Cauchy}.
		That is, $\Phi_{r}\eps_{N}(t)$ is Cauchy in $L^{2}$ for any $r>0$.
		Thus, there exist $z_{r}^{\ast}$ for each $r>0$ such that $\Phi_{r}\eps_{N}(t)\to\Phi_{r}z_{r}^{\ast}$
		in $L^{2}$. By the uniqueness of the limit, we conclude that there
		exists $z^{\ast}$ such that $\Phi_{r}\eps_{N}(t)\to\Phi_{r}z^{\ast}$
		in $L^{2}$ for any $r>0$. Since $\|\eps_{N}(t)\|_{L^{2}}$ is uniformly
		bounded in $L^{2}$, we also have $z^{\ast}\in L^{2}$, and we finish
		the proof. 
	\end{proof}
	Now, we show that $\eps_{N}\to z^{\ast}$ in $L^{2}$. We first prove
	that $\eps_{N}\rightharpoonup z^{\ast}$ in $H^{1}$ for some $z^{\ast}\in H^{1}$.
	For any sequence $t_{n}\to T$, we know that $\|\eps_{N}(t_{n})\|_{H^{1}}$
	is bounded. Therefore, there exists a subseqeunce $t_{n^{\prime}}\to T$
	such that $\eps_{N}(t_{n}^{\prime})\rightharpoonup z_{w}^{\ast}$
	for some $z_{w}^{\ast}\in H^{1}$. Moreover, by the Rellich--Kondrachov
	theorem, for any $R>0$, we have $\Phi_{R^{-1}}\chi_{R}\eps_{N}(t_{n}^{\prime})\to\Phi_{R^{-1}}\chi_{R}z_{w}^{\ast}$
	in $L^{2}$. In view of Lemma~\ref{lem:Outer convergence} and the
	uniqueness of the limit, we have $\Phi_{R^{-1}}\chi_{R}\eps_{N}(t_{n}^{\prime})\to\Phi_{R^{-1}}\chi_{R}z^{\ast}$
	in $L^{2}$. Thus, by taking $R$ arbitrary large, we have $z^{\ast}=z_{w}^{\ast}$.
	
	We show further information of $z^{*}$. From \eqref{eq:soliton decoupling}
	with $\psi\equiv1$, we deduce $M(z^{\ast})=M(v_{0})-N\cdot M(Q)$.
	If we further assume $v_{0}\in H^{1,1}$, then from the virial identity,
	we have that $\|xv(t)\|_{L^{2}}$ is bounded for $t\in[0,T)$. Therefore,
	by Lemma~\ref{lem:Outer convergence} and Fatou's lemma, we estimate
	\begin{align*}
		\|(x-x_{j}(T))z^{\ast}\|_{L^{2}} & =\lim_{r\to0}\|(x-x_{j}(T))\Phi_{r}z^{\ast}\|\\
		& \leq\lim_{r\to0}\liminf_{t\to T}\|(x-x_{j}(T))\Phi_{r}v(t)\|_{L^{2}}\\
		& \leq\limsup_{t\to T}\|(x-x_{j}(T))\Phi_{r}v(t)\|_{L^{2}}\lesssim_{T,v_{0}}1,
	\end{align*}
	and we conclude $xz^{\ast}\in L^{2}$.
	
	\textbf{Step 2.} \emph{Pseudo-conformal bound $\lambda_{N}\lesssim T-t$}.
	
	Let $I_{r}(t)=\int_{\bbR}\Phi_{r}|u|^{2}$. From \eqref{eq:nonnegative energy},
	we deduce $|\partial_{t}I_{r}|\lesssim_{M_{0},E_{0}}r^{-1}$ and 
	\begin{align}
		|I_{r}(t)-I_{r}(\tau)|\lesssim_{M_{0},E_{0}}|t-\tau|r^{-1}.\label{eq:lambda pseudo conformal 1}
	\end{align}
	We have 
	\begin{align*}
		{\int_{\bbR}}\Phi_{r}|[Q]_{\textrm{g}_{j}}|^{2}\sim{\int_{\bbR}}{\textbf 1}_{|x-x_{j}|\geq\frac{r}{2}}|[Q]_{\textrm{g}_{j}}|^{2}\sim{\int_{\bbR}}\varphi_{r\lambda_{j}^{-1}}Q^{2}\sim\lambda_{j}r^{-1}.
	\end{align*}
	We also estimate for $j\le i\le N$, 
	\begin{align*}
		2(\Phi_{r}[Q]_{\textrm{g}_{j}},[Q]_{\textrm{g}_{i}})_{r} & =2({\textbf 1}_{|y|\gtrsim r\lambda_{j}^{-1}}Q,{\textbf 1}_{|y-(x_{i}-x_{j})\lambda_{j}^{-1}|\gtrsim r\lambda_{j}^{-1}}[Q]_{\textrm{g}_{i,j}})_{r}\\
		& \lesssim\|{\textbf 1}_{|y|\gtrsim r\lambda_{j}^{-1}}Q\|_{L^{2}}\lesssim(\lambda_{j}r^{-1})^{\frac{1}{2}}.
	\end{align*}
	Similarly, we have for $j<N$, 
	\begin{align*}
		2(\Phi_{r}[Q]_{\textrm{g}_{j}},\eps_{N})_{r}\lesssim\|{\textbf 1}_{|y|\gtrsim r\lambda_{j}^{-1}}Q\|_{L^{2}}\|\eps_{N}\|_{L^{2}}\lesssim_{M_{0}}(\lambda_{j}r^{-1})^{\frac{1}{2}}.
	\end{align*}
	Using the decomposition of $\eps_{N}$, we estimate 
	\begin{align*}
		2(\Phi_{r}[Q]_{\textrm{g}_{N}},\eps_{N})_{r}\lesssim{\sum_{k=1}^{N-1}}\|\chi_{R}\eps_{j}\|_{L^{2}}+|(Q,\wt{\eps}_{N})_{r}|\lesssim{\sum_{k=1}^{N-1}}\lambda_{j}+\lambda_{j}^{\frac{1}{2}-}=o_{t\to T}(1).
	\end{align*}
	Since $\eps_{N}(t)\to z^{\ast}$ in $L^{2}$ , we have 
	\begin{align*}
		{\int_{\bbR}}\Phi_{r}(|\eps_{N}|^{2}-|z^{\ast}|^{2})dx=o_{t\to T}(1).
	\end{align*}
	Using \eqref{eq:lambda pseudo conformal 1}, and then we obtain 
	\begin{align*}
		\bigg|I_{r}(t)-{\int_{\bbR}}\Phi_{r}|z^{\ast}|^{2}dx\bigg|\lesssim|T-t|r^{-1}.
	\end{align*}
	Now, we take $r=T-t$ and estimate 
	\begin{align}
		\bigg|{\sum_{j=1}^{N}}\frac{\lambda_{j}}{T-t}+O_{M_{0}}\bigg({\sum_{j=1}^{N-1}}\bigg(\frac{\lambda_{j}}{T-t}\bigg)^{\frac{1}{2}}\bigg)+o_{t\to T}(1)\bigg|\lesssim1.\label{eq:lambda pseudo conformal 2}
	\end{align}
	On the other hand, by Young's inequality, we obtain 
	\begin{align}
		{\sum_{j=1}^{N-1}}\frac{\lambda_{j}}{T-t}+O_{M_{0}}\bigg({\sum_{j=1}^{N-1}}\bigg(\frac{\lambda_{j}}{T-t}\bigg)^{\frac{1}{2}}\bigg)\geq-C_{M_{0}}.\label{eq:lambda pseudo conformal 2-1}
	\end{align}
	Hence, by \eqref{eq:lambda pseudo conformal 2} and \eqref{eq:lambda pseudo conformal 2-1}
	we have $\frac{\lambda_{N}}{T-t}-C_{M_{0}}+o_{t\to T}(1)\lesssim_{M_{0},E_{0}}1$,
	or equivalently $\la_{N}\lesssim C_{M_{0},E_{0}}(T-t)$. From $\la_{1}\lesssim\la_{2}\lesssim\cdots\lesssim\la_{N}$,
	we have $\la_{j}\lesssim T-t$ for all $j$. This finishes the proof
	of Theorem~\ref{thm:Soliton resolution gauged} for finite time blow-up
	solutions.
	
	\textbf{Step 3.} \emph{Global solution case.}
	
	We use the pseudo-conformal transform to extend results to a global
	solution in $H^{1,1}$. According to the local theory, if $v_{0}\in H^{1,1}$,
	then $v(t)\in H^{1,1}$ for its lifespan. Suppose that $v(t)$ is
	a solution to \eqref{CMdnls-gauged} on the time interval $[1,\infty)$.
	If $v(t)$ scatters forward in time, then there is nothing to prove.
	Suppose that $v(t)$ does not scatter forward in time. Denote $v_{c}(t)\coloneqq[\calC v](t)$.
	Then $v_{c}(t)$ is a solution to \eqref{CMdnls-gauged} on $[-1,0)$.
	We claim that $v_{c}(t)$ does not converge to $z_{c}^{\ast}$ in
	$L^{2}$ as $t\to0$. If $v_{c}(t)$ were to converge, then we would
	have 
	\begin{align*}
		\|v_{c}(t)-z_{c}^{\ast}\|_{L^{2}}=o_{t\to0}(1)=\|v_{c}(t)-e^{it\partial_{xx}}z_{c}^{\ast}\|_{L^{2}}.
	\end{align*}
	Taking the inverse of the pseudo-conformal transform (indeed, $\mathcal{C}^{-1}=\mathcal{C})$,
	we would derive that 
	\begin{align*}
		\|v(t)-e^{it\partial_{xx}}v^{\ast}\|_{L^{2}}=o_{t\to\infty}(1),\qquad v^{\ast}=\tfrac{1}{\sqrt{2\pi}}\calF(z_{c}^{\ast}),
	\end{align*}
	and this is a contradiction. Therefore, $v_{c}(t)$ does not converge
	in $L^{2}$. Let $T\geq0$ be the maximal forward time of existence
	of $v_{c}$. We claim that $T=0$, i.e. $v_{c}$ blows up at $t=0$.
	If $T>0$, then we have $v_{c}(t)\in C_{t}^{1}H^{1}([-1,0+]\times\bbR)$
	by a standard Cauchy theory with $v_{c}(-1)\in H^{1}$. Thus, we have
	$v_{c}(t)\to v_{c}(0)$ as $t\to0$ in $H^{1}$. This is a contradiction
	and thus $v_{c}$ blows up at $t=0$. We can apply Theorem~\ref{thm:Soliton resolution gauged}
	for the finite-time blow-up case. Hence, for some $1\leq N\leq\frac{M(v_{0})}{M(Q)}$,
	$v_{c}(t)$ has the decomposition 
	\begin{align*}
		v_{c}(t)-\sum_{j=1}^{N}[Q]_{\lambda_{j}(t),\gamma_{j}(t),x_{j}(t)}\to z^{\ast}\text{ in }L^{2}\text{ as }t\to0^{-},
	\end{align*}
	where the modulation parameters $(\lambda_{j}(t),\gamma_{j}(t),x_{j}(t))$
	and $z^{*}$ satisfy the properties stated in Theorem~\ref{thm:Soliton resolution gauged}.
	We can rewrite $z^{*}$ by $e^{it\partial_{xx}}z^{*}$ as 
	\begin{align*}
		v_{c}(t)-\sum_{j=1}^{N}[Q]_{\lambda_{j}(t),\gamma_{j}(t),x_{j}(t)}-e^{it\partial_{xx}}z^{\ast}\to0\text{ in }L^{2}\text{ as }t\to0^{-}.
	\end{align*}
	Taking the inverse of the pseudo-conformal transform (indeed, $\mathcal{C}^{-1}=\mathcal{C})$,
	we have 
	\begin{align}
		v(t)-e^{i\frac{x^{2}}{4t}}\sum_{j=1}^{N}[Q]_{\mathring{\lambda}_{j}(t),\mathring{\gamma}_{j}(t),\mathring{x}_{j}(t)}-e^{it\partial_{xx}}v^{\ast}\to0\text{ in }L^{2}\text{ as }t\to+\infty,\label{eq:sol resol global pre}
	\end{align}
	where $(\mathring{\lambda}_{j}(t),\mathring{\gamma}_{j}(t),\mathring{x}_{j}(t))\coloneqq(t\lambda_{j}(-t^{-1}),\gamma_{j}(-t^{-1}),tx(-t^{-1}))$
	and $v^{\ast}=\frac{\calF(z^{\ast})}{\sqrt{2\pi}}$. Here, we denote
	the velocity of soliton by $2c_{j}(t)\coloneqq\tfrac{1}{t}\mathring{x}_{j}(t)=x_{j}(-t^{-1})$,
	in accordance with Galilean transform notation. Moreover, from the
	properties stated in Theorem~\ref{thm:Soliton resolution gauged},
	we have $\mathring{\lambda}_{1}(t)\lesssim\mathring{\lambda}_{2}(t)\lesssim\cdots\lesssim\mathring{\lambda}_{N}(t)=t\lambda_{N}(-t^{-1})\lesssim1$,
	and 
	\begin{align*}
		\lim_{t\to+\infty}|t|\cdot\left|2\cdot\frac{c_{i}(t)-c_{j}(t)}{\mathring{\lambda}_{i}(t)}\right|=\lim_{t\to+\infty}\left|\frac{\mathring{x}_{i}(t)-\mathring{x}_{j}(t)}{\mathring{\lambda}_{i}(t)}\right|=\infty,
	\end{align*}
	for all $1\leq i\neq j\leq N$. Moreover, we have 
	\begin{align*}
		\|v(t)\|_{\dot{H}^{1}} & \sim\tfrac{1}{|t|}\|v_{c}(-t^{-1})\|_{\dot{H}^{1}}+O(\tfrac{1}{|t|}\|xv_{c}(-t^{-1})\|_{L^{2}})\\
		& \sim\mathring{\lambda}_{1}(t)^{-1}+O(\tfrac{1}{|t|}\|xv_{c}(-t^{-1})\|_{L^{2}}).
	\end{align*}
	Using $xv_{c}\in L^{2}$, and the virial identity \eqref{eq:Virial identity},
	we have $\limsup_{t\to0^{-}}\|xv_{c}(t)\|<\infty$. Thus, we obtain
	$\|v(t)\|_{\dot{H}^{1}}\sim\mathring{\lambda}_{1}(t)^{-1}$ as $t\to\infty$.
	
	Now, we rewrite $e^{i\frac{x^{2}}{4t}}[Q]_{\mathring{\lambda}_{j}(t),\mathring{\gamma}_{j}(t),2tc_{j}(t)}$ in \eqref{eq:sol resol global pre} into a canonical form via a Galilean boost. Set $y=\frac{x-2tc_{j}}{\mathring{\lambda}_{j}}$, and we compute
	\begin{align*}
		e^{i\frac{x^{2}}{4t}}[Q]_{\mathring{\lambda}_{j},\mathring{\gamma}_{j},2tc_{j}}(x)
		& =e^{i\frac{\mathring{\lambda}_{j}^{2}y^{2}}{4t}}e^{ic_{j}\cdot x-itc_{j}^{2}}\frac{e^{i\mathring{\gamma}_{j}}}{\mathring{\lambda}_{j}^{\frac{1}{2}}}Q\bigg(\frac{x-2tc_{j}}{\mathring{\lambda}_{j}}\bigg)\\
		& =e^{i\frac{\mathring{\lambda}_{j}^{2}y^{2}}{4t}}\textnormal{Gal}_{c_{j}}([Q]_{\mathring{\lambda}_{j}(t),\mathring{\gamma}_{j}(t),0})(x).
	\end{align*}
	Since $\mathring{\lambda}_{j}(t)\lesssim\mathring{\lambda}_{N}(t)\lesssim1$,
	and using DCT in the asymptotics as $t\to\infty$, we can replace
	the pseudo-conformal factor $e^{i\frac{\mathring{\lambda}_{j}^{2}y^{2}}{4t}}$
	with $1$. Finally, from $v^{\ast}=\frac{1}{\sqrt{2\pi}}\calF(z^{\ast})$
	and $\partial_{x}z^{\ast},xz^{\ast}\in L^{2}$, we have $\partial_{x}v^{\ast},xv^{\ast}\in L^{2}$.
	Note that $\lim_{t\to\infty}c_{j}(t)$ exists. On the blow-up side,
	$x_{j}(t)$ is a position parameter, but on the pseudo-conformal side,
	$c_{j}(t)=\frac{1}{2}x_{j}(-t^{-1})$ is a velocity of each soliton.
	We rename $\mathring{\lambda}_{j}(t),\mathring{\gamma}_{j}(t)$ as
	$\lambda_{j}(t),\gamma_{j}(t)$ in the theorem statement. This finishes
	the proof of global solution case. 
\end{proof}
Next, we provide the proof of Theorem~\ref{thm:Soliton resolution}.
This is accomplished using the gauge transform $\mathcal{G}$ and
its inverse $\mathcal{G}^{-1}$. 
\begin{proof}[Proof of Theorem~\ref{thm:Soliton resolution}]
	Since the gauge transform $\mathcal{G}$ is a fixed-time nonlinear transform
	and a diffeomorphism on $H^{1}(\R)$, it suffices to justify
	the multi-soliton configuration under $\mathcal{G}^{-1}$. In the
	procedure, since $\mathcal{G}^{-1}$ is a nonlinear transform, we
	need to handle the sum of solitons with care. Recall the inverse gauge
	transform, 
	\begin{align*}
		\mathcal{G}^{-1}(f)(x)=f(x)e^{\frac{i}{2}\int_{-\infty}^{x}|f(y)|^{2}dy}.
	\end{align*}
	In view of $v=\sum_{j=1}^{N}[Q]_{\mathrm{g}_{j}}+\eps_{N}$, and by
	the local mass decoupling \eqref{eq:soliton decoupling} with $\psi(y)=\textbf{1}_{(-\infty,x]}(y)$,
	we decompose the phase 
	\[
	\int_{-\infty}^{x}|v(y)|^{2}dy=\int_{-\infty}^{x}\sum_{\ell=1}^{N}|[Q]_{\textrm{g}_{\ell}}|^{2}+|\eps_{N}|^{2}dy+o_{t\to T}(1).
	\]
	Here, $o_{t\to T}(1)$ does not depend on $x$. Then, using $\mathcal{G}^{-1}([Q]_{\textrm{g}_{j}})=-[\calR]_{\textrm{g}_{j}}$
	we obtain 
	\begin{align}
		u(t,x)= & -\mathcal{G}^{-1}(v)\nonumber \\
		= & \sum_{j=1}^{N}[\calR]_{\textrm{g}_{j}}\exp\bigg(\frac{i}{2}\int_{-\infty}^{x}\sum_{\ell=1,\ell\ne j}^{N}|[Q]_{\textrm{g}_{\ell}}|^{2}+|\eps_{N}|^{2}dy+o_{t\to T}(1)\bigg)\label{eq:description u^*2}\\
		& -\mathcal{G}^{-1}(\eps_{N})\exp\bigg(\frac{i}{2}\int_{-\infty}^{x}\sum_{\ell=1}^{N}|[Q]_{\textrm{g}_{\ell}}|^{2}dy+o_{t\to T}(1)\bigg)\label{eq:description u^*1}
	\end{align}
	We now compute the asymptotics of the phase functions in \eqref{eq:description u^*2}
	and \eqref{eq:description u^*1}. From $\lambda_{j}(t)\to0$, $x_{j}(t)\to x_{j}(T)$,
	and $\eps_{N}(t)\to z^{\ast}$ in $L^{2}$, we have 
	\[
	{\frac{1}{2}\int_{-\infty}^{\lambda_{j}x+x_{j}}|\eps_{N}|^{2}dy}\to\frac{1}{2}{\int_{-\infty}^{x_{j}(T)}}|z^{*}|^{2}dy\eqqcolon\gamma_{j}^{*}.
	\]
	On the other hand, we have 
	\begin{align*}
		[\calR]_{\textrm{g}_{j}}{\exp\bigg(\frac{i}{2}\int_{-\infty}^{x}\sum_{\ell=1,\ell\ne j}^{N}|[Q]_{\textrm{g}_{\ell}}|^{2}\bigg)}=\bigg[\calR\exp\bigg(\frac{i}{2}\sum_{\ell=1,\ell\ne j}^{N}\int_{-\infty}^{[\cdot-\frac{x_{\ell}-x_{j}}{\lambda_{j}}]\frac{\lambda_{j}}{\lambda_{\ell}}}Q^{2}\bigg)\bigg]_{\textrm{g}_{j}}.
	\end{align*}
	Using $\frac{1}{2}\int_{-\infty}^{x}Q^{2}=\arctan x+\frac{\pi}{2}$,
	\begin{align*}
		\eqref{eq:description u^*2}=\sum_{j=1}^{N}[\calR\cdot e^{i(\td{\gamma}_{j}(t,\cdot)+\gamma_{j}^{*}+o_{t\to T}(1))}]_{\lambda_{j},\gamma_{j},x_{j}},
	\end{align*}
	where 
	\begin{align*}
		\td{\gamma}_{j}(t,\cdot)\coloneqq{\sum_{\ell=1,\ell\neq j,}^{N}}\theta_{\ell,j}(t,\cdot),\quad\theta_{\ell,j}(t,\cdot)\coloneqq\arctan\bigg(\bigg[\cdot-\frac{x_{\ell}-x_{j}}{\lambda_{j}}\bigg]\frac{\lambda_{j}}{\lambda_{\ell}}\bigg)(t)+\frac{\pi}{2}.
	\end{align*}
	Now, we show that $\td{\gamma}_{j}(t,x)$ converges to a constant,
	\emph{independent of $x$} as $t\to T$. For $\ell>j$, using $\frac{\lambda_{j}}{\lambda_{\ell}}\lesssim1$
	and , we have 
	\begin{align}
		\theta_{\ell,j}(t,\cdot)\to\arctan\bigg(\lim_{t\to T}\bigg(\frac{x_{j}-x_{\ell}}{\lambda_{\ell}}\bigg)(t)\bigg)+\frac{\pi}{2}\label{eq:i>j point converge}
	\end{align}
	as $t\to T$. By Proposition~\ref{prop:no bubble tree}, we have $(x_{j}-x_{\ell})\lambda_{\ell}^{-1}(t)\to+\infty$
	or $-\infty$, the right side of \eqref{eq:i>j point converge} is
	$0$ or $\pi$. If $\ell<j$, we use $1\lesssim\frac{\lambda_{j}}{\lambda_{\ell}}$
	and $|(x_{\ell}-x_{j})\lambda_{j}^{-1}|\to\infty$, we have 
	\begin{align*}
		\theta_{\ell,j}(t,\cdot)\to-\arctan\bigg(\lim_{t\to T}\bigg(\frac{x_{\ell}-x_{j}}{\lambda_{j}}\bigg)(t)\bigg)+\frac{\pi}{2},
	\end{align*}
	and this is also $0$ or $\pi$. Therefore, again by DCT, $\td{\gamma}_{j}(t,\cdot)$
	converges to a constant, which we denote by $\td{\gamma}_{j}^{\prime}$,
	and we have 
	\begin{align}
		\eqref{eq:description u^*2}-\sum_{j=1}^{N}[\calR]_{\lambda_{j},\gamma_{j}+\gamma_{j}^{*}+\td{\gamma}_{j}^{\prime},x_{j}}\to0\text{ in }L^{2}.\label{eq:i<j point converge}
	\end{align}
	
	Next, we consider \eqref{eq:description u^*1}. From $\eps_{N}\to z^{\ast}$
	in $L^{2}$, we have $\mathcal{G}^{-1}(\eps_{N})\to\mathcal{G}^{-1}(z^{\ast})$
	in $L^{2}$. By a similar argument, we have 
	\[
	\exp\bigg(\frac{i}{2}\int_{-\infty}^{x}\sum_{\ell=1}^{N}|[Q]_{\textrm{g}_{\ell}}|^{2}dy+o_{t\to T}(1)\bigg)\to\prod_{j=1}^{N}({\textbf 1}_{x<x_{j}(T)}-{\textbf 1}_{x>x_{j}(T)})
	\]
	in pointwise sense. Hence, we have 
	\begin{align*}
		\eqref{eq:description u^*1}\to\mathcal{G}^{-1}(z^{\ast})\prod_{j=1}^{N}({\textbf 1}_{x<x_{j}(T)}-{\textbf 1}_{x\geq x_{j}(T)})\eqqcolon\td z^{\ast}\text{ in }L^{2}.
	\end{align*}
	Thus, we have \eqref{eq:soliton resol ungauge}. Since the modulation
	parameters $\lambda_{j}$ and $x_{j}$ remain unchanged, we also have
	\eqref{eq:asymptotic orthogonality}, $\lim_{t\to T}x_{j}(t)=x_{j}(T)$
	exists with $|x_{j}(T)|<\infty$, and $\lambda_{1}\lesssim\lambda_{2}\lesssim\cdots\lesssim\lambda_{N}\lesssim T-t$.
	We also have $|\td z^{\ast}|=|z^{\ast}|$, which implies $\|\td z^{\ast}\|_{L^{2}}^{2}=\|z^{\ast}\|_{L^{2}}=M(v_{0})-N\cdot M(Q)$.
	If $xu_{0}\in L^{2}$, then $|xv_{0}|=|x\calG(u_{0})|=|xu_{0}|\in L^{2}$,
	which means that $x\td z^{\ast}\in L^{2}$. However, due to discontinuities
	at $x_{j}(T)$, we do not have control of $\partial_{x}\td z^{*}$.
	This completes the proof for finite-time blow-up solutions.
	
	For global solutions in $H^{1,1}$, we argue similarly to the proof
	of Step 3 of Theorem~\ref{thm:Soliton resolution gauged} to obtain
	the scattering or the soliton decomposition 
	\begin{align}
		u(t)-\sum_{j=1}^{N}\textnormal{Gal}_{c_{j}(t)}([\calR]_{{\lambda}_{j}(t),{\gamma}_{j}(t),0})-e^{it\partial_{xx}}u^{\ast}\to0\text{ in }L^{2}\text{ as }t\to+\infty,\label{eq:sec5 ungauge global decom}
	\end{align}
	for some modulation parameters $({\lambda}_{j}(t),{\gamma}_{j}(t),c_{j}(t))$
	and $u^{\ast}=\frac{1}{\sqrt{2\pi}}\calF(\td z^{\ast})$. Using $x\td z^{\ast}\in L^{2}$,
	we have $\partial_{x}u^{\ast}\in L^{2}$.
	
	Now, we show that for chiral solutions $u(t)\in L_{+}^{2}$, we can
	maintain the chirality in the multi-soliton configuration. For the
	case $T<\infty$, since we know $[\calR]_{\lambda_{j},\gamma_{j},x_{j}}\in L_{+}^{2}$
	in \eqref{eq:soliton resol ungauge}, and thus $\td z^{\ast}\in L_{+}^{2}$,
	there is nothing to prove. For the case $T=\infty$, the solitons
	contain the Galilean transforms $\textnormal{Gal}_{c_{j}}([\calR]_{{\lambda}_{j},{\gamma}_{j},0})$.
	Therefore, we might need to adjust the soliton configurations. Note
	that $\calF(\calR)(\xi)=2\sqrt{2}\pi ie^{-\xi}{\textbf 1}_{\xi\geq0}$.
	So, we obtain 
	\begin{align}
		\calF(\textnormal{Gal}_{c_{j}}([\calR]_{{\lambda}_{j},{\gamma}_{j},0}))(\xi)=\lambda_{j}^{\frac{1}{2}}e^{i(\gamma_{j}-tc_{j}^{2}-2tc_{j}\xi)}2\sqrt{2}\pi ie^{-\lambda_{j}(\xi-c_{j})}{\textbf 1}_{\lambda_{j}(\xi-c_{j})\geq0}.\label{eq:soliton fourier side}
	\end{align}
	In view of \eqref{eq:soliton fourier side}, if $c_{j}<0$ for some
	$1\le j\le N$, then $\textnormal{Gal}_{{c}_{j}}([\calR]_{{\lambda}_{j},{\gamma}_{j},0})\notin L_{+}^{2}$.
	On the other hand, if $\lim_{t\to\infty} c_j(t)\eqqcolon c_j(\infty)>0$ for all $1\le j\le N$, then the multi-soliton configuration becomes asymptotically chiral.
	Therefore, by choosing the valid time $\tau$ large enough, we are
	fine. If $c_{J}(\infty)\le0$ for some $1\le J\le N$, we might need
	to make a suitable change of the configuration. We first claim that
	\begin{equation}
		\lim_{t\to\infty}(\lambda_{J}c_{J})(t)=0.\label{eq:claim galilean parameter}
	\end{equation}
	Assume not, i.e. $\liminf_{t\to\infty}(\lambda_{J}c_{J})(t)\eqqcolon(\lambda_{J}c_{J})(\infty)<0$.
	We have for $c_{J}(t)\le0$, from \eqref{eq:soliton fourier side},
	\begin{align}
		\Pi_{+}\{[\textnormal{Gal}_{{c}_{J}}(\calR)]_{{\lambda}_{J},{\gamma}_{J},0}\}=e^{\lambda_{J}c_{J}}[\calR]_{{\lambda}_{J},\gamma_{J}-tc_{J}^{2},2tc_{J}}.\label{eq:5.33-1}
	\end{align}
	This means that, applying $\Pi_{+}$ to \eqref{eq:sec5 ungauge global decom},
	since $u_{0},u(t)\in L_{+}^{2}$, we have the following decoupled
	$L_{+}^{2}$ norm, 
	\begin{align*}
		M_0=M(u(t))=\|u(t)\|_{L_{+}^{2}}^{2}=\sum_{j=1}^{N}e^{\text{min}((\lambda_{j}c_{j})(t),0)}\cdot2\pi+\|u^{*}\|_{L_{+}^{2}}^{2}+o_{t\to\infty}(1).
	\end{align*}
	However, we have 
	\begin{align}
		M(u) & =\liminf_{t\to+\infty}\bigg(\sum_{j=1}^{N}e^{\text{min}((\lambda_{j}c_{j})(t),0)}\cdot2\pi+\|u^{*}\|_{L_{+}^{2}}^{2}\bigg)\nonumber \\
		& \le e^{(\lambda_{J}c_{J})(\infty)}2\pi+2(N-1)\pi+\|u^{*}\|_{L_{+}^{2}}^{2}<2N\pi+\|u^{*}\|_{L^{2}}^{2}=M(u),\label{eq:5.34}
	\end{align}
	which leads to a contradiction, thus proving \eqref{eq:claim galilean parameter}.
	Moreover, \eqref{eq:5.34} also shows that $\|u^{*}\|_{L_{+}^{2}}=\|u^{*}\|_{L^{2}}$
	and $u^{\ast}\in L_{+}^{2}$. Under the condition $c_{J}(\infty)\le0$,
	using \eqref{eq:claim galilean parameter} and \eqref{eq:5.33-1},
	we are able to replace $\textnormal{Gal}_{{c}_{J}}([\calR]_{{\lambda}_{J},{\gamma}_{J},0})$
	with a chiral soliton $[\calR]_{{\lambda}_{J},\gamma_{J}-tc_{J}^{2},2tc_{J}}$
	in the configuration. This finishes the proof. 
\end{proof}
\begin{rem}
	\label{rem:phase correction} One can observe that the \emph{no bubble
		tree property}, as stated in Proposition~\ref{prop:no bubble tree},
	is natural. Indeed, in the proof of Theorem~\ref{thm:Soliton resolution},
	if there were to be a bubble tree, i.e., $\lim_{t\to T}|(x_{\ell}-x_{j})\lambda_{j}^{-1}|\not\to\infty$,
	we could not derive \eqref{eq:i<j point converge} for $\ell<j$.
	Instead, we would have 
	\begin{align*}
		\exp(i\theta_{\ell,j}(t,y))-({\textbf 1}_{y<(x_{\ell}-x_{j})\lambda_{j}^{-1}(t)}-{\textbf 1}_{y>(x_{\ell}-x_{j})\lambda_{j}^{-1}(t)})\to0.
	\end{align*}
	This implies that $\theta_{\ell,j}$ converges to a step function
	with a discontinuity. This means that in the multi-soliton configuration,
	some solitons have discontinuities in phase. We believe this is quite
	unnatural. Note that, so far, there is no finite-time bubble tree
	construction in any models. Therefore, the nonexistence of a bubble
	tree in \eqref{CMdnls} provides sharper information on the soliton
	resolution. 
\end{rem}

\appendix

\section{Decomposition}

\label{sec:appendix variation} We provide the proofs of Lemmas~\ref{lem:Proximity}
and~\ref{lem:Decomposition pre} . They are consequences of variational
structure of the ground state $Q$. 
\begin{lem}[\cite{GerardLenzmann2022}]
	\label{lem:variational argument} Suppose that $\{f_{n}\}_{n\bbN}\subset H^{1}$
	is a sequence such that 
	\begin{align*}
		\sup_{n\in\bbN}\|f_{n}\|_{L^{2}}<\infty,\quad\lim_{n\to\infty}E(f_{n})=0,\quad\|f_{n}\|_{\dot{H}^{1}}=1\text{ for all }n\in\bbN.
	\end{align*}
	Then, there exist a sequence $x_{n}$, and $\lambda,\gamma$ such
	that 
	\begin{align*}
		f_{n}(\cdot-x_{n})\rightharpoonup[Q]_{\lambda,\gamma,0}\text{ weakly in }H^{1},\quad\liminf_{n\to\infty}\|f_{n}\|_{L^{2}}^{2}\geq M(Q)=2\pi.
	\end{align*}
\end{lem}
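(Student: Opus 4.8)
The statement to prove is Lemma \ref{lem:variational argument}: a concentration–compactness characterization of minimizing sequences for $E$ among functions with fixed $\dot H^1$-norm, recovering $Q$ in the weak limit and the mass lower bound $M(Q)=2\pi$. This is attributed to \cite{GerardLenzmann2022}, so it is a known result; nonetheless here is how I would organize the proof.

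\textbf{Plan.} First I would normalize and translate. Since $\|f_n\|_{\dot H^1}=1$ and $\sup_n\|f_n\|_{L^2}<\infty$, the sequence $\{f_n\}$ is bounded in $H^1$; after passing to a subsequence it converges weakly in $H^1$ to some $f_\infty$, but a priori $f_\infty$ could be $0$ because of translation invariance. To prevent vanishing, I would use the profile-decomposition / concentration-compactness dichotomy adapted to $\dot H^1(\mathbb R)$ (equivalently, a Lieb-type compactness lemma): the key point is that $E(f_n)\to0$ together with $\|f_n\|_{\dot H^1}=1$ forces the nonlinear term $\|\mathbf D_{f_n}f_n\|_{L^2}^2=2E(f_n)\to0$, so the cross term $\int \mathcal H(|f_n|^2)\,\mathrm{Re}(\overline{f_n}\partial_x f_n)\,dx$ together with $\frac14\int|\mathcal H(|f_n|^2)|^2|f_n|^2$ must nearly cancel $\|\partial_x f_n\|_{L^2}^2=1$. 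In particular $\int |\mathcal H(|f_n|^2)|^2|f_n|^2\,dx$ stays bounded below away from zero, which rules out vanishing: there exist $x_n$ so that $f_n(\cdot-x_n)\rightharpoonup Q_\infty$ weakly in $H^1$ with $Q_\infty\neq0$. Replace $f_n$ by $f_n(\cdot-x_n)$.

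\textbf{Key steps.} (i) Lower semicontinuity and the Bogomol'nyi structure: write $E(f)=\tfrac12\|\mathbf D_f f\|_{L^2}^2\geq0$, and exploit that $E$ is a "complete square". Using the weak $H^1$ convergence $f_n\rightharpoonup Q_\infty$ and the local (Rellich) strong convergence $f_n\to Q_\infty$ in $L^2_{\mathrm{loc}}$ (and hence, with the $L^2$ bound, in $L^p_{\mathrm{loc}}$ for $p<\infty$), one passes to the limit in the quadratic-plus-quartic expression to get $E(Q_\infty)\leq\liminf E(f_n)=0$, hence $E(Q_\infty)=0$, i.e. $\mathbf D_{Q_\infty}Q_\infty=0$. (ii) Classification of zero-energy solutions: by the variational characterization recalled in the introduction — $Q$ is the unique solution of $\mathbf D_QQ=0$ up to scaling, phase, translation — the nonzero weak limit must be $Q_\infty=[Q]_{\lambda,\gamma,0}$ for some $\lambda>0$, $\gamma\in\mathbb R/2\pi\mathbb Z$ (the translation having been absorbed into $x_n$; one may need a further fixed translation, which can be folded into $x_n$). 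This gives the claimed weak convergence. (iii) Mass bound: from weak $H^1$ convergence, $\liminf_n\|f_n\|_{L^2}^2\geq\|Q_\infty\|_{L^2}^2$; and since $M$ is scaling-invariant, $\|Q_\infty\|_{L^2}^2=M(Q)=2\pi$. This yields $\liminf_n\|f_n\|_{L^2}^2\geq 2\pi$.

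\textbf{Main obstacle.} The delicate point is step (i)'s exclusion of vanishing and the dichotomy bookkeeping: one must show that the constraint $\|f_n\|_{\dot H^1}=1$ cannot be reconciled with $E(f_n)\to0$ when mass spreads out or splits into several far-apart pieces. Concretely, the nonlocal cubic term $\int|D|(|f_n|^2)\,|f_n|^2$ is \emph{not} controlled purely by $\dot H^1$; it involves the $L^2$-norm, so one needs the Gagliardo–Nirenberg/Sobolev inequality $\|\mathcal H(|f|^2)\|_{L^\infty}\lesssim\|f\|_{L^2}\|f\|_{\dot H^1}$ (already used in the excerpt) and the sharp constant in the mass-critical inequality tying $E\geq0$ to $M\geq M(Q)$ — this sharp constant is exactly what produces the value $2\pi$. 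The cleanest route is: prove a sharp inequality of the form "$E(f)\geq 0$ with equality iff $f$ is a modulated $Q$, and if $M(f)<2\pi$ then $E(f)\gtrsim_{M(f)}\|f\|_{\dot H^1}^2$" (a quantitative coercivity below threshold), which immediately forbids $E(f_n)\to0$ unless $\liminf M(f_n)\geq 2\pi$; then run the profile decomposition to locate the bubble. I would cite \cite{GerardLenzmann2022} for the sharp constant and the rigidity, and only supply the weak-limit extraction and lower-semicontinuity details here.
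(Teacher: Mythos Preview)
The paper does not supply a proof of this lemma; it is stated in the appendix with attribution to \cite{GerardLenzmann2022} and then invoked as a black box in the proof of Lemma~\ref{lem:Proximity}. Your outline is a correct and standard concentration--compactness argument: exclude vanishing via Lions' lemma (if $f_n(\cdot-x_n)\rightharpoonup 0$ for every choice of centers, then $f_n\to 0$ in $L^p$ for $2<p<\infty$, so the quartic and sextic terms in the expansion of $E$ vanish and $E(f_n)\to\tfrac12$, contradicting $E(f_n)\to 0$); extract a nonzero weak $H^1$ limit $Q_\infty$; show $\mathbf D_{Q_\infty}Q_\infty=0$ by checking that $\mathbf D_{f_n}f_n\rightharpoonup\mathbf D_{Q_\infty}Q_\infty$ weakly in $L^2$ while it converges strongly to $0$; invoke the classification of zero-energy states from \cite{GerardLenzmann2022}; and read off the mass bound from weak lower semicontinuity of the $L^2$-norm together with the scaling invariance of $M$.

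One remark on your final paragraph: the ``cleanest route'' via a subthreshold coercivity inequality $E(f)\gtrsim_{M(f)}\|f\|_{\dot H^1}^2$ for $M(f)<2\pi$ is superfluous here, and proving that inequality from scratch would require essentially the same compactness argument you have already given, so it does not shortcut anything. Your step~(iii) already delivers the mass bound once the weak limit is identified as a modulated $Q$.
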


Now, we prove the tube stability. 
\begin{proof}[Proof of Lemma~\ref{lem:Proximity}]
	By scaling, we may assume $\wt{\lambda}=1$. We suppose the tube
	stability fails. Then, there exist $\delta^{\prime}>0$ and a sequence
	$f_{n}^{\prime}$ in $H^{1}$ such that 
	\begin{align*}
		\sup_{n\in\bbN}\|f_{n}^{\prime}\|_{L^{2}}<\infty,\quad\lim_{n\to\infty}E(f_{n}^{\prime})=0,\quad\|f_{n}\|_{\dot{H}^{1}}=1\text{ for all }n\in\bbN,
	\end{align*}
	and 
	\begin{align}
		\inf_{\wt{\lambda},\wt{\gamma},\wt x}\|f_{n}^{\prime}-[Q]_{\wt{\lambda},\wt{\gamma},\wt x}\|_{\dot{\calH}^{1}}>\delta^{\prime}\text{ for any }n\in\bbN.\label{eq:tube fail}
	\end{align}
	By Lemma~\ref{lem:variational argument}, we have $f_{n}^{\prime}(\cdot-x_{n})\rightharpoonup[Q]_{\lambda,\gamma,0}$
	weakly in $H^{1}$. We write $f_{n}^{\prime}(\cdot-x_{n})=[Q+\td f_{n}]_{\lambda,\gamma,0}$.
	So, we have $\td f_{n}\rightharpoonup0$ weakly in $H^{1}$, and we
	also have $\td f_{n}\to0$ in $L_{\text{loc}}^{2}$ and $L^{p}$ for
	$2<p<\infty$. We have 
	\begin{align}
		E(Q+\td f_{n})=\tfrac{1}{2}\|L_{Q}\td f_{n}\|_{L^{2}}^{2}+O(\|N_{Q}(\td f_{n})\|_{L^{2}}^{2}),\label{eq:appendix energy estimte}
	\end{align}
	where 
	\begin{align*}
		N_{Q}(\td f_{n})=\td f_{n}\mathcal{H}(\Re(Q\td f_{n}))+\tfrac{1}{2}(Q+\td f_{n})\mathcal{H}(|\td f_{n}|^{2}).
	\end{align*}
	Since $\|\td f_{n}\|_{L^{\infty}}\lesssim1$ and $\|Q\td f_{n}\|_{L^{2}}\to0$,
	we have 
	\begin{align*}
		\|\td f_{n}\mathcal{H}(\Re(Q\td f_{n}))\|_{L^{2}}\lesssim\|\td f_{n}\|_{L^{\infty}}\|Q\td f_{n}\|_{L^{2}}\to0.
	\end{align*}
	Using \eqref{eq:HilbertProductRule} with $f=g=|\td f_{n}|^{2}$,
	we deduce 
	\begin{align*}
		\|(Q+\td f_{n})\mathcal{H}(|\td f_{n}|^{2})\|_{L^{2}}^{2} & \lesssim\|Q+\td f_{n}\|_{L^{\infty}}^{2}\|\mathcal{H}(|\td f_{n}|^{2})\|_{L^{2}}^{2}\\
		& =\|Q+\td f_{n}\|_{L^{\infty}}^{2}\bigg(\|\td f_{n}\|_{L^{4}}^{4}+{\int_{\bbR}}|\td f_{n}|^{2}\mathcal{H}(|\td f_{n}|^{2})dx\bigg).
	\end{align*}
	We have $\|\td f_{n}\|_{L^{4}}\to0$. For any $f\in L^{2}$, from
	the isometric property of $\calH$ with $\calH^{2}=-I$, we can know
	that $\int_{\bbR}f\calH(f)dx=0$, and this means that ${\int_{\bbR}}|\td f_{n}|^{2}\mathcal{H}(|\td f_{n}|^{2})dx=0$.
	Therefore, we have $\|N_{Q}(\td f_{n})\|_{L^{2}}\to0$ as $n\to\infty$.
	This leads to $\eqref{eq:appendix energy estimte}\to0$. Now, thanks
	to the subcoercivity of $L_{Q}$ (\cite{KimKimKwon2024arxiv}, Lemma
	A.3), we have $\td f_{n}\to0$ in $\dot{\calH}^{1}$, and we also
	have $f_{n}^{\prime}(\cdot-x_{n})\to[Q]_{\lambda,\gamma,0}$ in $\dot{\calH}^{1}$.
	This contradicts \eqref{eq:tube fail}. 
\end{proof}
\begin{proof}[Proof of Lemma~\ref{lem:Decomposition pre}]
	For a notational convenience, we adopt the notation $\textrm{g}\coloneqq(\lambda,\gamma,x)$
	and $[f]_{\textrm{g}}\coloneqq[f]_{\lambda,\gamma,x}$ in \eqref{eq: g def}
	and \eqref{eq: g def renormal}. We denote by $\textrm{g}_{0}\coloneqq(1,0,0)$.
	
	(1) We equip $\bbR_{+}$ with the metric $\text{dist}(\lambda_{1},\lambda_{2})=|\log(\frac{\lambda_{1}}{\lambda_{2}})|$,
	and equip $\bbR/2\pi\bbZ$ with the induced metric from $\bbR$. We
	first decompose $v$ by $[Q+\wt{\eps}]_{\textrm{g}}$. We define 
	\begin{align*}
		\mathbf{F}(\textrm{g};v)=\mathbf{F}(\lambda,\gamma,x;v)\coloneqq((\wt{\eps},\mathcal{Z}_{1})_{r},(\wt{\eps},\mathcal{Z}_{2})_{r},(\wt{\eps},\mathcal{Z}_{3})_{r})^{T},
	\end{align*}
	where $w\coloneqq\lambda^{1/2}e^{-i\gamma}v(\lambda\cdot+x)$ and
	$\wt{\eps}\coloneqq w-Q$. Then one checks that 
	\begin{align*}
		\partial_{\textrm{g}}\mathbf{F}(\textrm{g}_{0};v)\coloneqq\partial_{\lambda,\gamma,x}\mathbf{F}(1,0,0;v)=\partial_{\textrm{g}}\mathbf{F}(\textrm{g}_{0};Q)+O(\|v-Q\|_{\dot{\calH}^{1}}),
	\end{align*}
	and that the matrix $(\partial_{\textrm{g}}\mathbf{F}(\textrm{g}_{0};Q))_{i,j}$ is diagonal and nonsingular. Thus, by the implicit function theorem, $\mathbf{F}$
	is $C^{1}$ for $(\lambda,\gamma,x)$ and invertible at $(1,0,0,Q)$.
	Let $B_{\delta_{2}}^{\dot{\calH}^{1}}(Q)\coloneqq\{v\in\dot{\calH}^{1}:\|v-Q\|_{\dot{\calH}^{1}}<\delta_{2}\}$.
	For given $R_{0}\gg1$, there exist $0<\delta_{2}\ll\delta_{1}$ and
	$C^{1}$ maps $\textrm{g}=(\lambda,\gamma,x):B_{\delta_{2}}^{\dot{\calH}^{1}}(Q)\to B_{\delta_{1}}(\textrm{g}_{0})$
	such that for given $v\in B_{\delta_{2}}^{\dot{\calH}^{1}}(Q)$, $\textrm{g}(v)$
	is unique solution to $\textbf{F}(\textrm{g};v)=0$ in $B_{\delta_{1}}(\textrm{g}_{0})$.
	We also have that $\text{dist}(\textrm{g},\textrm{g}_{0})\lesssim\|v-Q\|_{\dot{\calH}^{1}}$
	and $\|\wt{\eps}\|_{\dot{\calH}^{1}}\lesssim\|v-Q\|_{\dot{\calH}^{1}}$.
	
	Now, we want to prove the uniqueness of $\textrm{g}$ on $\mathcal{T}_{\delta}$
	for some $\delta$. From the $L^{2}$-scaling, phase rotation, and
	translation invariances, we may assume $\|v-Q\|_{\dot{\calH}^{1}}<\delta$.
	We choose $\eta$ and $\delta$ so that $\eta\ll\delta_{1}$ and $\delta\ll\min(\eta,\delta_{2})$.
	We assume that $v=[Q+\wt{\eps}]_{\textrm{g}}=[Q+\wt{\eps}^{\prime}]_{\textrm{g}^{\prime}}$
	with $\|\wt{\eps}\|_{\dot{\calH}^{1}},\|\wt{\eps}^{\prime}\|_{\dot{\calH}^{1}}<\eta$
	and $(\wt{\eps},\mathcal{Z}_{k})_{r}=(\wt{\eps}^{\prime},\mathcal{Z}_{k})_{r}=0$
	for $k=1,2,3$. Then, by the scaling, rotation, and translation symmetries
	and changing the roles of $(\textrm{g},\eps)$ and $(\textrm{g}^{\prime},\eps^{\prime})$
	if necessary, we may assume $\lambda\geq1$, $\lambda^{\prime}=1$,
	$\gamma^{\prime}=0$, and $x^{\prime}=0$. Then, we have $Q-[Q]_{\textrm{g}}=[\wt{\eps}]_{\textrm{g}}-\wt{\eps}^{\prime}$
	and we deduce 
	\begin{align*}
		\|Q-[Q]_{\textrm{g}}\|_{\dot{H}^{1}}=\|[\wt{\eps}]_{\textrm{g}}-\wt{\eps}^{\prime}\|_{\dot{H}^{1}}\leq\lambda^{-1}\|\wt{\eps}\|_{\dot{\calH}^{1}}+\|\wt{\eps}^{\prime}\|_{\dot{\calH}^{1}}\lesssim\eta\ll\delta_{1}.
	\end{align*}
	So, we have $\text{dist}(\textrm{g},\textrm{g}^{\prime})<\frac{\delta_{1}}{2}$.
	Since $\text{dist}(\textrm{g},\textrm{g}_{0})\lesssim\|v-Q\|_{\dot{\calH}^{1}}<\delta\ll\delta_{1}$,
	we have $\text{dist}(\textrm{g},\textrm{g}_{0})<\frac{\delta_{1}}{2}$,
	and thus we derive $\text{dist}(\textrm{g}^{\prime},\textrm{g}_{0})<\delta_{1}$.
	By the uniqueness which comes from the implicit function theorem,
	we conclude $\textrm{g}=\textrm{g}^{\prime}$.
	
	(2) Let $\wt{\lambda}\coloneqq\frac{\|Q\|_{\dot{H}^{1}}}{\|v\|_{\dot{H}^{1}}}$.
	Then, we have $\|Q\|_{\dot{H}^{1}}=\wt{\lambda}\|w\|_{\dot{H}^{1}}=\wt{\lambda}\|Q+\wt{\eps}\|_{\dot{H}^{1}}$.
	Since $\|\wt{\eps}\|_{\dot{H}^{1}}<\eta\ll1$, we have $\frac{\lambda}{\wt{\lambda}}\sim1$
	and \eqref{eq:Decompose lambda estimate}. This finishes the proof. 
\end{proof}

 \bibliographystyle{abbrv}
\bibliography{referenceCM}

\end{document}